\providecommand{\tabularnewline}{\\}
\numberwithin{equation}{section}
\numberwithin{figure}{section}
\theoremstyle{plain}
\newtheorem{thm}{\protect\theoremname}[section]
\theoremstyle{remark}
\newtheorem{notation}[thm]{\protect\notationname}
\theoremstyle{remark}
\newtheorem{rem}[thm]{\protect\remarkname}
\theoremstyle{definition}
\newtheorem{defn}[thm]{\protect\definitionname}
\theoremstyle{plain}
\newtheorem{lem}[thm]{\protect\lemmaname}
\theoremstyle{remark}
\newtheorem*{acknowledgement*}{\protect\acknowledgementname}
	\setlist[itemize]{leftmargin=*}
	\setlist[enumerate]{leftmargin=*}
\providecommand{\acknowledgementname}{Acknowledgement}
\providecommand{\definitionname}{Definition}
\providecommand{\lemmaname}{Lemma}
\providecommand{\notationname}{Notation}
\providecommand{\remarkname}{Remark}
\providecommand{\theoremname}{Theorem}
\begin{document}
\title[Metastable Hierarchy in Lattice Models and Kawasaki Dynamics]{Metastable Hierarchy in Abstract Low-Temperature Lattice Models:
An Application to Kawasaki Dynamics for Ising Lattice Gas with Macroscopic
Number of Particles}
\author{Seonwoo Kim}
\address{June E Huh Center for Mathematical Challenges, Korea Institute for
Advanced Study, Republic of Korea.}
\email{seonwookim@kias.re.kr}
\begin{abstract}
This article is divided into two parts. In the first part, we study
the hierarchical phenomenon of metastability in low-temperature lattice
models in the most general setting. Given an abstract dynamical system
governed by a Hamiltonian function, we prove that there exists a hierarchical
decomposition of the collection of stable plateaux in the system into
multiple $\mathfrak{m}$ levels, such that at each level there exist
tunneling metastable transitions between the stable plateaux, which
can be characterized by convergence to an explicit simple Markov chain as the
inverse temperature $\beta$ tends to infinity. In the second part,
as an application, we characterize the $3$-level metastable hierarchy
in Kawasaki dynamics for Ising lattice gas with macroscopic number
of particles. We prove that the ground states in this model are those
in which the particles line up and form a one-dimensional strip, and
identify the full structure relevant to the tunneling transitions
between these ground states. In particular, the results differ from
the previous work \cite{BL Kawasaki} in that the particles in the
ground states are likely to form a strip rather than a square droplet.
The main tool is the resolvent approach to metastability, recently
developed in \cite{LMS resolvent}. Along with the analysis, we present
a theorem on the sharp asymptotics of the exit distribution from cycles,
which to the author's knowledge is not known in the community and
therefore may be of independent interest.
\end{abstract}

\maketitle
\tableofcontents{}

\section{\label{sec1}Introduction}

Metastability occurs in a wide class of low-temperature dynamical
systems that possess two or more locally stable states, including
countless real-world examples such as the supercooling of water or
the persistence of stock prices at high levels. In the context of
statistical physics, this phenomenon can be understood as a first-order
phase transition with respect to intrinsic physical parameters such
as magnetization or particle density. It occurs in a large number
of concrete models falling in the range of small random perturbations
of dynamical systems \cite{BEGK,FW,LanLeeSeo Meta-1,LanLeeSeo Meta-2,LanMarianiSeo},
condensing interacting particle systems \cite{BL ZRP,BDG,Kim RIP-2nd,Kim RIP-3rd,LMS critical,Seo NRZRP},
ferromagnetic spin systems at low temperature \cite{BAC,BdHN,KS IsingPotts-growing,KS IsingPotts-3D,MNOS,NS Ising1},
etc.

Inspired by the Freidlin--Wentzell theory \cite{FW} on large deviation
properties of perturbed dynamical systems, Cassandro, Galves, Olivieri
and Vares \cite{CGOV} successfully developed this theory in discrete
ferromagnetic systems with applications to the Curie--Weiss model
and the contact process on $\mathbb{Z}$. Since then, there has been
a vast literature on the application of this so-called \emph{pathwise
approach} to metastability, including some notable works \cite{BAC,MNOS,NZ,NS Ising1,OS rev}.
See the monograph \cite{OV} for an extensive history, overview and
literature on the subject.

In the early 2000s, Bovier, Eckhoff, Gayrard and Klein \cite{BEGK}
made a remarkable breakthrough by interpreting the well-known \emph{potential
theory} in the language of metastability. They discovered that a sharp
asymptotic of a potential-theoretic object called the \emph{capacity}
is the key to give a much more refined analysis of the metastable
behavior, in particular the exact prefactor of the mean metastable
transition time, and thus the full \emph{Eyring--Kramers formula}.
We refer the readers to a recent monograph \cite{BdH} for detailed
aspects and complete literature on the potential-theoretic approach
to metastability.

Recently, Landim, Marcondes and Seo \cite{LMS resolvent} introduced
a completely new methodology, namely the \emph{resolvent approach},
to study metastability occurring in systems with a complicated structure
of multiple stable states. Let $L_{N}$ be the infinitesimal generator
of a metastable dynamics, where $N$ is the control parameter of the
system. The idea is to compare the accelerated microscopic resolvent
equation $(\lambda-\theta_{N}L_{N})F_{N}=G_{N}$ and the macroscopic
resolvent equation $(\lambda-\mathfrak{L})f=g$, where $\lambda$
is a positive constant, $g$ is an arbitrary function, $G_{N}$ is
the lifted function of $g$, and $\theta_{N}$ is a suitably chosen
time scale. They then proved that the metastability of the $L_{N}$-dynamics
characterized by $(\theta_{N},\,\mathfrak{L})$ is equivalent to the
property that the microscopic solution $F_{N}$ is close to the macroscopic
solution $f$ in each metastable valley as $N\to\infty$; see Sections
\ref{sec4.3} and \ref{sec5.4} for detailed formulations in the context
of this article. In particular, the equivalence of this resolvent
condition (cf. \eqref{eq:res cond} and \eqref{eq:res cond-h} below)
to the occurrence of metastability makes the resolvent characterization
to be the most effective tool so far. Then, Landim, Lee and Seo \cite{LanLeeSeo Meta-1,LanLeeSeo Meta-2}
successfully applied the resolvent approach to characterize the metastability
occurring in the overdamped Langevin dynamics with general potentials,
in particular to characterize the full hierarchical structure depending
on the different depths of the valleys.

In this article, we apply the resolvent approach to a general class
of low-temperature ferromagnetic lattice models to identify a hierarchical
structure of tunneling transitions that occur in the complicated energy
landscape of metastable states. To fix ideas, we consider the Metropolis
dynamics (cf. \eqref{eq:rbeta def}) in the state space such that
the dynamics jumps with rate $1$ to equal or lower energy configurations
and with $\beta$-exponentially small rate to higher energy configurations,
where $\beta$ denotes the inverse temperature. Extending the results
\cite{LanLeeSeo Meta-1,LanLeeSeo Meta-2} from diffusion to discrete
systems, we characterize each \emph{stable plateau} (see Definition
\ref{def:stab plat}) as a metastable element in the system and quantify
the metastable transitions at each level with a limiting Markov chain
between these stable plateaux, by accelerating the dynamics and proving
convergence to a limit object. The exact formulation of this convergence
is given in Section \ref{sec2.2}.

More specifically, denote temporarily by $\mathscr{P}^{1}$ the collection
of stable plateaux in the system. For each stable plateau, its initial
depth is defined as the minimum energy barrier to make a transition
to another one. Then, the initial metastable tunneling transitions
occur between those with the minimum initial depth. In turn, stable
plateaux in each irreducible class subject to the first-level transitions
form a new element in the second level, so that next we can characterize
the second-level tunneling transitions in the new collection $\mathscr{P}^{2}$
in a similar way. We can iterate this procedure until we obtain a
unique irreducible class of transitions at the terminal level, say
level $\mathfrak{m}$. Then, the sequence $\mathscr{P}^{1},\,\mathscr{P}^{2},\,\dots,\,\mathscr{P}^{\mathfrak{m}}$
contains complete information about the metastable hierarchy in the
system. See Section \ref{sec2.1} for the exact formulation of this
scheme.

In particular, compared to the previous work \cite{BL rev-MP} where the authors also studied the hierarchical decomposition of Markov chains, the work presented in this article provides a much more explicit general framework in that here we can derive the exact limiting Markov chain at each level by thoroughly analyzing the saddle structure of the energy landscape. More precisely, the limiting Markov chain is obtained by ignoring all the negligible jump rates of the system; see Definition \ref{def:gen const} for the details.

We chose the Metropolis dynamics as our Markov chain in the abstract
system to avoid further technical difficulties irrelevant to the main
results of the article. Nevertheless, we believe that the main ideas
presented here are robust enough to be directly applicable to a broader
class of stochastic dynamics, such as non-reversible dynamics in the
category of Freidlin--Wentzell-type Markov chains (cf. \cite[Section 6]{OV}
or \cite{CNSoh}).

As an application, we study a concrete and fundamental model that
fits into this framework, the \emph{Kawasaki dynamics} for the Ising
gas model with macroscopic number of particles in a fixed lattice
in the zero external field setup. The Kawasaki dynamics for the Ising
lattice gas was previously studied in \cite{BL Kawasaki}, in the
case where the total number of particles $\mathscr{N}$ is bounded
by $\frac{L^{2}}{4}$, where $L$ is the side length of the lattice.
Their analysis started from the fact that in this case, the ground
states are those in which the particles cluster together to form a
square droplet. However, if $\mathscr{N}>\frac{L^{2}}{4}$ then this
characteristic breaks down; here, in the ground state, the particles
tend to line up in one direction, forming a one-dimensional strip
droplet. Because of this fact, the geometry of the saddle structure
changes dramatically, and we need a completely new analysis to study
the metastable transitions here. In short, we identify a \emph{phase
transition} in the shape of the ground states at a threshold $\mathscr{N}^{*}:=\frac{L^{2}}{4}$
(see also Remark \ref{rem:phase trans particles}).

We emphasize here that the Kawasaki dynamics for Ising particles with
zero external field is difficult to treat with the previous methods,
and that it is a suitable model to apply the general methodology developed
in this article for the following reasons. First, as shown in \cite{KS IsingPotts-growing,KS IsingPotts-2D,KS IsingPotts-3D},
in interacting spin systems where the spins play the same role (i.e.,
external field $h=0$), the energy landscape and in particular the
saddle structure tend to be much more complicated than in their asymmetric
counterparts (i.e., $h\ne0$ as in \cite{BAC,NS Ising1}). The same
phenomenon occurs here; between the stable configurations (which are
monochromatic configurations, see Theorem \ref{thm:Ham S}) there
is a huge saddle plateau structure that also contains a large number
of shallow valleys. Second, the saddle structure is highly complicated
even in the bulk part of the metastable transitions, which was not
the case in the Glauber dynamics \cite{BGN Potts-zero,KS IsingPotts-2D},
where the bulk transitions were approximately one-dimensional. This
feature makes it implausible to decompose the energy landscape and
treat each part independently, as was done in the previous works.

One can also study the deepest metastable transitions between the
stable configurations in a new setup where the lattice size $L$ also
tends to infinity along with the inverse temperature $\beta$. When
the number of particles $\mathscr{N}$ is smaller than $\frac{L^{2}}{4}$,
Gois and Landim \cite{GoisLan} proved that the tunneling dynamics
between the stable states can be approximated as a two-dimensional
Brownian motion, mainly due to the fact that each stable configuration
consists of a square droplet that can move in the plane. However,
in the regime of this article ($\mathscr{N}>\frac{L^{2}}{4}$), the
particles in the stable configurations now form a one-dimensional
strip rather than a square. This difference suggests that in our new
regime, the limiting tunneling dynamics would be a one-dimensional
Brownian motion, with the position of the strip moving along the line.
This serves as a potential project for the near future.

Among the key ideas to be presented in the following, a notable part
is a sharp characterization of the exit distributions from cycles
(cf. Definition \ref{def:cycle}). According to the classical Freidlin--Wentzell
theory \cite{FW}, it was well known since the 1970s that a typical
(with probability tending to $1$) exit from a cycle occurs at its
boundary with minimum energy. However, it was impossible to obtain
an exact asymptotics of the exit distribution on the energy minimizers
using the pathwise approach. In Section \ref{sec2.3}, we provide
a sharp estimate of the exit distributions from cycles, which seems
to be the first result in this direction to the author's knowledge
and may deserve an independent interest in the community. The key
idea is to apply the $H^{1}$-approximation approach, recently developed
in \cite{KS IsingPotts-2D,LanSeo RW-pot-field}. See Sections \ref{sec2.3}
and \ref{sec9} for the exact formulation and its proof.

The remainder of this article is organized as follows. In Section
\ref{sec2}, we study the general framework for establishing the hierarchical
structure of metastable transitions in low-temperature lattice models,
along with a key theorem on the exit distribution from cycles in Section
\ref{sec2.3}. Then, in Section \ref{sec3}, we apply this framework
to the specific model, the Kawasaki dynamics for the Ising lattice
gas system with macroscopic number of particles. In Sections \ref{sec4}
and \ref{sec5}, we provide a proof of the general theory developed
in Section \ref{sec2}. In Sections \ref{sec6}, \ref{sec7} and \ref{sec8},
we perform a detailed analysis of the complicated energy landscape
of the Kawasaki dynamics, and then apply the general methodology to
prove the main results in Section \ref{sec3}. In Section \ref{sec9},
we prove the theorem on the exit distribution from cycles. In Appendix
\ref{appenA}, we give a short proof of a technical lemma used in
Section \ref{sec7}.

\section{\label{sec2}Metastable Hierarchy in Low-Temperature Lattice Models}

Consider a finite state space $\Omega$ which is given an (undirected) edge
structure, such that we write $\eta\sim\xi$ (and $\xi\sim\eta$)
if and only if $\{\eta,\,\xi\}$ is an edge. We assume that the resulting
graph $\Omega$ is connected.

Suppose that a \emph{Hamiltonian} (or \emph{energy}) function $\mathbb{H}:\,\Omega\to\mathbb{R}$
is given to the system, such that the corresponding \emph{Gibbs measure}
$\mu_{\beta}$ on $\Omega$ is defined as
\begin{equation}
\mu_{\beta}(\eta):=Z_{\beta}^{-1}e^{-\beta\mathbb{H}(\eta)}\quad\text{for}\quad\eta\in\Omega,\label{eq:Gibbs def}
\end{equation}
where $\beta>0$ is the inverse temperature of the system and $Z_{\beta}:=\sum_{\eta\in\Omega}e^{-\beta\mathbb{H}(\eta)}$
is the normalizing constant such that $\mu_{\beta}$ is a probability
measure. We are interested in the regime of zero-temperature limit:
$\beta\to\infty$.
\begin{notation}
\label{nota:bot bdry def}For $\mathcal{A}\subseteq\Omega$, define
the \emph{bottom} of $\mathcal{A}$ as $\mathcal{F}(\mathcal{A}):=\{\xi\in\mathcal{A}:\,\mathbb{H}(\xi)=\min_{\mathcal{A}}\mathbb{H}\}$
and the (outer) \emph{boundary} of $\mathcal{A}$ as $\partial\mathcal{A}:=\{\eta\in\Omega\setminus\mathcal{A}:\,\eta\sim\xi\quad\text{for some}\quad\xi\in\mathcal{A}\}$.
Moreover, define $\partial^{\star}\mathcal{A}:=\mathcal{F}(\partial\mathcal{A})$.
\end{notation}

Consider the \emph{Metropolis dynamics} $\{\eta_{\beta}(t)\}_{t\ge0}$
in $\Omega$ whose transition rate function $r_{\beta}:\,\Omega\times\Omega\to[0,\,\infty)$
is represented as
\begin{equation}
r_{\beta}(\eta,\,\xi)=\begin{cases}
e^{-\beta\max\{\mathbb{H}(\xi)-\mathbb{H}(\eta),\,0\}} & \text{if}\quad\eta\sim\xi,\\
0 & \text{otherwise}.
\end{cases}\label{eq:rbeta def}
\end{equation}
Denote by $L_{\beta}$ the corresponding infinitesimal generator.
Since $\Omega$ is connected, $\{\eta_{\beta}(t)\}_{t\ge0}$ is irreducible
in $\Omega$. One can easily notice that the Metropolis dynamics is
reversible with respect to $\mu_{\beta}$:
\begin{equation}
\mu_{\beta}(\eta)r_{\beta}(\eta,\,\xi)=\mu_{\beta}(\xi)r_{\beta}(\xi,\,\eta)=\begin{cases}
Z_{\beta}^{-1}e^{-\beta\max\{\mathbb{H}(\eta),\,\mathbb{H}(\xi)\}} & \text{if}\quad\eta\sim\xi,\\
0 & \text{otherwise}.
\end{cases}\label{eq:det bal}
\end{equation}
We denote by $\mathcal{S}:=\mathcal{F}(\Omega)$ the minimizer of
the Hamiltonian (cf. Notation \ref{nota:bot bdry def}). The elements
of $\mathcal{S}$ are called \emph{ground states}. By \eqref{eq:Gibbs def},
it is straightforward that $\lim_{\beta\to\infty}\mu_{\beta}(\mathcal{S})=1$.

A sequence $\omega=(\omega_{n})_{n=0}^{N}$ of configurations is called
a \emph{path} from $\omega_{0}=\eta$ to $\omega_{N}=\xi$ if $\omega_{n}\sim\omega_{n+1}$
for all $n\in[0,\,N-1]$.\footnote{In this article, $[\alpha,\,\alpha']$ denotes the collection of integers
from $\alpha$ to $\alpha'$.} In this case, we write $\omega:\,\eta\to\xi$. Moreover, writing
$\omega:\,\mathcal{A}\to\mathcal{B}$ for disjoint subsets $\mathcal{A},\,\mathcal{B}$
implies that $\omega:\,\eta\to\xi$ for some $\eta\in\mathcal{A}$
and $\xi\in\mathcal{B}$. For each path $\omega$, define the \emph{height}
of $\omega$ as
\begin{equation}
\Phi_{\omega}:=\max_{n\in[0,\,N]}\mathbb{H}(\omega_{n}).\label{eq:Phi-omega def}
\end{equation}
Then, for $\eta,\,\xi\in\Omega$, define the \emph{communication height}
or \emph{energy barrier} between $\eta$ and $\xi$ as
\begin{equation}
\Phi(\eta,\,\xi):=\min_{\omega:\,\eta\to\xi}\Phi_{\omega}.\label{eq:comm height def}
\end{equation}
More generally, for two disjoint subsets $\mathcal{A},\,\mathcal{B}\subseteq\Omega$,
define $\Phi(\mathcal{A},\,\mathcal{B}):=\min_{\omega:\,\mathcal{A}\to\mathcal{B}}\Phi_{\omega}$.
By concatenating the paths, it holds that
\begin{equation}
\Phi(\eta,\,\xi)\le\max\{\Phi(\eta,\,\zeta),\,\Phi(\zeta,\,\xi)\}\quad\text{for all}\quad\eta,\,\xi,\,\zeta\in\Omega.\label{eq:Phi max ineq}
\end{equation}
Now, define
\begin{equation}
\overline{\Phi}:=\max_{\bm{s},\,\bm{s'}\in\mathcal{S}}\Phi(\bm{s},\,\bm{s'}).\label{eq:Phi-bar def}
\end{equation}
In other words, $\overline{\Phi}$ is the minimal energy level subject
to which we observe all transitions between the ground states in $\mathcal{S}$.
Thus, we naturally define
\begin{equation}
\overline{\Omega}:=\{\eta\in\Omega:\,\Phi(\mathcal{S},\,\eta)\le\overline{\Phi}\},\label{eq:Omega-bar def}
\end{equation}
which is the collection of all configurations reachable from the ground
states by paths of height at most $\overline{\Phi}$. Hereafter, our
analysis is focused on the collection $\overline{\Omega}$, which
is a connected subgraph of $\Omega$.
\begin{rem}
\label{rem:Omega Omega-bar}In the zero-temperature limit, the energy
landscape near the ground states captures all the essential features
of the metastability phenomenon. Thus, instead of trying to characterize
all the locally stable and saddle structures in the whole system,
we focus on the essential subset $\overline{\Omega}$. Nevertheless,
we note here that the following analyses below are fully valid even
if $\overline{\Omega}$ is replaced by the full space $\Omega$; such
attempts have been carried out in \cite{BL rev-MP}.
\end{rem}

Now, we define two crucial notions explored in this article; namely \emph{stable plateaux} and \emph{cycles}. We call that a set $\mathcal{A}$ is \emph{connected} if for any $\eta,\,\xi\in\mathcal{A}$, there exists a path $\eta=\eta_0,\,\eta_1,\,\dots,\,\eta_N=\xi$ inside $\mathcal{A}$, i.e., $\eta_n\in\mathcal{A}$ for all $n\in[0,\,N]$.
\begin{defn}[Stable plateau]
\label{def:stab plat}A nonempty connected set $\mathcal{P}$ is
called a \emph{stable plateau} if the following two statements hold.
\begin{itemize}
\item For all $\eta,\,\xi\in\mathcal{P}$, we have $\mathbb{H}(\eta)=\mathbb{H}(\xi)$;
we denote by $\mathbb{H}(\mathcal{P})$ the common energy value.\footnote{We adopt this notation for any set with common energy.}
\item It holds that $\mathbb{H}(\zeta)>\mathbb{H}(\mathcal{P})$ for all
$\zeta\in\partial\mathcal{P}$.
\end{itemize}
\end{defn}

We denote by $\nu_{0}$ the number of stable plateaux in $\overline{\Omega}$
and by $\mathscr{P}^{1}$ their collection:
\begin{equation}
\mathscr{P}^{1}:=\{\mathcal{P}_{i}^{1}:\,i\in[1,\,\nu_{0}]\}.\label{eq:P1 def}
\end{equation}
It is clear that each ground state in $\mathcal{S}$ constitutes a
stable plateau in $\overline{\Omega}$, i.e., for every $\bm{s}\in\mathcal{S}$
there exists $\mathcal{P}\in\mathscr{P}^{1}$ such that $\bm{s}\in\mathcal{P}$. Note that here it is possible for a stable plateau to contain two or more ground states.
\begin{defn}[Cycle]
\label{def:cycle}A nonempty connected set $\mathcal{C}$ is a (nontrivial)
\emph{cycle} (cf. \cite[Definition 6.5]{OV}) if
\begin{equation}
\max_{\eta\in\mathcal{C}}\mathbb{H}(\eta)<\min_{\xi\in\partial\mathcal{C}}\mathbb{H}(\xi).\label{eq:cyc def}
\end{equation}
Each stable plateau is clearly a cycle. Then, the \emph{depth} of
cycle $\mathcal{C}$ is defined as
\begin{equation}
\Gamma^{\mathcal{C}}:=\min_{\partial\mathcal{C}}\mathbb{H}-\min_{\mathcal{C}}\mathbb{H}>0.\label{eq:depth-def}
\end{equation}
\end{defn}

Each cycle, in particular each stable plateau, is \emph{metastable}
in the sense that it takes an exponentially long time to exit (cf.
\cite[Theorem 6.23]{OV}) and that the exit time is asymptotically
exponentially distributed (cf. \cite[Theorem 6.30]{OV}). On the other
hand, any configuration that is not contained in a stable plateau
is not metastable, since its holding rate is of order $1$. Thus,
from now on we assume that $\nu_{0}\ge2$, consider $\mathscr{P}^{1}$
as the collection of all metastable elements and study the tunneling
transitions between the elements of $\mathscr{P}^{1}$.

It is easy to verify that for any two cycles $\mathcal{C},\,\mathcal{C}'$
with $\mathcal{C}\cap\mathcal{C}'\ne\emptyset$, it holds that $\mathcal{C}\subseteq\mathcal{C}'$
or $\mathcal{C}'\subseteq\mathcal{C}$ (see \cite[Proposition 6.8]{OV}
for a proof). Moreover, the following lemma holds.
\begin{lem}
\label{lem:cyc bot P}For every cycle $\mathcal{C}$, its bottom $\mathcal{F}(\mathcal{C})$
is a union of stable plateaux.
\end{lem}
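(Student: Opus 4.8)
The plan is to fix a cycle $\mathcal{C}$ and show that its bottom $\mathcal{F}(\mathcal{C})$ decomposes into stable plateaux. Write $m:=\min_{\mathcal{C}}\mathbb{H}$, so $\mathcal{F}(\mathcal{C})=\{\xi\in\mathcal{C}:\,\mathbb{H}(\xi)=m\}$. The set $\mathcal{F}(\mathcal{C})$ need not be connected, so first I would break it into its connected components (as subgraphs of $\Omega$) and argue that each component $\mathcal{Q}$ is a stable plateau. Since all configurations in $\mathcal{Q}$ share the energy value $m$ by construction, the first condition in Definition \ref{def:stab plat} is immediate, and $\mathcal{Q}$ is connected by choice of component. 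The substance is the boundary condition: I must show $\mathbb{H}(\zeta)>m$ for every $\zeta\in\partial\mathcal{Q}$.

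To verify the boundary condition, take $\zeta\in\partial\mathcal{Q}$, so $\zeta\sim\eta$ for some $\eta\in\mathcal{Q}\subseteq\mathcal{C}$. There are two cases. If $\zeta\notin\mathcal{C}$, then $\zeta\in\partial\mathcal{C}$, and by the cycle condition \eqref{eq:cyc def} we have $\mathbb{H}(\zeta)\ge\min_{\partial\mathcal{C}}\mathbb{H}>\max_{\mathcal{C}}\mathbb{H}\ge m$, so certainly $\mathbb{H}(\zeta)>m$. If instead $\zeta\in\mathcal{C}$, then $\mathbb{H}(\zeta)\ge m$ by definition of $m$; moreover $\mathbb{H}(\zeta)\neq m$, because otherwise $\zeta\in\mathcal{F}(\mathcal{C})$ and, being adjacent to $\eta\in\mathcal{Q}$, it would lie in the same connected component $\mathcal{Q}$ of $\mathcal{F}(\mathcal{C})$, contradicting $\zeta\in\partial\mathcal{Q}=\Omega\setminus\mathcal{Q}$ adjacent to $\mathcal{Q}$. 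Hence $\mathbb{H}(\zeta)>m$ in this case as well. This establishes that each component $\mathcal{Q}$ is a stable plateau, and since $\mathcal{F}(\mathcal{C})$ is the disjoint union of its components, it is a union of stable plateaux.

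The main (and really only) obstacle is bookkeeping around the two notions of connectedness and adjacency: one must be careful that ``connected component of $\mathcal{F}(\mathcal{C})$'' is taken with respect to the ambient graph structure on $\Omega$ restricted to $\mathcal{F}(\mathcal{C})$, and that the outer boundary $\partial\mathcal{Q}$ is taken in $\Omega$ (not relative to $\mathcal{C}$), as in Notation \ref{nota:bot bdry def}; the case split above is then clean. A minor point worth stating explicitly is that $\mathcal{F}(\mathcal{C})$ is nonempty since $\Omega$ is finite, so at least one such plateau exists. No asymptotics or potential theory is needed; this is a purely combinatorial consequence of the definitions.
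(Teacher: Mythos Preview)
Your proof is correct and follows essentially the same approach as the paper: decompose $\mathcal{F}(\mathcal{C})$ into connected components and verify the boundary condition for each component by the same two-case split ($\zeta\notin\mathcal{C}$ via the cycle inequality, $\zeta\in\mathcal{C}$ via minimality of $m$ on $\mathcal{C}$). Your version spells out slightly more explicitly why $\zeta\in\mathcal{C}$ with $\mathbb{H}(\zeta)=m$ would contradict $\zeta\notin\mathcal{Q}$, but the argument is the same.
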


\begin{proof}
Decompose $\mathcal{F}(\mathcal{C})$ into connected components as
$\mathcal{F}(\mathcal{C})=\mathcal{A}_{1}\cup\cdots\cup\mathcal{A}_{N}$
and consider $\partial\mathcal{A}_{n}$ for each $n\in[1,\,N]$. We
claim that $\mathbb{H}(\xi)>\mathbb{H}(\mathcal{A}_{n})$ for all
$\xi\in\partial\mathcal{A}_{n}$ which concludes the proof of the
lemma. Indeed, if $\xi\notin\mathcal{C}$ such that $\xi\in\partial\mathcal{C}$,
then $\mathbb{H}(\xi)>\mathbb{H}(\mathcal{A}_{n})$ by \eqref{eq:cyc def}.
If $\xi\in\mathcal{C}$, then $\mathbb{H}(\xi)>\mathbb{H}(\mathcal{A}_{n})$
since $\mathcal{A}_{n}\subseteq\mathcal{F}(\mathcal{C})$ and $\xi\in\mathcal{C}\setminus\mathcal{F}(\mathcal{C})$.
\end{proof}
Now, we present a general framework of constructing Markov jumps between
cycles.
\begin{defn}[Construction of Markovian jumps between cycles]
\label{def:gen const}Suppose that we are given a pair $(\mathscr{C},\,\Gamma^{\star})$
where $\mathscr{C}$ is a collection of disjoint cycles in $\overline{\Omega}$
with $|\mathscr{C}|\ge2$ and $\Gamma^{\star}$ is a positive real
number. Define
\begin{equation}
\mathscr{P}^{\mathscr{C}}:=\{\mathcal{F}(\mathcal{C}):\,\mathcal{C}\in\mathscr{C}\}.\label{eq:PC def}
\end{equation}
By Lemma \ref{lem:cyc bot P}, $\mathscr{P}^{\mathscr{C}}$ is a collection
of disjoint unions of stable plateaux in $\overline{\Omega}$. Define
\begin{equation}
\mathscr{C}^{\star}:=\{\mathcal{C}\in\mathscr{C}:\,\Gamma^{\mathcal{C}}\ge\Gamma^{\star}\}\quad\text{and}\quad\mathscr{C}^{\sharp}:=\{\mathcal{C}\in\mathscr{C}:\,\Gamma^{\mathcal{C}}<\Gamma^{\star}\}\quad\text{such that}\quad\mathscr{C}=\mathscr{C}^{\star}\cup\mathscr{C}^{\sharp}.\label{eq:C-star C-sharp def}
\end{equation}
In words, $\mathscr{C}^{\star}$ consists of the cycles in $\mathscr{C}$
with depth at least $\Gamma^{\star}$. Accordingly, define
\begin{equation}
\mathscr{P}^{\mathscr{C}^{\star}}:=\{\mathcal{F}(\mathcal{C}):\,\mathcal{C}\in\mathscr{C}^{\star}\}\quad\text{and}\quad\mathscr{P}^{\mathscr{C}^{\sharp}}:=\{\mathcal{F}(\mathcal{C}):\,\mathcal{C}\in\mathscr{C}^{\sharp}\}\quad\text{such that}\quad\mathscr{P}^{\mathscr{C}}=\mathscr{P}^{\mathscr{C}^{\star}}\cup\mathscr{P}^{\mathscr{C}^{\sharp}}.\label{eq:P-star P-sharp def}
\end{equation}
In addition, define $\Delta^{\mathscr{C}}:=\overline{\Omega}\setminus\bigcup_{\mathcal{C}\in\mathscr{C}}\mathcal{C}$.

The \emph{contracted graph} $\Omega^{\mathscr{C}}$ is obtained from
$\overline{\Omega}$ by contracting the elements in each cycle $\mathcal{C}\in\mathscr{C}$
into single element $\mathcal{F}(\mathcal{C})\in\mathscr{P}^{\mathscr{C}}$,
such that $\Omega^{\mathscr{C}}=\Delta^{\mathscr{C}}\cup\mathscr{P}^{\mathscr{C}}$.
Then, the induced Markov chain $\{\mathfrak{X}^{\mathscr{C}}(t)\}_{t\ge0}$
in $\Omega^{\mathscr{C}}$ with transition rate $\mathfrak{R}^{\mathscr{C}}(\cdot,\,\cdot)$
is defined as follows:
\begin{equation}
\begin{cases}
\mathfrak{R}^{\mathscr{C}}(\eta,\,\xi):=1 & \text{if}\quad\eta\sim\xi\quad\text{and}\quad\mathbb{H}(\xi)\le\mathbb{H}(\eta),\\
\mathfrak{R}^{\mathscr{C}}(\eta,\,\mathcal{F}(\mathcal{C})):=|\{\zeta\in\mathcal{C}:\,\eta\sim\zeta\}| & \text{if}\quad\eta\in\partial\mathcal{C},\\
\mathfrak{R}^{\mathscr{C}}(\mathcal{F}(\mathcal{C}),\,\eta):=|\mathcal{F}(\mathcal{C})|^{-1}\cdot|\{\zeta\in\mathcal{C}:\,\eta\sim\zeta\}| & \text{if}\quad\Gamma^{\mathcal{C}}\le\Gamma^{\star}\quad\text{and}\quad\eta\in\partial^{\star}\mathcal{C},
\end{cases}\label{eq:RC def}
\end{equation}
and $\mathfrak{R}^{\mathscr{C}}(\cdot,\,\cdot):=0$ otherwise. Note
that $\{\mathfrak{X}^{\mathscr{C}}(t)\}_{t\ge0}$ is not necessarily
irreducible, since every $\mathcal{F}(\mathcal{C})\in\mathscr{P}^{\mathscr{C}}$
with $\Gamma^{\mathcal{C}}>\Gamma^{\star}$ is an absorbing state.
Thus, $\{\mathfrak{X}^{\mathscr{C}}(t)\}_{t\ge0}$ encodes the asymptotic
jumps between the configurations and cycles with depth at most $\Gamma^{\star}$.
By \eqref{eq:C-star C-sharp def} and \eqref{eq:RC def}, among the
cycles in $\mathscr{C}^{\star}$ only those with depth exactly $\Gamma^{\star}$
have positive rates with respect to $\mathfrak{R}^{\mathscr{C}}(\cdot,\,\cdot)$.

Then, consider the \emph{trace} Markov chain $\{\mathfrak{X}^{\mathscr{C}^{\star}}(t)\}_{t\ge0}$
in $\mathscr{P}^{\mathscr{C}^{\star}}$ whose transition rate function
$\mathfrak{R}^{\mathscr{C}^{\star}}(\cdot,\,\cdot)$ is defined as\footnote{In this article, $\mathcal{T}_{\mathcal{A}}$ denotes the (random)
hitting time of set $\mathcal{A}$.}
\begin{equation}
\mathfrak{R}^{\mathscr{C}^{\star}}(\mathcal{F}(\mathcal{C}),\,\mathcal{F}(\mathcal{C}')):=\sum_{\eta\in\Delta^{\mathscr{C}}}\mathfrak{R}^{\mathscr{C}}(\mathcal{F}(\mathcal{C}),\,\eta)\cdot{\bf P}_{\eta}^{\mathscr{C}}[\mathcal{T}_{\mathcal{F}(\mathcal{C}')}=\mathcal{T}_{\mathscr{P}^{\mathscr{C}^{\star}}}],\label{eq:R-Cstar def}
\end{equation}
where ${\bf P}_{\eta}^{\mathscr{C}}$ denotes the law of $\{\mathfrak{X}^{\mathscr{C}}(t)\}_{t\ge0}$
starting from $\eta\in\Omega^{\mathscr{C}}$. Refer to \cite[Section 6.1]{BL TM}
for the precise definition of the trace process.
\end{defn}

\subsection{\label{sec2.1}Hierarchical structure of stable plateaux}

\subsubsection*{Initial level}

Recall from \eqref{eq:P1 def} that $\mathscr{P}^{1}=\{\mathcal{P}_{1}^{1},\,\dots,\,\mathcal{P}_{\nu_{0}}^{1}\}$
is the collection of all $\nu_{0}\ge2$ stable plateaux in $\overline{\Omega}$.
For each $\mathcal{P}_{i}^{1}\in\mathscr{P}^{1}$, define the \emph{initial
depth} as
\begin{equation}
\Gamma_{i}^{1}:=\Phi(\mathcal{P}_{i}^{1},\,\breve{\mathcal{P}}_{i}^{1})-\mathbb{H}(\mathcal{P}_{i}^{1})>0,\quad\text{where}\quad\breve{\mathcal{P}}_{i}^{1}:=\bigcup_{j\in[1,\,\nu_{0}]:\,j\ne i}\mathcal{P}_{j}^{1}.\label{eq:Gammai1 def}
\end{equation}
Accordingly, define
\begin{equation}
\Gamma^{\star,1}:=\min_{i\in[1,\,\nu_{0}]}\Gamma_{i}^{1}>0.\label{eq:Gamma-star1 def}
\end{equation}
Then, define
\begin{equation}
\mathcal{V}_{i}^{1}:=\{\eta\in\Omega:\,\Phi(\mathcal{P}_{i}^{1},\,\eta)-\mathbb{H}(\mathcal{P}_{i}^{1})<\Gamma_{i}^{1}\}.\label{eq:Vi1 def}
\end{equation}

\begin{lem}
\label{lem:Vi1 cycle}Collections $\mathcal{V}_{i}^{1}$ for $i\in[1,\,\nu_{0}]$
are disjoint cycles in $\overline{\Omega}$ with bottom $\mathcal{P}_{i}^{1}$
and depth $\Gamma_{i}^{1}$.
\end{lem}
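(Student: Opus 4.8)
The plan is to verify the three asserted properties—cycle, bottom, depth—and then disjointness, in that order. For the cycle property of $\mathcal{V}_i^1$, I would first check connectedness: if $\eta \in \mathcal{V}_i^1$, then by \eqref{eq:Vi1 def} there is a path $\omega:\mathcal{P}_i^1\to\eta$ with $\Phi_\omega < \mathbb{H}(\mathcal{P}_i^1)+\Gamma_i^1$, and every configuration along $\omega$ then also lies in $\mathcal{V}_i^1$ by the sub-additivity \eqref{eq:Phi max ineq}, so $\mathcal{V}_i^1$ is connected (and contains $\mathcal{P}_i^1$, since $\Phi(\mathcal{P}_i^1,\eta)=\mathbb{H}(\mathcal{P}_i^1)$ for $\eta\in\mathcal{P}_i^1$ and $\Gamma_i^1>0$). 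Next, for $\zeta\in\mathcal{V}_i^1$ I must show $\mathbb{H}(\zeta) < \min_{\partial\mathcal{V}_i^1}\mathbb{H}$, i.e. \eqref{eq:cyc def}. The key observation is that if $\xi\in\partial\mathcal{V}_i^1$, then $\xi\notin\mathcal{V}_i^1$, so $\Phi(\mathcal{P}_i^1,\xi)\ge\mathbb{H}(\mathcal{P}_i^1)+\Gamma_i^1$; but $\xi$ is adjacent to some $\eta\in\mathcal{V}_i^1$, and concatenating a near-optimal path $\mathcal{P}_i^1\to\eta$ (of height $<\mathbb{H}(\mathcal{P}_i^1)+\Gamma_i^1$) with the single edge $\eta\sim\xi$ gives a path $\mathcal{P}_i^1\to\xi$ of height $\max\{\Phi_\omega,\mathbb{H}(\xi)\}$. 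Since this must be $\ge\mathbb{H}(\mathcal{P}_i^1)+\Gamma_i^1$ while $\Phi_\omega<\mathbb{H}(\mathcal{P}_i^1)+\Gamma_i^1$, we are forced to conclude $\mathbb{H}(\xi)\ge\mathbb{H}(\mathcal{P}_i^1)+\Gamma_i^1$. On the other hand, every $\zeta\in\mathcal{V}_i^1$ satisfies $\mathbb{H}(\zeta)\le\Phi(\mathcal{P}_i^1,\zeta)<\mathbb{H}(\mathcal{P}_i^1)+\Gamma_i^1$, so \eqref{eq:cyc def} holds.

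The bottom computation follows from the same inequality: every $\zeta\in\mathcal{V}_i^1$ has $\mathbb{H}(\zeta)\le\Phi(\mathcal{P}_i^1,\zeta)<\mathbb{H}(\mathcal{P}_i^1)+\Gamma_i^1$, while $\mathbb{H}$ equals $\mathbb{H}(\mathcal{P}_i^1)$ on $\mathcal{P}_i^1\subseteq\mathcal{V}_i^1$; I still need to rule out that some $\zeta\in\mathcal{V}_i^1\setminus\mathcal{P}_i^1$ attains energy $\mathbb{H}(\mathcal{P}_i^1)$. If it did, $\zeta$ would lie in a stable plateau $\mathcal{P}_j^1$ (by Definition \ref{def:stab plat}, since a minimal-energy configuration of $\overline{\Omega}$ reachable at low height sits in a plateau—more carefully, I would argue that the connected component of the $\mathbb{H}(\mathcal{P}_i^1)$-level set of $\mathcal{V}_i^1$ containing $\zeta$ is a stable plateau, using that its boundary energy is strictly larger by the cycle property just proved together with Definition \ref{def:stab plat}). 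If that plateau is $\mathcal{P}_i^1$ itself we are done; if it is some $\mathcal{P}_j^1$ with $j\ne i$, then $\zeta\in\breve{\mathcal{P}}_i^1$, so $\Phi(\mathcal{P}_i^1,\breve{\mathcal{P}}_i^1)\le\Phi(\mathcal{P}_i^1,\zeta)<\mathbb{H}(\mathcal{P}_i^1)+\Gamma_i^1$, contradicting the definition \eqref{eq:Gammai1 def} of $\Gamma_i^1$. Hence $\mathcal{F}(\mathcal{V}_i^1)=\mathcal{P}_i^1$, and the depth is $\Gamma^{\mathcal{V}_i^1}=\min_{\partial\mathcal{V}_i^1}\mathbb{H}-\mathbb{H}(\mathcal{P}_i^1)$; combined with the lower bound $\min_{\partial\mathcal{V}_i^1}\mathbb{H}\ge\mathbb{H}(\mathcal{P}_i^1)+\Gamma_i^1$ from the cycle step and the matching upper bound obtained by taking an optimal path $\mathcal{P}_i^1\to\breve{\mathcal{P}}_i^1$ of height exactly $\mathbb{H}(\mathcal{P}_i^1)+\Gamma_i^1$ and noting it must cross $\partial\mathcal{V}_i^1$ at a configuration of energy $\le\mathbb{H}(\mathcal{P}_i^1)+\Gamma_i^1$, we get depth exactly $\Gamma_i^1$.

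For disjointness, suppose $\mathcal{V}_i^1\cap\mathcal{V}_j^1\ne\emptyset$ with $i\ne j$; pick $\eta$ in the intersection. Then $\Phi(\mathcal{P}_i^1,\eta)<\mathbb{H}(\mathcal{P}_i^1)+\Gamma_i^1$ and $\Phi(\mathcal{P}_j^1,\eta)<\mathbb{H}(\mathcal{P}_j^1)+\Gamma_j^1$. Concatenating gives $\Phi(\mathcal{P}_i^1,\mathcal{P}_j^1)\le\max\{\Phi(\mathcal{P}_i^1,\eta),\Phi(\mathcal{P}_j^1,\eta)\}$. Since $\mathcal{P}_j^1\subseteq\breve{\mathcal{P}}_i^1$, the left side is $\ge\mathbb{H}(\mathcal{P}_i^1)+\Gamma_i^1$, forcing $\Phi(\mathcal{P}_j^1,\eta)\ge\mathbb{H}(\mathcal{P}_i^1)+\Gamma_i^1$; symmetrically $\Phi(\mathcal{P}_i^1,\eta)\ge\mathbb{H}(\mathcal{P}_j^1)+\Gamma_j^1$. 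Feeding these back into the two defining inequalities yields $\mathbb{H}(\mathcal{P}_j^1)+\Gamma_j^1>\mathbb{H}(\mathcal{P}_i^1)+\Gamma_i^1$ and $\mathbb{H}(\mathcal{P}_i^1)+\Gamma_i^1>\mathbb{H}(\mathcal{P}_j^1)+\Gamma_j^1$, a contradiction. Finally, $\mathcal{V}_i^1\subseteq\overline{\Omega}$ holds because any $\eta\in\mathcal{V}_i^1$ satisfies $\Phi(\mathcal{S},\eta)\le\Phi(\mathcal{P}_i^1,\eta)<\mathbb{H}(\mathcal{P}_i^1)+\Gamma_i^1\le\overline{\Phi}$—here using that $\mathcal{P}_i^1$ meets $\mathcal{S}$ or, if not, that $\mathbb{H}(\mathcal{P}_i^1)+\Gamma_i^1\le\overline{\Phi}$ anyway since otherwise $\mathcal{P}_i^1$ could not be reached below level $\overline{\Phi}$ contradicting $\mathcal{P}_i^1\subseteq\overline{\Omega}$; I expect the cleanest route is to observe $\Phi(\mathcal{S},\mathcal{P}_i^1)+\Gamma_i^1\le\overline{\Phi}$ directly from the definition of $\overline{\Phi}$ and $\Gamma_i^1$. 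I expect the main obstacle to be the bottom claim—specifically, pinning down rigorously that a minimal-energy configuration inside the cycle $\mathcal{V}_i^1$ lies in an honest stable plateau of $\overline{\Omega}$ (not merely a connected level set), which requires carefully combining the strict boundary inequality from the cycle property with Definition \ref{def:stab plat}; the cycle and disjointness parts are routine applications of \eqref{eq:Phi max ineq}.
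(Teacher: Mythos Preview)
Your argument is correct and matches the paper's strategy: the cycle property, depth, and disjointness are proved exactly as you outline, via \eqref{eq:Phi max ineq} and the definition \eqref{eq:Gammai1 def}. The one difference is ordering: the paper proves disjointness \emph{before} the bottom claim, which dissolves the obstacle you flag. Once the $\mathcal{V}_j^1$ are pairwise disjoint, $\mathcal{P}_j^1\subseteq\mathcal{V}_j^1$ gives $\mathcal{V}_i^1\cap\mathcal{P}_j^1=\emptyset$ for every $j\ne i$, and then Lemma~\ref{lem:cyc bot P} (which already says $\mathcal{F}(\mathcal{V}_i^1)$ is a union of stable plateaux) forces $\mathcal{F}(\mathcal{V}_i^1)=\mathcal{P}_i^1$ in one line---no level-set analysis needed.

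One small imprecision in your bottom step: you only rule out $\zeta\in\mathcal{V}_i^1\setminus\mathcal{P}_i^1$ with energy \emph{equal} to $\mathbb{H}(\mathcal{P}_i^1)$, but a priori the minimum of $\mathbb{H}$ on $\mathcal{V}_i^1$ could be strictly lower. The fix is to apply your plateau argument (or just cite Lemma~\ref{lem:cyc bot P}) to the actual bottom $\mathcal{F}(\mathcal{V}_i^1)$, whatever its energy; your contradiction via $\Phi(\mathcal{P}_i^1,\breve{\mathcal{P}}_i^1)\le\Phi(\mathcal{P}_i^1,\zeta)<\mathbb{H}(\mathcal{P}_i^1)+\Gamma_i^1$ then works verbatim.
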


\begin{proof}
By \eqref{eq:Omega-bar def} and \eqref{eq:Gammai1 def} it is clear
that $\mathbb{H}(\mathcal{P}_{i}^{1})+\Gamma_{i}^{1}\le\overline{\Phi}$,
thus $\mathcal{V}_{i}^{1}$ is a subset of $\overline{\Omega}$.\footnote{The same reasoning applies to all other collections in the remainder
as well; thus, we regard them to be subsets of $\overline{\Omega}$
without further explanation.} To prove the disjointness, suppose the contrary that there exists
$\eta\in\mathcal{V}_{i}^{1}\cap\mathcal{V}_{j}^{1}$ for $i\ne j$.
By \eqref{eq:Phi max ineq},
\[
\Phi(\mathcal{P}_{i}^{1},\,\mathcal{P}_{j}^{1})\le\max\{\Phi(\mathcal{P}_{i}^{1},\,\eta),\,\Phi(\mathcal{P}_{j}^{1},\,\eta)\}<\max\{\mathbb{H}(\mathcal{P}_{i}^{1})+\Gamma_{i}^{1},\,\mathbb{H}(\mathcal{P}_{j}^{1})+\Gamma_{j}^{1}\}.
\]
On the other hand, by \eqref{eq:Gammai1 def}, $\mathbb{H}(\mathcal{P}_{i}^{1})+\Gamma_{i}^{1}=\Phi(\mathcal{P}_{i}^{1},\,\breve{\mathcal{P}}_{i}^{1})\le\Phi(\mathcal{P}_{i}^{1},\,\mathcal{P}_{j}^{1})$
and similarly $\mathbb{H}(\mathcal{P}_{j}^{1})+\Gamma_{j}^{1}\le\Phi(\mathcal{P}_{j}^{1},\,\mathcal{P}_{i}^{1})$,
which contradict the displayed inequality. Thus, the collections $\mathcal{V}_{i}^{1}$
for $i\in[1,\,\nu_{0}]$ are disjoint.

By \eqref{eq:Vi1 def} and the fact that $\mathcal{P}_{i}^{1}$ is
connected, it follows immediately that $\mathcal{V}_{i}^{1}$ is also
connected. Moreover, by the definition of $\mathcal{V}_{i}^{1}$,
it is clear that $\mathbb{H}(\eta)<\mathbb{H}(\mathcal{P}_{i}^{1})+\Gamma_{i}^{1}$
for all $\eta\in\mathcal{V}_{i}^{1}$ and $\mathbb{H}(\zeta)\ge\mathbb{H}(\mathcal{P}_{i}^{1})+\Gamma_{i}^{1}$
for all $\zeta\in\partial\mathcal{V}_{i}^{1}$. These facts prove
that $\mathcal{V}_{i}^{1}$ is a cycle.

Since $\mathcal{F}(\mathcal{V}_{i}^{1})$ is a union of stable plateaux
by Lemma \ref{lem:cyc bot P} and $\mathcal{V}_{i}^{1}\cap\mathcal{P}_{j}^{1}=\emptyset$
for all $j\ne i$ by the disjointness, we obtain that $\mathcal{F}(\mathcal{V}_{i}^{1})=\mathcal{P}_{i}^{1}$.
Finally, there exists $\xi\in\partial\mathcal{V}_{i}^{1}$ such that
$\mathbb{H}(\xi)=\mathbb{H}(\mathcal{P}_{i}^{1})+\Gamma_{i}^{1}$
due to the existence of a path $\mathcal{P}_{i}^{1}\to\breve{\mathcal{P}}_{i}^{1}$
of height $\mathbb{H}(\mathcal{P}_{i}^{1})+\Gamma_{i}^{1}$ guaranteed
by \eqref{eq:Gammai1 def}. Collecting these observations, we calculate
as
\[
\Gamma^{\mathcal{V}_{i}^{1}}=\min_{\partial\mathcal{V}_{i}^{1}}\mathbb{H}-\min_{\mathcal{V}_{i}^{1}}\mathbb{H}=(\mathbb{H}(\mathcal{P}_{i}^{1})+\Gamma_{i}^{1})-\mathbb{H}(\mathcal{P}_{i}^{1})=\Gamma_{i}^{1}.
\]
This completes the proof of Lemma \ref{lem:Vi1 cycle}.
\end{proof}
Collect
\begin{equation}
\mathscr{C}^{1}:=\{\mathcal{V}_{i}^{1}:\,i\in[1,\,\nu_{0}]\}.\label{eq:C1 def}
\end{equation}
We apply the general construction given in Definition \ref{def:gen const}
to $(\mathscr{C}^{1},\,\Gamma^{\star,1})$. By \eqref{eq:PC def}
and Lemma \ref{lem:Vi1 cycle}, it readily holds that $\mathscr{P}^{\mathscr{C}^{1}}=\mathscr{P}^{1}$.
\begin{notation}
\label{nota:abbr 1}We abbreviate $\mathscr{C}^{\star,1}:=(\mathscr{C}^{1})^{\star}$,
$\mathscr{C}^{\sharp,1}:=(\mathscr{C}^{1})^{\sharp}$, $\mathscr{P}^{\star,1}:=\mathscr{P}^{(\mathscr{C}^{1})^{\star}}$,
$\mathscr{P}^{\sharp,1}:=\mathscr{P}^{(\mathscr{C}^{1})^{\sharp}}$,
$\Delta^{1}:=\Delta^{\mathscr{C}^{1}}$, $\Omega^{1}:=\Omega^{\mathscr{C}^{1}}$,
$\mathfrak{X}^{1}(t):=\mathfrak{X}^{\mathscr{C}^{1}}(t)$, $\mathfrak{R}^{1}(\cdot,\,\cdot):=\mathfrak{R}^{\mathscr{C}^{1}}(\cdot,\,\cdot)$,
$\mathfrak{X}^{\star,1}(t):=\mathfrak{X}^{(\mathscr{C}^{1})^{\star}}(t)$
and $\mathfrak{R}^{\star,1}(\cdot,\,\cdot):=\mathfrak{R}^{(\mathscr{C}^{1})^{\star}}(\cdot,\,\cdot)$.
\end{notation}

Note that by \eqref{eq:Gamma-star1 def} and Lemma \ref{lem:Vi1 cycle},
$\Gamma^{\mathcal{V}_{i}^{1}}\ge\Gamma^{\star,1}$ for all $i\in[1,\,\nu_{0}]$,
thus it holds that
\begin{equation}
\mathscr{C}^{\star,1}=\mathscr{C}^{1}\quad\text{and}\quad\mathscr{C}^{\sharp,1}=\emptyset,\label{eq:C-star1 C-sharp1 prop}
\end{equation}
and accordingly,
\begin{equation}
\mathscr{P}^{\star,1}=\mathscr{P}^{1}\quad\text{and}\quad\mathscr{P}^{\sharp,1}=\emptyset.\label{eq:P-star1 P-sharp1 prop}
\end{equation}
Now, $\{\mathfrak{X}^{\star,1}(t)\}_{t\ge0}$ decomposes $\mathscr{P}^{\star,1}$
into
\begin{equation}
\mathscr{P}^{\star,1}=\mathscr{P}_{1}^{\star,1}\cup\cdots\cup\mathscr{P}_{\nu_{1}}^{\star,1}\cup\mathscr{P}_{{\rm tr}}^{\star,1},\label{eq:P-star1 dec}
\end{equation}
where $\mathscr{P}_{1}^{\star,1},\,\dots,\,\mathscr{P}_{\nu_{1}}^{\star,1}$
are irreducible components and $\mathscr{P}_{{\rm tr}}^{\star,1}$
is the collection of transient elements. Accordingly, decompose
\begin{equation}
\mathscr{C}^{\star,1}=\mathscr{C}_{1}^{\star,1}\cup\cdots\cup\mathscr{C}_{\nu_{1}}^{\star,1}\cup\mathscr{C}_{{\rm tr}}^{\star,1},\label{eq:C-star1 dec}
\end{equation}
where $\mathscr{C}_{m}^{\star,1}:=\{\mathcal{V}_{i}^{1}\in\mathscr{C}^{\star,1}:\,\mathcal{P}_{i}^{1}\in\mathscr{P}_{m}^{\star,1}\}$
for $m\in\{1,\,\dots,\,\nu_{1},\,{\rm tr}\}$.

\subsubsection*{From level $h-1$ to level $h$}

For an integer $h\ge2$, suppose that we are given a collection $\mathscr{C}^{h-1}$
of disjoint cycles in $\overline{\Omega}$ and a collection $\mathscr{P}^{h-1}=\{\mathcal{F}(\mathcal{C}):\,\mathcal{C}\in\mathscr{C}^{h-1}\}$,
along with decompositions
\begin{equation}
\mathscr{C}^{h-1}=\bigcup_{m\in[1,\,\nu_{h-1}]}\mathscr{C}_{m}^{\star,h-1}\cup\mathscr{C}_{{\rm tr}}^{\star,h-1}\cup\mathscr{C}^{\sharp,h-1}\label{eq:Ch-1 Ph-1 ind hyp}
\end{equation}
and
\[
\mathscr{P}^{h-1}=\bigcup_{m\in[1,\,\nu_{h-1}]}\mathscr{P}_{m}^{\star,h-1}\cup\mathscr{P}_{{\rm tr}}^{\star,h-1}\cup\mathscr{P}^{\sharp,h-1},
\]
where $\mathscr{P}_{m}^{\star,h-1}=\{\mathcal{F}(\mathcal{C}):\,\mathcal{C}\in\mathscr{C}_{m}^{\star,h-1}\}$
for $m\in\{1,\,\dots,\,\nu_{h-1},\,{\rm tr}\}$ and $\mathscr{P}^{\sharp,h-1}=\{\mathcal{F}(\mathcal{C}):\,\mathcal{C}\in\mathscr{C}^{\sharp,h-1}\}$.
Note that Lemma \ref{lem:Vi1 cycle}, \eqref{eq:C-star1 C-sharp1 prop},
\eqref{eq:P-star1 P-sharp1 prop}, \eqref{eq:P-star1 dec} and \eqref{eq:C-star1 dec}
guarantee this assumption for $h=2$. Further suppose that $\nu_{h-1}\ge2$.
Then, for each $i\in[1,\,\nu_{h-1}]$ define
\begin{equation}
\mathcal{P}_{i}^{h}:=\bigcup_{\mathcal{P}\in\mathscr{P}_{i}^{\star,h-1}}\mathcal{P}\quad\text{and}\quad\mathscr{P}^{\star,h}:=\{\mathcal{P}_{i}^{h}:\,i\in[1,\,\nu_{h-1}]\}.\label{eq:P-starh def}
\end{equation}

\begin{lem}
\label{lem:Pih energy}For each $i\in[1,\,\nu_{h-1}]$, it holds that
$\mathbb{H}(\eta)=\mathbb{H}(\xi)$ for all $\eta,\,\xi\in\mathcal{P}_{i}^{h}$.
\end{lem}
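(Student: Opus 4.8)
The plan is to show that $\mathcal{P}_i^h$, which by \eqref{eq:P-starh def} is the union of the stable plateaux (more precisely, unions of stable plateaux of the form $\mathcal{F}(\mathcal{C})$) lying in the irreducible component $\mathscr{P}_i^{\star,h-1}$ of the trace chain $\{\mathfrak{X}^{\star,h-1}(t)\}_{t\ge0}$, consists entirely of configurations of a single common energy. The natural strategy is an induction on $h$, with the base case $h=2$ and the inductive step both reducing to a single key observation about how the trace chain moves. Namely, I would first record that each $\mathcal{F}(\mathcal{C})$ with $\mathcal{C}\in\mathscr{C}^{h-1}$ already has a well-defined common energy $\mathbb{H}(\mathcal{F}(\mathcal{C}))$ — for $h=2$ this is because $\mathcal{F}(\mathcal{V}_i^1)=\mathcal{P}_i^1$ is a stable plateau (Lemma \ref{lem:Vi1 cycle}), and for $h\ge3$ this is exactly the statement $\mathbb{H}(\eta)=\mathbb{H}(\xi)$ on $\mathcal{P}_i^{h-1}=\mathcal{F}(\mathcal{C})$ supplied by the inductive hypothesis applied one level down. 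So the problem reduces to proving that any two cycles $\mathcal{C},\mathcal{C}'\in\mathscr{C}_i^{\star,h-1}$ lying in the \emph{same} irreducible component satisfy $\mathbb{H}(\mathcal{F}(\mathcal{C}))=\mathbb{H}(\mathcal{F}(\mathcal{C}'))$.

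For this, the central point is that a single jump of $\{\mathfrak{X}^{\star,h-1}(t)\}_{t\ge0}$ between two elements $\mathcal{F}(\mathcal{C})$ and $\mathcal{F}(\mathcal{C}')$ of $\mathscr{P}^{\star,h-1}$ preserves the bottom energy. Unwinding the definition \eqref{eq:R-Cstar def} of the trace rate, a positive rate $\mathfrak{R}^{\star,h-1}(\mathcal{F}(\mathcal{C}),\mathcal{F}(\mathcal{C}'))>0$ requires that the induced chain $\{\mathfrak{X}^{\mathscr{C}^{h-1}}(t)\}_{t\ge0}$ leave $\mathcal{F}(\mathcal{C})$ via a rate of the third type in \eqref{eq:RC def}, i.e. that $\Gamma^{\mathcal{C}}\le\Gamma^{\star,h-1}$ — so in fact $\Gamma^{\mathcal{C}}=\Gamma^{\star,h-1}$ since $\mathcal{C}\in\mathscr{C}^{\star,h-1}$ — and then travel through configurations in $\Delta^{\mathscr{C}^{h-1}}$ of energy at most $\mathbb{H}(\mathcal{F}(\mathcal{C}))+\Gamma^{\star,h-1}$ (the chain only jumps to equal or lower energy configurations, or into a cycle whose boundary it currently sits on) before being absorbed at $\mathcal{F}(\mathcal{C}')$. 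The energy at the moment of absorption is $\mathbb{H}(\mathcal{F}(\mathcal{C}'))+\Gamma^{\star,h-1}$, because the chain enters $\mathcal{F}(\mathcal{C}')$ from $\partial^\star\mathcal{C}'$, i.e. from the minimal-energy part of $\partial\mathcal{C}'$, which sits at height $\min_{\partial\mathcal{C}'}\mathbb{H}=\mathbb{H}(\mathcal{F}(\mathcal{C}'))+\Gamma^{\mathcal{C}'}$, and again $\mathcal{C}'\in\mathscr{C}^{\star,h-1}$ forces $\Gamma^{\mathcal{C}'}=\Gamma^{\star,h-1}$. Since the intermediate path never exceeds $\mathbb{H}(\mathcal{F}(\mathcal{C}))+\Gamma^{\star,h-1}$, we get $\mathbb{H}(\mathcal{F}(\mathcal{C}'))+\Gamma^{\star,h-1}\le\mathbb{H}(\mathcal{F}(\mathcal{C}))+\Gamma^{\star,h-1}$; running the argument in the reverse direction (possible within an irreducible component) gives the opposite inequality, hence $\mathbb{H}(\mathcal{F}(\mathcal{C}))=\mathbb{H}(\mathcal{F}(\mathcal{C}'))$. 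Iterating along a chain of one-step transitions connecting any two cycles in $\mathscr{C}_i^{\star,h-1}$ — which exists by irreducibility of $\mathscr{P}_i^{\star,h-1}$ — yields that all of $\mathscr{P}_i^{\star,h-1}$ shares one bottom energy, and therefore $\mathcal{P}_i^h=\bigcup_{\mathcal{P}\in\mathscr{P}_i^{\star,h-1}}\mathcal{P}$ has constant energy $\mathbb{H}(\eta)=\mathbb{H}(\xi)$.

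I expect the main obstacle to be the careful bookkeeping of energies along the trajectory of the induced chain $\{\mathfrak{X}^{\mathscr{C}^{h-1}}(t)\}_{t\ge0}$: one must verify that between leaving $\mathcal{F}(\mathcal{C})$ (at height $\mathbb{H}(\mathcal{F}(\mathcal{C}))+\Gamma^{\star,h-1}$, by the third clause of \eqref{eq:RC def}) and being absorbed at some $\mathcal{F}(\mathcal{C}')\in\mathscr{P}^{\star,h-1}$, the chain cannot "sneak up" to a strictly higher energy and then come back down — this uses that the only energy-increasing moves available to $\mathfrak{X}^{\mathscr{C}^{h-1}}$ are the second-type entries into cycles $\mathcal{C}''\in\mathscr{C}^{h-1}$ from their boundary, and such a move sends the chain into an element $\mathcal{F}(\mathcal{C}'')$ of lower bottom energy (since a cycle's boundary lies strictly above its bottom). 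One also needs the elementary fact that any two cycles with nonempty intersection are nested (quoted in the excerpt from \cite[Proposition 6.8]{OV}), to rule out pathologies when the path passes near cycle boundaries. Once these structural facts are in place the argument is essentially a graph-theoretic traversal, so I do not anticipate any deep difficulty beyond organizing the case analysis in \eqref{eq:RC def} cleanly.
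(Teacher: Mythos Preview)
Your approach is correct and is in substance the same as the paper's: both reduce the claim to (i) each $\mathcal{P}^{h-1}_j$ having constant energy (the inductive hypothesis, Lemma~\ref{lem:gen ind hyp}-(1)) and (ii) all elements of a single irreducible component $\mathscr{P}_i^{\star,h-1}$ having equal bottom energy. The paper simply cites the inductive hypothesis Theorem~\ref{thm:P-star class ind hyp}-(3) for (ii), whereas you reprove its content inline---namely the monotonicity statement ``$\mathfrak{R}^{\star,h-1}(\mathcal{F}(\mathcal{C}),\mathcal{F}(\mathcal{C}'))>0 \Rightarrow \mathbb{H}(\mathcal{F}(\mathcal{C}'))\le\mathbb{H}(\mathcal{F}(\mathcal{C}))$'' (this is Lemma~\ref{lem:4.1-1}/\ref{lem:5.2-1} in the paper) together with the obvious closed-loop argument from irreducibility.

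Two small corrections to your bookkeeping, neither of which breaks the proof: first, $\mathcal{C}'\in\mathscr{C}^{\star,h-1}$ only forces $\Gamma^{\mathcal{C}'}\ge\Gamma^{\star,h-1}$, not equality---but $\ge$ is exactly the direction you need for the inequality $\mathbb{H}(\mathcal{F}(\mathcal{C}'))+\Gamma^{\star,h-1}\le\mathbb{H}(\text{entry point})$. Second, the induced chain enters $\mathcal{F}(\mathcal{C}')$ from any point of $\partial\mathcal{C}'$ (second clause of \eqref{eq:RC def}), not necessarily $\partial^{\star}\mathcal{C}'$; again this only helps, since the entry energy is then $\ge\min_{\partial\mathcal{C}'}\mathbb{H}$. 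Also, the energy-\emph{increasing} step of $\mathfrak{X}^{\mathscr{C}^{h-1}}$ is the \emph{third}-type exit $\mathcal{F}(\mathcal{C}'')\to\partial^{\star}\mathcal{C}''$, not the second-type entry; the key observation is that an entry--exit pair through a cycle $\mathcal{C}''\in\mathscr{C}^{\sharp,h-1}$ takes the path from some $\eta_0\in\partial\mathcal{C}''$ to some $\eta\in\partial^{\star}\mathcal{C}''$ with $\mathbb{H}(\eta)=\min_{\partial\mathcal{C}''}\mathbb{H}\le\mathbb{H}(\eta_0)$, so the net effect along the $\Delta^{h-1}$-trajectory is still non-increasing (compare the proof of Lemma~\ref{lem:5.2-1}).
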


As in \eqref{eq:Gammai1 def}, define the \emph{$h$-th depth} as
\begin{equation}
\Gamma_{i}^{h}:=\Phi(\mathcal{P}_{i}^{h},\,\breve{\mathcal{P}}_{i}^{h})-\mathbb{H}(\mathcal{P}_{i}^{h})>0,\quad\text{where}\quad\breve{\mathcal{P}}_{i}^{h}:=\bigcup_{j\in[1,\,\nu_{h-1}]:\,j\ne i}\mathcal{P}_{j}^{h}.\label{eq:Gammaih def}
\end{equation}
Then, write
\begin{equation}
\Gamma^{\star,h}:=\min_{i\in[1,\,\nu_{h-1}]}\Gamma_{i}^{h}>0.\label{eq:Gamma-starh def}
\end{equation}

\begin{lem}
\label{lem:Gamma-star inc}It holds that $\Gamma^{\star,h}>\Gamma^{\star,h-1}$.
\end{lem}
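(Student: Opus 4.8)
The plan is to show that for every $i\in[1,\,\nu_{h-1}]$, the $h$-th depth $\Gamma_i^h$ is strictly larger than $\Gamma^{\star,h-1}$; taking the minimum over $i$ then gives the claim. Fix such an $i$. By \eqref{eq:Gammaih def}, $\mathbb{H}(\mathcal{P}_i^h)+\Gamma_i^h=\Phi(\mathcal{P}_i^h,\,\breve{\mathcal{P}}_i^h)$, so it suffices to produce a uniform lower bound on this communication height. The key structural input is that $\mathcal{P}_i^h$ is, by \eqref{eq:P-starh def}, the union of the bottoms $\mathcal{F}(\mathcal{C})$ over the cycles $\mathcal{C}$ in the irreducible component $\mathscr{C}_i^{\star,h-1}$, and each such $\mathcal{C}$ has depth $\Gamma^{\mathcal{C}}\ge\Gamma^{\star,h-1}$ by the very definition of $\mathscr{C}^{\star,h-1}$ in \eqref{eq:C-star C-sharp def}. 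Moreover, by the induction hypothesis preceding the lemma (and Lemma \ref{lem:Pih energy}) all these cycles sit at the common energy level $\mathbb{H}(\mathcal{P}_i^h)$ at their bottoms.

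The core of the argument is the following observation. Consider any path $\omega:\,\mathcal{P}_i^h\to\breve{\mathcal{P}}_i^h$ realizing the communication height, say starting in $\mathcal{F}(\mathcal{C})$ for some $\mathcal{C}\in\mathscr{C}_i^{\star,h-1}$. To leave $\mathcal{C}$ the path must climb to $\partial\mathcal{C}$, i.e., reach energy at least $\mathbb{H}(\mathcal{P}_i^h)+\Gamma^{\mathcal{C}}\ge\mathbb{H}(\mathcal{P}_i^h)+\Gamma^{\star,h-1}$, so $\Phi(\mathcal{P}_i^h,\,\breve{\mathcal{P}}_i^h)\ge\mathbb{H}(\mathcal{P}_i^h)+\Gamma^{\star,h-1}$, hence $\Gamma_i^h\ge\Gamma^{\star,h-1}$. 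It remains to upgrade this to a \emph{strict} inequality. Suppose $\Gamma_i^h=\Gamma^{\star,h-1}$. Then there is a path $\omega$ from $\mathcal{F}(\mathcal{C})$ to $\mathcal{F}(\mathcal{C}')$ with $\mathcal{C}'\notin\mathscr{C}_i^{\star,h-1}$, of height exactly $\mathbb{H}(\mathcal{P}_i^h)+\Gamma^{\star,h-1}$. Decompose $\omega$ into maximal excursions out of the cycles of $\mathscr{C}^{h-1}$; each time the path exits a cycle $\mathcal{D}\in\mathscr{C}^{h-1}$ through $\partial\mathcal{D}$ at this height, we must have $\Gamma^{\mathcal{D}}\le\Gamma^{\star,h-1}$, and by construction of the induced chain $\mathfrak{R}^{\mathscr{C}^{h-1}}$ in \eqref{eq:RC def} (the third line, valid precisely when $\Gamma^{\mathcal{D}}\le\Gamma^{\star}$) every such crossing corresponds to an allowed transition of $\{\mathfrak{X}^{\star,h-1}(t)\}$. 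Chaining these, the endpoints $\mathcal{F}(\mathcal{C})$ and $\mathcal{F}(\mathcal{C}')$ lie in the same irreducible component of $\{\mathfrak{X}^{\star,h-1}(t)\}$ — contradicting $\mathcal{C}\in\mathscr{C}_i^{\star,h-1}$ but $\mathcal{C}'\notin\mathscr{C}_i^{\star,h-1}$. (Here one also uses that $\Gamma_i^{h-1}$ for the constituents equals $\Gamma^{\star,h-1}$ is impossible to have been achieved \emph{within} a component without merging it, which is exactly how the irreducible components were formed at level $h-1$.) Therefore $\Gamma_i^h>\Gamma^{\star,h-1}$ for every $i$, and minimizing over $i$ yields $\Gamma^{\star,h}>\Gamma^{\star,h-1}$.

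The main obstacle I anticipate is making the excursion-decomposition argument fully rigorous: one has to track a path in the original graph $\overline{\Omega}$ and faithfully translate its passages through the various cycles of $\mathscr{C}^{h-1}$ into jumps of the contracted/trace chain $\{\mathfrak{X}^{\star,h-1}(t)\}$, being careful about (i) cycles in $\mathscr{C}^{\sharp,h-1}$ and transient elements, which the path may also traverse, and (ii) the possibility that the path re-enters and re-exits the same cycle several times. The cleanest route is probably to argue at the level of communication heights directly: show that if $\Phi(\mathcal{F}(\mathcal{C}),\,\mathcal{F}(\mathcal{C}'))\le\mathbb{H}(\mathcal{P}_i^h)+\Gamma^{\star,h-1}$ in $\overline{\Omega}$, then $\mathcal{C}$ and $\mathcal{C}'$ communicate at level $\Gamma^{\star,h-1}$ in the sense encoded by $\{\mathfrak{X}^{\star,h-1}(t)\}$, which is essentially the defining property of how the level-$(h-1)$ decomposition into irreducible components was obtained, and then invoke that $\mathcal{C},\mathcal{C}'$ lying in different components forces strict inequality.
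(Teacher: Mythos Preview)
Your overall strategy --- show $\Gamma_i^h>\Gamma^{\star,h-1}$ for every $i$ and then minimize --- matches the paper's, and your weak inequality $\Gamma_i^h\ge\Gamma^{\star,h-1}$ via the exit-barrier of the starting cycle is exactly the paper's first step. But there is a gap in your strictness argument, and it is precisely where the paper proceeds differently.

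The paper does \emph{not} attempt a direct excursion decomposition of a minimizing path through the cycles of $\mathscr{C}^{h-1}$. Instead it makes an explicit case split on whether $|\mathscr{P}_i^{\star,h-1}|=1$ or $|\mathscr{P}_i^{\star,h-1}|\ge2$, packaged as the auxiliary Lemma~\ref{lem:Gamma inc}. In the singleton case $\mathcal{P}_i^h=\mathcal{P}_j^{h-1}$, the inclusion $\breve{\mathcal{P}}_i^h\subseteq\breve{\mathcal{P}}_j^{h-1}$ gives $\Gamma_i^h\ge\Gamma_j^{h-1}$, and then the induction hypothesis Theorem~\ref{thm:P-star class ind hyp}-(1) says absorbing states satisfy $\Gamma_j^{h-1}>\Gamma^{\star,h-1}$; strictness is immediate, and no path argument is needed at all. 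Your proposal does not isolate this case, and your excursion argument does not obviously cover it: the ``chaining crossings into jumps of $\{\mathfrak{X}^{\star,h-1}(t)\}$ to force the same irreducible component'' contradiction is vacuous when the starting component is a singleton absorbing state.

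In the multi-element case, the paper's strictness argument is again a one-line application of the induction hypothesis: if equality held one would find $\mathcal{P}_j^{h-1}\in\mathscr{P}_i^{\star,h-1}$ and $\mathcal{P}\in\mathscr{P}_{i'}^{\star,h-1}$ with $i'\ne i$ and $\Phi(\mathcal{P}_j^{h-1},\mathcal{P})-\mathbb{H}(\mathcal{P}_j^{h-1})=\Gamma^{\star,h-1}$, and Theorem~\ref{thm:P-star class ind hyp}-(3) says that \emph{any} $\mathcal{P}$ with this communication height already lies in $\mathscr{P}_i^{\star,h-1}$ --- contradiction. This is exactly the ``cleanest route'' you describe in your final paragraph, so you have the right endgame; but note that the target $\mathcal{P}$ here is an element of $\mathscr{P}^{\star,h-1}$, not of the full $\mathscr{C}^{h-1}$, so you never have to track the path through transient or $\mathscr{C}^{\sharp,h-1}$ cycles as you worried. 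The induction hypothesis Theorem~\ref{thm:P-star class ind hyp}-(3) has already absorbed that bookkeeping. In short: drop the excursion decomposition, split on $|\mathscr{P}_i^{\star,h-1}|$, and invoke parts (1) and (3) of the level-$(h{-}1)$ classification directly.
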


Next, define
\begin{equation}
\mathcal{V}_{i}^{h}:=\{\eta\in\Omega:\,\Phi(\mathcal{P}_{i}^{h},\,\eta)-\mathbb{H}(\mathcal{P}_{i}^{h})<\Gamma_{i}^{h}\}.\label{eq:Vih def}
\end{equation}

\begin{lem}
\label{lem:Vih cycle}Collections $\mathcal{V}_{i}^{h}$ for $i\in[1,\,\nu_{h-1}]$
are disjoint cycles in $\overline{\Omega}$ with bottom $\mathcal{P}_{i}^{h}$
and depth $\Gamma_{i}^{h}$.
\end{lem}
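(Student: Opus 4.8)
The plan is to mirror the proof of Lemma~\ref{lem:Vi1 cycle}, using Lemma~\ref{lem:Pih energy} to supply the common energy value $\mathbb{H}(\mathcal{P}_i^h)$ wherever the level-$1$ argument used the constant energy of a single stable plateau, and invoking Lemma~\ref{lem:Gamma-star inc} at the one point where additional input is required. \emph{Disjointness} transcribes the level-$1$ computation: if $\eta\in\mathcal{V}_i^h\cap\mathcal{V}_j^h$ with $i\neq j$, then \eqref{eq:Phi max ineq} and \eqref{eq:Vih def} give $\Phi(\mathcal{P}_i^h,\,\mathcal{P}_j^h)<\max\{\mathbb{H}(\mathcal{P}_i^h)+\Gamma_i^h,\,\mathbb{H}(\mathcal{P}_j^h)+\Gamma_j^h\}$, whereas \eqref{eq:Gammaih def} together with $\mathcal{P}_j^h\subseteq\breve{\mathcal{P}}_i^h$ and $\mathcal{P}_i^h\subseteq\breve{\mathcal{P}}_j^h$ makes each term on the right at most $\Phi(\mathcal{P}_i^h,\,\mathcal{P}_j^h)$ --- a contradiction. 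The \emph{cycle property} is again immediate: from $\mathbb{H}(\eta)\le\Phi(\mathcal{P}_i^h,\,\eta)$ and \eqref{eq:Vih def} one gets $\max_{\mathcal{V}_i^h}\mathbb{H}<\mathbb{H}(\mathcal{P}_i^h)+\Gamma_i^h$, and for $\zeta\in\partial\mathcal{V}_i^h$ adjacent to some $\eta\in\mathcal{V}_i^h$ the inequality \eqref{eq:Phi max ineq} forces $\mathbb{H}(\zeta)\ge\Phi(\mathcal{P}_i^h,\,\zeta)\ge\mathbb{H}(\mathcal{P}_i^h)+\Gamma_i^h$, so \eqref{eq:cyc def} holds.

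For the \emph{depth and bottom}, a height-realizing path $\mathcal{P}_i^h\to\breve{\mathcal{P}}_i^h$ provided by \eqref{eq:Gammaih def} leaves $\mathcal{V}_i^h$ (its endpoint is in $\breve{\mathcal{P}}_i^h$, which is disjoint from $\mathcal{V}_i^h$), and reading off its first exit point gives $\min_{\partial\mathcal{V}_i^h}\mathbb{H}=\mathbb{H}(\mathcal{P}_i^h)+\Gamma_i^h$; since $\mathcal{P}_i^h\subseteq\mathcal{V}_i^h$ is clear from \eqref{eq:Vih def}, the depth equals $\Gamma_i^h$ as soon as we know $\mathcal{F}(\mathcal{V}_i^h)=\mathcal{P}_i^h$. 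By Lemma~\ref{lem:cyc bot P}, $\mathcal{F}(\mathcal{V}_i^h)$ is a union of stable plateaux; recalling that $\mathscr{P}^{h-1}$ is a collection of disjoint unions of stable plateaux covering all stable plateaux of $\overline{\Omega}$, each stable plateau $\mathcal{P}\subseteq\mathcal{F}(\mathcal{V}_i^h)$ is contained in $\mathcal{F}(\mathcal{C})$ for a unique $\mathcal{C}\in\mathscr{C}^{h-1}$, and it remains to show that $\mathbb{H}(\mathcal{P})\ge\mathbb{H}(\mathcal{P}_i^h)$ always and that equality forces $\mathcal{P}\in\mathscr{P}_i^{\star,h-1}$.

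The genuinely new point is the \emph{connectedness} of $\mathcal{V}_i^h$: unlike a single plateau, $\mathcal{P}_i^h=\bigcup_{\mathcal{P}\in\mathscr{P}_i^{\star,h-1}}\mathcal{P}$ need not be graph-connected. Since every $\eta\in\mathcal{V}_i^h$ is joined to $\mathcal{P}_i^h$ by a path of height $<\mathbb{H}(\mathcal{P}_i^h)+\Gamma_i^h$, hence lying in $\mathcal{V}_i^h$, it suffices to connect any two constituent plateaux of $\mathcal{P}_i^h$ by such a path, and I would obtain this from the irreducibility of $\mathscr{P}_i^{\star,h-1}$ under the trace chain $\mathfrak{X}^{\star,h-1}$ of Definition~\ref{def:gen const} via the estimate that each positive-rate step $\mathcal{F}(\mathcal{C})\to\mathcal{F}(\mathcal{C}')$ of $\mathfrak{R}^{\star,h-1}$ satisfies $\Phi(\mathcal{F}(\mathcal{C}),\,\mathcal{F}(\mathcal{C}'))\le\mathbb{H}(\mathcal{F}(\mathcal{C}))+\Gamma^{\star,h-1}$. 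Such a step is realized, through \eqref{eq:RC def} and \eqref{eq:R-Cstar def}, by a trajectory of the auxiliary chain $\mathfrak{X}^{h-1}$ which leaks out of $\mathcal{C}$ through $\partial^{\star}\mathcal{C}$ at energy $\mathbb{H}(\mathcal{F}(\mathcal{C}))+\Gamma^{\mathcal{C}}$ --- and $\Gamma^{\mathcal{C}}=\Gamma^{\star,h-1}$, since only cycles of depth exactly $\Gamma^{\star,h-1}$ have outgoing rates --- and then moves only through $\Delta^{h-1}$ and the cycles of $\mathscr{C}^{\sharp,h-1}$ until it first meets $\mathscr{P}^{\star,h-1}$ at $\mathcal{F}(\mathcal{C}')$; unfolding each passage through a contracted shallow cycle into a genuine path through that cycle raises the height only up to the energy at which the passage began, so the unfolded path never exceeds $\mathbb{H}(\mathcal{F}(\mathcal{C}))+\Gamma^{\star,h-1}$. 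Concatenating this along an irreducible path in $\mathscr{P}_i^{\star,h-1}$, all of whose cycles have bottom energy $\mathbb{H}(\mathcal{P}_i^h)$ by Lemma~\ref{lem:Pih energy}, yields $\Phi(\mathcal{P},\,\mathcal{P}')\le\mathbb{H}(\mathcal{P}_i^h)+\Gamma^{\star,h-1}<\mathbb{H}(\mathcal{P}_i^h)+\Gamma_i^h$ --- the strict inequality because $\Gamma_i^h\ge\Gamma^{\star,h}>\Gamma^{\star,h-1}$ by \eqref{eq:Gamma-starh def} and Lemma~\ref{lem:Gamma-star inc}. The same estimate, read in the reverse direction, feeds into the bottom identification: an unfolded trajectory out of $\mathcal{F}(\mathcal{C})$ for $\mathcal{C}\in\mathscr{C}^{h-1}$ contained in $\mathcal{V}_i^h$ cannot exceed height $\mathbb{H}(\mathcal{F}(\mathcal{C}))+\Gamma^{\star,h-1}$, whereas reaching any part of $\mathcal{P}_i^h$ demands crossing the boundary of a cycle of $\mathscr{C}_i^{\star,h-1}$ at energy $\ge\mathbb{H}(\mathcal{P}_i^h)+\Gamma^{\star,h-1}$, which (combined with $\mathcal{P}\subseteq\mathcal{P}_m^h\subseteq\mathcal{V}_m^h$ being disjoint from $\mathcal{V}_i^h$ unless $m=i$ in the irreducible-class case) forces $\mathbb{H}(\mathcal{P})\ge\mathbb{H}(\mathcal{P}_i^h)$ with equality only for $\mathcal{P}\in\mathscr{P}_i^{\star,h-1}$.

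The step I expect to be the main obstacle is exactly this height-unfolding estimate together with its reverse use. One has to verify with care that the intermediate states of a transition of $\mathfrak{X}^{\star,h-1}$ are confined to $\Delta^{h-1}$ and the cycles of $\mathscr{C}^{\sharp,h-1}$, so that entering and leaving a shallow cycle can never lift the running energy maximum above $\mathbb{H}(\mathcal{F}(\mathcal{C}))+\Gamma^{\star,h-1}$; and, in the reverse argument, one must rule out a stable plateau of energy below $\mathbb{H}(\mathcal{P}_i^h)$, or a transient or shallow cycle whose bottom has energy $\mathbb{H}(\mathcal{P}_i^h)$, hiding inside $\mathcal{V}_i^h$, for which a short auxiliary induction on the level --- transporting the present statement down to the level at which such a cycle was last active --- seems to be the cleanest route. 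Granting these, the remainder is a line-by-line transcription of the proof of Lemma~\ref{lem:Vi1 cycle}.
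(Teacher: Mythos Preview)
Your proposal is correct and follows essentially the same route as the paper. The paper organizes the argument by packaging your ``height-unfolding estimate'' and ``auxiliary induction on the level'' into a list of inductive hypotheses (Lemma~\ref{lem:gen ind hyp} and Theorem~\ref{thm:P-star class ind hyp}) together with an intermediate lemma (Lemma~\ref{lem:Gamma inc}) bounding $\Gamma_m^h$ from below: connectedness then follows directly from Theorem~\ref{thm:P-star class ind hyp}-(3), which already records $\Phi(\mathcal{P}_j^{h-1},\mathcal{P}_{j'}^{h-1})=\mathbb{H}(\mathcal{P}_i^h)+\Gamma^{\star,h-1}$ within $\mathscr{P}_i^{\star,h-1}$, and the bottom identification proceeds by locating a rogue plateau $\mathcal{P}$ in some $\mathcal{C}\in\mathscr{C}_{\rm tr}^{\star,h'}$ via Lemma~\ref{lem:gen ind hyp}-(4) and then using Theorem~\ref{thm:P-star class ind hyp}-(2) to produce a strictly lower plateau inside $\mathcal{V}_i^h$ --- exactly your ``transport down to the level at which such a cycle was last active.'' Your direct trajectory-unfolding of $\mathfrak{X}^{h-1}$ reproves these hypotheses locally rather than citing them, which is slightly more work but the same mathematics.
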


Now, define
\begin{equation}
\mathscr{C}^{h}:=\{\mathcal{V}_{i}^{h}:\,i\in[1,\,\nu_{h-1}]\}\cup\{\mathcal{C}\in\mathscr{C}_{{\rm tr}}^{\star,h-1}\cup\mathscr{C}^{\sharp,h-1}:\,\mathcal{C}\cap\mathcal{V}_{i}^{h}=\emptyset\quad\text{for all}\quad i\in[1,\,\nu_{h-1}]\}.\label{eq:Ch def}
\end{equation}
Then, we apply the construction in Definition \ref{def:gen const}
to $(\mathscr{C}^{h},\,\Gamma^{\star,h})$.
\begin{lem}
\label{lem:Ch prop}It holds that $(\mathscr{C}^{h})^{\star}=\{\mathcal{V}_{i}^{h}:\,i\in[1,\,\nu_{h-1}]\}$
and
\[
(\mathscr{C}^{h})^{\sharp}=\{\mathcal{C}\in\mathscr{C}_{{\rm tr}}^{\star,h-1}\cup\mathscr{C}^{\sharp,h-1}:\,\mathcal{C}\cap\mathcal{V}_{i}^{h}=\emptyset\quad\text{for all}\quad i\in[1,\,\nu_{h-1}]\}.
\]
\end{lem}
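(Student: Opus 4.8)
The plan is to read off both equalities directly from the definition \eqref{eq:C-star C-sharp def}, namely that $(\mathscr{C}^{h})^{\star}$ and $(\mathscr{C}^{h})^{\sharp}$ split the cycles of $\mathscr{C}^{h}$ according to whether their depth is $\ge\Gamma^{\star,h}$ or $<\Gamma^{\star,h}$. By \eqref{eq:Ch def} we have $\mathscr{C}^{h}=\mathscr{A}\cup\mathscr{B}$, where $\mathscr{A}:=\{\mathcal{V}_{i}^{h}:i\in[1,\,\nu_{h-1}]\}$ and $\mathscr{B}:=\{\mathcal{C}\in\mathscr{C}_{{\rm tr}}^{\star,h-1}\cup\mathscr{C}^{\sharp,h-1}:\mathcal{C}\cap\mathcal{V}_{i}^{h}=\emptyset\text{ for all }i\}$, so it suffices to prove the two inclusions $\mathscr{A}\subseteq(\mathscr{C}^{h})^{\star}$ and $\mathscr{B}\subseteq(\mathscr{C}^{h})^{\sharp}$. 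Indeed, since $(\mathscr{C}^{h})^{\star}$ and $(\mathscr{C}^{h})^{\sharp}$ are disjoint and cover $\mathscr{C}^{h}=\mathscr{A}\cup\mathscr{B}$, these inclusions force $\mathscr{A}\cap\mathscr{B}=\emptyset$ and then $(\mathscr{C}^{h})^{\star}=\mathscr{C}^{h}\setminus(\mathscr{C}^{h})^{\sharp}\subseteq\mathscr{A}\subseteq(\mathscr{C}^{h})^{\star}$, hence equality, and symmetrically for $(\mathscr{C}^{h})^{\sharp}$.

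The first inclusion is immediate: by Lemma \ref{lem:Vih cycle} the depth of $\mathcal{V}_{i}^{h}$ equals $\Gamma_{i}^{h}$, which is at least $\min_{j}\Gamma_{j}^{h}=\Gamma^{\star,h}$ by \eqref{eq:Gamma-starh def}, so $\mathcal{V}_{i}^{h}\in(\mathscr{C}^{h})^{\star}$. For the second inclusion I would show that every $\mathcal{C}\in\mathscr{C}_{{\rm tr}}^{\star,h-1}\cup\mathscr{C}^{\sharp,h-1}$ satisfies $\Gamma^{\mathcal{C}}\le\Gamma^{\star,h-1}$; combined with Lemma \ref{lem:Gamma-star inc} this gives $\Gamma^{\mathcal{C}}<\Gamma^{\star,h}$, hence $\mathcal{C}\in(\mathscr{C}^{h})^{\sharp}$. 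If $\mathcal{C}\in\mathscr{C}^{\sharp,h-1}=(\mathscr{C}^{h-1})^{\sharp}$, then $\Gamma^{\mathcal{C}}<\Gamma^{\star,h-1}$ directly from \eqref{eq:C-star C-sharp def} at level $h-1$. If instead $\mathcal{C}\in\mathscr{C}_{{\rm tr}}^{\star,h-1}\subseteq\mathscr{C}^{\star,h-1}=(\mathscr{C}^{h-1})^{\star}$, then $\Gamma^{\mathcal{C}}\ge\Gamma^{\star,h-1}$, and I claim equality must hold: if $\Gamma^{\mathcal{C}}>\Gamma^{\star,h-1}$, then by the remark following \eqref{eq:RC def} (applied to the construction at level $h-1$) the state $\mathcal{F}(\mathcal{C})$ is absorbing — hence recurrent — for $\{\mathfrak{X}^{\mathscr{C}^{h-1}}(t)\}_{t\ge0}$; since $\mathcal{C}\in\mathscr{C}^{\star,h-1}$ we have $\mathcal{F}(\mathcal{C})\in\mathscr{P}^{\star,h-1}$, so $\mathcal{F}(\mathcal{C})$ remains absorbing, hence recurrent, in the trace chain $\{\mathfrak{X}^{(\mathscr{C}^{h-1})^{\star}}(t)\}_{t\ge0}$ on $\mathscr{P}^{\star,h-1}$, contradicting $\mathcal{F}(\mathcal{C})\in\mathscr{P}_{{\rm tr}}^{\star,h-1}$. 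Therefore $\Gamma^{\mathcal{C}}=\Gamma^{\star,h-1}$, as needed.

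This argument is essentially bookkeeping, and I anticipate no serious obstacle; the only point demanding a little care is the last step — that a transient state of the level-$(h-1)$ trace chain cannot arise from a cycle of depth strictly exceeding $\Gamma^{\star,h-1}$ — which rests on the elementary facts that such cycles contract to absorbing states of $\mathfrak{X}^{\mathscr{C}^{h-1}}$ (read off from \eqref{eq:RC def}) and that an absorbing state stays recurrent after passing to a trace. One should also keep straight the inductive decomposition \eqref{eq:Ch-1 Ph-1 ind hyp}, so as to correctly identify $\mathscr{C}^{\star,h-1}$, $\mathscr{C}_{{\rm tr}}^{\star,h-1}$, and $\mathscr{C}^{\sharp,h-1}$ as the outputs of the level-$(h-1)$ construction and to justify the containments $\mathscr{C}_{{\rm tr}}^{\star,h-1}\subseteq\mathscr{C}^{\star,h-1}$ and $\mathscr{P}_{{\rm tr}}^{\star,h-1}\subseteq\mathscr{P}^{\star,h-1}$ used above.
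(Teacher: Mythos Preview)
Your proposal is correct and follows the same overall strategy as the paper: split $\mathscr{C}^{h}$ according to depth and verify $\Gamma^{\mathcal{V}_{i}^{h}}\ge\Gamma^{\star,h}$ for the first family and $\Gamma^{\mathcal{C}}<\Gamma^{\star,h}$ for the second. The only difference is in how the second inequality is obtained for $\mathcal{C}\in\mathscr{C}_{{\rm tr}}^{\star,h-1}\cup\mathscr{C}^{\sharp,h-1}$. The paper invokes the inductive hypothesis Lemma~\ref{lem:gen ind hyp}-(4), which unwinds any such $\mathcal{C}$ back to $\mathscr{C}_{{\rm tr}}^{\star,h'}$ at some earlier level $h'\le h-1$, and then reads off $\Gamma^{\mathcal{C}}=\Gamma^{\star,h'}$ from Lemma~\ref{lem:gen ind hyp}-(2) and Theorem~\ref{thm:P-star class ind hyp}-(2). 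You instead stay at level $h-1$: for $\mathcal{C}\in\mathscr{C}^{\sharp,h-1}$ you get $\Gamma^{\mathcal{C}}<\Gamma^{\star,h-1}$ directly from \eqref{eq:C-star C-sharp def}, and for $\mathcal{C}\in\mathscr{C}_{{\rm tr}}^{\star,h-1}$ you argue from \eqref{eq:RC def} that $\Gamma^{\mathcal{C}}>\Gamma^{\star,h-1}$ would make $\mathcal{F}(\mathcal{C})$ absorbing (hence recurrent) in the trace chain, contradicting transience. Your route is slightly more self-contained and avoids the recursive unwind, at the cost of re-deriving a special case of Theorem~\ref{thm:P-star class ind hyp}-(1)/(2); the paper's route is shorter because those facts are already packaged in the inductive hypotheses.
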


By Lemmas \ref{lem:Vih cycle} and \ref{lem:Ch prop}, we have $\mathscr{P}^{(\mathscr{C}^{h})^{\star}}=\{\mathcal{P}_{i}^{h}:\,i\in[1,\,\nu_{h-1}]\}=\mathscr{P}^{\star,h}$.
\begin{notation}
\label{nota:abbr h}Abbreviate $\mathscr{P}^{h}:=\mathscr{P}^{\mathscr{C}^{h}}$,
$\mathscr{C}^{\star,h}:=(\mathscr{C}^{h})^{\star}$, $\mathscr{C}^{\sharp,h}:=(\mathscr{C}^{h})^{\sharp}$,
$\mathscr{P}^{\sharp,h}:=\mathscr{P}^{(\mathscr{C}^{h})^{\sharp}}$,
$\Delta^{h}:=\Delta^{\mathscr{C}^{h}}$, $\Omega^{h}:=\Omega^{\mathscr{C}^{h}}$,
$\mathfrak{X}^{h}(t):=\mathfrak{X}^{\mathscr{C}^{h}}(t)$, $\mathfrak{R}^{h}(\cdot,\,\cdot):=\mathfrak{R}^{\mathscr{C}^{h}}(\cdot,\,\cdot)$,
$\mathfrak{X}^{\star,h}(t):=\mathfrak{X}^{(\mathscr{C}^{h})^{\star}}(t)$
and $\mathfrak{R}^{\star,h}(\cdot,\,\cdot):=\mathfrak{R}^{(\mathscr{C}^{h})^{\star}}(\cdot,\,\cdot)$.
\end{notation}

The Markov chain $\{\mathfrak{X}^{\star,h}(t)\}_{t\ge0}$ decomposes
$\mathscr{P}^{\star,h}$ into
\begin{equation}
\mathscr{P}^{\star,h}=\mathscr{P}_{1}^{\star,h}\cup\cdots\cup\mathscr{P}_{\nu_{h}}^{\star,h}\cup\mathscr{P}_{{\rm tr}}^{\star,h},\label{eq:P-starh dec}
\end{equation}
with $\nu_{h}$ irreducible components and a transient collection.
Accordingly, we obtain
\begin{equation}
\mathscr{C}^{\star,h}=\mathscr{C}_{1}^{\star,h}\cup\cdots\cup\mathscr{C}_{\nu_{h}}^{\star,h}\cup\mathscr{C}_{{\rm tr}}^{\star,h},\label{eq:C-starh dec}
\end{equation}
where $\mathscr{C}_{m}^{\star,h}:=\{\mathcal{V}_{i}^{h}\in\mathscr{C}^{\star,h}:\,\mathcal{P}_{i}^{h}\in\mathscr{P}_{m}^{\star,h}\}$
for $m\in\{1,\,\dots,\,\nu_{h},\,{\rm tr}\}$.

Finally, we may repeat the same inductive procedure provided that
$\nu_{h}\ge2$. The proof of Lemmas \ref{lem:Pih energy}, \ref{lem:Gamma-star inc},
\ref{lem:Vih cycle} and \ref{lem:Ch prop} are given in Section \ref{sec5.1}.

According to the construction, the number of irreducible components
decreases strictly:
\begin{thm}
\label{thm:nu dec}For all $h\ge1$, it holds that $\nu_{h}<\nu_{h-1}$.
\end{thm}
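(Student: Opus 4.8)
The plan is to show that passing from level $h-1$ to level $h$ strictly decreases the number of irreducible components, by exhibiting at least one genuine merging. Recall that $\nu_{h-1}$ counts the irreducible components $\mathscr{P}_1^{\star,h-1},\dots,\mathscr{P}_{\nu_{h-1}}^{\star,h-1}$ of $\{\mathfrak{X}^{\star,h-1}(t)\}_{t\ge0}$, and that $\mathcal{P}_i^h$ is the union of the stable plateaux in $\mathscr{P}_i^{\star,h-1}$, with $\mathscr{P}^{\star,h}=\{\mathcal{P}_1^h,\dots,\mathcal{P}_{\nu_{h-1}}^h\}$ now playing the role of ``ground states'' at level $h$. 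So it suffices to prove that the limiting chain $\{\mathfrak{X}^{\star,h}(t)\}_{t\ge0}$ on these $\nu_{h-1}$ elements has strictly fewer than $\nu_{h-1}$ irreducible components, i.e. that it is \emph{not} the case that every $\mathcal{P}_i^h$ is its own absorbing component. Equivalently, I must show there is at least one $i$ and at least one $j\ne i$ for which $\mathfrak{R}^{\star,h}(\mathcal{P}_i^h,\mathcal{P}_j^h)>0$, where this rate is computed through the contracted chain $\{\mathfrak{X}^h(t)\}_{t\ge0}$ via \eqref{eq:R-Cstar def}.

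First I would locate the ``shallowest'' element at level $h$: pick $i$ with $\Gamma_i^h=\Gamma^{\star,h}$, which exists by \eqref{eq:Gamma-starh def}. By \eqref{eq:Gammaih def} there is a path $\omega:\mathcal{P}_i^h\to\breve{\mathcal{P}}_i^h$ of height exactly $\mathbb{H}(\mathcal{P}_i^h)+\Gamma_i^h=\mathbb{H}(\mathcal{P}_i^h)+\Gamma^{\star,h}$; say it reaches $\mathcal{P}_j^h$ for some $j\ne i$. By Lemma \ref{lem:Vih cycle}, $\mathcal{V}_i^h$ is a cycle of depth $\Gamma_i^h=\Gamma^{\star,h}$ with bottom $\mathcal{P}_i^h$, so $\mathcal{V}_i^h\in\mathscr{C}^{\star,h}$ and, being of depth exactly $\Gamma^{\star,h}$, it has positive outgoing rates in $\mathfrak{R}^h$ from its boundary bottom $\partial^{\star}\mathcal{V}_i^h$ by the third line of \eqref{eq:RC def}. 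The path $\omega$ exits $\mathcal{V}_i^h$ through a configuration $\zeta\in\partial\mathcal{V}_i^h$ with $\mathbb{H}(\zeta)=\mathbb{H}(\mathcal{P}_i^h)+\Gamma^{\star,h}=\min_{\partial\mathcal{V}_i^h}\mathbb{H}$, hence $\zeta\in\partial^{\star}\mathcal{V}_i^h$; this gives $\mathfrak{R}^h(\mathcal{P}_i^h,\zeta)>0$. Now from $\zeta$ I follow the descending portion of $\omega$ toward $\mathcal{P}_j^h$: along a path the communication height is $\mathbb{H}(\mathcal{P}_i^h)+\Gamma^{\star,h}$, and $\zeta$ lies at that height, so the remainder of $\omega$ from $\zeta$ onward never exceeds it, meaning the contracted chain $\{\mathfrak{X}^h(t)\}$ started at $\zeta$ has a positive-probability route (through the ``$\le$'' downhill moves of \eqref{eq:RC def}, together with contracted cycle-jumps whenever the path dips into a cycle of $\mathscr{C}^h$) that reaches $\mathscr{P}^{\star,h}$ for the first time at $\mathcal{P}_j^h$ rather than back at $\mathcal{P}_i^h$. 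Feeding this into \eqref{eq:R-Cstar def} yields $\mathfrak{R}^{\star,h}(\mathcal{P}_i^h,\mathcal{P}_j^h)>0$, so $\mathcal{P}_i^h$ is not absorbing; therefore at least two of the $\nu_{h-1}$ level-$h$ elements lie in a common component (or one is transient), and the number of irreducible components $\nu_h$ satisfies $\nu_h\le\nu_{h-1}-1<\nu_{h-1}$. The case $h=1$ is identical, using Lemma \ref{lem:Vi1 cycle}, $\Gamma^{\star,1}$, and $\nu_0\ge2$.

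The main obstacle is the middle step: verifying that the contracted chain can actually transit from $\zeta$ to $\mathcal{P}_j^h$ while avoiding an immediate return to $\mathcal{P}_i^h$, since a priori the descending tail of $\omega$ might pass through configurations that have been absorbed into $\mathcal{V}_i^h$ or into some other absorbing cycle of $\mathscr{C}^h$. The resolution is that $\zeta\notin\mathcal{V}_i^h$ (it is on the boundary) and the tail of $\omega$ after $\zeta$ stays at energy $<\mathbb{H}(\mathcal{P}_i^h)+\Gamma^{\star,h}$ except possibly touching that level again, so it cannot re-enter $\mathcal{V}_i^h$ (whose configurations all have energy $<\mathbb{H}(\mathcal{P}_i^h)+\Gamma^{\star,h}$, but re-entry would force crossing the boundary upward then the path would have to come back down inside — one shows that once $\omega$ has left $\mathcal{V}_i^h$ at its minimal boundary height, the sub-path to $\mathcal{P}_j^h$ of minimal height avoids $\mathcal{P}_i^h$, for otherwise concatenation would produce a path $\mathcal{P}_j^h\to\mathcal{P}_i^h\to\breve{\mathcal{P}}_j^h$ contradicting the definition of $\Gamma_j^h$ unless $\Gamma_j^h=\Gamma^{\star,h}$ too, in which case one simply relabels and picks a genuinely new target). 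I would isolate this as the heart of the argument and handle the edge case — all level-$h$ elements having the same depth $\Gamma^{\star,h}$ — by noting that then the level-$h$ graph of positive $\mathfrak{R}^{\star,h}$-rates is a nonempty graph on $\nu_{h-1}$ vertices with no isolated vertices being absorbing, forcing at least one nontrivial merge and hence $\nu_h<\nu_{h-1}$.
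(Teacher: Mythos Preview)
Your overall strategy coincides with the paper's: pick $i$ with $\Gamma_i^h=\Gamma^{\star,h}$, show that $\mathfrak{R}^{\star,h}(\mathcal{P}_i^h,\mathcal{P}_j^h)>0$ for some $j\ne i$, and conclude that the chain on $\nu_{h-1}$ states is nonzero, hence $\nu_h<\nu_{h-1}$. The paper packages the middle step as Lemma~\ref{lem:4.1-2} (level $1$) and Lemma~\ref{lem:5.2-2} (general $h$).

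The gap is precisely in the step you flag as the obstacle. You propose to follow the tail of a height-minimizing path $\omega$ from the exit point $\zeta\in\partial^{\star}\mathcal{V}_i^h$ to $\mathcal{P}_j^h$. But $\omega$ is only constrained to have height $\le\mathbb{H}(\mathcal{P}_i^h)+\Gamma^{\star,h}$; it is not monotone. In particular the tail of $\omega$ may contain \emph{uphill} steps entirely inside $\Delta^h$, and the contracted chain $\{\mathfrak{X}^h(t)\}$ cannot perform such steps: by \eqref{eq:RC def}, in $\Delta^h$ only transitions with $\mathbb{H}(\xi)\le\mathbb{H}(\eta)$ have positive rate. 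Your suggested patch (that re-entry into $\mathcal{V}_i^h$ is excluded by energy considerations, or by a contradiction with $\Gamma_j^h$) does not address this: the problem is not re-entering $\mathcal{V}_i^h$ but rather that $\omega$ need not be realizable by $\{\mathfrak{X}^h(t)\}$ at all.

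The paper's resolution is to abandon $\omega$ and instead take a \emph{purely downhill} path from $\partial^{\star}\mathcal{V}_i^h$. Concretely, let $\mathcal{A}_i$ be the set of configurations outside $\mathcal{V}_i^h$ reachable from $\partial^{\star}\mathcal{V}_i^h$ by downhill paths. One checks (using the definition of $\Gamma_i^h$) that $\mathcal{F}(\mathcal{A}_i)$ lies strictly below the exit level, and that each connected component of $\mathcal{F}(\mathcal{A}_i)$ is a stable plateau; since it lies outside $\mathcal{V}_i^h$, it is not $\mathcal{P}_i^h$. The resulting downhill path \emph{is} realizable by the contracted chain (downhill steps in $\Delta^h$ plus exits from shallow cycles through $\partial^{\star}$), and at level $h>1$ one iterates via the induction hypothesis (Theorem~\ref{thm:P-star class ind hyp}) to land in some $\mathcal{V}_{j}^h$ with $j\ne i$. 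This yields $\mathfrak{R}^{\star,h}(\mathcal{P}_i^h,\mathcal{P}_j^h)>0$ cleanly, without having to control where a non-monotone $\omega$ goes.
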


\begin{figure}
{\small{}\begin{tikzpicture}[scale=0.85]
\draw (0,3) sin (0.5,0);
\draw (0.5,0) cos (0.75,0.5); \draw (0.75,0.5) sin (1,1);
\draw (1,1) cos (1.25,0.5); \draw (1.25,0.5) sin (1.5,0);
\draw (1.5,0) cos (1.75,1); \draw (1.75,1) sin (2,2);
\draw (2,2) cos (2.25,1); \draw (2.25,1) sin (2.5,0);
\draw (2.5,0) cos (2.75,1.5); \draw (2.75,1.5) sin (3,3);
\draw (3,3) cos (3.25,2.5); \draw (3.25,2.5) sin (3.5,2);
\draw (3.5,2) cos (3.75,2.5); \draw (3.75,2.5) sin (4,3);
\draw (4,3) cos (4.25,2); \draw (4.25,2) sin (4.5,1);
\draw (4.5,1) cos (4.75,1.5); \draw (4.75,1.5) sin (5,2);
\draw (5,2) cos (5.25,1.5); \draw (5.25,1.5) sin (5.5,1);
\draw (5.5,1) cos (5.75,2); \draw (5.75,2) sin (6,3);
\draw (6,3) cos (6.25,1.5); \draw (6.25,1.5) sin (6.5,0);
\draw (6.5,0) cos (7,3);

\foreach \i in {0.5,1.5,2.5,6.5} {
\draw[very thick] (\i-0.1,0)--(\i+0.1,0); }
\foreach \i in {4.5,5.5} {
\draw[very thick] (\i-0.1,1)--(\i+0.1,1); }
\foreach \i in {3.5} {
\draw[very thick] (\i-0.1,2)--(\i+0.1,2); }

\draw (0.5,0) node[below]{$\bm{\mathcal{P}_1^1}$};
\draw (1.5,0) node[below]{$\bm{\mathcal{P}_2^1}$};
\draw (2.5,0) node[below]{$\bm{\mathcal{P}_3^1}$};
\draw (3.5,2) node[below]{$\mathcal{P}_4^1$};
\draw (4.5,1) node[below]{$\bm{\mathcal{P}_5^1}$};
\draw (5.5,1) node[below]{$\bm{\mathcal{P}_6^1}$};
\draw (6.5,0) node[below]{$\bm{\mathcal{P}_7^1}$};

\draw[red,<-] (0.5,0.75) sin (1,1.25); \draw[red,<-] (1.5,0.75) sin (1,1.25);
\draw[red,<-] (2.5,2.75) sin (3,3.25); \draw[red] (3.5,2.75) sin (3,3.25);
\draw[red] (3.5,2.75) sin (4,3.25); \draw[red,<-] (4.5,2.75) sin (4,3.25);
\draw[red,<-] (4.5,1.75) sin (5,2.25); \draw[red,<-] (5.5,1.75) sin (5,2.25);
\end{tikzpicture}
\hspace{20mm}
\begin{tikzpicture}[scale=0.85]
\draw (0,3) sin (0.5,0);
\draw (0.5,0) cos (0.75,0.5); \draw (0.75,0.5) sin (1,1);
\draw (1,1) cos (1.25,0.5); \draw (1.25,0.5) sin (1.5,0);
\draw (1.5,0) cos (1.75,1); \draw (1.75,1) sin (2,2);
\draw (2,2) cos (2.25,1); \draw (2.25,1) sin (2.5,0);
\draw (2.5,0) cos (2.75,1.5); \draw (2.75,1.5) sin (3,3);
\draw (3,3) cos (3.25,2.5); \draw (3.25,2.5) sin (3.5,2);
\draw (3.5,2) cos (3.75,2.5); \draw (3.75,2.5) sin (4,3);
\draw (4,3) cos (4.25,2); \draw (4.25,2) sin (4.5,1);
\draw (4.5,1) cos (4.75,1.5); \draw (4.75,1.5) sin (5,2);
\draw (5,2) cos (5.25,1.5); \draw (5.25,1.5) sin (5.5,1);
\draw (5.5,1) cos (5.75,2); \draw (5.75,2) sin (6,3);
\draw (6,3) cos (6.25,1.5); \draw (6.25,1.5) sin (6.5,0);
\draw (6.5,0) cos (7,3);

\draw[very thick] (0.4,0)--(1.6,0);
\draw[very thick] (2.4,0)--(2.6,0);
\draw[very thick] (4.4,1)--(5.6,1);
\draw[very thick] (6.4,0)--(6.6,0);

\draw (1,0) node[below]{$\bm{\mathcal{P}_1^2}$};
\draw (2.5,0) node[below]{$\bm{\mathcal{P}_2^2}$};
\draw (5,1) node[below]{$\mathcal{P}_3^2$};
\draw (6.5,0) node[below]{$\bm{\mathcal{P}_4^2}$};

\draw[red,<-] (1,1.75) sin (1.75,2.25); \draw[red,<-] (2.5,1.75) sin (1.75,2.25);
\draw[red,<-] (2.5,2.75) sin (3.75,3.25); \draw[red] (5,2.75) sin (3.75,3.25);
\draw[red] (5,2.75) sin (5.75,3.25); \draw[red,<-] (6.5,2.75) sin (5.75,3.25);
\end{tikzpicture}\\
\begin{tikzpicture}[scale=0.85]
\draw (0,3) sin (0.5,0);
\draw (0.5,0) cos (0.75,0.5); \draw (0.75,0.5) sin (1,1);
\draw (1,1) cos (1.25,0.5); \draw (1.25,0.5) sin (1.5,0);
\draw (1.5,0) cos (1.75,1); \draw (1.75,1) sin (2,2);
\draw (2,2) cos (2.25,1); \draw (2.25,1) sin (2.5,0);
\draw (2.5,0) cos (2.75,1.5); \draw (2.75,1.5) sin (3,3);
\draw (3,3) cos (3.25,2.5); \draw (3.25,2.5) sin (3.5,2);
\draw (3.5,2) cos (3.75,2.5); \draw (3.75,2.5) sin (4,3);
\draw (4,3) cos (4.25,2); \draw (4.25,2) sin (4.5,1);
\draw (4.5,1) cos (4.75,1.5); \draw (4.75,1.5) sin (5,2);
\draw (5,2) cos (5.25,1.5); \draw (5.25,1.5) sin (5.5,1);
\draw (5.5,1) cos (5.75,2); \draw (5.75,2) sin (6,3);
\draw (6,3) cos (6.25,1.5); \draw (6.25,1.5) sin (6.5,0);
\draw (6.5,0) cos (7,3);

\draw[very thick] (0.4,0)--(2.6,0);
\draw[very thick] (6.4,0)--(6.6,0);

\draw (1.5,0) node[below]{$\bm{\mathcal{P}_1^3}$};
\draw (6.5,0) node[below]{$\bm{\mathcal{P}_2^3}$};

\draw[red,<-] (1.5,2.75) sin (4,3.75); \draw[red,<-] (6.5,2.75) sin (4,3.75);
\end{tikzpicture}}\caption{\label{Fig2.1}Example of a hierarchical decomposition of stable plateaux
in $\mathscr{P}^{1}$ with $\mathfrak{m}=3$. At each level $h\in[1,\,3]$,
bold-faced elements are recurrent and the rest are transient with
respect to $\{\mathfrak{X}^{\star,h}(t)\}_{t\ge0}$. At level $1$,
we have $\mathscr{P}_{1}^{\star,1}=\{\mathcal{P}_{1}^{1},\,\mathcal{P}_{2}^{1}\}$,
$\mathscr{P}_{2}^{\star,1}=\{\mathcal{P}_{3}^{1}\}$, $\mathscr{P}_{3}^{\star,1}=\{\mathcal{P}_{5}^{1},\,\mathcal{P}_{6}^{1}\}$,
$\mathscr{P}_{4}^{\star,1}=\{\mathcal{P}_{7}^{1}\}$ and $\mathscr{P}_{{\rm tr}}^{\star,1}=\{\mathcal{P}_{4}^{1}\}$.
At level $2$, we have $\mathscr{P}_{1}^{\star,2}=\{\mathcal{P}_{1}^{2},\,\mathcal{P}_{2}^{2}\}$,
$\mathscr{P}_{2}^{\star,2}=\{\mathcal{P}_{4}^{2}\}$ and $\mathscr{P}_{{\rm tr}}^{\star,2}=\{\mathcal{P}_{3}^{2}\}$.
Finally, at level $\mathfrak{m}=3$, we have $\mathscr{P}^{\star,3}=\mathscr{P}_{1}^{\star,3}=\{\mathcal{P}_{1}^{3},\,\mathcal{P}_{2}^{3}\}$
which is exactly composed of the ground states.}
\end{figure}
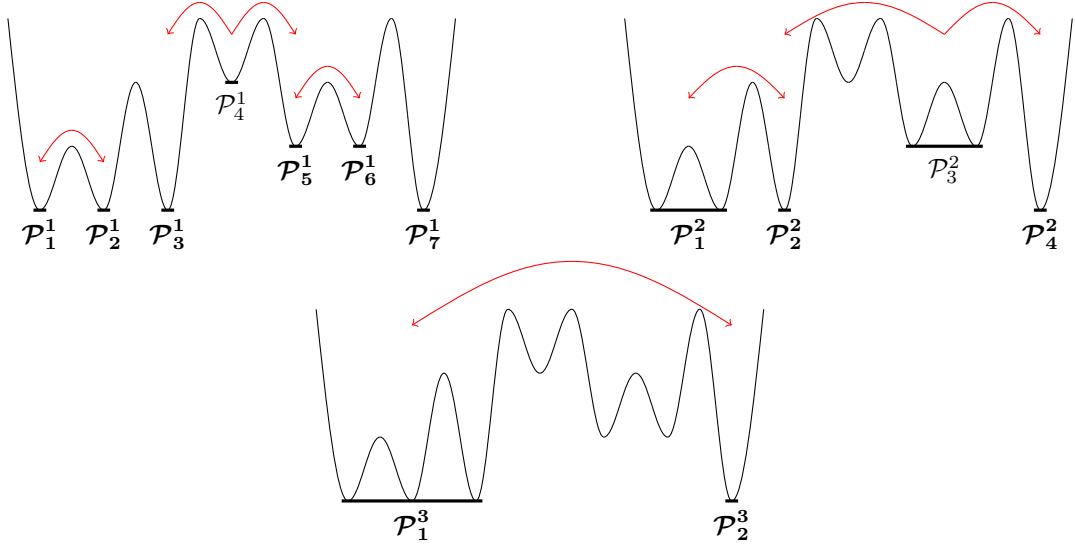

By Theorem \ref{thm:nu dec}, there exists a terminal integer $\mathfrak{m}$
such that $\nu_{\mathfrak{m}-1}>1$ and $\nu_{\mathfrak{m}}=1$. In
turn, the sequence $\mathscr{P}^{1},\,\mathscr{P}^{2},\,\dots,\,\mathscr{P}^{\mathfrak{m}}$
constitutes the full hierarchical decomposition of the stable plateaux
in $\overline{\Omega}$. See Figure \ref{Fig2.1} for an illustration.

In addition, the ground states in $\mathcal{S}$ are always contained
in the recurrent collection: for each $h\ge1$, define
\begin{equation}
\mathscr{P}_{{\rm rec}}^{\star,h}:=\mathscr{P}_{1}^{\star,h}\cup\cdots\cup\mathscr{P}_{\nu_{h}}^{\star,h}\quad\text{and}\quad\mathscr{C}_{{\rm rec}}^{\star,h}:=\mathscr{C}_{1}^{\star,h}\cup\cdots\cup\mathscr{C}_{\nu_{h}}^{\star,h}.\label{eq:P-starh C-starh rec}
\end{equation}

\begin{thm}[Ground states are always recurrent]
\label{thm:ground states rec}For all $\bm{s}\in\mathcal{S}$ and
$h\ge1$, there exists $\mathcal{P}_{i}^{h}\in\mathscr{P}_{{\rm rec}}^{\star,h}$
such that $\bm{s}\in\mathcal{P}_{i}^{h}$.
\end{thm}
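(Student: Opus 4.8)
The plan is to argue by induction on the level $h$, using the characterization of the $\mathcal{V}_i^h$ as cycles with bottom $\mathcal{P}_i^h$ together with the transition rule \eqref{eq:RC def} for the trace chain $\{\mathfrak{X}^{\star,h}(t)\}_{t\ge0}$. For the base case $h=1$, fix $\bm{s}\in\mathcal{S}$. By the remark after \eqref{eq:P1 def}, $\bm{s}$ lies in some stable plateau $\mathcal{P}_i^1\in\mathscr{P}^1=\mathscr{P}^{\star,1}$, so it suffices to show that $\mathcal{P}_i^1$ is not transient for $\{\mathfrak{X}^{\star,1}(t)\}_{t\ge0}$, i.e.\ $\mathcal{P}_i^1\notin\mathscr{P}_{\mathrm{tr}}^{\star,1}$. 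The key point is that a state of $\{\mathfrak{X}^{\star,h}(t)\}_{t\ge0}$ is recurrent if and only if it has a positive return probability, and since the chain is finite this is equivalent to the state being reachable from every state it can reach. So I would show: whenever $\mathfrak{R}^{\star,h}(\mathcal{P}_i^h,\,\mathcal{P}_j^h)>0$ with $\bm{s}\in\mathcal{P}_i^h$, then also $\mathfrak{R}^{\star,h}(\mathcal{P}_j^h,\,\mathcal{P}_i^h)>0$, or more precisely $\mathcal{P}_i^h$ is reachable back from $\mathcal{P}_j^h$. The mechanism is that $\bm{s}$ being a \emph{global} minimizer of $\mathbb{H}$ forces $\mathbb{H}(\mathcal{P}_i^h)=\mathbb{H}(\mathcal{S})\le\mathbb{H}(\mathcal{P}_j^h)$ for every $j$; combined with \eqref{eq:cyc def} this makes $\mathcal{V}_i^h$ a cycle of minimal energy bottom, from which I get that any path in $\Omega^h$ leaving $\mathcal{P}_i^h$ and reaching another plateau can be reversed at no higher energy, so that $\mathcal{P}_i^h$ lies in an irreducible component.

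More concretely, I would proceed as follows. First, establish the energy identity $\mathbb{H}(\mathcal{P}_i^h)=\mathbb{H}(\mathcal{S})$ for the plateau $\mathcal{P}_i^h$ containing $\bm{s}$: by Lemma \ref{lem:Pih energy} all elements of $\mathcal{P}_i^h$ share a common energy, and since $\bm{s}\in\mathcal{S}$ this common value is $\min_\Omega\mathbb{H}$. Second, recall from Lemma \ref{lem:Vih cycle} that $\mathcal{V}_i^h$ is a cycle with bottom $\mathcal{P}_i^h$ and depth $\Gamma_i^h$; since $\mathbb{H}(\mathcal{P}_i^h)$ is the global minimum, $\mathbb{H}$ restricted to $\partial\mathcal{V}_i^h$ is at least $\mathbb{H}(\mathcal{S})+\Gamma_i^h$ while every other plateau has energy $\ge\mathbb{H}(\mathcal{S})$. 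Third, I would trace through the definition \eqref{eq:R-Cstar def} of $\mathfrak{R}^{\star,h}$: if $\mathfrak{R}^{\star,h}(\mathcal{P}_i^h,\,\mathcal{P}_j^h)>0$, there is a path in the contracted graph $\Omega^h$ from $\partial^\star\mathcal{V}_i^h$ through $\Delta^h$ to $\mathcal{P}_j^h$, and by the definition \eqref{eq:RC def} of $\mathfrak{R}^h$ all the intermediate steps through $\Delta^h$ are downhill (rate $1$ requires $\mathbb{H}(\xi)\le\mathbb{H}(\eta)$). Reversing such a path and using that $\mathcal{V}_j^h$ has bottom $\mathcal{P}_j^h$ with $\mathbb{H}(\mathcal{P}_j^h)\ge\mathbb{H}(\mathcal{S})=\mathbb{H}(\mathcal{P}_i^h)$, I would show $\Phi(\mathcal{P}_j^h,\,\mathcal{P}_i^h)\le\Phi(\mathcal{P}_i^h,\,\mathcal{P}_j^h)$, which by \eqref{eq:Gammaih def} and the minimality of $\mathbb{H}(\mathcal{P}_i^h)$ guarantees that the reversed jump $\mathcal{P}_j^h\to\mathcal{P}_i^h$ also has positive rate (or at least that $\mathcal{P}_i^h$ is reachable from $\mathcal{P}_j^h$ in $\{\mathfrak{X}^{\star,h}(t)\}_{t\ge0}$). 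Hence $\mathcal{P}_i^h$ communicates with everything it reaches and so lies in some irreducible component $\mathscr{P}_m^{\star,h}$, i.e.\ $\mathcal{P}_i^h\in\mathscr{P}_{\mathrm{rec}}^{\star,h}$. Finally, the induction step: by \eqref{eq:P-starh def}, $\mathcal{P}_i^h\in\mathscr{P}_m^{\star,h}$ contributes to a single plateau $\mathcal{P}_m^{h+1}=\bigcup_{\mathcal{P}\in\mathscr{P}_m^{\star,h}}\mathcal{P}$ at the next level, which contains $\bm{s}$, and rerunning the argument with $h+1$ in place of $h$ closes the induction.

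The main obstacle I anticipate is making the ``reverse a downhill path'' argument fully rigorous at the level of the contracted chain $\{\mathfrak{X}^h(t)\}_{t\ge0}$ and the trace chain $\{\mathfrak{X}^{\star,h}(t)\}_{t\ge0}$, rather than at the level of the original energy landscape. The subtlety is that the trace rate \eqref{eq:R-Cstar def} involves hitting probabilities ${\bf P}_\eta^{\mathscr{C}}[\mathcal{T}_{\mathcal{F}(\mathcal{C}')}=\mathcal{T}_{\mathscr{P}^{\mathscr{C}^\star}}]$, so positivity of $\mathfrak{R}^{\star,h}(\mathcal{P}_i^h,\,\mathcal{P}_j^h)$ only tells us there is \emph{some} trajectory of $\{\mathfrak{X}^h(t)\}_{t\ge0}$ realizing this, and I need to reverse it while staying inside $\Delta^h$ and avoiding all absorbing plateaux of depth $>\Gamma^{\star,h}$; I expect this to work because along such a trajectory the energy never exceeds $\mathbb{H}(\mathcal{S})+\Gamma^{\star,h}$ (otherwise a cycle of larger depth would be entered), and $\mathbb{H}(\mathcal{S})$ being the global minimum controls the reversed excursion symmetrically. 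A clean way to package this would be to prove a small auxiliary lemma: if $\mathcal{P}_i^h$ has energy equal to $\min_\Omega\mathbb{H}$ and $\mathfrak{R}^{\star,h}(\mathcal{P}_i^h,\,\mathcal{P}_j^h)>0$, then $\mathcal{V}_i^h$ and $\mathcal{V}_j^h$ lie in a common cycle of depth $\le\Gamma^{\star,h}$ inside $\overline{\Omega}$, and invoke the already-established structure of cycles (nesting, Lemma \ref{lem:cyc bot P}) to conclude mutual reachability.
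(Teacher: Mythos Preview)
Your approach is correct but substantially more elaborate than the paper's. The paper proceeds by contradiction and invokes the classification theorem it has already established (Theorem~\ref{thm:P-star1 class} for $h=1$, Theorem~\ref{thm:P-starh class} for general $h$): part~(2) of that theorem says that if $\mathcal{P}_i^h\in\mathscr{P}_{\mathrm{tr}}^{\star,h}$ then there exists $\mathcal{P}_j^h$ with $\mathbb{H}(\mathcal{P}_j^h)<\mathbb{H}(\mathcal{P}_i^h)$, which immediately contradicts $\bm{s}\in\mathcal{P}_i^h$ being a global energy minimizer. The entire proof is two lines once that classification is in hand.

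What you are doing is essentially re-deriving, from scratch, the fragment of the classification theorem that you need. Your ``main obstacle'' (reversing a downhill trajectory of the contracted chain) is precisely the content of Lemma~\ref{lem:4.1-1} (and its level-$h$ analogue Lemma~\ref{lem:5.2-1}): if $\mathfrak{R}^{\star,h}(\mathcal{P}_i^h,\mathcal{P}_j^h)>0$ then $\mathbb{H}(\mathcal{P}_i^h)\ge\mathbb{H}(\mathcal{P}_j^h)$, with equality forcing $\mathfrak{R}^{\star,h}(\mathcal{P}_j^h,\mathcal{P}_i^h)>0$. Since your $\mathcal{P}_i^h$ has globally minimal energy, equality is automatic along any chain of positive-rate jumps out of it, and mutual reachability follows by iterating that lemma. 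So your worry is already resolved by a lemma the paper proves en route to the classification; you can either cite that lemma directly or, more efficiently, just cite the classification theorem and argue by contradiction as the paper does.
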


In particular, by Theorem \ref{thm:ground states rec} with $h=\mathfrak{m}$
and Lemma \ref{lem:Pih energy}, the unique irreducible collection
$\mathscr{P}_{1}^{\star,\mathfrak{m}}$ of the terminal level $\mathfrak{m}$
consists of exactly all the ground states in $\mathcal{S}$. We provide
a proof of Theorems \ref{thm:nu dec} and \ref{thm:ground states rec}
in Sections \ref{sec4.2} (initial step) and \ref{sec5.3} (inductive
step).

\subsection{\label{sec2.2}Metastable hierarchy of tunneling transitions between
stable plateaux}

For each $h\in[1,\,\mathfrak{m}]$, define
\begin{equation}
\mathcal{V}^{\star,h}:=\bigcup_{i\in[1,\,\nu_{h-1}]}\mathcal{V}_{i}^{h}.\label{eq:V-starh def}
\end{equation}
Define a projection function $\Psi^{h}:\,\mathcal{V}^{\star,h}\to\mathscr{P}^{\star,h}$
as
\begin{equation}
\Psi^{h}(\eta):=\mathcal{P}_{i}^{h}\quad\text{for each}\quad\eta\in\mathcal{V}_{i}^{h}\quad\text{and}\quad i\in[1,\,\nu_{h-1}].\label{eq:Psih def}
\end{equation}
Consider the trace process $\{\eta_{\beta}^{h}(t)\}_{t\ge0}$ of the
original process in $\mathcal{V}^{\star,h}$. Then, define the \emph{$h$-th
order process} $\{X_{\beta}^{h}(t)\}_{t\ge0}$ in $\mathscr{P}^{\star,h}$
as 
\[
X_{\beta}^{h}(t):=\Psi^{h}(\eta_{\beta}^{h}(e^{\Gamma^{\star,h}\beta}t))\quad\text{for}\quad t\ge0.
\]
We are ready to state our main result.
\begin{thm}[Hierarchical tunneling metastable transitions]
\label{thm:main}For each $h\in[1,\,\mathfrak{m}]$, the following
statements are valid.
\begin{enumerate}
\item The $h$-th order process $\{X_{\beta}^{h}(t)\}_{t\ge0}$ in $\mathscr{P}^{\star,h}$
converges to $\{\mathfrak{X}^{\star,h}(t)\}_{t\ge0}$.
\item The original process spends negligible time outside $\mathcal{V}^{\star,h}$
in the time scale $e^{\Gamma^{\star,h}\beta}$:
\[
\lim_{\beta\to\infty}\mathbb{E}_{\eta}\Big[\int_{0}^{T}{\bf 1}\{\eta_{\beta}(e^{\Gamma^{\star,h}\beta}t)\notin\mathcal{V}^{\star,h}\}{\rm d}t\Big]=0\quad\text{for all}\quad T>0\quad\text{and}\quad\eta\in\mathcal{V}^{\star,h}.
\]
\end{enumerate}
\end{thm}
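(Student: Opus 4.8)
The plan is to deduce Theorem~\ref{thm:main} from the resolvent characterization of metastability of \cite{LMS resolvent}, arguing by induction on the level $h\in[1,\,\mathfrak{m}]$. Fix $h$; let $\mathfrak{L}^{\star,h}$ denote the generator of the limiting chain $\{\mathfrak{X}^{\star,h}(t)\}_{t\ge0}$ with rates $\mathfrak{R}^{\star,h}(\cdot,\,\cdot)$, and put $\theta_\beta:=e^{\Gamma^{\star,h}\beta}$. For $\lambda>0$ and an arbitrary $g:\,\mathscr{P}^{\star,h}\to\mathbb{R}$, let $F_\beta$ solve the accelerated resolvent equation $(\lambda-\theta_\beta L_\beta)F_\beta=G_\beta$ on $\overline{\Omega}$, where $G_\beta$ is the natural lift of $g$ (equal to $g(\mathcal{P}_i^h)$ on $\mathcal{V}_i^h$), and let $f$ solve $(\lambda-\mathfrak{L}^{\star,h})f=g$ on $\mathscr{P}^{\star,h}$. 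By \cite{LMS resolvent} (see Sections~\ref{sec4.3} and~\ref{sec5.4} for the formulation adapted to the present setting), assertions~(1) and~(2) reduce to the \emph{resolvent estimate}
\[
\lim_{\beta\to\infty}\ \max_{i\in[1,\,\nu_{h-1}]}\ \sup_{\eta\in\mathcal{V}_i^h}\big|\,F_\beta(\eta)-f(\mathcal{P}_i^h)\,\big|\;=\;0
\]
together with the negligibility of the time spent by the accelerated process in $\Delta^h=\overline{\Omega}\setminus\mathcal{V}^{\star,h}$; the whole proof is the verification of these two facts.

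For the base case $h=1$ the wells are the cycles $\mathcal{V}_i^1$ of Lemma~\ref{lem:Vi1 cycle}, whose (flat) bottoms are the genuine stable plateaux $\mathcal{P}_i^1$. The first ingredient is a sharp capacity asymptotics: using the Dirichlet principle, the reversibility \eqref{eq:det bal}, and explicit equilibrium-potential test functions, one shows $\mathrm{cap}_\beta(\mathcal{V}_i^1,\,\breve{\mathcal{P}}_i^1)$ is asymptotic to $C_i^1\,Z_\beta^{-1}e^{-\beta(\mathbb{H}(\mathcal{P}_i^1)+\Gamma_i^1)}$ with an explicit prefactor $C_i^1$ determined by the graph structure of the saddle plateau at energy $\mathbb{H}(\mathcal{P}_i^1)+\Gamma_i^1$; after acceleration by $\theta_\beta$ only the wells with $\Gamma_i^1=\Gamma^{\star,1}$ retain a positive limiting rate, consistently with \eqref{eq:RC def}, where only cycles of depth exactly $\Gamma^{\star}$ carry positive rate. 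The second ingredient is the sharp exit-distribution theorem of Section~\ref{sec2.3}: conditioned on leaving $\mathcal{V}_i^1$, the process exits through the minimal-energy boundary $\partial^{\star}\mathcal{V}_i^1$ with weights proportional to the number of connecting edges, and starting from $\mathcal{F}(\mathcal{V}_i^1)=\mathcal{P}_i^1$ with the uniform law — exactly as prescribed by the last two lines of \eqref{eq:RC def} — after which it descends through $\Delta^1$ and reaches a well $\mathcal{P}_j^1$ with probability ${\bf P}_\eta^{\mathscr{C}^1}[\mathcal{T}_{\mathcal{P}_j^1}=\mathcal{T}_{\mathscr{P}^{\star,1}}]$, i.e.\ the weight appearing in \eqref{eq:R-Cstar def}. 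Feeding these into the variational representation of $F_\beta$ (a two-sided test-function argument) gives the resolvent estimate. Assertion~(2) holds because $\Delta^1$ contains no cycle at all (the bottom of any cycle is a union of stable plateaux, and these lie in the wells), so excursions in $\Delta^1$ have $O(1)$ expected duration while consecutive excursions are separated by well-sojourns of length $\asymp\theta_\beta$; hence the total real time spent in $\Delta^1$ up to time $\theta_\beta T$ is $O(T)$, and division by $\theta_\beta$ yields the claim.

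For the inductive step, assume the result at level $h-1$. Since $\Gamma^{\star,h}>\Gamma^{\star,h-1}$ by Lemma~\ref{lem:Gamma-star inc}, on the coarser scale $\theta_\beta=e^{\Gamma^{\star,h}\beta}$ the level-$(h-1)$ dynamics has equilibrated within each recurrent class $\mathscr{P}_m^{\star,h-1}$, which by \eqref{eq:P-starh def} becomes the single super-plateau $\mathcal{P}_m^h$, while the transient plateaux and the shallow cycles of $\mathscr{C}^{\sharp,h-1}$ are absorbed into $\Delta^h$. By Lemmas~\ref{lem:Vih cycle} and~\ref{lem:Ch prop} the data $(\mathcal{V}_i^h,\,\mathscr{C}^h,\,\Delta^h)$ obey the same structural properties as at level~$1$, so one repeats the two estimates above — capacity asymptotics for $\mathrm{cap}_\beta(\mathcal{V}_i^h,\,\breve{\mathcal{P}}_i^h)$, and the exit-distribution theorem of Section~\ref{sec2.3} applied to $\mathcal{V}_i^h$ — together with the analysis of the trace of $\{\mathfrak{X}^{\mathscr{C}^h}(t)\}_{t\ge0}$ across $\Delta^h$, which now genuinely contains shallow cycles (of depth $<\Gamma^{\star,h}$, by Lemma~\ref{lem:Ch prop}) that the process may enter and leave, producing exactly the hitting probabilities in \eqref{eq:R-Cstar def}. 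The additional point is a compatibility check: one must verify that within each $\mathcal{V}_i^h$ the internal transitions among the stable plateaux composing $\mathcal{P}_i^h$ occur on a scale $o(\theta_\beta)$ — they are separated by barriers $<\Gamma^{\star,h}$ by construction — so that the projected process $\Psi^h(\eta_\beta^h(\cdot))$ is Markovian in the limit; this is where the inductive hypothesis enters, through its time-scale-separation content. Theorems~\ref{thm:nu dec} and~\ref{thm:ground states rec} guarantee the induction terminates at a well-defined level $\mathfrak{m}$, and assertion~(2) at level $h$ follows as at level~$1$ since any metastable trapping in $\Delta^h$ is on a scale $o(\theta_\beta)$.

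The hardest part will be the sharp prefactor $C_i^h$ in the capacity estimate in the presence of \emph{saddle plateaux}: the set of minimal-energy saddle configurations between two wells is typically large and highly connected, so determining $C_i^h$ is a genuine graph-combinatorial computation rather than the usual ``count the nondegenerate saddle points'' bookkeeping. Closely intertwined with this is the new sharp exit-distribution estimate of Section~\ref{sec2.3}, which I would prove by the $H^1$-approximation method of \cite{KS IsingPotts-2D,LanSeo RW-pot-field}: it is exactly this estimate — and not merely the classical Freidlin--Wentzell fact that the exit occurs at the minimal-energy boundary — that pins down the limiting rates $\mathfrak{R}^{\star,h}$ rather than only the time scale $e^{\Gamma^{\star,h}\beta}$. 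The surrounding inductive bookkeeping — which cycles merge, which become transient, how the shallow cycles in $\Delta^h$ feed into the trace rates — is delicate, but given the structural lemmas already in hand it is essentially routine.
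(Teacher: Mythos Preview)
Your overall framing is right: the paper argues by induction on $h$ and, via \cite[Theorem~2.3]{LMS resolvent}, reduces \emph{both} assertions~(1) and~(2) to the single resolvent condition \eqref{eq:res cond}/\eqref{eq:res cond-h} (so no separate argument for~(2) is needed). But the mechanism you propose for establishing the resolvent condition is not what the paper does, and the step you lean on is the weak link. You plan to compute sharp capacity asymptotics $\mathrm{cap}_\beta(\mathcal{V}_i^h,\breve{\mathcal{P}}_i^h)\sim C_i^h Z_\beta^{-1}e^{-\beta(\mathbb{H}(\mathcal{P}_i^h)+\Gamma_i^h)}$ and then feed these into a ``variational representation of $F_\beta$ (a two-sided test-function argument)''. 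No capacity appears anywhere in the proof of Theorem~\ref{thm:main}, and the resolvent solution has no useful variational characterization; it is given explicitly by $F_\beta(\eta)=\mathbb{E}_\eta[\int_0^\infty e^{-\lambda t}G(\eta_\beta(\theta_\beta t))\,{\rm d}t]$. You appear to be conflating the potential-theoretic route (capacities, Dirichlet principle, test functions for equilibrium potentials) with the resolvent route.

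The paper's actual argument (Sections~\ref{sec4.3} and~\ref{sec5.4}) is direct and bypasses capacities entirely:
\begin{itemize}
\item[(a)] $F_\beta$ is asymptotically constant on each well $\mathcal{V}_i^h$ (Lemmas~\ref{lem:Fbeta flat valley-1}, \ref{lem:Fbeta flat valley-h}), proved by the mixing criterion of \cite[Proposition~6.7]{LMS resolvent} together with the internal-barrier bound $\widetilde{\Gamma}(\mathcal{V}_i^h)\le\Gamma^{\star,h-1}<\Gamma^{\star,h}$.
\item[(b)] For $v\in\Delta^h\cup\mathscr{P}^{\sharp,h}$ one shows $F_\beta(v)\approx\sum_i{\bf P}_v^h[\mathcal{T}_{\mathcal{P}_i^h}=\mathcal{T}_{\mathscr{P}^{\star,h}}]\,\overline{F}_\beta(\mathcal{P}_i^h)$ (Lemmas~\ref{lem:Fbeta Delta1}, \ref{lem:Fbeta Deltah}) by coupling the real dynamics with the $\beta=\infty$ chain $\{\mathfrak{X}^h(t)\}$ until absorption in $\mathscr{P}^{\star,h}$. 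At $h=1$ this is elementary since $\Delta^1$ contains no cycle; for $h\ge2$ the coupling must cross the shallow cycles in $\mathscr{C}^{\sharp,h}$, and \emph{this} is where Theorem~\ref{thm:exit H1} is used --- on those shallow cycles, not on the big wells $\mathcal{V}_i^h$ as you suggest.
\item[(c)] Integrate the microscopic equation $(\lambda-\theta_\beta L_\beta)F_\beta=G$ against $\mu_\beta$ over $\mathcal{V}_i^h$; the generator term collapses to a sum over $\partial^\star\mathcal{V}_i^h$, and (a)--(b) give $(\lambda-\mathfrak{L}^{\star,h})\overline{F}_\beta(\mathcal{P}_i^h)=g(\mathcal{P}_i^h)+o(1)$, whence the resolvent estimate.
\end{itemize}
In particular the ``hardest part'' you flag --- the combinatorial prefactor $C_i^h$ in a capacity --- never arises: the limiting rates $\mathfrak{R}^{\star,h}$ fall out of step~(c) and the definition \eqref{eq:R-Cstar def} automatically, with no saddle-plateau computation.
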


According to Theorem \ref{thm:main}, \eqref{eq:P-star P-sharp def}
and \eqref{eq:RC def}, at each level $h\in[1,\,\mathfrak{m}]$, there
exist tunneling transitions between the stable plateaux of depth at
least $\Gamma^{\star,h}$ in the time scale $e^{\beta\Gamma^{\star,h}}$,
where those with depth strictly greater than $\Gamma^{\star,h}$ are
absorbing. This is consistent with \eqref{eq:Gamma-starh def} in
that $\Gamma^{\star,h}$ is the minimum energy barrier between the
stable plateaux at level $h$.

We prove Theorem \ref{thm:main} in Sections \ref{sec4.3} (initial
step) and \ref{sec5.4} (inductive step).

\subsection{\label{sec2.3}Exit distribution from cycles}

In this subsection, we record a theorem regarding the precise exit
distribution of cycles in low-temperature lattice models. Fix a cycle
$\mathcal{C}$ which is not the full set $\Omega$ and recall the
definitions of $\partial\mathcal{C}$ and $\partial^{\star}\mathcal{C}$
from Notation \ref{nota:bot bdry def}. It is well known that (cf.
\cite[Corollary 6.25]{OV}) it is most likely to escape $\mathcal{C}$
via $\partial^{\star}\mathcal{C}$, the minimizing set of $\mathbb{H}$
in $\partial\mathcal{C}$:
\[
\lim_{\beta\to\infty}\inf_{\eta\in\mathcal{C}}\mathbb{P}_{\eta}[\eta_{\beta}(\mathcal{T}_{\partial\mathcal{C}})\in\partial^{\star}\mathcal{C}]=1.
\]
However, according to the previous methods to metastability, it was
unable to characterize the exact probability of escaping $\mathcal{C}$
via a specific configuration $\xi\in\partial^{\star}\mathcal{C}$
in the limit as $\beta\to\infty$. It turns out that this is now possible
by applying the new approach, the \emph{$H^{1}$-approximation method}
to metastability, recently developed in \cite{KS IsingPotts-2D,LanSeo RW-pot-field}.
\begin{thm}
\label{thm:exit H1}Given a cycle $\mathcal{C}\ne\Omega$, it holds
for each $\xi_{0}\in\partial^{\star}\mathcal{C}$ that
\[
\lim_{\beta\to\infty}\mathbb{P}_{\eta_{0}}[\eta_{\beta}(\mathcal{T}_{\partial\mathcal{C}})=\xi_{0}]=\frac{\sum_{\eta\in\mathcal{C}}{\bf 1}\{\eta\sim\xi_{0}\}}{\sum_{\eta\in\mathcal{C}}\sum_{\xi\in\partial^{\star}\mathcal{C}}{\bf 1}\{\eta\sim\xi\}}\quad\text{for all}\quad\eta_{0}\in\mathcal{C}.
\]
\end{thm}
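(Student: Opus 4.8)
The plan is to realize the exit distribution as a harmonic (equilibrium) measure and then compute its $\beta\to\infty$ limit using the $H^1$-approximation method. First I would fix $\xi_0\in\partial^\star\mathcal{C}$ and let $h_\beta(\eta):=\mathbb{P}_\eta[\eta_\beta(\mathcal{T}_{\partial\mathcal{C}})=\xi_0]$ for $\eta\in\mathcal{C}$, extended by $h_\beta(\xi_0)=1$ and $h_\beta(\zeta)=0$ for $\zeta\in\partial\mathcal{C}\setminus\{\xi_0\}$. This $h_\beta$ is the solution of the Dirichlet problem $(L_\beta h_\beta)(\eta)=0$ for $\eta\in\mathcal{C}$ with the prescribed boundary data, and by the maximum principle it converges to a constant on $\mathcal{C}$ (this is essentially the content of the cited fact that the exit is concentrated on $\partial^\star\mathcal{C}$, refined to the level of individual configurations). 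The first real step is therefore to identify that constant. The clean way is the variational/$H^1$ route: the quantity $\mathbb{P}_{\eta_0}[\eta_\beta(\mathcal{T}_{\partial\mathcal{C}})=\xi_0]$, averaged suitably against $\mu_\beta$ restricted to $\mathcal{F}(\mathcal{C})$, can be expressed via capacities, and one shows that up to negligible corrections
\[
\mathbb{P}_{\eta_0}[\eta_\beta(\mathcal{T}_{\partial\mathcal{C}})=\xi_0]\;=\;\frac{\mathrm{cap}_\beta(\mathcal{F}(\mathcal{C}),\,\xi_0)}{\mathrm{cap}_\beta(\mathcal{F}(\mathcal{C}),\,\partial^\star\mathcal{C})}\,\bigl(1+o(1)\bigr),
\]
uniformly in $\eta_0\in\mathcal{C}$, since the bulk of $\mathcal{C}$ is a cycle of positive depth $\Gamma^{\mathcal{C}}$ and mixes on a time scale much shorter than $e^{\beta\Gamma^{\mathcal{C}}}$, washing out the dependence on the starting point $\eta_0$.

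Next I would compute the asymptotics of each capacity. By definition $\mathrm{cap}_\beta(\mathcal{F}(\mathcal{C}),\partial^\star\mathcal{C})=\inf_f \tfrac12\sum_{\eta\sim\xi}\mu_\beta(\eta)r_\beta(\eta,\xi)\,(f(\eta)-f(\xi))^2$ over $f$ with the appropriate boundary values. Using the detailed balance identity \eqref{eq:det bal}, every edge weight $\mu_\beta(\eta)r_\beta(\eta,\xi)$ equals $Z_\beta^{-1}e^{-\beta\max\{\mathbb{H}(\eta),\mathbb{H}(\xi)\}}$. The edges that carry the dominant (largest) weight crossing from $\mathcal{C}$ to $\partial^\star\mathcal{C}$ are exactly those joining $\partial^\star\mathcal{C}$ to its neighbors inside $\mathcal{C}$, and each such edge has weight $Z_\beta^{-1}e^{-\beta\,\mathbb{H}(\partial^\star\mathcal{C})}$ since $\mathbb{H}(\partial^\star\mathcal{C})=\min_{\partial\mathcal{C}}\mathbb{H}>\max_{\mathcal{C}}\mathbb{H}$. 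A matching test function and matching flow (Thomson/Dirichlet principles) then give
\[
\mathrm{cap}_\beta(\mathcal{F}(\mathcal{C}),\,\partial^\star\mathcal{C})\;=\;Z_\beta^{-1}e^{-\beta\,\mathbb{H}(\partial^\star\mathcal{C})}\;\Bigl(\sum_{\eta\in\mathcal{C}}\sum_{\xi\in\partial^\star\mathcal{C}}\mathbf{1}\{\eta\sim\xi\}\Bigr)\,\bigl(1+o(1)\bigr),
\]
and likewise with $\partial^\star\mathcal{C}$ replaced by the single configuration $\xi_0$, where the combinatorial factor becomes $\sum_{\eta\in\mathcal{C}}\mathbf{1}\{\eta\sim\xi_0\}$. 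Here the crucial structural input is that all of $\partial^\star\mathcal{C}$ sits at the same energy level, so no single boundary configuration is exponentially preferred; the relative weights are purely the number of connecting edges. Dividing the two capacity asymptotics, the $Z_\beta^{-1}e^{-\beta\,\mathbb{H}(\partial^\star\mathcal{C})}$ prefactors cancel and one is left with precisely the stated ratio.

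The main obstacle is making the second equality rigorous: proving that the capacity ratio, which only controls an $\mu_\beta$-average of the harmonic function over $\mathcal{F}(\mathcal{C})$, actually gives the pointwise limit of $\mathbb{P}_{\eta_0}[\cdot]$ for \emph{every} $\eta_0\in\mathcal{C}$, including configurations high up in the cycle. This is where the $H^1$-approximation method of \cite{KS IsingPotts-2D,LanSeo RW-pot-field} enters: one constructs an explicit approximation $\widehat h_\beta$ to the true harmonic function $h_\beta$ — essentially locally constant on $\mathcal{C}$ with a boundary-layer correction near $\partial\mathcal{C}$ — and controls the error in $H^1(\mu_\beta)$ norm, which then upgrades to a uniform ($L^\infty$) estimate on $\mathcal{C}$ via the a priori bound relating the oscillation of a harmonic function on a cycle to its Dirichlet energy and the cycle depth. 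The technical heart is that the relevant ``escape'' edges all lie at energy $\mathbb{H}(\partial^\star\mathcal{C})$ while any alternative path out of $\mathcal{C}$ costs strictly more, so the approximation error is $e^{-\beta\delta}$-small for some $\delta>0$; I would carry out this estimate on the contracted/localized landscape near $\partial^\star\mathcal{C}$ and invoke the cycle structure (Definition \ref{def:cycle}, \eqref{eq:cyc def}) to dominate everything else. Once the uniform approximation is in hand, the theorem follows by taking $\beta\to\infty$.
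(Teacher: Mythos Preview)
Your overall instinct—use the $H^1$-approximation machinery to pin down the equilibrium potential associated with the exit event—is the right one, and your final paragraph essentially sketches the correct mechanism. But the middle of your argument takes a detour that introduces a genuine gap.

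The displayed identity
\[
\mathbb{P}_{\eta_0}[\eta_\beta(\mathcal{T}_{\partial\mathcal{C}})=\xi_0]\;=\;\frac{\mathrm{cap}_\beta(\mathcal{F}(\mathcal{C}),\,\xi_0)}{\mathrm{cap}_\beta(\mathcal{F}(\mathcal{C}),\,\partial^\star\mathcal{C})}\,(1+o(1))
\]
is not a standard potential-theoretic identity; there is no general formula expressing a splitting probability $\mathfrak{h}_{A,B}(x)$ for $x\notin A\cup B$ as a ratio of two capacities involving a third set. The reason it \emph{happens} to give the right answer here is precisely the fast-mixing/constant-potential phenomenon you are trying to prove, so invoking it is circular. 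Your justification (``averaged suitably against $\mu_\beta$ restricted to $\mathcal{F}(\mathcal{C})$, can be expressed via capacities'') does not point to an actual formula.

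The paper bypasses this entirely. It notes that the exit probability \emph{is} the equilibrium potential $\mathfrak{h}_{\xi_0,\,\partial\mathcal{C}\setminus\{\xi_0\}}(\eta_0)$, builds the explicit test function $G_0$ that equals the target constant $\mathfrak{a}(\xi_0)/\sum_{\xi\in\partial^\star\mathcal{C}}\mathfrak{a}(\xi)$ on all of $\mathcal{C}$ (no boundary-layer correction is needed—a pure step function suffices), and shows directly that $\mathscr{D}_\beta(G_0-\mathfrak{h}_{\xi_0,\partial\mathcal{C}\setminus\{\xi_0\}})=o(\mu_\beta(\partial^\star\mathcal{C}))$. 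The pointwise upgrade is then a one-line application of the Dirichlet principle to the difference $\delta_0:=G_0-\mathfrak{h}_{\xi_0,\partial\mathcal{C}\setminus\{\xi_0\}}$, which vanishes on $\partial\mathcal{C}$: one gets $\delta_0(\eta_0)^2\cdot\mathrm{cap}_\beta(\eta_0,\partial\mathcal{C})\le\mathscr{D}_\beta(\delta_0)$, and since $\mathrm{cap}_\beta(\eta_0,\partial\mathcal{C})$ is of the same order $\mu_\beta(\partial^\star\mathcal{C})$ (this is the only separate capacity computation actually needed), the conclusion $\delta_0(\eta_0)=o(1)$ follows. So the right organization is: one capacity estimate plus one $H^1$ estimate on the correct potential, not a ratio of two capacities plus a separate argument linking that ratio to the exit law.
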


In particular, Theorem \ref{thm:exit H1} states that the probability
of escaping $\mathcal{C}$ at $\xi_{0}\in\partial^{\star}\mathcal{C}$
is asymptotically proportional to the number of configurations in
$\mathcal{C}$ connected to $\xi_{0}$. This is consistent with the
definition of the transition rate of the induced Markov chain in \eqref{eq:RC def},
and plays a crucial role in the proof of Lemma \ref{lem:Fbeta Deltah}.
We prove Theorem \ref{thm:exit H1} in Section \ref{sec9}.

\section{\label{sec3}Kawasaki Dynamics with Macroscopic Number of Ising Particles}

In this section, we apply the general results presented in Section
\ref{sec2} to the Kawasaki dynamics with macroscopic number of Ising
particles.

\subsection*{Ising gas model}

For fixed positive integers $K$ and $L$, we consider a two-dimensional
periodic square lattice $\Lambda=(V,\,E)$ of side lengths $K$ and
$L$, i.e.,
\[
V:=\mathbb{T}_{K}\times\mathbb{T}_{L}=\{0,\,1,\,\dots,\,K-1\}\times\{0,\,1,\,\dots,\,L-1\},
\]
and $E$ is the set of unordered nearest-neighbor bonds in $V$. To
fix ideas, we assume that $K>L$.\footnote{We briefly discuss the case of $K=L$ in Remark \ref{rem:K=00003DL}.}

For a positive integer $\mathscr{N}$, the particle configuration
space $\Omega=\Omega_{\mathscr{N}}$ is defined as
\[
\Omega:=\Big\{\eta=(\eta(x))_{x\in V}\in\{0,\,1\}^{V}:\,\sum_{x\in V}\eta(x)=\mathscr{N}\Big\},
\]
where value $1$ (resp. $0$) indicates the occupied (resp. vacant)
state. In this sense, $\mathscr{N}$ indicates the total number of
particles in the system. We are interested in the case where there
exist macroscopic number of particles in the system. For this purpose,
we assume that $\mathscr{N}=L\mathscr{N}_{0}$ for an integer $\mathscr{N}_{0}$
such that
\begin{equation}
\frac{L}{4}<\mathscr{N}_{0}<\frac{K}{2},\quad\text{thus}\quad\frac{L^{2}}{4}<\mathscr{N}<\frac{KL}{2}.\label{eq:N0 assump}
\end{equation}
For each $\eta\in\Omega$, we define the Hamiltonian $\mathbb{H}(\eta)$
as
\begin{equation}
\mathbb{H}(\eta):=-\sum_{\{x,\,y\}\in E}\eta(x)\eta(y).\label{eq:H def}
\end{equation}
It is clear that $\mathbb{H}:\,\Omega\to\mathbb{R}$ is an integer-valued
function. According to this Hamiltonian, we assign a Gibbs measure
to $\Omega$ as in \eqref{eq:Gibbs def}.
\begin{rem}
\label{rem:spin system}We also regard this gas model as an Ising
spin system in $\Lambda$ with two spins $0$ and $1$.
\end{rem}

\subsection*{Kawasaki dynamics}

We define a dynamical system in $\Omega$. The \emph{Kawasaki dynamics}
in $\Omega$ is the continuous-time Markov chain $\{\eta_{\beta}(t)\}_{t\ge0}$
whose transition rate function $r_{\beta}:\,\Omega\times\Omega\to[0,\,\infty)$
is defined as
\begin{equation}
r_{\beta}(\eta,\,\xi):=\begin{cases}
e^{-\beta\max\{\mathbb{H}(\eta^{x\leftrightarrow y})-\mathbb{H}(\eta),\,0\}} & \text{if}\quad\xi=\eta^{x\leftrightarrow y}\ne\eta\quad\text{and}\quad\{x,\,y\}\in E,\\
0 & \text{otherwise}.
\end{cases}\label{eq:r-beta-def}
\end{equation}
Here, $\eta^{x\leftrightarrow y}$ is obtained from $\eta$ by exchanging
the states (vacant or occupied) at $x$ and $y$:
\begin{equation}
\eta^{x\leftrightarrow y}(x)=\eta(y),\quad\eta^{x\leftrightarrow y}(y)=\eta(x),\quad\text{and}\quad\eta^{x\leftrightarrow y}(z)=\eta(z)\quad\text{for}\quad z\ne x,\,y.\label{eq:eta-xy def}
\end{equation}
According to the Kawasaki dynamics, each particle in the lattice jumps
independently to its vacant neighbor with rate $1$ if the jump does
not increase the energy and with exponentially small rate $e^{-\beta\Delta}$
if the jump increases the energy by $\Delta>0$.

Denote by $L_{\beta}$ the corresponding infinitesimal stochastic
generator. It is clear that the Kawasaki dynamics $\{\eta_{\beta}(t)\}_{t\ge0}$
in $\Omega$ fits into the general setting considered in Section \ref{sec2}.

\subsection*{Ground states}

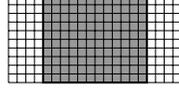
\begin{figure}
\begin{tikzpicture}[scale=0.115]
\fill[white] (-4,0) rectangle (16,10);

\fill[black!40!white] (0,0) rectangle (12,10);

\draw[very thin] (-4,0) grid (16,10);
\foreach \i in {0} {
\draw[thick] (0+20*\i,0) rectangle (12+20*\i,10);
}
\end{tikzpicture}\caption{\label{Fig3.1}Configuration $\bm{\sigma^{k}}$. We illustrate each
configuration in the dual lattice in the sense that the gray (resp.
white) faces indicate the $\mathscr{N}=L\mathscr{N}_{0}$ occupied
(resp. $KL-\mathscr{N}$ vacant) sites. Moreover, in each illustration,
the region surrounded by thick line indicates $\mathfrak{c}^{k}\cup\mathfrak{c}^{k+1}\cup\cdots\cup\mathfrak{c}^{k+\mathscr{N}_{0}-1}$
(cf. \eqref{eq:ck def}).}
\end{figure}

For each $k\in\mathbb{T}_{K}$, denote by $\mathfrak{c}^{k}$ the
$k$-th column in $\Lambda=\mathbb{T}_{K}\times\mathbb{T}_{L}$:
\begin{equation}
\mathfrak{c}^{k}:=\{k\}\times\mathbb{T}_{L}.\label{eq:ck def}
\end{equation}
Then, denote by $\bm{\sigma^{k}}\in\Omega$ the configuration such
that (see Figure \ref{Fig3.1})
\begin{equation}
\{x\in V:\,\bm{\sigma^{k}}(x)=1\}=\mathfrak{c}^{k}\cup\mathfrak{c}^{k+1}\cup\cdots\cup\mathfrak{c}^{k+\mathscr{N}_{0}-1}.\label{eq:sigmak def}
\end{equation}

\begin{thm}[Ground states]
\label{thm:Ham S}For all $\eta\in\Omega$, it holds that $\mathbb{H}(\eta)\ge-2L\mathscr{N}_{0}+L$.
Moreover, equality holds if and only if $\eta=\bm{\sigma^{k}}$ for
some $k\in\mathbb{T}_{K}$. In turn, $\mathcal{S}=\{\bm{\sigma^{k}}:\,k\in\mathbb{T}_{K}\}$.
\end{thm}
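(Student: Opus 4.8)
The plan is to establish the lower bound $\mathbb{H}(\eta)\ge -2L\mathscr{N}_0+L$ by counting the number of occupied bonds from the point of view of columns, and then to analyze the equality case. For a configuration $\eta\in\Omega$, write $n_k:=|\{x\in\mathfrak{c}^k:\,\eta(x)=1\}|$ for the number of particles in the $k$-th column, so that $\sum_{k\in\mathbb{T}_K}n_k=\mathscr{N}=L\mathscr{N}_0$ with each $n_k\in[0,\,L]$. I would split $-\mathbb{H}(\eta)=\sum_{\{x,y\}\in E}\eta(x)\eta(y)$ into \emph{vertical} bonds (both endpoints in the same column $\mathfrak{c}^k$) and \emph{horizontal} bonds (endpoints in adjacent columns $\mathfrak{c}^k$ and $\mathfrak{c}^{k+1}$). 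For vertical bonds: within a single column of $L$ sites arranged on the cycle $\mathbb{T}_L$, if $n_k$ of them are occupied then the number of occupied vertical bonds is at most $n_k$ (with equality iff the occupied sites form a single contiguous arc, or $n_k\in\{0,L\}$), and strictly at most $n_k-1$ if $0<n_k<L$ and the occupied set is not a contiguous arc — actually the clean bound is: number of occupied vertical bonds in column $k$ equals $n_k$ minus the number of ``occupied arcs'', which is $\le n_k$ and equals $n_k$ only when $n_k=L$. For horizontal bonds between columns $k$ and $k+1$: the number of occupied horizontal bonds there is at most $\min\{n_k,\,n_{k+1}\}$.

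Combining, $-\mathbb{H}(\eta)\le \sum_k n_k + \sum_k \min\{n_k,\,n_{k+1}\}$, and it remains to show the right side is at most $2L\mathscr{N}_0-L$ given the constraints $\sum_k n_k=L\mathscr{N}_0$, $0\le n_k\le L$. The first sum is exactly $L\mathscr{N}_0$, so I need $\sum_k \min\{n_k,\,n_{k+1}\}\le L\mathscr{N}_0-L = L(\mathscr{N}_0-1)$. Here is where the assumption \eqref{eq:N0 assump}, in particular $\mathscr{N}_0<K/2$, enters: since $\sum_k n_k = L\mathscr{N}_0 < LK/2$, the average column occupancy is less than $L/2$, so the columns cannot all be ``full''; in fact at least $K-\mathscr{N}_0 \ge ?$ columns must be strictly less than full, and more precisely, using $\min\{a,b\}\le \frac{a+b}{2}$ would only give $L\mathscr{N}_0$, which is not sharp enough, so I must use an argument that exploits that some $n_k$ are small. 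The key combinatorial lemma I would isolate and prove is: for nonnegative integers $n_0,\dots,n_{K-1}$ on the cycle $\mathbb{T}_K$ with each $n_k\le L$ and $\sum n_k = L\mathscr{N}_0$ where $\mathscr{N}_0<K/2$, one has $\sum_{k}\min\{n_k,n_{k+1}\}\le L(\mathscr{N}_0-1)$, with a description of when equality holds. A convenient route: since $\sum n_k < LK/2$, there exist two indices $j\ne j'$ with $n_j+n_{j'}\le L$... actually more useful is that there is at least one index with $n_k=0$ is \emph{not} guaranteed; instead I would argue by the identity $\sum_k \min\{n_k,n_{k+1}\} = \sum_k n_k - \sum_k (n_k-\min\{n_k,n_{k+1}\})^+$-type bookkeeping, or directly bound $\sum_k\min\{n_k,n_{k+1}\}\le \sum_k n_k - \max_k n_k \le L\mathscr{N}_0 - \max_k n_k$ and separately handle the case $\max_k n_k = L$ versus $\max_k n_k < L$; one checks that on the cycle, $\sum_k\min\{n_k,n_{k+1}\}\le \sum_k n_k - (\text{number of occupied blocks, weighted})$. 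Since $\mathscr{N}_0 < K/2$, the occupied columns cannot wrap the whole cycle, guaranteeing a ``drop'' of total size at least $L$, which yields the $-L$.

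For the equality case: tracing through the two inequalities, equality in the vertical-bond bound forces every column with $0<n_k<L$ to have its occupied sites contiguous, and equality in the horizontal-bond bound forces nested alignment of the occupied arcs in adjacent columns. Equality in the combinatorial lemma forces the occupancy profile $(n_k)$ to be ``all $L$ or all $0$'', i.e. exactly $\mathscr{N}_0$ consecutive columns (on $\mathbb{T}_K$) are completely full and the rest empty — this uses $\mathscr{N}_0<K/2$ again to rule out degenerate wraparound and to make the ``$\mathscr{N}_0$ consecutive full columns'' the unique profile shape. That profile, together with full columns, is precisely $\bm{\sigma^k}$ for some $k\in\mathbb{T}_K$, as in \eqref{eq:sigmak def}; hence $\mathcal{S}=\{\bm{\sigma^k}:\,k\in\mathbb{T}_K\}$.

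The main obstacle I anticipate is the sharpness of the combinatorial lemma $\sum_k\min\{n_k,n_{k+1}\}\le L(\mathscr{N}_0-1)$ and the complete characterization of its equality case: the naive bounds ($\min\le$ average, or $\le\min$ of neighbors) are off by exactly the additive $L$ that we need, so the argument must genuinely use the cyclic structure together with $\mathscr{N}_0<K/2$ to locate a guaranteed deficit of size $\ge L$ in the telescoping, and must show any equality profile is a single run of full columns. I would likely prove this lemma by induction on $K$ or by a direct exchange/rearrangement argument (pushing mass to make columns full or empty can only increase $\sum\min\{n_k,n_{k+1}\}$, reducing to the extremal profiles), being careful that $K>L$ (assumed in Section \ref{sec3}) and $\mathscr{N}_0>L/4$ play no essential role here — only $\mathscr{N}_0<K/2$ and $\mathscr{N}_0<L$ (which follows since $\mathscr{N}_0<K/2$... no, rather $\mathscr{N}_0 \le L$ is needed and follows from $\mathscr{N} = L\mathscr{N}_0 \le KL$, trivially, but we actually want $\mathscr{N}_0 \le L$; note $\frac{L}{4} < \mathscr{N}_0$ and we'd want an upper bound $\mathscr{N}_0 \le L$, which must be extracted — if $\mathscr{N}_0 > L$ the profile of $\bm{\sigma^k}$ would not make sense since there are only $\le L$... wait, $\bm\sigma^k$ occupies $\mathscr{N}_0$ full columns, needing $\mathscr{N}_0 \le K$, which holds since $\mathscr{N}_0 < K/2$). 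I will double-check at the outset which of the hypotheses in \eqref{eq:N0 assump} are actually invoked, and present the lemma as a clean standalone statement before specializing.
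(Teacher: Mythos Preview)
Your key combinatorial lemma --- that $\sum_{k}\min\{n_k,n_{k+1}\}\le L(\mathscr{N}_0-1)$ for every profile $(n_k)_{k\in\mathbb{T}_K}$ with $0\le n_k\le L$ and $\sum_k n_k=L\mathscr{N}_0$ --- is \emph{false}. Take $K=10$, $L=5$, $\mathscr{N}_0=3$ (so \eqref{eq:N0 assump} holds) and set $n_0=\cdots=n_4=3$, $n_5=\cdots=n_9=0$: then $\sum_k\min\{n_k,n_{k+1}\}=12>10=L(\mathscr{N}_0-1)$. The configuration realizing this profile with five aligned $3$-sticks has $-\mathbb{H}=10+12=22<25=2L\mathscr{N}_0-L$, so the theorem survives, but your intermediate bound $\sum_k n_k+\sum_k\min\{n_k,n_{k+1}\}=27$ overshoots. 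The loss is in bounding the vertical bonds crudely by $n_k$: when $0<n_k<L$ the true count is $n_k-a_k$ with $a_k\ge 1$, and it is this accumulated deficit that must offset the horizontal excess --- decoupling the two estimates discards the compensation. (Your rearrangement heuristic also fails: the strip profile with three full columns gives $\sum\min=10$, strictly less than the five-$3$'s profile above, so pushing toward full/empty columns \emph{decreases} the sum here.)

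The paper proceeds via the interface count: using $\mathbb{H}(\eta)=-2L\mathscr{N}_0+\tfrac12\,\mathbb{I}(\eta)$, it splits $\mathbb{I}=\sum_{k}\mathbb{I}_{\mathfrak{c}^k}+\sum_{\ell}\mathbb{I}_{\mathfrak{r}^\ell}$ into within-column and within-row contributions and runs a four-case analysis on bridges. The decisive case (vertical bridge, no horizontal bridge) gives $\mathbb{I}_{\mathfrak{r}^\ell}\ge 2$ for every $\ell$, hence $\mathbb{I}\ge 2L$ immediately; the cross case invokes the isoperimetric bound $\mathbb{I}\ge 4\sqrt{\mathscr{N}}$ together with $\mathscr{N}>L^2/4$. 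Contrary to your guess, both inequalities in \eqref{eq:N0 assump} and the hypothesis $K>L$ are essential to the \emph{statement}, not just the proof: if $\mathscr{N}_0\le L/4$ a square droplet beats the strip, and if $K\le L$ horizontal strips tie or win. If you want to rescue the column-profile route you must keep the vertical deficit $\sum_{k:\,0<n_k<L}a_k$ in play and couple it to the horizontal terms --- but at that point the paper's row-interface bookkeeping is shorter and cleaner.
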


The proof of Theorem \ref{thm:Ham S} is given in Section \ref{sec6.1}.
Hereafter, we define
\begin{equation}
\mathbb{H}_{0}:=-2L\mathscr{N}_{0}+L,\label{eq:H0 def}
\end{equation}
such that $\mathbb{H}(\eta)=\mathbb{H}_{0}$ for all $\eta\in\mathcal{S}$
and $\mathbb{H}(\eta)>\mathbb{H}_{0}$ for all $\eta\notin\mathcal{S}$.
\begin{rem}
\label{rem:phase trans particles}We note that the assumption in \eqref{eq:N0 assump}
highlights one of the main novelties here compared to the previous
studies \cite{BL Kawasaki,GoisLan}, where the authors considered
the case of $\mathscr{N}<\frac{L^{2}}{4}$. As a result, we notice
in Theorem \ref{thm:Ham S} that in order to lower the energy of the
system, the particles are most likely to line up in the vertical direction
(which has a shorter side length) and form a one-dimensional strip.
This is in contrast to the fact that when $\mathscr{N}<\frac{L^{2}}{4}$,
the particles are most likely to gather up and form a square cluster,
which has a two-dimensional geometry. This reveals a \emph{phase transition}
in the shape of the ground states at the sharp threshold $\mathscr{N}^{*}=\frac{L^{2}}{4}$.
\end{rem}

\subsection*{Energy barrier between ground states}

The energy barrier between the ground states in $\mathcal{S}$ is
presented as follows.
\begin{thm}[Energy barrier between ground states]
\label{thm:E barrier}For all distinct $k,\,k'\in\mathbb{T}_{K}$,
it holds that 
\[
\Phi(\bm{\sigma^{k}},\,\bm{\sigma^{k'}})=\mathbb{H}_{0}+4.
\]
\end{thm}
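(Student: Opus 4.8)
The plan is to establish the two inequalities $\Phi(\bm{\sigma^{k}},\,\bm{\sigma^{k'}}) \le \mathbb{H}_0 + 4$ and $\Phi(\bm{\sigma^{k}},\,\bm{\sigma^{k'}}) \ge \mathbb{H}_0 + 4$ separately, and then reduce the general case of arbitrary distinct $k,\,k'$ to the case of neighboring columns by the subadditivity relation \eqref{eq:Phi max ineq}. Indeed, since the columns $\mathfrak{c}^0, \mathfrak{c}^1, \dots, \mathfrak{c}^{K-1}$ are arranged cyclically, one can pass from $\bm{\sigma^{k}}$ to $\bm{\sigma^{k'}}$ by a chain of single-column shifts $\bm{\sigma^{k}} \to \bm{\sigma^{k+1}} \to \cdots \to \bm{\sigma^{k'}}$; by \eqref{eq:Phi max ineq} it then suffices to show $\Phi(\bm{\sigma^{k}},\,\bm{\sigma^{k+1}}) = \mathbb{H}_0 + 4$ for every $k$, together with the lower bound that no path between any two \emph{distinct} ground states can stay strictly below level $\mathbb{H}_0 + 4$.

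For the upper bound, I would exhibit an explicit path from $\bm{\sigma^{k}}$ to $\bm{\sigma^{k+1}}$ whose height is exactly $\mathbb{H}_0 + 4$. The natural move is to detach particles one at a time from the ``trailing'' column $\mathfrak{c}^{k}$ and transport them along the edge of the strip to the ``leading'' side, building up the new column $\mathfrak{c}^{k+\mathscr{N}_0}$. Concretely: slide a corner particle of $\mathfrak{c}^{k}$ one step up/down along the column to create a protuberance (this costs $+2$ relative to the minimum since it breaks one bond... actually one must track the bond count carefully), then walk it across the top or bottom row to the far column, where it reattaches. The key point is that the maximal energy along this path — achieved at the ``worst'' intermediate configuration, typically when a single particle has been fully detached and is in transit as an isolated protuberance on an otherwise-intact strip — equals $\mathbb{H}_0 + 4$. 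One must check that moving the particle along the row does not incur additional cost beyond the initial detachment, and that reattaching it on the other side brings the energy back down. Iterating this $L$ times (once per particle in the column) keeps the height at $\mathbb{H}_0+4$ throughout, since at each stage the bulk of the strip is just a shifted version of a near-ground configuration.

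For the lower bound, I would argue that any configuration $\zeta$ lying on a path between two distinct ground states, at the moment the path first ``leaves'' the basin of $\bm{\sigma^{k}}$, cannot be a ground state and cannot be a near-ground configuration of energy $\mathbb{H}_0 + 1, \mathbb{H}_0+2$, or $\mathbb{H}_0+3$ that still admits a downhill route to $\bm{\sigma^{k}}$ only — i.e. there must be a genuine saddle of height at least $\mathbb{H}_0 + 4$. The cleanest route is an isoperimetric-type estimate: using the Hamiltonian \eqref{eq:H def}, $\mathbb{H}(\eta) = -(\text{number of occupied nearest-neighbor bonds})$, so $\mathbb{H}(\eta) - \mathbb{H}_0$ measures the ``excess boundary'' of the occupied set relative to the optimal strip shape of width $\mathscr{N}_0$ spanning the short direction. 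One shows that to transform one full-height strip into another (shifted) full-height strip, at some intermediate step the occupied region must fail to contain any full column $\mathfrak{c}^j$, or must have a shape whose perimeter exceeds that of the strip by at least $4$; a careful case analysis on the number of complete occupied columns and the structure of the ``incomplete'' columns gives the bound $\mathbb{H}(\zeta) \ge \mathbb{H}_0 + 4$.

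I expect the lower bound to be the main obstacle. The upper bound is a matter of writing down one clever path and bookkeeping bond counts, which is routine once the right path is chosen. The lower bound requires a genuine combinatorial/isoperimetric argument showing that \emph{every} path between distinct ground states is forced through level $\mathbb{H}_0 + 4$; the delicate part is ruling out ``cheap'' deformations that might conceivably reshuffle the strip at cost only $+2$ or $+3$ — one has to use the constraint $\frac{L}{4} < \mathscr{N}_0 < \frac{K}{2}$ from \eqref{eq:N0 assump} (so that a strip of width $\mathscr{N}_0$ genuinely spans the short side $L$ and fits within the long side $K$) to pin down the optimal shapes and their stability. This is presumably where the detailed energy-landscape analysis of Sections \ref{sec6}--\ref{sec8} is invoked, so in practice I would defer the heavy lifting to the structural lemmas proved there and present here only the reduction to neighboring columns plus the explicit optimal path.
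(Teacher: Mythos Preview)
Your overall architecture---split into upper and lower bounds, reduce the upper bound to adjacent ground states via \eqref{eq:Phi max ineq}, and construct an explicit path---matches the paper exactly (Lemmas \ref{lem:E barrier UB} and \ref{lem:E barrier LB}). However, your description of the upper-bound path is garbled: you cannot ``slide a corner particle of $\mathfrak{c}^{k}$ up/down along the column'' since that column is fully occupied, and walking a detached particle around the exterior is not what the paper does. The paper's path (Figure~\ref{Fig6.1}) instead processes the strip row by row: for each row $q$ it first pushes the particle at $(k+\mathscr{N}_{0}-1,q)$ one step right into the empty column, then propagates the resulting hole leftward across the row until it exits at column $k$. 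A direct bond count shows the height is $\mathbb{H}_{0}+3$ on the first and last rows and $\mathbb{H}_{0}+4$ on the intermediate rows, so $\Phi_{\omega}=\mathbb{H}_{0}+4$.

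For the lower bound, your isoperimetric sketch is not the paper's route and would not close the gap on its own. The paper does \emph{not} argue via perimeter excess on a single saddle configuration; instead it performs an exhaustive enumeration of $\mathcal{N}^{k}=\{\xi:\Phi(\bm{\sigma^{k}},\xi)<\mathbb{H}_{0}+4\}$ in Section~\ref{sec7.2.2}, obtaining the explicit decomposition \eqref{eq:Nk-dec} into finitely many named pieces ($\mathcal{S}_{1}^{k}$, $\mathcal{F}^{k}$, $\mathcal{R}^{k}$, $\mathcal{P}_{1}^{k}$, etc.). Lemma~\ref{lem:E barrier LB} is then the trivial observation that these explicit sets are disjoint for distinct $k$, whence $\mathcal{N}^{k}\cap\mathcal{N}^{k'}=\emptyset$, i.e.\ $\Phi(\bm{\sigma^{k}},\bm{\sigma^{k'}})\ge\mathbb{H}_{0}+4$. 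Your ``case analysis on the number of complete occupied columns'' points in the right direction, but the actual work is a dynamical enumeration of what is reachable at height $\le\mathbb{H}_{0}+3$, not a static isoperimetric bound---and you were right to anticipate that this is precisely what the landscape analysis in Section~\ref{sec7} supplies.
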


We investigate detailed features of the energy landscape in Sections
\ref{sec6} and \ref{sec7}. As a byproduct of these precise results,
we prove Theorem \ref{thm:E barrier} at the end of Section \ref{sec7.2}.

Theorem \ref{thm:E barrier} indicates that in the present setting,
it holds that (cf. \eqref{eq:Phi-bar def} and \eqref{eq:Omega-bar def})
\begin{equation}
\overline{\Phi}=\mathbb{H}_{0}+4,\quad\text{thus}\quad\overline{\Omega}=\{\eta\in\Omega:\,\Phi(\mathcal{S},\,\eta)\le\mathbb{H}_{0}+4\}.\label{eq:Kawa Omega-bar}
\end{equation}
 Hereafter, our investigation is focused on $\overline{\Omega}$.

\subsection*{Hierarchical tunneling transitions between metastable valleys}

Now, we are ready to state the main results on the hierarchical tunneling
transitions of the Kawasaki dynamics. Recall from \eqref{eq:P1 def}
that $\mathscr{P}^{1}$ is the collection of all stable plateaux in
$\overline{\Omega}$. First, we characterize $\mathscr{P}^{1}$. Since
the definitions of the stable plateaux that appear below are quite
complicated, we defer the exact definitions to their first appearances
in the storyline, which are mentioned in the theorem below.
\begin{thm}[Characterization of $\mathscr{P}^{1}$]
\label{thm:P1 char}The elements of $\mathscr{P}^{1}$ are classified
as follows.
\begin{itemize}
\item \textbf{Energy $\mathbb{H}_{0}$}
\begin{itemize}
\item $\{\bm{\sigma^{k}}\}$ for each $k\in\mathbb{T}_{K}$ (cf. \eqref{eq:sigmak def})
\end{itemize}
\item \textbf{Energy $\mathbb{H}_{0}+2$}
\begin{itemize}
\item $\{\bm{\sigma_{m;\ell,\ell'}^{k}}\}$ for each $m\in[2,\,L-2]$ and
$\ell,\,\ell'\in\mathbb{T}_{L}$ (cf. Definition \ref{def:shallow bot})
\item $\mathcal{S}_{1}^{k}$, $\mathcal{S}_{L-1}^{k-1}$ (cf. Definition
\ref{def:shallow bot}), $\mathcal{R}^{k}$ and $\mathcal{L}^{k}$
(cf. Table \ref{Tab1})
\item $\{\eta\}$ for each $\eta\in\mathcal{R}_{(i)}^{k}\cup\mathcal{L}_{(i)}^{k}$
for $i\in[2,\,\frac{L}{2}]$ (cf. Table \ref{Tab3})
\end{itemize}
\item \textbf{Energy $\mathbb{H}_{0}+3$}
\begin{itemize}
\item $\{\eta\}$ for each $\eta\in\mathcal{D}_{m}^{k}$ for $m\in[2,\,L-2]$
(cf. Table \ref{Tab2})
\item stable plateaux in $\mathcal{R}_{m,\pm}^{k}\cup\mathcal{L}_{m,\mp}^{k}$
for $m\in[2,\,L-2]$ (cf. Table \ref{Tab2})
\item $\{\eta\}$ for each $\widehat{\mathcal{R}}_{(i)}^{k}\cup\widehat{\mathcal{L}}_{(i)}^{k}$
for $i\in[2,\,\frac{L-2}{2}]$ (cf. Table \ref{Tab3})
\item stable plateaux in each $\mathcal{R}_{(i),\pm}^{k}\cup\mathcal{L}_{(i),\mp}^{k}\cup\widehat{\mathcal{R}}_{(i),\pm}^{k}\cup\widehat{\mathcal{L}}_{(i),\mp}^{k}$
for $i\in[2,\,\frac{L+1}{2}]$ (cf. Table \ref{Tab3})
\end{itemize}
\end{itemize}
\end{thm}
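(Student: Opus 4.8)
The plan is to read the classification off the fine description of the energy landscape of $\overline{\Omega}$ that is established in Sections~\ref{sec6} and~\ref{sec7}, which I take for granted here. The backbone is the elementary identity $\mathbb{H}(\eta)=-2\mathscr{N}+P(\eta)/2$, where $P(\eta)$ denotes the perimeter of the occupied cluster, i.e.\ the number of bonds in $E$ with exactly one occupied endpoint; this follows by counting, since $\Lambda$ is $4$-regular and therefore $2\,|E_{\mathrm{occ}}(\eta)|+P(\eta)=4\mathscr{N}$, with $E_{\mathrm{occ}}(\eta)$ the set of bonds having both endpoints occupied. Consequently $\mathbb{H}(\eta)=\mathbb{H}_{0}+j$ precisely when $P(\eta)=2L+2j$, and by \eqref{eq:Kawa Omega-bar} every $\eta\in\overline{\Omega}$ satisfies $P(\eta)\le 2L+8$, so that $\mathbb{H}(\eta)\in\{\mathbb{H}_{0},\,\mathbb{H}_{0}+1,\,\dots,\,\mathbb{H}_{0}+4\}$. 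The first task is to enumerate, up to the symmetries of $\Lambda$ (these act on $\overline{\Omega}$ commuting with $\mathbb{H}$, and they account for the index $k\in\mathbb{T}_{K}$ and for the pairing between the $\mathcal{R}$- and $\mathcal{L}$-families), all configurations of $\overline{\Omega}$ at each of these five levels; this is exactly the content of the isoperimetric-type analysis of Sections~\ref{sec6}--\ref{sec7}, which produces the catalogue of shapes encoded by Definition~\ref{def:shallow bot} and Tables~\ref{Tab1}--\ref{Tab3} (strips, strips with one shifted column, with a protrusion, with a notch, with a detached column being assembled near a free end, and combinations thereof).

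Given this enumeration, what remains is a finite verification of the stable-plateau condition of Definition~\ref{def:stab plat} for each shape. The Kawasaki moves that preserve the energy are exactly the particle swaps that slide a particle along a flat segment of the cluster boundary or relocate a weakly bound particle; I would first describe these ``flat directions'' geometrically, then, for each $\eta$, identify the connected component $\mathcal{P}$ of the level set $\{\,\xi:\mathbb{H}(\xi)=\mathbb{H}(\eta)\,\}$ that contains $\eta$, and finally test whether $\mathbb{H}>\mathbb{H}(\mathcal{P})$ holds throughout $\partial\mathcal{P}$. A configuration whose equal-energy component admits a strictly downhill move lies in no stable plateau and is discarded; the remaining components are exactly the stable plateaux, and the classification in the statement is the list of these. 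This is how one checks that at energy $\mathbb{H}_{0}$ the strips $\bm{\sigma^{k}}$ are isolated local minima (every swap strictly raises the energy), giving the singletons $\{\bm{\sigma^{k}}\}$; that at energy $\mathbb{H}_{0}+2$ the shifted-column configurations $\bm{\sigma_{m;\ell,\ell'}^{k}}$ and the dislocated-particle configurations in $\mathcal{R}_{(i)}^{k}\cup\mathcal{L}_{(i)}^{k}$ are again isolated, while $\mathcal{S}_{1}^{k}$, $\mathcal{S}_{L-1}^{k-1}$, $\mathcal{R}^{k}$, $\mathcal{L}^{k}$ are genuinely multi-element plateaux (their members slide into one another along a flat edge of full length but cannot descend); and similarly through the energy-$\mathbb{H}_{0}+3$ families of Tables~\ref{Tab2}--\ref{Tab3}.

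Two of the five levels are handled by a short argument rather than case analysis. At energy $\mathbb{H}_{0}+1$ one shows there is no configuration in $\overline{\Omega}$ at all, equivalently that $P(\eta)<2L+4$ forces $\eta$ to be a strip, which sharpens Theorem~\ref{thm:Ham S} and is part of the Section~\ref{sec6} analysis; the mechanism is that $P(\eta)=P_{h}(\eta)+P_{v}(\eta)$ with $P_{v}(\eta)=2\sum_{k}r_{k}(\eta)$, where $r_{k}(\eta)$ is the number of occupied arcs in column $k$, so that $P_{v}$ is even and vanishes only when every column is full or empty. In the case $P_{v}=0$ one gets a union of $b\ge1$ blocks of consecutive full columns and $P_{h}=2bL$, hence $P_{h}=2L$ (the strip) or $P_{h}\ge 4L>2L+2$; in the case $P_{v}=2$ the unique partial column would contain a multiple of $L$ particles, impossible since $\mathscr{N}=L\mathscr{N}_{0}$; and $P_{v}\ge4$ combined with the refined horizontal bound of Section~\ref{sec6} forces $P\ge2L+4$. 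At the top level $\mathbb{H}_{0}+4=\overline{\Phi}$ there is likewise no stable plateau: if $\mathcal{P}\subseteq\overline{\Omega}$ had $\mathbb{H}(\mathcal{P})=\overline{\Phi}$, then fixing $\eta\in\mathcal{P}$ and following a path $\omega:\mathcal{S}\to\eta$ of height at most $\overline{\Phi}$ backwards until it first exits $\mathcal{P}$ (it must, since ground states have energy $\mathbb{H}_{0}<\overline{\Phi}$) produces a configuration in $\partial\mathcal{P}$ of energy at most $\overline{\Phi}=\mathbb{H}(\mathcal{P})$, contradicting Definition~\ref{def:stab plat}.

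The main obstacle is the combinatorial breadth of the energy-$\mathbb{H}_{0}+3$ stratum, where a cluster may carry a protrusion of height up to roughly $L/2$, a notch, or a partially assembled detached column near either free end of the strip, often in combination. Two points require care: first, making the enumeration genuinely exhaustive, which is where the cyclic geometry of $\mathbb{T}_{K}$ and especially of $\mathbb{T}_{L}$ intervenes, since, for instance, a protrusion of height $L-1$ wraps around and degenerates into a shifted column; second, for each shape correctly computing its equal-energy connected component and excluding hidden downhill moves, which is precisely why the shape parameters are restricted (for example $m\in[2,L-2]$ for $\bm{\sigma_{m;\ell,\ell'}^{k}}$ and for $\mathcal{D}_{m}^{k}$) and why the extreme cases $m=1$ and $m=L-1$ are absorbed into the larger, highly mobile plateaux $\mathcal{S}_{1}^{k}$ and $\mathcal{S}_{L-1}^{k-1}$. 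Organizing this bookkeeping is the role of Tables~\ref{Tab1}--\ref{Tab3}, and the proof of Theorem~\ref{thm:P1 char} amounts to verifying that this catalogue is complete and that each entry does, or does not, satisfy Definition~\ref{def:stab plat} as indicated.
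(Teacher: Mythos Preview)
Your proposal is correct and follows essentially the paper's approach: the paper's proof (Section~\ref{sec8}, first paragraph) simply collects Lemmas~\ref{lem:Nkm stab plat}, \ref{lem:Nk stab plat}, \ref{lem:Pkm Qkm stab plat}, \ref{lem:Nkm-hat stab plat} and \ref{lem:Nk-hat stab plat}, which identify the stable plateaux within each piece of the decomposition~\eqref{eq:E landscape dec} of $\overline{\Omega}$, and this is exactly what you do, organized by energy level rather than by the pieces $\widehat{\mathcal{N}}^{k}$, $\widehat{\mathcal{N}}_{m}^{k}$, $\widehat{\mathcal{P}}_{m}^{k}$, $\widehat{\mathcal{Q}}_{m}^{k}$. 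One minor addition in your write-up is worth keeping: your abstract argument that no stable plateau can sit at the ceiling $\mathbb{H}(\mathcal{P})=\overline{\Phi}$ (follow a height-$\overline{\Phi}$ path from $\mathcal{S}$ backwards to its first exit from $\mathcal{P}$) dispenses in one stroke with all the energy-$(\mathbb{H}_{0}+4)$ collections ($\mathcal{D}_{m,\mathfrak{0}}^{k}$, $\mathcal{P}_{m,\pm}^{k}$, $\mathcal{Q}_{m,\mp}^{k}$, $\mathcal{F}_{(i)}^{k}$, $\mathcal{D}^{k}$, $\widehat{\mathcal{P}}_{m}^{k}$, $\widehat{\mathcal{Q}}_{m}^{k}$, \ldots), whereas the paper rules these out case by case.
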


Next, we present the hierarchical decomposition of $\mathscr{P}^{1}$.
\begin{thm}[Hierarchical decomposition of $\mathscr{P}^{1}$]
\label{thm:hier dec}$ $
\begin{enumerate}
\item We have $\Gamma^{\star,1}=1$ and $\mathscr{P}^{1}=\mathscr{P}^{\star,1}$
is given in Theorem \ref{thm:P1 char} and decomposed as (cf. \eqref{eq:P-starh dec}
and \eqref{eq:P-starh C-starh rec})
\begin{align*}
\mathscr{P}_{{\rm rec}}^{\star,1}= & \bigcup_{k\in\mathbb{T}_{K}}\{\{\bm{\sigma^{k}}\}\}\cup\bigcup_{k\in\mathbb{T}_{K}}\bigcup_{m=2}^{L-2}\{\{\bm{\sigma_{m;\ell,\ell'}^{k}}\}:\,\ell,\,\ell'\in\mathbb{T}_{L}\}\\
 & \cup\bigcup_{k\in\mathbb{T}_{K}}\bigcup_{i\in[2,\,\frac{L}{2})}\{\{\eta\}:\,\eta\in\mathcal{R}_{(i)}^{k}\}\cup\bigcup_{k\in\mathbb{T}_{K}}\bigcup_{i\in[2,\,\frac{L}{2})}\{\{\eta\}:\,\eta\in\mathcal{L}_{(i)}^{k}\}\\
 & \cup\bigcup_{k\in\mathbb{T}_{K}}\bigcup_{\eta\in\mathcal{R}_{(\frac{L}{2})}^{k}}\{\{\eta\}\}\cup\bigcup_{k\in\mathbb{T}_{K}}\bigcup_{\eta\in\mathcal{L}_{(\frac{L}{2})}^{k}}\{\{\eta\}\}\quad\text{(if}\quad L\quad\text{is even)}.
\end{align*}
In particular, $\nu_{1}>1$.
\item We have $\Gamma^{\star,2}=2$ and $\mathscr{P}^{\star,2}$ is decomposed
as (cf. \eqref{eq:P-starh dec})
\[
\mathscr{P}_{{\rm rec}}^{\star,2}=\bigcup_{k\in\mathbb{T}_{K}}\{\{\bm{\sigma^{k}}\}\}
\]
and
\begin{align*}
\mathscr{P}_{{\rm tr}}^{\star,2}= & \bigcup_{k\in\mathbb{T}_{K}}\bigcup_{m=2}^{L-2}\{\mathcal{S}_{m}^{k}\}\cup\bigcup_{k\in\mathbb{T}_{K}}\bigcup_{i\in[2,\,\frac{L}{2})}\{\mathcal{R}_{(i)}^{k}\}\cup\bigcup_{k\in\mathbb{T}_{K}}\bigcup_{i\in[2,\,\frac{L}{2})}\{\mathcal{L}_{(i)}^{k}\}\\
 & \cup\bigcup_{k\in\mathbb{T}_{K}}\bigcup_{\eta\in\mathcal{R}_{(\frac{L}{2})}^{k}}\{\{\eta\}\}\cup\bigcup_{k\in\mathbb{T}_{K}}\bigcup_{\eta\in\mathcal{L}_{(\frac{L}{2})}^{k}}\{\{\eta\}\}\quad\text{(if}\quad L\quad\text{is even)}.
\end{align*}
In particular, $\nu_{2}=k>1$.
\item We have $\Gamma^{\star,3}=4$ and $\mathscr{P}^{\star,3}$ is decomposed
as
\[
\mathscr{P}^{\star,3}=\mathscr{P}_{{\rm rec}}^{\star,3}=\{\{\bm{\sigma^{k}}\}:\,k\in\mathbb{T}_{K}\}.
\]
In particular, $\nu_{3}=1$ thus $\mathfrak{m}=3$.
\end{enumerate}
\end{thm}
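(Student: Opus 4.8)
The plan is to execute the hierarchical construction of Section \ref{sec2.1} for the Kawasaki dynamics level by level, feeding in Theorems \ref{thm:Ham S} and \ref{thm:E barrier}, the classification of $\mathscr{P}^{1}$ in Theorem \ref{thm:P1 char}, and the fine description of the saddle structure of $\overline{\Omega}$ obtained in Sections \ref{sec6}--\ref{sec8}. Since $\mathbb{H}$ is integer-valued by \eqref{eq:H def}, each depth $\Gamma_{i}^{h}$ is a positive integer, and by Lemma \ref{lem:Gamma-star inc} it suffices to verify for $h=1,2,3$ that $\Gamma^{\star,h}$ equals $1,2,4$ respectively and that the irreducible decomposition \eqref{eq:P-starh dec} of $\{\mathfrak{X}^{\star,h}(t)\}_{t\ge0}$ is the claimed one; the conclusion $\mathfrak{m}=3$ is then forced by Theorem \ref{thm:nu dec} together with $\nu_{3}=1$.

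\emph{Level $1$.} By the saddle analysis of Section \ref{sec7}, every energy-$(\mathbb{H}_{0}+3)$ stable plateau listed in Theorem \ref{thm:P1 char} is connected by a path of height $\mathbb{H}_{0}+4$ to some distinct stable plateau of energy $\mathbb{H}_{0}+2$, so its initial depth is exactly $1$; by integrality this yields $\Gamma^{\star,1}=1$, hence $\mathscr{C}^{\star,1}=\mathscr{C}^{1}$ and $\mathscr{P}^{\star,1}=\mathscr{P}^{1}$. For the decomposition I would examine each stable plateau separately. A direct count with \eqref{eq:H def} shows that every single move out of $\bm{\sigma^{k}}$ costs exactly $3$ in energy (interior particles are frozen and a boundary-column particle can only leave into the adjacent empty column), so $\Phi(\bm{\sigma^{k}},\eta)\ge\mathbb{H}_{0}+3$ for every $\eta\ne\bm{\sigma^{k}}$; thus $\{\bm{\sigma^{k}}\}$ has initial depth $\ge 2$, is absorbing in $\{\mathfrak{X}^{\star,1}(t)\}_{t\ge0}$, and forms a recurrent singleton. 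Next, for fixed $k$ and $m\in[2,L-2]$ the plateaux $\{\bm{\sigma_{m;\ell,\ell'}^{k}}\}$ with $\ell,\ell'\in\mathbb{T}_{L}$ are pairwise joined by depth-$1$ paths (sliding a protuberance over a height-$(\mathbb{H}_{0}+3)$ saddle) while no depth-$1$ path escapes this family, so they constitute one recurrent class with union $\mathcal{S}_{m}^{k}$; likewise $\{\{\eta\}:\eta\in\mathcal{R}_{(i)}^{k}\}$ and $\{\{\eta\}:\eta\in\mathcal{L}_{(i)}^{k}\}$ are recurrent classes for $i\in[2,\frac{L}{2})$, and for even $L$ each singleton $\{\eta\}$ with $\eta\in\mathcal{R}_{(L/2)}^{k}\cup\mathcal{L}_{(L/2)}^{k}$ is a recurrent class. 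Every other stable plateau --- the remaining energy-$(\mathbb{H}_{0}+3)$ ones and the ``edge'' energy-$(\mathbb{H}_{0}+2)$ ones $\mathcal{S}_{1}^{k},\mathcal{S}_{L-1}^{k-1},\mathcal{R}^{k},\mathcal{L}^{k}$ --- admits a depth-$1$ transition to a strictly lower energy level or into one of the closed classes above, with positive probability of never returning, hence is transient. This is item (1), and $\nu_{1}>1$ already because the $K$ singletons $\{\bm{\sigma^{k}}\}$ are distinct recurrent components.

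\emph{Levels $2$ and $3$.} Passing the recurrent level-$1$ classes through \eqref{eq:P-starh def} gives the level-$2$ elements $\{\bm{\sigma^{k}}\}$, $\mathcal{S}_{m}^{k}$ ($m\in[2,L-2]$), $\mathcal{R}_{(i)}^{k},\mathcal{L}_{(i)}^{k}$ ($i\in[2,\frac{L}{2})$), and the singletons in $\mathcal{R}_{(L/2)}^{k}\cup\mathcal{L}_{(L/2)}^{k}$ for even $L$. The quantitative heart of this step is the identity $\Phi(\mathcal{S}_{m}^{k},\mathcal{S})=\mathbb{H}_{0}+4$ (and its analogues for $\mathcal{R}_{(i)}^{k},\mathcal{L}_{(i)}^{k}$): the upper bound is immediate from $\mathcal{S}_{m}^{k}\subseteq\overline{\Omega}$ and \eqref{eq:Kawa Omega-bar}, while the lower bound is precisely the assertion that $\mathcal{S}_{m}^{k}$ is closed under depth-$1$ transitions, established in Sections \ref{sec7}--\ref{sec8}. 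Hence $\Gamma^{\star,2}=2$. Moreover, since the level-$1$-transient plateaux $\mathcal{R}^{k},\mathcal{L}^{k},\dots$ have been dissolved, the second depth of $\{\bm{\sigma^{k}}\}$ now equals $\min_{k'}\Phi(\bm{\sigma^{k}},\bm{\sigma^{k'}})-\mathbb{H}_{0}=4>2$ by Theorem \ref{thm:E barrier} (using again $\Phi(\bm{\sigma^{k}},\mathcal{S}_{m}^{k'})\ge\mathbb{H}_{0}+4$), so $\{\bm{\sigma^{k}}\}$ is absorbing in $\{\mathfrak{X}^{\star,2}(t)\}_{t\ge0}$ and a recurrent singleton, whereas every remaining level-$2$ element has second depth exactly $2$ and, the contracted graph being connected at heights $\le\overline{\Phi}=\mathbb{H}_{0}+4$ with only the $\{\bm{\sigma^{k}}\}$ absorbing, drains into some $\{\bm{\sigma^{k}}\}$, hence is transient. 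This is item (2), and $\nu_{2}=K>1$. Finally, the recurrent level-$2$ components are the $K$ singletons $\{\{\bm{\sigma^{k}}\}\}$, so each level-$3$ element is $\mathcal{P}_{i}^{3}=\{\bm{\sigma^{k}}\}$ with third depth $\Phi(\bm{\sigma^{k}},\bigcup_{k'\ne k}\bm{\sigma^{k'}})-\mathbb{H}_{0}=4$ by Theorem \ref{thm:E barrier}; thus $\Gamma^{\star,3}=4$, and since all $\bm{\sigma^{k}}$ communicate at height $\mathbb{H}_{0}+4$, $\{\mathfrak{X}^{\star,3}(t)\}_{t\ge0}$ is irreducible, so $\nu_{3}=1$ and $\mathfrak{m}=3$. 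This is item (3).

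\emph{Main obstacle.} Granting Theorems \ref{thm:Ham S}, \ref{thm:E barrier}, \ref{thm:P1 char} and the saddle structure, everything above is bookkeeping with Definition \ref{def:gen const} and Lemmas \ref{lem:Vih cycle}--\ref{lem:Ch prop} (one also checks that the level-$1$-transient cycles are all swallowed by the level-$2$ cycles, so that $\mathscr{C}^{\sharp,2}=\emptyset$). The genuine difficulty lies entirely in the landscape input: (i) determining, for each of the numerous stable plateaux in Theorem \ref{thm:P1 char}, the exact set of configurations reachable by depth-$1$ paths, and thereby separating the ``bulk'' shallow valleys that close up into the classes $\mathcal{S}_{m}^{k},\mathcal{R}_{(i)}^{k},\mathcal{L}_{(i)}^{k}$ from the ``edge'' valleys $\mathcal{S}_{1}^{k},\mathcal{S}_{L-1}^{k-1},\mathcal{R}^{k},\mathcal{L}^{k}$ that leak to $\mathcal{S}$; and (ii) establishing the sharp communication-height identity $\Phi(\mathcal{S}_{m}^{k},\mathcal{S})=\mathbb{H}_{0}+4$, whose lower bound demands an isoperimetric-type estimate on $\overline{\Omega}$. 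This is the content of Sections \ref{sec6}--\ref{sec8} and is, I expect, by far the most laborious part of the argument.
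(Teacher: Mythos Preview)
Your proposal is correct and follows essentially the same route as the paper's proof in Section~\ref{sec8}: compute the initial depths plateau by plateau using the landscape classification of Sections~\ref{sec6}--\ref{sec7}, invoke Theorem~\ref{thm:P-star1 class}/\ref{thm:P-starh class} to sort recurrent from transient, and iterate. Two small slips worth noting: (i) your argument that every move out of $\bm{\sigma^{k}}$ costs $3$ actually yields initial depth $\ge 3$, not $\ge 2$ as you wrote (this is harmless since you only need $>1$); (ii) your parenthetical claim that $\mathscr{C}^{\sharp,2}=\emptyset$ is false---the energy-$(\mathbb{H}_{0}+3)$ transient plateaux in $\mathcal{D}_{m}^{k}$, $\mathcal{R}_{m,\pm}^{k}$, $\widehat{\mathcal{R}}_{(i)}^{k}$, etc.\ lie outside every $\mathcal{N}_{m}^{k}$ and $\mathcal{N}^{k}$ (hence outside every $\mathcal{V}_{i}^{2}$) and therefore do populate $\mathscr{C}^{\sharp,2}$. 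This does not affect your argument, since the recurrent/transient classification of $\mathscr{P}^{\star,2}$ depends only on the communication heights via Theorem~\ref{thm:P-starh class}, not on whether $\mathscr{C}^{\sharp,2}$ is empty.
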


Finally, the tunneling transitions are characterized as follows.
\begin{thm}[Metastable hierarchy between stable plateaux]
\label{thm:Kawasaki}For each $h\in[1,\,3]$, there exists a limiting
Markov chain $\{\mathfrak{X}^{\star,h}(t)\}_{t\ge0}$ in $\mathscr{P}^{\star,h}$
such that the $e^{\beta\Gamma^{\star,h}}$-accelerated Kawasaki dynamics
converges to $\{\mathfrak{X}^{\star,h}(t)\}_{t\ge0}$ in the sense
of Theorem \ref{thm:main}.
\end{thm}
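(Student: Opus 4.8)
The plan is to verify that, for each level $h\in\{1,2,3\}$, the hypotheses of the general Theorem \ref{thm:main} are satisfied by the Kawasaki dynamics, so that the conclusion follows immediately. The key point is that Theorems \ref{thm:P1 char} and \ref{thm:hier dec} have already done the combinatorial work: they identify $\mathscr{P}^{1}$, the initial depths $\Gamma^{\star,1}=1$, $\Gamma^{\star,2}=2$, $\Gamma^{\star,3}=4$, and the decompositions \eqref{eq:P-starh dec} of each $\mathscr{P}^{\star,h}$ into recurrent and transient parts. Given these, the only remaining task is to confirm that the abstract framework of Section \ref{sec2} genuinely applies to this concrete model and that the limiting chains $\{\mathfrak{X}^{\star,h}(t)\}_{t\ge0}$ produced by Definition \ref{def:gen const} are well-defined and nondegenerate at each level. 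First I would observe that the Kawasaki dynamics \eqref{eq:r-beta-def} is exactly a Metropolis dynamics in the sense of \eqref{eq:rbeta def} for the Hamiltonian \eqref{eq:H def} on the graph $\Omega$ whose edges join configurations differing by a single allowed particle swap $\eta\leftrightarrow\eta^{x\leftrightarrow y}$; since the periodic lattice is connected and $\mathscr{N}$ is fixed, $\Omega$ is a connected graph, so all structural results of Section \ref{sec2} are in force.

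Next I would instantiate the hierarchy. By Theorem \ref{thm:E barrier} and \eqref{eq:Kawa Omega-bar}, the essential subgraph $\overline{\Omega}$ is $\{\eta:\Phi(\mathcal{S},\eta)\le\mathbb{H}_{0}+4\}$, and by Theorem \ref{thm:Ham S} we have $\nu_{0}=|\mathscr{P}^{1}|\ge|\mathcal{S}|=K\ge2$, so the standing assumption $\nu_{0}\ge2$ holds. Now I apply the construction of Section \ref{sec2.1}: at level $1$, Theorem \ref{thm:hier dec}(1) gives $\Gamma^{\star,1}=1$ and the decomposition of $\mathscr{P}^{\star,1}=\mathscr{P}^{1}$ with $\nu_{1}>1$; at level $2$, Theorem \ref{thm:hier dec}(2) gives $\Gamma^{\star,2}=2$ with $\nu_{2}=K>1$; at level $3$, Theorem \ref{thm:hier dec}(3) gives $\Gamma^{\star,3}=4$ with $\nu_{3}=1$, so the terminal level is $\mathfrak{m}=3$ and, consistently with Theorem \ref{thm:ground states rec}, the unique irreducible class $\mathscr{P}_{1}^{\star,3}$ equals $\{\{\bm{\sigma^{k}}\}:k\in\mathbb{T}_{K}\}$. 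For each such $h$ the pair $(\mathscr{C}^{h},\Gamma^{\star,h})$ satisfies $|\mathscr{C}^{h}|\ge\nu_{h-1}\ge2$, so Definition \ref{def:gen const} legitimately produces the induced chain $\{\mathfrak{X}^{\mathscr{C}^{h}}(t)\}$ and its trace $\{\mathfrak{X}^{\star,h}(t)\}$ on $\mathscr{P}^{\star,h}$.

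With the hierarchy in place, the theorem is precisely the statement of Theorem \ref{thm:main} applied at levels $h=1,2,3$: part (1) of Theorem \ref{thm:main} gives the convergence of the $h$-th order process $\{X_{\beta}^{h}(t)\}$ to $\{\mathfrak{X}^{\star,h}(t)\}$ in the time scale $e^{\beta\Gamma^{\star,h}}$, and part (2) gives that the original Kawasaki dynamics spends negligible time outside $\mathcal{V}^{\star,h}$, which together are exactly the "sense of Theorem \ref{thm:main}" referred to in the statement. Thus the proof is a short deduction: check that the model fits the abstract setting, invoke Theorems \ref{thm:P1 char}--\ref{thm:hier dec} to supply the explicit hierarchy data, and then quote Theorem \ref{thm:main}.

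The main obstacle is not in this deduction but in what it rests on: the proofs of Theorems \ref{thm:P1 char} and \ref{thm:hier dec}, which require the delicate saddle-structure analysis of the energy landscape of the Kawasaki dynamics carried out in Sections \ref{sec6}--\ref{sec8}. In particular, verifying that the resolvent condition underlying Theorem \ref{thm:main} (cf. \eqref{eq:res cond} and \eqref{eq:res cond-h}) holds for this model demands controlling the large saddle plateau between monochromatic configurations and the many shallow valleys it contains — the feature the introduction singles out as making the zero-field Kawasaki dynamics genuinely harder than its asymmetric counterparts. Once those structural results and the resolvent estimates (relying on Theorem \ref{thm:exit H1} for the exit distributions, through Lemma \ref{lem:Fbeta Deltah}) are established, the statement of Theorem \ref{thm:Kawasaki} follows with no further work.
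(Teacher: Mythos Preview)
Your proposal is correct and matches the paper's approach exactly: the paper's entire proof of Theorem~\ref{thm:Kawasaki} is the single sentence ``Theorem~\ref{thm:Kawasaki} is now straightforward from the general theory developed in Section~\ref{sec2} and proved in Sections~\ref{sec4} and~\ref{sec5},'' which is precisely your deduction that once the model is seen to fit the abstract Metropolis framework and Theorems~\ref{thm:P1 char}--\ref{thm:hier dec} supply the hierarchy data, Theorem~\ref{thm:main} applies verbatim at each level $h\in\{1,2,3\}$. Your additional care in verifying $\nu_{0}\ge2$, $|\mathscr{C}^{h}|\ge2$, and the well-definedness of the construction in Definition~\ref{def:gen const} is sound and only makes the deduction more explicit than the paper's one-line version.
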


According to the remark after Theorem \ref{thm:ground states rec},
the level-$3$ transitions occur between the ground states $\{\bm{\sigma^{k}}\}$
for each $k\in\mathbb{T}_{K}$, which are indeed the deepest tunneling
transitions between the stable states. We record a characteristic
of this level-$3$ Markov chain which demonstrates the complicated
energy landscape of the Kawasaki dynamics. Recall that $\mathfrak{R}^{\star,h}(\cdot,\,\cdot)$
is the transition rate function of $\{\mathfrak{X}^{\star,h}(t)\}_{t\ge0}$.
\begin{thm}
\label{thm:pos rates}For all distinct $k,\,k'\in\mathbb{T}_{K}$,
it holds that $\mathfrak{R}^{\star,3}(\{\bm{\sigma^{k}}\},\,\{\bm{\sigma^{k'}}\})>0$.
\end{thm}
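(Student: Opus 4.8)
The plan is to show that the level-3 limiting chain $\{\mathfrak{X}^{\star,3}(t)\}_{t\ge0}$ on $\mathscr{P}^{\star,3}=\{\{\bm{\sigma^{k}}\}:k\in\mathbb{T}_K\}$ has a positive rate between \emph{every} ordered pair of distinct ground states, not merely an irreducible rate matrix. The starting point is Theorem \ref{thm:hier dec}(3): at level $3$ we have $\Gamma^{\star,3}=4$, $\nu_3=1$, and $\mathscr{P}^{\star,3}=\mathscr{P}_{{\rm rec}}^{\star,3}$. Unwinding Definition \ref{def:gen const}, the rate $\mathfrak{R}^{\star,3}(\{\bm{\sigma^{k}}\},\{\bm{\sigma^{k'}}\})$ is a sum over $\eta\in\Delta^{3}$ of $\mathfrak{R}^{3}(\{\bm{\sigma^{k}}\},\eta)\cdot\mathbf{P}^{\mathscr{C}^{3}}_{\eta}[\mathcal{T}_{\{\bm{\sigma^{k'}}\}}=\mathcal{T}_{\mathscr{P}^{\star,3}}]$, where $\mathfrak{R}^{3}(\{\bm{\sigma^{k}}\},\eta)=|\mathcal{F}(\mathcal{V}^3_k)|^{-1}|\{\zeta\in\mathcal{V}^3_k:\eta\sim\zeta\}|$ is positive precisely for $\eta\in\partial^{\star}\mathcal{V}^3_k$ (the minimizing boundary of the depth-$4$ cycle around $\bm{\sigma^{k}}$), since $\Gamma^{\mathcal{V}^3_k}=\Gamma^{\star,3}=4$. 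So it suffices to exhibit, for each pair $k\ne k'$, one configuration $\eta\in\partial^{\star}\mathcal{V}^3_k$ from which the contracted chain $\{\mathfrak{X}^{3}(t)\}$ reaches $\{\bm{\sigma^{k'}}\}$ before returning to any other ground state; equivalently, a path in $\Omega^{3}$ of non-increasing $\mathbb{H}$ (after the first descending step out of the cycle) from $\eta$ down to $\bm{\sigma^{k'}}$ that does not touch $\mathscr{P}^{\star,3}\setminus\{\bm{\sigma^{k'}}\}$ en route, which forces that event to have positive probability.

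The concrete step I would carry out is a direct construction in the Kawasaki configuration space. Recall $\bm{\sigma^{k}}$ is the strip occupying columns $\mathfrak{c}^k,\dots,\mathfrak{c}^{k+\mathscr{N}_0-1}$. The standard mechanism for moving a strip by one column is: detach a single particle from the far edge of the strip (the $\mathbb{H}_0+2$ protuberance configurations $\bm{\sigma^{k}_{1;\ell,\ell'}}$ and their relatives in $\mathcal{S}^k_1$, $\mathcal{R}^k$, $\mathcal{L}^k$), slide it around — this is the $\mathbb{H}_0+3$ and $\mathbb{H}_0+4$ saddle structure enumerated in Tables \ref{Tab1}--\ref{Tab3} — and reattach it on the opposite edge, ultimately reaching $\bm{\sigma^{k+1}}$ or $\bm{\sigma^{k-1}}$. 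The energy-$4$ saddle configurations sitting in $\partial^{\star}\mathcal{V}^3_k$ are exactly the bottlenecks of this mechanism (consistent with Theorem \ref{thm:E barrier}, $\Phi(\bm{\sigma^k},\bm{\sigma^{k'}})=\mathbb{H}_0+4$). Thus I would first establish that $\bm{\sigma^{k}}$ and $\bm{\sigma^{k\pm1}}$ are connected through $\partial^{\star}\mathcal{V}^3_k$ by a path whose only ground-state endpoints are $\bm{\sigma^k}$ and $\bm{\sigma^{k\pm1}}$; this gives $\mathfrak{R}^{\star,3}(\{\bm{\sigma^{k}}\},\{\bm{\sigma^{k\pm1}}\})>0$. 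Then I would iterate: from any $k$, chain together single-column moves $k\to k+1\to\cdots\to k'$, noting that once the chain $\{\mathfrak{X}^{3}(t)\}$ is sitting at $\bm{\sigma^{k+1}}$ (an absorbing-looking vertex only at level $3$, but in $\Omega^3$ it is genuinely reachable from and to), there is positive probability of continuing toward $k'$. More carefully, because we only need a \emph{single} $\eta\in\partial^{\star}\mathcal{V}^3_k$ with a downhill-to-$\bm{\sigma^{k'}}$ trajectory avoiding the other $\bm{\sigma^j}$'s, I would exhibit that trajectory explicitly: exit the $k$-cycle at a saddle, descend into $\Delta^3$, traverse the shallow-valley landscape moving the protuberance column by column (each intermediate strip $\bm{\sigma^j}$, $k<j<k'$, is itself a ground state and hence in $\mathscr{P}^{\star,3}$, so this naive route is \emph{not} allowed) — and this is exactly the subtlety.

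The main obstacle, therefore, is the following: a path from $\bm{\sigma^k}$ to $\bm{\sigma^{k'}}$ that proceeds by unit-column shifts necessarily passes through the intermediate ground states $\bm{\sigma^{k+1}},\dots,\bm{\sigma^{k'-1}}$, all of which lie in $\mathscr{P}^{\star,3}$, so such a path does \emph{not} witness $\mathcal{T}_{\{\bm{\sigma^{k'}}\}}=\mathcal{T}_{\mathscr{P}^{\star,3}}$. I would resolve this not by finding a "long-jump" path (column-shifts of length $>1$ cost more than energy $4$, by the isoperimetric analysis behind Theorem \ref{thm:Ham S}), but by exploiting the structure of the trace chain $\mathfrak{R}^{\star,3}$: it is enough that $\mathfrak{R}^{\star,3}(\{\bm{\sigma^{k}}\},\{\bm{\sigma^{k'}}\})$ is positive, and I claim it genuinely is, because from a saddle $\eta\in\partial^{\star}\mathcal{V}^3_k$ the $\{\mathfrak{X}^3\}$-trajectory can, with positive probability, wander through the shallow $\mathbb{H}_0+2$ and $\mathbb{H}_0+3$ plateaux — which form a connected sub-landscape of $\Delta^3$ linking the neighborhoods of \emph{consecutive} strips — in such a way that it slides the protuberance all the way from column $k-1$ (relative to $\bm{\sigma^k}$) to column $k'$ without the full strip ever coinciding with an intermediate $\bm{\sigma^j}$; the point is that the "partially-shifted" configurations (one column half-built, the old edge-column partially dissolved, total energy $\le\mathbb{H}_0+4$) are never ground states. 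Establishing the existence of this connecting corridor requires a careful reading of the geometry in Sections \ref{sec6}--\ref{sec7} and Tables \ref{Tab1}--\ref{Tab3}, in particular the fact — already implicit in Theorem \ref{thm:hier dec}(2), where at level $2$ all the $\mathcal{S}^k_m$, $\mathcal{R}^k_{(i)}$, $\mathcal{L}^k_{(i)}$ become transient feeding into the $\bm{\sigma^k}$'s — that the energy-$\le \mathbb{H}_0+3$ portion of $\Delta^3$ between two adjacent strip-neighborhoods is connected \emph{through configurations that are not themselves strips}. Given that corridor, the desired positive probability follows from finiteness of the state space (every self-avoiding realizable trajectory in a finite chain has positive probability). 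Everything else is bookkeeping with Definition \ref{def:gen const}.
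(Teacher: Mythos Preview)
Your high-level strategy matches the paper's: you correctly identify the obstacle (naive column-by-column shifts pass through the intermediate ground states $\bm{\sigma^j}$, which lie in $\mathscr{P}^{\star,3}$ and must be avoided) and correctly propose to resolve it via a bypass corridor through the shallow landscape around each intermediate valley. But you have not actually provided the bypass, and your description of it contains imprecisions suggesting the mechanism is not yet in hand. The avoidance condition is stronger than you state: in $\Omega^3$ each $\mathcal{V}^3_j=\mathcal{N}^j$ is contracted to a single vertex, so the path must avoid all of $\mathcal{N}^j$ (cf.\ \eqref{eq:Nk-dec}), not just the configuration $\bm{\sigma^j}$. Your claim about the ``energy-$\le\mathbb{H}_0+3$ portion of $\Delta^3$'' being connected is loose (the shallow plateaux sit in $\mathscr{P}^{\sharp,3}$, not $\Delta^3$) and in fact the bypass must pass through energy-$(\mathbb{H}_0+4)$ configurations; the ``sliding protuberance'' picture also does not quite describe what happens.

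The paper's proof is precisely the explicit construction you defer. After the forward sweep $\mathcal{N}^k\to\mathcal{P}_2^k\to\mathcal{N}_2^k\to\cdots\to\mathcal{N}_{L-2}^k\to\mathcal{P}_{L-1}^k$ reaches the last major passage before $\mathcal{N}^{k+1}$, instead of descending into $\mathcal{N}^{k+1}$ one takes the detour
\[
\mathcal{P}_{L-1}^k \to \mathcal{L}_{(2),-}^{k+1} \to \mathcal{G}_{(2),-}^{k+1} \to \widehat{\mathcal{G}}_{(2),-}^{k+1} \to \mathcal{G}_{(2)}^{k+1} \to \widehat{\mathcal{G}}_{(2),+}^{k+1} \to \mathcal{G}_{(2),+}^{k+1} \to \mathcal{L}_{(2),+}^{k+1} \to \mathcal{Q}_2^{k+1},
\]
which runs through the left-edge structure of $\widehat{\mathcal{N}}^{k+1}\setminus\mathcal{N}^{k+1}$ (cf.\ Figure \ref{Fig7.15}) and emerges at the minor passage $\mathcal{Q}_2^{k+1}$; from there the forward sweep resumes toward $\mathcal{P}_{L-1}^{k+1}$. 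Concatenating these detours across the intermediate indices yields a path in $\Omega^3$ from $\partial^\star\mathcal{N}^k$ to $\mathcal{N}^{k'}$ with $\mathfrak{R}^3(\cdot,\cdot)>0$ throughout and avoiding every other $\mathcal{N}^j$. The identification of this specific corridor---which genuinely requires the full taxonomy of Section \ref{sec7.5}---is the entire content of the proof; asserting that such a corridor exists is not a proof.
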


We prove Theorems \ref{thm:P1 char}, \ref{thm:hier dec}, \ref{thm:Kawasaki}
and \ref{thm:pos rates} in Section \ref{sec8}.
\begin{rem}[Uniformly positive rates]
\label{rem:pos rates}By Theorem \ref{thm:pos rates}, the limiting
dynamics $\{\mathfrak{X}^{\star,3}(t)\}_{t\ge0}$ in $\mathscr{P}^{\star,3}=\{\{\bm{\sigma^{k}}\}:\,k\in\mathbb{T}_{K}\}$
has uniformly positive rates. Geometrically, this means that a strip
starting from column $k\in\mathbb{T}_{K}$ can move an arbitrary distance
to another strip starting from column $k'\in\mathbb{T}_{K}$ with
a positive transition rate. This phenomenon is due to the highly complicated
geometry of the saddle structure subject to the Kawasaki dynamics,
which was also the case in \cite[Theorem 2.1]{BL Kawasaki}. In detail,
a single particle can move freely in the empty space without changing
the energy, which makes possible strange but typical metastable transitions.
See Figure \ref{Fig7.15} for such paths and also the proof of Theorem
\ref{thm:pos rates} at the end of Section \ref{sec8}.
\end{rem}

\begin{rem}[Case of $K=L$]
\label{rem:K=00003DL}Suppose here that $K=L$. Then, each stable
plateau classified in Theorem \ref{thm:P1 char} has a reflected counterpart
(by $\mathbb{T}_{K}\leftrightarrow\mathbb{T}_{L}$, which is possible
since $K=L$) with the same Hamiltonian value. Moreover, the hierarchical
metastable transitions in this reflected world remain the same as
characterized in Theorems \ref{thm:hier dec} and \ref{thm:Kawasaki},
but level $3$ would not be the terminal level since the original
ground states are not connected to the reflected ground states at
this level. Thus, the difference here occurs due to this additional
final transition, at level $\mathfrak{m}=4$, between the collection
of original ground states in $\mathcal{S}$ and the collection of
reflected new ground states. To see this, for simplicity we fix $k\in\mathbb{T}_{k}$,
recall $\bm{\sigma^{k}}\in\mathcal{S}$, and define a new configuration
$\bm{\widehat{\sigma}^{k}}$ defined as
\[
\{x\in V:\,\bm{\widehat{\sigma}^{k}}(x)=1\}=\mathfrak{r}^{k}\cup\mathfrak{r}^{k+1}\cup\cdots\cup\mathfrak{r}^{k+\mathscr{N}_{0}-1},
\]
where $\mathfrak{r}^{\ell}$ is the $\ell$-th row in $\Lambda$.
Then, according to the full analysis of the energy landscape given
in Section \ref{sec7}, it readily holds that
\[
\Phi(\bm{\sigma^{k}},\,\bm{\widehat{\sigma}^{k}})\ge\mathbb{H}_{0}+5.
\]
It would be quite an interesting but technical question to characterize exactly this energy barrier $\Phi(\bm{\sigma^{k}},\,\bm{\widehat{\sigma}^{k}})$, and also to dive into the exact energy landscape lying between these transitions. We decide not to go any further in this direction and leave it as a future research topic.
\end{rem}

\section{\label{sec4}Initial Step}

In this section, we prove the main results for the initial step $h=1$
by the following procedure. In Section \ref{sec4.1}, we provide some
key properties of the limiting Markov chain $\{\mathfrak{X}^{\star,1}(t)\}_{t\ge0}$
in $\mathscr{P}^{\star,1}=\mathscr{P}^{1}$ (cf. \eqref{eq:P-star1 P-sharp1 prop}).
Then, in Section \ref{sec4.2}, we prove Theorems \ref{thm:nu dec}
and \ref{thm:ground states rec} for $h=1$. Finally, in Section \ref{sec4.3},
we prove Theorem \ref{thm:main} for $h=1$.

In this section, we omit the superscripts $1$ in $\mathcal{P}_{i}^{1}$
and $\mathcal{V}_{i}^{1}$.

\subsection{\label{sec4.1}Properties of the limiting Markov chain}

We record some key properties of $\{\mathfrak{X}^{\star,1}(t)\}_{t\ge0}$
in $\mathscr{P}^{1}$. Recall \eqref{eq:P-star1 dec} and \eqref{eq:P-starh C-starh rec}.
In Section \ref{sec4.1}, we prove the following theorem.
\begin{thm}
\label{thm:P-star1 class}The following classification holds.
\begin{enumerate}
\item If $\mathcal{P}_{i}\in\mathscr{P}_{m}^{\star,1}$ for some $m\in[1,\,\nu_{1}]$
with $|\mathscr{P}_{m}^{\star,1}|=1$, then $\Gamma_{i}^{1}>\Gamma^{\star,1}$.
\item If $\mathcal{P}_{i}\in\mathscr{P}_{{\rm tr}}^{\star,1}$, then $\Gamma_{i}^{1}=\Gamma^{\star,1}$
and there exists $\mathcal{P}_{j}\in\mathscr{P}^{1}$ such that $\Phi(\mathcal{P}_{i},\,\mathcal{P}_{j})-\mathbb{H}(\mathcal{P}_{i})=\Gamma^{\star,1}$
and $\mathbb{H}(\mathcal{P}_{j})<\mathbb{H}(\mathcal{P}_{i})$.
\item If $\mathcal{P}_{i}\in\mathscr{P}_{m}^{\star,1}$ for some $m\in[1,\,\nu_{1}]$
with $|\mathscr{P}_{m}^{\star,1}|\ge2$, then $\Gamma_{i}^{1}=\Gamma^{\star,1}$
and $\mathbb{H}(\mathcal{P}_{j})=\mathbb{H}(\mathcal{P}_{i})$ for
all $\mathcal{P}_{j}\in\mathscr{P}^{1}$ such that $\Phi(\mathcal{P}_{i},\,\mathcal{P}_{j})-\mathbb{H}(\mathcal{P}_{i})=\Gamma^{\star,1}$.
Moreover, in this case,
\[
\mathscr{P}_{m}^{\star,1}\setminus\{\mathcal{P}_{i}\}=\{\mathcal{P}_{j}\in\mathscr{P}^{1}:\,\Phi(\mathcal{P}_{i},\,\mathcal{P}_{j})-\mathbb{H}(\mathcal{P}_{i})=\Gamma^{\star,1}\}.
\]
\end{enumerate}
\end{thm}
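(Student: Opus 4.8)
The plan is to turn the hypotheses on the initial depths and on the communication heights $\Phi$ into properties of the trace chain $\{\mathfrak{X}^{\star,1}(t)\}_{t\ge0}$, which by \eqref{eq:C-star1 C-sharp1 prop}--\eqref{eq:P-star1 P-sharp1 prop} is the trace of $\{\mathfrak{X}^{1}(t)\}_{t\ge0}$ on $\mathscr{P}^{1}$. First I would record two elementary facts. (a) On $\Delta^{1}$ the chain $\{\mathfrak{X}^{1}(t)\}_{t\ge0}$ performs only rate-$1$ moves along non-increasing edges (first line of \eqref{eq:RC def}); since $\Delta^{1}$ contains no stable plateau, every state of $\Delta^{1}$ is transient and the chain is absorbed into $\mathscr{P}^{1}$ almost surely, with the energy non-increasing along its excursions outside $\mathscr{P}^{1}$. (b) By the third line of \eqref{eq:RC def} together with Lemma \ref{lem:Vi1 cycle}, a contracted plateau $\mathcal{P}_{i}$ has positive outgoing rate $\mathfrak{R}^{1}(\mathcal{P}_{i},\,\cdot)$ if and only if $\Gamma_{i}^{1}\le\Gamma^{\star,1}$, i.e.\ (since $\Gamma_{i}^{1}\ge\Gamma^{\star,1}$ always) $\Gamma_{i}^{1}=\Gamma^{\star,1}$, and then the positive rates point exactly to the configurations of $\partial^{\star}\mathcal{V}_{i}$, all at energy $\mathbb{H}(\mathcal{P}_{i})+\Gamma^{\star,1}$. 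Hence $\mathcal{P}_{i}$ is an absorbing state of $\{\mathfrak{X}^{\star,1}(t)\}_{t\ge0}$ exactly when $\Gamma_{i}^{1}>\Gamma^{\star,1}$; since an absorbing state is precisely a recurrent singleton, this proves (1) and reduces (2) and (3) to the case $\Gamma_{i}^{1}=\Gamma^{\star,1}$.

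The next step is a \emph{descent lemma}: if $\mathfrak{R}^{\star,1}(\mathcal{P}_{i},\,\mathcal{P}_{j})>0$ then $\Phi(\mathcal{P}_{i},\,\mathcal{P}_{j})-\mathbb{H}(\mathcal{P}_{i})=\Gamma^{\star,1}$ and $\mathbb{H}(\mathcal{P}_{j})\le\mathbb{H}(\mathcal{P}_{i})$. For the identity, a trajectory realizing this transition leaves $\mathcal{V}_{i}$ through $\partial^{\star}\mathcal{V}_{i}$ at height $\mathbb{H}(\mathcal{P}_{i})+\Gamma^{\star,1}$ by (b) and then reaches $\mathcal{P}_{j}$ without raising the energy by (a), so $\Phi(\mathcal{P}_{i},\,\mathcal{P}_{j})\le\mathbb{H}(\mathcal{P}_{i})+\Gamma^{\star,1}$; the reverse bound is $\Phi(\mathcal{P}_{i},\,\mathcal{P}_{j})\ge\Phi(\mathcal{P}_{i},\,\breve{\mathcal{P}}_{i}^{1})=\mathbb{H}(\mathcal{P}_{i})+\Gamma_{i}^{1}=\mathbb{H}(\mathcal{P}_{i})+\Gamma^{\star,1}$ by \eqref{eq:Gammai1 def}. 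For the inequality, symmetry of $\Phi$ gives $\Phi(\mathcal{P}_{j},\,\mathcal{P}_{i})=\mathbb{H}(\mathcal{P}_{i})+\Gamma^{\star,1}$, whence $\Gamma^{\star,1}\le\Gamma_{j}^{1}\le\Phi(\mathcal{P}_{j},\,\mathcal{P}_{i})-\mathbb{H}(\mathcal{P}_{j})=\mathbb{H}(\mathcal{P}_{i})+\Gamma^{\star,1}-\mathbb{H}(\mathcal{P}_{j})$. Two consequences: every recurrent class is \emph{flat} (it is closed and its edges are non-increasing, hence of constant energy), and inside a recurrent class $\mathscr{P}_{m}^{\star,1}$ with $|\mathscr{P}_{m}^{\star,1}|\ge2$, concatenating one-step barriers through \eqref{eq:Phi max ineq} yields $\Phi(\mathcal{P}_{i},\,\mathcal{P}_{j})-\mathbb{H}(\mathcal{P}_{i})=\Gamma^{\star,1}$ for every $\mathcal{P}_{j}\in\mathscr{P}_{m}^{\star,1}\setminus\{\mathcal{P}_{i}\}$; hence $\mathscr{P}_{m}^{\star,1}\setminus\{\mathcal{P}_{i}\}\subseteq\mathscr{N}_{i}:=\{\mathcal{P}_{j}\in\mathscr{P}^{1}:\,\Phi(\mathcal{P}_{i},\,\mathcal{P}_{j})-\mathbb{H}(\mathcal{P}_{i})=\Gamma^{\star,1}\}$.

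Now fix $\mathcal{P}_{i}$ with $\Gamma_{i}^{1}=\Gamma^{\star,1}$ and introduce the cycle $\mathcal{M}_{i}:=\{\zeta\in\overline{\Omega}:\,\Phi(\mathcal{P}_{i},\,\zeta)\le\mathbb{H}(\mathcal{P}_{i})+\Gamma^{\star,1}\}$ (it is indeed a cycle: any $\xi\in\partial\mathcal{M}_{i}$ must have $\mathbb{H}(\xi)>\mathbb{H}(\mathcal{P}_{i})+\Gamma^{\star,1}\ge\max_{\mathcal{M}_{i}}\mathbb{H}$). Using $\Gamma_{k}^{1}\ge\Gamma^{\star,1}$ together with $\Phi(\mathcal{P}_{k},\,\mathcal{P}_{i})\ge\Phi(\mathcal{P}_{k},\,\breve{\mathcal{P}}_{k}^{1})=\mathbb{H}(\mathcal{P}_{k})+\Gamma_{k}^{1}$, one checks that every stable plateau contained in $\mathcal{M}_{i}$ other than $\mathcal{P}_{i}$ lies in $\mathscr{N}_{i}$ and has energy $\le\mathbb{H}(\mathcal{P}_{i})$, while conversely every element of $\mathscr{N}_{i}$ is contained in $\mathcal{M}_{i}$. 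Moreover the contracted image of $\mathcal{M}_{i}$ is closed for $\{\mathfrak{X}^{1}(t)\}_{t\ge0}$: a rate-$1$ move out of $\mathcal{M}_{i}$ would be strictly energy-increasing hence forbidden, and a cycle-exit from a plateau in $\mathcal{M}_{i}$ lands at a configuration of energy $\le\mathbb{H}(\mathcal{P}_{i})+\Gamma^{\star,1}$ that is still connected to $\mathcal{P}_{i}$ at that height, hence still in $\mathcal{M}_{i}$. This gives the desired dichotomy. \emph{Deep case.} If $\mathcal{M}_{i}$ contains a plateau of energy $<\mathbb{H}(\mathcal{P}_{i})$, then, being trapped in the contracted $\mathcal{M}_{i}$ and driven downhill by the descent lemma, $\{\mathfrak{X}^{1}(t)\}_{t\ge0}$ started at $\mathcal{P}_{i}$ reaches---by the reachability discussed below---a plateau of energy $<\mathbb{H}(\mathcal{P}_{i})$, from which the descent lemma forbids a return; thus $\mathcal{P}_{i}$ is transient, and such a plateau belongs to $\mathscr{N}_{i}$, which is (2). \emph{Flat case.} Otherwise all plateaux in $\mathcal{M}_{i}$ have energy $\mathbb{H}(\mathcal{P}_{i})$, so they are exactly $\{\mathcal{P}_{i}\}\cup\mathscr{N}_{i}$ (in particular $\mathbb{H}(\mathcal{P}_{j})=\mathbb{H}(\mathcal{P}_{i})$ for all $\mathcal{P}_{j}\in\mathscr{N}_{i}$, the first half of (3)); granting that $\{\mathfrak{X}^{1}(t)\}_{t\ge0}$ started at $\mathcal{P}_{i}$ visits each of these plateaux, the closedness of the contracted $\mathcal{M}_{i}$ makes $\mathcal{P}_{i}$ recurrent with class exactly $\{\mathcal{P}_{i}\}\cup\mathscr{N}_{i}$, which together with the inclusion of the previous paragraph yields (3).

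The crux is the reachability statement just invoked (and its deep-case counterpart): that from $\mathcal{P}_{i}$ the trace chain really realizes a transition towards every bottom plateau of $\mathcal{M}_{i}$, even though $\{\mathfrak{X}^{1}(t)\}_{t\ge0}$ moves downhill away from contracted plateaux. The main tool here is Theorem \ref{thm:exit H1}: the exit of the dynamics from any cycle of depth $\Gamma^{\star,1}$ lands at each point of its minimal boundary $\partial^{\star}$ with asymptotically positive probability, so that each optimal path leaving $\mathcal{P}_{i}$---and, iterating over intermediate bottom plateaux, each optimal path between two bottom plateaux of $\mathcal{M}_{i}$---is followed with positive probability. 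Combining this with a path-surgery step (an optimal path between two bottom plateaux of $\mathcal{M}_{i}$ necessarily stays inside $\mathcal{M}_{i}$ and can be cut into successive exit-and-descent segments of $\{\mathfrak{X}^{1}(t)\}_{t\ge0}$) and with an induction on the finite cycle decomposition of $\mathcal{M}_{i}$ completes the proof. I expect this last reachability/irreducibility step, rather than any of the combinatorial identities for $\Phi$, to be where the real work lies.
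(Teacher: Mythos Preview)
Your overall architecture---the descent lemma, the trichotomy via absorbing/transient/recurrent, the closedness of the contracted $\mathcal{M}_i$, and the flat/deep dichotomy---is sound and close in spirit to the paper. The paper organizes the argument around the \emph{reachable} set $\mathscr{Z}_i:=\{\mathcal{P}_j:\mathcal{P}_j\text{ reachable from }\mathcal{P}_i\text{ via }\mathfrak{X}^{\star,1}\}$ rather than your level set $\mathcal{M}_i$; this makes the transient case automatic (if some $\mathcal{P}_j\in\mathscr{Z}_i$ has lower energy then no return is possible, Lemma~\ref{lem:4.1-3}), and concentrates all the work into showing $\mathscr{N}_i\subseteq\mathscr{Z}_i$ under the flat hypothesis (Lemma~\ref{lem:4.1-4}). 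Your $\mathcal{M}_i$-formulation is equivalent, but you then have to prove reachability in \emph{both} the flat and the deep case.

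The genuine gap is your proposed tool for reachability. Theorem~\ref{thm:exit H1} is a statement about the \emph{original} Metropolis chain $\{\eta_\beta(t)\}$ in the limit $\beta\to\infty$; the trace chain $\{\mathfrak{X}^{\star,1}(t)\}$ is a fixed, $\beta$-independent Markov chain whose rates are given explicitly by \eqref{eq:RC def}--\eqref{eq:R-Cstar def}, so ``asymptotically positive probability'' is meaningless for it, and invoking Theorem~\ref{thm:exit H1} would in any case be circular (the paper uses Theorem~\ref{thm:P-star1 class} on the way to Theorem~\ref{thm:main}, and Theorem~\ref{thm:exit H1} is only needed later, in Lemma~\ref{lem:Fbeta Deltah}). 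More importantly, your path-surgery does not yet explain why the $\Delta^1$ portions of an optimal path can be followed by $\mathfrak{X}^1$, which only moves along non-increasing edges there.

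The missing observation is purely combinatorial. Take an optimal path from $\mathcal{V}_i$ to $\mathcal{V}_{j'}$ and look at a $\Delta^1$-segment $\omega_{n,1},\dots,\omega_{n,M_n}$ between two consecutive valleys. Necessarily $\mathbb{H}(\omega_{n,1})=\mathbb{H}(\mathcal{P}_i)+\Gamma^{\star,1}$. If some $\omega_{n,n'}$ has strictly smaller energy, take the first such index; then $\omega_{n,1},\dots,\omega_{n,n'-1}$ are at constant energy, $\omega_{n,n'}$ lies in $\Delta^1$, and a downhill run from $\omega_{n,n'}$ reaches some $\mathcal{P}_{j''}$ with $\mathbb{H}(\omega_{n,n'})\ge\mathbb{H}(\mathcal{P}_{j''})+\Gamma_{j''}^{1}\ge\mathbb{H}(\mathcal{P}_{j''})+\Gamma^{\star,1}$, hence $\mathbb{H}(\mathcal{P}_{j''})<\mathbb{H}(\mathcal{P}_i)$. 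Thus either (i) the $\Delta^1$-segment sits at constant top energy, so $\mathfrak{X}^1$ can traverse it in either direction and you advance to the next valley at the same energy, or (ii) you exhibit an $\mathfrak{X}^1$-path from $\mathcal{P}_i$ to a strictly lower plateau. Iterating, in the flat case (ii) is impossible and you get full reachability of $\mathscr{N}_i$; in the deep case (ii) must occur at some stage (since the terminal plateau is lower), giving a lower element of $\mathscr{Z}_i$ and hence transience. This is exactly the inductive argument in the paper's proof of Lemma~\ref{lem:4.1-4}; once you replace the appeal to Theorem~\ref{thm:exit H1} by this energy-dip argument, your scheme goes through.
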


We start with two lemmas.
\begin{lem}
\label{lem:4.1-1}For $i\ne j$ such that $\mathfrak{R}^{\star,1}(\mathcal{P}_{i},\,\mathcal{P}_{j})>0$,
we have $\mathbb{H}(\mathcal{P}_{i})\ge\mathbb{H}(\mathcal{P}_{j})$.
Moreover, if $\mathbb{H}(\mathcal{P}_{i})=\mathbb{H}(\mathcal{P}_{j})$
then we also have $\mathfrak{R}^{\star,1}(\mathcal{P}_{j},\,\mathcal{P}_{i})>0$.
\end{lem}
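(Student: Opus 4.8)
The plan is to unwind the definitions of the transition rates $\mathfrak{R}^{\star,1}(\cdot,\,\cdot)$ of the trace chain and $\mathfrak{R}^{1}(\cdot,\,\cdot)$ of the chain $\{\mathfrak{X}^{1}(t)\}_{t\ge0}$ on the contracted graph $\Omega^{1}$, and to exploit the fact that along any path realizing a positive trace-rate, the energy never goes above the common level $\mathbb{H}(\mathcal{P}_{i})+\Gamma^{\star,1}$ while touching $\mathcal{P}_j$ only at its bottom. First I would observe that, by \eqref{eq:R-Cstar def}, if $\mathfrak{R}^{\star,1}(\mathcal{P}_{i},\,\mathcal{P}_{j})>0$ then there exists at least one $\eta\in\Delta^{1}$ with $\mathfrak{R}^{1}(\mathcal{P}_{i},\,\eta)>0$ and ${\bf P}_{\eta}^{1}[\mathcal{T}_{\mathcal{P}_{j}}=\mathcal{T}_{\mathscr{P}^{\star,1}}]>0$, i.e.\ a trajectory of $\{\mathfrak{X}^{1}(t)\}_{t\ge0}$ that leaves $\mathcal{P}_i$, stays in $\Delta^{1}$, and first hits $\mathscr{P}^{\star,1}$ at $\mathcal{P}_j$. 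By \eqref{eq:RC def}, the rate $\mathfrak{R}^{1}(\mathcal{P}_{i},\,\eta)$ is positive only when $\Gamma_{i}^{1}\le\Gamma^{\star,1}$ and $\eta\in\partial^{\star}\mathcal{V}_{i}^{1}$, so in fact $\Gamma_i^1=\Gamma^{\star,1}$ (recall $\Gamma_i^1\ge\Gamma^{\star,1}$ always) and $\mathbb{H}(\eta)=\mathbb{H}(\mathcal{P}_i)+\Gamma^{\star,1}$.

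Next I would track the energy along such a realizing trajectory inside $\Delta^{1}$. The rates on $\Delta^{1}$ given by the first line of \eqref{eq:RC def} only allow moves to equal-or-lower energy; the only moves that can raise the energy are the entry moves into a cycle bottom $\mathcal{F}(\mathcal{C})=\mathcal{P}_{j'}$ via the second line of \eqref{eq:RC def} with $\eta'\in\partial\mathcal{V}_{j'}^{1}$, but such a move reaches a state of $\mathscr{P}^{\star,1}$ and hence terminates the excursion (since $\mathscr{P}^{\star,1}=\mathscr{P}^1$ here by \eqref{eq:P-star1 P-sharp1 prop}); the terminal move landing in $\mathcal{P}_j$ is of this type. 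Thus along the excursion the energy is non-increasing until the final entry step, which is a move from some $\zeta\in\partial\mathcal{V}_j^1$ into $\mathcal{P}_j=\mathcal{F}(\mathcal{V}_j^1)$; since $\mathcal{V}_j^1$ is a cycle, $\mathbb{H}(\zeta)>\mathbb{H}(\mathcal{P}_j)$. Combining: $\mathbb{H}(\mathcal{P}_j)<\mathbb{H}(\zeta)\le\cdots\le\mathbb{H}(\eta)=\mathbb{H}(\mathcal{P}_i)+\Gamma^{\star,1}$. This already gives $\mathbb{H}(\mathcal{P}_j)<\mathbb{H}(\mathcal{P}_i)+\Gamma^{\star,1}$; to upgrade this to $\mathbb{H}(\mathcal{P}_j)\le\mathbb{H}(\mathcal{P}_i)$ I would use that the realizing trajectory, read as a path in $\overline{\Omega}$ after un-contracting the cycles (staying at the bottom of any intermediate cycle we pass through, which does not raise the maximum), witnesses $\Phi(\mathcal{P}_i,\,\mathcal{P}_j)\le\mathbb{H}(\mathcal{P}_i)+\Gamma^{\star,1}$, hence $\Gamma_i^1=\Phi(\mathcal{P}_i,\,\breve{\mathcal{P}}_i^1)-\mathbb{H}(\mathcal{P}_i)\le\mathbb{H}(\mathcal{P}_i)+\Gamma^{\star,1}-\mathbb{H}(\mathcal{P}_i)=\Gamma^{\star,1}$, consistent, and by the cycle property of $\mathcal{V}_i^1$ the path must exit $\mathcal{V}_i^1$ through $\partial\mathcal{V}_i^1$ at height exactly $\mathbb{H}(\mathcal{P}_i)+\Gamma_i^1=\mathbb{H}(\mathcal{P}_i)+\Gamma^{\star,1}$, and thereafter descends; since it never re-enters $\mathcal{V}_i^1$ and ends at $\mathcal{P}_j$, monotonicity of the energy on the post-exit portion (except the single terminal entry step into $\mathcal{P}_j$, which strictly decreases energy from $\zeta$ to $\mathcal{P}_j$) forces $\mathbb{H}(\mathcal{P}_j)\le\mathbb{H}(\eta)-\big(\mathbb{H}(\eta)-\mathbb{H}(\mathcal{P}_i)\big)=\mathbb{H}(\mathcal{P}_i)$ once one checks the very first step out of $\eta$ already lowers the energy to $\le\mathbb{H}(\mathcal{P}_i)$; this last point is where I would be most careful, arguing that $\eta\in\partial^\star\mathcal{V}_i^1$ has $\mathbb{H}(\eta)=\Phi(\mathcal{P}_i,\,\breve{\mathcal{P}}_i^1)$ and the realizing path from $\eta$ must immediately drop below this level or it would stay in a cycle containing $\mathcal{P}_i$.

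For the second assertion, suppose $\mathbb{H}(\mathcal{P}_i)=\mathbb{H}(\mathcal{P}_j)$. Then the chain of inequalities above collapses: the excursion out of $\eta$ has height exactly $\mathbb{H}(\mathcal{P}_i)+\Gamma^{\star,1}$, it must drop immediately to level $\le\mathbb{H}(\mathcal{P}_i)$, and since it ends at $\mathcal{P}_j$ at the same level $\mathbb{H}(\mathcal{P}_i)$ the intermediate states in $\Delta^1$ all sit at energy $\le\mathbb{H}(\mathcal{P}_i)$, with the final step from $\zeta\in\partial\mathcal{V}_j^1$ into $\mathcal{P}_j$ strictly decreasing energy, so $\mathbb{H}(\zeta)=\mathbb{H}(\mathcal{P}_i)$ is impossible unless — and here I'd note that the energy profile is symmetric — the same path, traversed in reverse, witnesses $\mathfrak{R}^{\star,1}(\mathcal{P}_j,\,\mathcal{P}_i)>0$. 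Concretely, reversibility of the Metropolis dynamics (\eqref{eq:det bal}) together with the explicit symmetric form of the rates in \eqref{eq:RC def} implies that whenever $\eta\sim\xi$ with $\mathbb{H}(\eta)=\mathbb{H}(\xi)$ the edge carries rate $1$ in both directions, and the entry/exit rates into a cycle of depth $=\Gamma^{\star,1}$ (which all the $\mathcal{V}^1_\bullet$ are, by \eqref{eq:C-star1 C-sharp1 prop}) are mutually positive by the second and third lines of \eqref{eq:RC def}; so the time-reversal of the realizing excursion is again an admissible positive-probability excursion, now from $\mathcal{P}_j$ to $\mathcal{P}_i$, giving $\mathfrak{R}^{\star,1}(\mathcal{P}_j,\,\mathcal{P}_i)>0$.

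The main obstacle I anticipate is the bookkeeping in the paragraph above: carefully justifying that a positive-probability excursion of $\{\mathfrak{X}^1(t)\}_{t\ge0}$ from $\mathcal{P}_i$ to $\mathcal{P}_j$ through $\Delta^1$ has an energy profile that is non-increasing except for the final entry step, and in the equal-energy case is \emph{reversible} as an excursion. This requires matching up the three cases of \eqref{eq:RC def} with the possible moves in $\Delta^1$ and checking that the contracted-graph structure (absorbing bottoms of deep cycles, the role of $\partial^\star$ vs $\partial$) does not create a hidden energy-raising move; once that is in place the two claims follow by reading the inequalities forwards and then backwards.
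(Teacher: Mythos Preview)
Your overall framework is right, but there is a genuine gap in the first part: you only extract the weak inequality $\mathbb{H}(\zeta)>\mathbb{H}(\mathcal{P}_{j})$ from the cycle property of $\mathcal{V}_{j}^{1}$, and then try to close the gap to $\mathbb{H}(\mathcal{P}_{j})\le\mathbb{H}(\mathcal{P}_{i})$ via the claim that ``the very first step out of $\eta$ already lowers the energy to $\le\mathbb{H}(\mathcal{P}_{i})$''. This claim is false: nothing in \eqref{eq:RC def} prevents the excursion from staying at level $\mathbb{H}(\mathcal{P}_{i})+\Gamma^{\star,1}$ for several steps inside $\Delta^{1}$ (equal-energy moves are allowed), and your alternative ``or it would stay in a cycle containing $\mathcal{P}_{i}$'' does not follow either, since a configuration at height $\mathbb{H}(\mathcal{P}_{i})+\Gamma^{\star,1}$ sitting in $\Delta^{1}$ need not belong to any cycle containing $\mathcal{P}_{i}$.

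The missing ingredient is the \emph{quantitative} lower bound on the boundary of $\mathcal{V}_{j}^{1}$. By Lemma~\ref{lem:Vi1 cycle}, $\mathcal{V}_{j}^{1}$ is a cycle of depth exactly $\Gamma_{j}^{1}$ with bottom $\mathcal{P}_{j}$, so the last configuration $\omega_{N}\in\partial\mathcal{V}_{j}^{1}$ before entering $\mathcal{P}_{j}$ satisfies $\mathbb{H}(\omega_{N})\ge\min_{\partial\mathcal{V}_{j}^{1}}\mathbb{H}=\mathbb{H}(\mathcal{P}_{j})+\Gamma_{j}^{1}\ge\mathbb{H}(\mathcal{P}_{j})+\Gamma^{\star,1}$. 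Chaining this with your correct observation that the energy is non-increasing along the excursion gives
\[
\mathbb{H}(\mathcal{P}_{i})+\Gamma^{\star,1}=\mathbb{H}(\omega_{1})\ge\cdots\ge\mathbb{H}(\omega_{N})\ge\mathbb{H}(\mathcal{P}_{j})+\Gamma^{\star,1},
\]
and the first claim follows immediately. This also repairs your second part: when $\mathbb{H}(\mathcal{P}_{i})=\mathbb{H}(\mathcal{P}_{j})$ every inequality above becomes an equality, so $\Gamma_{j}^{1}=\Gamma^{\star,1}$, $\omega_{N}\in\partial^{\star}\mathcal{V}_{j}^{1}$, and all the $\omega_{n}$ sit at the \emph{same} energy $\mathbb{H}(\mathcal{P}_{i})+\Gamma^{\star,1}$ (not $\le\mathbb{H}(\mathcal{P}_{i})$ as you wrote). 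Then the reversal argument you sketch at the end goes through cleanly via \eqref{eq:RC def}, without any appeal to reversibility of the underlying Metropolis dynamics.
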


\begin{proof}
By \eqref{eq:RC def} and \eqref{eq:R-Cstar def}, $\mathfrak{R}^{\star,1}(\mathcal{P}_{i},\,\mathcal{P}_{j})>0$
implies that $\Gamma_{i}^{1}=\Gamma^{\star,1}$ and there exists a
sequence $\mathcal{P}_{i},\,\omega_{1},\,\dots,\,\omega_{N},\,\mathcal{P}_{j}$
along which $\mathfrak{R}^{1}(\cdot,\,\cdot)$ is positive, where
$\omega_{1},\,\dots,\,\omega_{N}\in\Delta^{1}$ . Then, by \eqref{eq:RC def}
we calculate
\[
\mathbb{H}(\mathcal{P}_{i})+\Gamma^{\star,1}=\mathbb{H}(\omega_{1})\ge\cdots\ge\mathbb{H}(\omega_{N})\ge\mathbb{H}(\mathcal{P}_{j})+\Gamma_{j}^{1}\ge\mathbb{H}(\mathcal{P}_{j})+\Gamma^{\star,1}.
\]
Thus, we obtain that $\mathbb{H}(\mathcal{P}_{i})\ge\mathbb{H}(\mathcal{P}_{j})$.

Further suppose that $\mathbb{H}(\mathcal{P}_{i})=\mathbb{H}(\mathcal{P}_{j})$.
Then, all equality holds in the previous display, thus $\Gamma_{j}^{1}=\Gamma^{\star,1}$,
$\mathbb{H}(\omega_{N})=\mathbb{H}(\mathcal{P}_{j})+\Gamma_{j}^{1}$
and $\mathbb{H}(\omega_{1})=\cdots=\mathbb{H}(\omega_{N})$. We then
deduce that $\mathfrak{R}^{1}(\cdot,\,\cdot)>0$ along the reversed
sequence $\mathcal{P}_{j},\,\omega_{N},\,\dots,\,\omega_{1},\,\mathcal{P}_{i}$,
thus $\mathfrak{R}^{\star,1}(\mathcal{P}_{j},\,\mathcal{P}_{i})>0$.
\end{proof}
A path $(\omega_{n})_{n=0}^{N}$ is a \emph{downhill} path if $\mathbb{H}(\omega_{n+1})\le\mathbb{H}(\omega_{n})$
for all $n\in[0,\,N-1]$.
\begin{lem}
\label{lem:4.1-2}It holds that $\Gamma_{i}^{1}=\Gamma^{\star,1}$
if and only if there exists $j\ne i$ with $\mathfrak{R}^{\star,1}(\mathcal{P}_{i},\,\mathcal{P}_{j})>0$.
\end{lem}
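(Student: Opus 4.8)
The plan is to prove the two implications separately; the backward one is immediate and the forward one needs an explicit $\mathfrak{R}^{1}$-positive path in the contracted graph $\Omega^{1}$. For $(\Leftarrow)$, if $\mathfrak{R}^{\star,1}(\mathcal{P}_{i},\mathcal{P}_{j})>0$ for some $j\ne i$ then by \eqref{eq:R-Cstar def} there is $\eta\in\Delta^{1}$ with $\mathfrak{R}^{1}(\mathcal{P}_{i},\eta)>0$; since $\mathcal{P}_{i}=\mathcal{F}(\mathcal{V}_{i}^{1})$ is a contracted vertex, the only clause of \eqref{eq:RC def} that can produce this is the third one, which forces $\Gamma^{\mathcal{V}_{i}^{1}}\le\Gamma^{\star,1}$; combined with $\Gamma^{\mathcal{V}_{i}^{1}}=\Gamma_{i}^{1}\ge\Gamma^{\star,1}$ (Lemma \ref{lem:Vi1 cycle} and \eqref{eq:Gamma-star1 def}) this gives $\Gamma_{i}^{1}=\Gamma^{\star,1}$.

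For $(\Rightarrow)$, assume $\Gamma_{i}^{1}=\Gamma^{\star,1}=:\gamma$ and set $c:=\mathbb{H}(\mathcal{P}_{i}^{1})+\gamma$. By Lemma \ref{lem:Vi1 cycle}, $\mathcal{V}_{i}^{1}$ is a cycle of depth $\gamma$ with bottom $\mathcal{P}_{i}^{1}$, so $\partial\mathcal{V}_{i}^{1}$ sits at energy $\ge c$ and $\partial^{\star}\mathcal{V}_{i}^{1}$ is a nonempty set of configurations at energy exactly $c$. First I would check that $\partial^{\star}\mathcal{V}_{i}^{1}\subseteq\Delta^{1}$: were some $\xi\in\partial^{\star}\mathcal{V}_{i}^{1}$ to lie in another cycle $\mathcal{V}_{l}^{1}$ with $l\ne i$, then concatenating a path inside $\mathcal{V}_{i}^{1}$ to a neighbour of $\xi$, the edge to $\xi$, and a path inside $\mathcal{V}_{l}^{1}$ from $\xi$ to $\mathcal{P}_{l}^{1}$ would yield $\Phi(\mathcal{P}_{i}^{1},\mathcal{P}_{l}^{1})<\mathbb{H}(\mathcal{P}_{l}^{1})+\Gamma_{l}^{1}$, contradicting $\Phi(\mathcal{P}_{i}^{1},\mathcal{P}_{l}^{1})\ge\mathbb{H}(\mathcal{P}_{l}^{1})+\Gamma_{l}^{1}$ (the computation is of the same flavour as the disjointness argument in the proof of Lemma \ref{lem:Vi1 cycle}). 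Hence $\mathfrak{R}^{1}(\mathcal{P}_{i},\xi)>0$ for each $\xi\in\partial^{\star}\mathcal{V}_{i}^{1}$ by the third clause of \eqref{eq:RC def}, and by \eqref{eq:R-Cstar def} it suffices to exhibit, for some $j\ne i$, a path $\mathcal{P}_{i}\to\mathfrak{z}_{1}\to\cdots\to\mathfrak{z}_{r-1}\to\mathcal{P}_{j}$ in $\Omega^{1}$ with $\mathfrak{z}_{1},\dots,\mathfrak{z}_{r-1}\in\Delta^{1}$ along which $\mathfrak{R}^{1}$ is strictly positive.

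To build such a path I would start from a path $\omega$ realizing $\Phi(\mathcal{P}_{i}^{1},\breve{\mathcal{P}}_{i}^{1})=c$ (which exists by \eqref{eq:Gammai1 def}), ending at some $\omega_{N}\in\mathcal{P}_{j}^{1}$ with $j\ne i$. Let $n_{1}$ be the last index with $\omega_{n_{1}}\in\mathcal{V}_{i}^{1}$; then $\omega_{n_{1}+1}\in\partial^{\star}\mathcal{V}_{i}^{1}$ (its energy is squeezed between $c$ and $\Phi_{\omega}=c$) and the tail $\omega_{n_{1}+1},\dots,\omega_{N}$ avoids $\mathcal{V}_{i}^{1}$. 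Let $n_{2}$ be the first index after $n_{1}+1$ at which $\mathbb{H}(\omega_{n_{2}})<c$ or $\omega_{n_{2}}\notin\Delta^{1}$; this exists with $n_{2}\le N$ because $\omega_{N}$ lies in the cycle $\mathcal{V}_{j}^{1}$, hence outside $\Delta^{1}$. By minimality, $\omega_{n_{1}+1},\dots,\omega_{n_{2}-1}$ all lie in $\Delta^{1}$ at the common energy $c$, so $\mathcal{P}_{i}\to\omega_{n_{1}+1}\to\cdots\to\omega_{n_{2}-1}$ is already a positive-$\mathfrak{R}^{1}$ path (the first step as above, the rest rate-$1$ flat moves in $\Delta^{1}$). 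If $\omega_{n_{2}}\in\mathcal{V}_{l}^{1}$ — necessarily $l\ne i$ — then $\omega_{n_{2}-1}\in\partial\mathcal{V}_{l}^{1}$ and the step $\omega_{n_{2}-1}\to\mathcal{P}_{l}$ completes the path. Otherwise $\mathbb{H}(\omega_{n_{2}})<c$; descend to $\omega_{n_{2}}$ and then follow a maximal downhill path of $\overline{\Omega}$ issued from $\omega_{n_{2}}$: staying always at energy $<c$, it can never reach $\partial\mathcal{V}_{i}^{1}$, hence never re-enters $\mathcal{V}_{i}^{1}$, so it terminates inside a stable plateau $\mathcal{P}_{k}^{1}$ with $k\ne i$; reading this downhill path off in $\Omega^{1}$ (rate-$1$ flat/downhill steps in $\Delta^{1}$, and a jump into the first cycle it meets) finishes the construction.

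The delicate point is exactly this last step, since $\mathfrak{X}^{1}$ moves only downhill inside $\Delta^{1}$ while the a priori optimal path $\omega$ may climb, so the argument must be arranged so that every move actually used is flat, downhill, or a cycle-absorption jump. Two facts make it work and should be pinned down carefully: (i) $\partial^{\star}\mathcal{V}_{i}^{1}\subseteq\Delta^{1}$, so the exit from $\mathcal{P}_{i}$ lands on a genuine vertex of $\Omega^{1}$; and (ii) once a path drops strictly below level $c$ it cannot re-enter $\mathcal{V}_{i}^{1}$, which both confines the subsequent descent away from $\mathcal{P}_{i}^{1}$ and forces it to terminate at a different stable plateau. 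I would also record the elementary fact that a maximal downhill path in $\overline{\Omega}$ always ends inside a stable plateau belonging to $\mathscr{P}^{1}$, since its terminal flat component has a strictly higher outer boundary and lies in $\overline{\Omega}$.
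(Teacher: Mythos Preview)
Your proof is correct and achieves the same goal as the paper's---exhibiting a downhill $\mathfrak{R}^{1}$-positive path from $\mathcal{P}_{i}$ through $\Delta^{1}$ into some other $\mathcal{P}_{j}$---but the construction is organized differently. The paper defines the set $\mathcal{A}_{i}$ of configurations in $\Omega\setminus\mathcal{V}_{i}$ reachable from $\partial^{\star}\mathcal{V}_{i}$ by downhill paths, argues by contradiction (via the definition of $\Gamma_{i}^{1}$) that $\mathcal{A}_{i}$ must dip strictly below level $c$, and then observes that the connected components of $\mathcal{F}(\mathcal{A}_{i})$ are stable plateaux, producing the desired $\mathcal{P}_{j}$. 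You instead start from an optimal path realizing $\Phi(\mathcal{P}_{i}^{1},\breve{\mathcal{P}}_{i}^{1})=c$, follow it past the last exit from $\mathcal{V}_{i}^{1}$ along the flat level $c$, and then append a greedy downhill continuation once the path drops. Both routes are natural; the paper's set-based argument is somewhat more streamlined, while yours is more explicitly path-constructive and makes the translation into $\Omega^{1}$ very transparent. One point worth noting: you take the trouble to verify $\partial^{\star}\mathcal{V}_{i}^{1}\subseteq\Delta^{1}$ (by an argument in the spirit of Lemma~\ref{lem:Vi1 cycle}), which is exactly what is needed for the first jump $\mathfrak{R}^{1}(\mathcal{P}_{i},\omega_{0})>0$ to make sense in $\Omega^{1}$; the paper uses this fact implicitly when it asserts positivity along $\mathcal{P}_{i}\to\omega_{0}\to\cdots$.
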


\begin{proof}
For the if part, if $\mathfrak{R}^{\star,1}(\mathcal{P}_{i},\,\mathcal{P}_{j})>0$
then \eqref{eq:RC def} and \eqref{eq:R-Cstar def} readily imply
that $\Gamma_{i}^{1}=\Gamma^{\star,1}$.

For the only if part, fix $\mathcal{P}_{i}\in\mathscr{P}^{1}$ with
$\Gamma_{i}^{1}=\Gamma^{\star,1}$. For notational convenience, we
abbreviate $\mathbb{H}_{i}^{\star,1}:=\mathbb{H}(\mathcal{P}_{i})+\Gamma^{\star,1}$.
By Lemma \ref{lem:Vi1 cycle}, $\mathbb{H}(\xi)=\mathbb{H}_{i}^{\star,1}$
for all $\xi\in\partial^{\star}\mathcal{V}_{i}$. Now, define
\[
\mathcal{A}_{i}:=\{\eta\in\Omega\setminus\mathcal{V}_{i}:\,\text{there exists a downhill path from}\quad\partial^{\star}\mathcal{V}_{i}\quad\text{to}\quad\eta\}.
\]
By definition, $\emptyset\ne\partial^{\star}\mathcal{V}_{i}\subseteq\mathcal{A}_{i}$
and $\mathbb{H}(\eta)\le\mathbb{H}_{i}^{\star,1}$ for all $\eta\in\mathcal{A}_{i}$.
We claim that $\mathcal{P}_{j}\subseteq\mathcal{A}_{i}$ for some
$j\ne i$. Indeed, first suppose that $\mathbb{H}(\eta)=\mathbb{H}_{i}^{\star,1}$
for all $\eta\in\mathcal{A}_{i}$. This implies that $\mathcal{V}_{i}\cup\mathcal{A}_{i}=\{\xi\in\Omega:\,\Phi(\mathcal{P}_{i},\,\xi)-\mathbb{H}(\mathcal{P}_{i})\le\Gamma^{\star,1}\}$
and $(\mathcal{V}_{i}\cup\mathcal{A}_{i})\cap\breve{\mathcal{P}}_{i}^{1}=\emptyset$,
which contradict the definition of $\Gamma_{i}^{1}$. Thus, it holds
that $\mathbb{H}(\mathcal{F}(\mathcal{A}_{i}))<\mathbb{H}_{i}^{\star,1}$.
Decomposing $\mathcal{F}(\mathcal{A}_{i})$ into connected components
$\mathcal{A}_{1}'\cup\cdots\cup\mathcal{A}_{N}'$, each $\mathcal{A}_{n}'$
is a stable plateau since $\mathbb{H}(\zeta)>\mathbb{H}(\mathcal{F}(\mathcal{A}_{i}))$
for all $\zeta\in\partial\mathcal{A}_{n}'$ by the definition of $\mathcal{A}_{i}$.
This proves the claim.

The above claim guarantees the existence of a downhill path $\omega=(\omega_{n})_{n=0}^{N}$
such that $\omega_{0}\in\partial^{\star}\mathcal{V}_{i}$, $\omega_{1},\,\dots,\,\omega_{N-1}\in\Delta^{1}$
and $\omega_{N}\in\mathcal{V}_{j}$. Then, by \eqref{eq:RC def},
$\mathfrak{R}^{1}(\cdot,\,\cdot)$ is positive along $\mathcal{P}_{i}\to\omega_{0}\to\cdots\to\omega_{N-1}\to\mathcal{P}_{j}$,
thus ${\bf P}_{\omega_{0}}^{1}[\mathcal{T}_{\mathcal{P}_{j}}=\mathcal{T}_{\mathscr{P}^{1}}]>0$.
Therefore, by \eqref{eq:R-Cstar def},
\[
\mathfrak{R}^{\star,1}(\mathcal{P}_{i},\,\mathcal{P}_{j})\ge\mathfrak{R}^{1}(\mathcal{P}_{i},\,\omega_{0})\cdot{\bf P}_{\omega_{0}}^{1}[\mathcal{T}_{\mathcal{P}_{j}}=\mathcal{T}_{\mathscr{P}^{1}}]>0,
\]
which completes the proof of Lemma \ref{lem:4.1-2}.
\end{proof}
Now, for each $i\in[1,\,\nu_{0}]$ with $\Gamma_{i}^{1}=\Gamma^{\star,1}$,
define
\begin{equation}
\mathscr{Z}_{i}:=\{\mathcal{P}_{j}\in\mathscr{P}^{1}\setminus\{\mathcal{P}_{i}\}:\,\mathcal{P}_{j}\quad\text{is reachable from}\quad\mathcal{P}_{i}\quad\text{via}\quad\{\mathfrak{X}^{\star,1}(t)\}_{t\ge0}\}.\label{eq:Zi def}
\end{equation}
Here, $\mathscr{Z}_{i}$ is nonempty by Lemma \ref{lem:4.1-2} and
$\mathbb{H}(\mathcal{P}_{j})\le\mathbb{H}(\mathcal{P}_{i})$ for all
$\mathcal{P}_{j}\in\mathscr{Z}_{i}$ by Lemma \ref{lem:4.1-1}. In
particular, by an inductive argument,
\begin{equation}
\Phi(\mathcal{P}_{i},\,\mathcal{P}_{j})-\mathbb{H}(\mathcal{P}_{i})=\Gamma^{\star,1}\quad\text{for all}\quad\mathcal{P}_{j}\in\mathscr{Z}_{i}.\label{eq:Zi prop}
\end{equation}

\begin{lem}
\label{lem:4.1-3}Suppose that $\Gamma_{i}^{1}=\Gamma^{\star,1}$
and $\mathbb{H}(\mathcal{P}_{j})<\mathbb{H}(\mathcal{P}_{i})$ for
some $\mathcal{P}_{j}\in\mathscr{Z}_{i}$. Then, $\mathcal{P}_{i}\in\mathscr{P}_{{\rm tr}}^{\star,1}$.
\end{lem}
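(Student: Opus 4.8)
The plan is to show that $\mathcal{P}_i$ cannot belong to any irreducible component of $\{\mathfrak{X}^{\star,1}(t)\}_{t\ge0}$, which by the decomposition \eqref{eq:P-star1 dec} forces $\mathcal{P}_i\in\mathscr{P}_{\rm tr}^{\star,1}$. Suppose for contradiction that $\mathcal{P}_i\in\mathscr{P}_m^{\star,1}$ for some $m\in[1,\,\nu_1]$. Since $\mathscr{P}_m^{\star,1}$ is an irreducible component and $\mathcal{P}_j\in\mathscr{Z}_i$ is reachable from $\mathcal{P}_i$, we have $\mathcal{P}_j\in\mathscr{P}_m^{\star,1}$ as well; in particular $\mathcal{P}_j$ must also be able to reach $\mathcal{P}_i$ back via $\{\mathfrak{X}^{\star,1}(t)\}_{t\ge0}$. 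First I would observe that, by Lemma \ref{lem:4.1-2}, $\Gamma_j^1=\Gamma^{\star,1}$ (since $\mathcal{P}_j$ has a positive outgoing rate to some plateau along the return path), so $\mathscr{Z}_j$ is well-defined and nonempty, and $\mathcal{P}_i\in\mathscr{Z}_j$.

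The key step is then to apply Lemma \ref{lem:4.1-1} along the return path. By the same inductive argument that produced \eqref{eq:Zi prop}, Lemma \ref{lem:4.1-1} yields $\mathbb{H}(\mathcal{P}_\ell)\le\mathbb{H}(\mathcal{P}_j)$ for every $\mathcal{P}_\ell\in\mathscr{Z}_j$; in particular, since $\mathcal{P}_i\in\mathscr{Z}_j$, we get $\mathbb{H}(\mathcal{P}_i)\le\mathbb{H}(\mathcal{P}_j)$. This directly contradicts the hypothesis $\mathbb{H}(\mathcal{P}_j)<\mathbb{H}(\mathcal{P}_i)$. Hence $\mathcal{P}_i$ belongs to no irreducible component, i.e., $\mathcal{P}_i\in\mathscr{P}_{\rm tr}^{\star,1}$, completing the proof.

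The only subtle point — which I expect to be the main (though modest) obstacle — is justifying carefully that reachability of $\mathcal{P}_i$ from $\mathcal{P}_j$ in the sense of \eqref{eq:Zi def} indeed lets us chain Lemma \ref{lem:4.1-1} across each elementary transition of $\{\mathfrak{X}^{\star,1}(t)\}_{t\ge0}$: one must note that each such transition corresponds to a pair with $\mathfrak{R}^{\star,1}(\cdot,\,\cdot)>0$, apply Lemma \ref{lem:4.1-1} to that pair to get a non-increase of $\mathbb{H}$, and then telescope. This is exactly the inductive argument already invoked for \eqref{eq:Zi prop}, so it can be cited rather than repeated. With that in hand, the energy comparison $\mathbb{H}(\mathcal{P}_i)\le\mathbb{H}(\mathcal{P}_j)<\mathbb{H}(\mathcal{P}_i)$ is the desired contradiction.
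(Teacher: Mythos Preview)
Your proposal is correct and uses the same core idea as the paper: chain Lemma~\ref{lem:4.1-1} along a hypothetical return path to obtain $\mathbb{H}(\mathcal{P}_i)\le\mathbb{H}(\mathcal{P}_j)$, contradicting the hypothesis. The paper's own proof is simply a more compressed version---it observes directly that $\mathcal{P}_i$ is not reachable from $\mathcal{P}_j$ by Lemma~\ref{lem:4.1-1} and concludes transience in one line, without the detour through defining $\mathscr{Z}_j$ or invoking Lemma~\ref{lem:4.1-2}; your extra step of verifying $\Gamma_j^1=\Gamma^{\star,1}$ is unnecessary since the energy-nonincrease argument along the return path works regardless.
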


\begin{proof}
$\mathcal{P}_{j}$ is reachable from $\mathcal{P}_{i}$ via $\{\mathfrak{X}^{\star,1}(t)\}_{t\ge0}$
since $\mathcal{P}_{j}\in\mathscr{Z}_{i}$, but $\mathcal{P}_{i}$
is not reachable from $\mathcal{P}_{j}$ by Lemma \ref{lem:4.1-1}.
Thus, we readily conclude that $\mathcal{P}_{i}\in\mathscr{P}_{{\rm tr}}^{\star,1}$.
\end{proof}
\begin{lem}
\label{lem:4.1-4}Suppose that $\Gamma_{i}^{1}=\Gamma^{\star,1}$
and $\mathbb{H}(\mathcal{P}_{j})=\mathbb{H}(\mathcal{P}_{i})$ for
all $\mathcal{P}_{j}\in\mathscr{Z}_{i}$. Then, $\mathcal{P}_{i}\in\mathscr{P}_{m}^{\star,1}$
for some $m\in[1,\,\nu_{1}]$ with $|\mathscr{P}_{m}^{\star,1}|\ge2$.
In this case, for each $\mathcal{P}_{j'}\in\mathscr{P}^{1}$ such
that $\Phi(\mathcal{P}_{i},\,\mathcal{P}_{j'})-\mathbb{H}(\mathcal{P}_{i})=\Gamma^{\star,1}$,
it holds that $\mathcal{P}_{j'}\in\mathscr{Z}_{i}$.
\end{lem}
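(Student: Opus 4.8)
The plan is to dispatch the two assertions in turn. For the first, $\Gamma_i^1=\Gamma^{\star,1}$ gives $\mathscr{Z}_i\ne\emptyset$ by Lemma \ref{lem:4.1-2}, so it suffices to show that $\{\mathcal{P}_i\}\cup\mathscr{Z}_i$ is one of the irreducible components $\mathscr{P}_m^{\star,1}$ in \eqref{eq:P-star1 dec}. It is forward-closed under $\mathfrak{R}^{\star,1}(\cdot,\,\cdot)$ by transitivity of reachability, so we only need it to be a communicating class. Given $\mathcal{P}_j\in\mathscr{Z}_i$, fix a path $\mathcal{P}_i=\mathcal{Q}_0,\,\mathcal{Q}_1,\,\dots,\,\mathcal{Q}_r=\mathcal{P}_j$ along which $\mathfrak{R}^{\star,1}(\cdot,\,\cdot)$ is positive. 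Each $\mathcal{Q}_s$ is reachable from $\mathcal{P}_i$, hence lies in $\{\mathcal{P}_i\}\cup\mathscr{Z}_i$, hence has energy $\mathbb{H}(\mathcal{P}_i)$ by the hypothesis; since the energies along the path are non-increasing by Lemma \ref{lem:4.1-1} and now all equal, the ``moreover'' part of Lemma \ref{lem:4.1-1} reverses every step, producing a path from $\mathcal{P}_j$ back to $\mathcal{P}_i$. Hence $\{\mathcal{P}_i\}\cup\mathscr{Z}_i=\mathscr{P}_m^{\star,1}$ with $|\mathscr{P}_m^{\star,1}|=1+|\mathscr{Z}_i|\ge2$, and every $\mathcal{P}_l\in\mathscr{P}_m^{\star,1}$ satisfies $\mathbb{H}(\mathcal{P}_l)=\mathbb{H}(\mathcal{P}_i)$, $\Gamma_l^1=\Gamma^{\star,1}$ (Lemma \ref{lem:4.1-2}), and $\Phi(\mathcal{P}_i,\,\mathcal{P}_l)=\mathbb{H}(\mathcal{P}_i)+\Gamma^{\star,1}$ (by \eqref{eq:Zi prop}).

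For the second assertion put $h:=\mathbb{H}(\mathcal{P}_i)+\Gamma^{\star,1}$ and $\mathscr{Y}_i:=\{\mathcal{P}_{j'}\in\mathscr{P}^1\setminus\{\mathcal{P}_i\}:\,\Phi(\mathcal{P}_i,\,\mathcal{P}_{j'})-\mathbb{H}(\mathcal{P}_i)=\Gamma^{\star,1}\}$; since $\mathscr{Z}_i\subseteq\mathscr{Y}_i$ by \eqref{eq:Zi prop}, we must prove $\mathscr{Y}_i\subseteq\mathscr{Z}_i$. As $\Phi(\mathcal{P}_i,\,\mathcal{P}_{j'})\ge\mathbb{H}(\mathcal{P}_i)+\Gamma_i^1=h$ for all $j'\ne i$, we have $\mathscr{Y}_i=\{\mathcal{P}_{j'}\ne\mathcal{P}_i:\,\Phi(\mathcal{P}_i,\,\mathcal{P}_{j'})\le h\}$, and comparing $\Gamma_{j'}^1\le\Phi(\mathcal{P}_{j'},\,\mathcal{P}_i)-\mathbb{H}(\mathcal{P}_{j'})$ with $\Gamma_{j'}^1\ge\Gamma^{\star,1}$ gives $\mathbb{H}(\mathcal{P}_{j'})\le\mathbb{H}(\mathcal{P}_i)$ for $\mathcal{P}_{j'}\in\mathscr{Y}_i$. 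We may assume some stable plateau is unreachable from $\mathcal{P}_i$, as otherwise $\mathscr{Z}_i=\mathscr{P}^1\setminus\{\mathcal{P}_i\}\supseteq\mathscr{Y}_i$ and there is nothing to prove. The heart of the matter is the claim that the reachable region of $\{\mathfrak{X}^1(t)\}_{t\ge0}$ out of $\mathcal{P}_i$, viewed inside $\overline{\Omega}$,
\[
\mathcal{M}:=\Big(\textstyle\bigcup_{\mathcal{P}_l\in\mathscr{P}_m^{\star,1}}\mathcal{V}_l\Big)\cup\{\eta\in\Delta^1:\,\eta\text{ is reachable from }\mathcal{P}_i\text{ via }\{\mathfrak{X}^1(t)\}_{t\ge0}\},
\]
is a cycle with $\min_{\partial\mathcal{M}}\mathbb{H}>h$. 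Granting this, for $\mathcal{P}_{j'}\in\mathscr{Y}_i$ any path attaining $\Phi(\mathcal{P}_i,\,\mathcal{P}_{j'})\le h$ starts in $\mathcal{M}$ and, having height $\le h<\min_{\partial\mathcal{M}}\mathbb{H}$, cannot leave $\mathcal{M}$; so $\mathcal{P}_{j'}$ meets $\mathcal{M}$, whence (nesting of cycles) $\mathcal{P}_{j'}\subseteq\mathcal{M}$, and since the stable plateaux inside $\mathcal{M}$ are exactly those in $\mathscr{P}_m^{\star,1}$ (the $\mathcal{V}_l$ are disjoint and $\Delta^1$ contains no stable plateau) we conclude $\mathcal{P}_{j'}\in\mathscr{P}_m^{\star,1}\setminus\{\mathcal{P}_i\}=\mathscr{Z}_i$. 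This also gives $\mathscr{Y}_i=\mathscr{Z}_i$, i.e.\ the displayed identity of Theorem \ref{thm:P-star1 class}(3).

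The remaining task --- proving the claim about $\mathcal{M}$ --- is where the hypothesis is essential, and I expect it to be the main obstacle. The mechanism is that, started from $\mathcal{P}_i$, the chain $\{\mathfrak{X}^1(t)\}_{t\ge0}$ can never enter a cycle $\mathcal{V}_q$ with $\mathbb{H}(\mathcal{P}_q)<\mathbb{H}(\mathcal{P}_i)$ or $\Gamma_q^1>\Gamma^{\star,1}$: such a $\mathcal{V}_q$ would make $\mathcal{P}_q$ reachable, hence $\mathcal{P}_q\in\mathscr{Z}_i$, so $\mathbb{H}(\mathcal{P}_q)=\mathbb{H}(\mathcal{P}_i)$ by the hypothesis and $\Phi(\mathcal{P}_i,\,\mathcal{P}_q)=\mathbb{H}(\mathcal{P}_i)+\Gamma^{\star,1}$ by \eqref{eq:Zi prop}, which forces $\Gamma_q^1\le\Phi(\mathcal{P}_q,\,\mathcal{P}_i)-\mathbb{H}(\mathcal{P}_q)=\Gamma^{\star,1}$, contradicting either possibility. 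Consequently every $\mathcal{V}_l$ appearing in $\mathcal{M}$ has bottom at energy $\mathbb{H}(\mathcal{P}_i)$ and depth $\Gamma^{\star,1}$, so by Lemma \ref{lem:Vi1 cycle} and \eqref{eq:depth-def} $\partial\mathcal{V}_l$ lies entirely at height $\ge h$ and $\mathcal{V}_l$ is a full connected component of the sublevel set $\{\mathbb{H}<h\}$; the reachable $\Delta^1$-states all have height $\le h$, since the chain exits $\mathcal{P}_i$ onto $\partial^\star\mathcal{V}_i$ at height $h$, moves downhill within $\Delta^1$, and returns to height $h$ only at further cycle exits; and a component of $\{\mathbb{H}<h\}$ meeting the reachable $\Delta^1$-states would, via a downhill path, lead the chain into a proper sub-cycle of that component whose bottom necessarily has energy $<\mathbb{H}(\mathcal{P}_i)$ --- again contradicting the hypothesis. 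Feeding these three facts into a case analysis of $\partial\mathcal{M}$ --- a boundary point adjacent to a reachable $\mathcal{V}_l$ lies in $\partial\mathcal{V}_l$ at height $>h$ (height $=h$ would make it a valid $\mathfrak{X}^1$-destination from $\mathcal{P}_l$ by \eqref{eq:RC def}, hence reachable), while one adjacent only to reachable $\Delta^1$-states must itself lie in $\Delta^1$ at height $>h$ --- yields $\min_{\partial\mathcal{M}}\mathbb{H}>h$. The genuinely delicate point throughout is the bookkeeping of how the contracted dynamics $\{\mathfrak{X}^1(t)\}_{t\ge0}$ threads through the family of cycles $\{\mathcal{V}_l\}$ inside $\overline{\Omega}$; and the hypothesis cannot be dropped, since without it a deep intermediate cycle may trap the dynamics short of a prescribed stable plateau, which is precisely the situation behind Lemma \ref{lem:4.1-3}.
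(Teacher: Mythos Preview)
Your proof is correct, and the first assertion is argued essentially as in the paper. For the second assertion you take a genuinely different route: the paper decomposes a height-$h$ path from $\mathcal{P}_i$ to $\mathcal{P}_{j'}$ into segments $\mathcal{V}_{i_n}\to\omega_{n,1}\to\cdots\to\omega_{n,M_n}\to\mathcal{V}_{i_{n+1}}$ and inducts on $n$, showing at each step that all intermediate $\Delta^1$-states sit at height exactly $h$ and each $\mathcal{P}_{i_n}$ lies in $\mathscr{Z}_i$ (any drop below $h$ would produce, via a downhill detour, a reachable plateau of strictly lower energy, violating the hypothesis). You instead package the same contradiction mechanism into a single global object: the $\mathfrak{X}^1$-reachable region $\mathcal{M}$, and show $\min_{\partial\mathcal{M}}\mathbb{H}>h$, so that any height-$\le h$ path is trapped in $\mathcal{M}$. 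The underlying engine is identical---the hypothesis forbids reachable $\Delta^1$-states at height $<h$ and reachable plateaux at energy $<\mathbb{H}(\mathcal{P}_i)$---but your formulation trades the path induction for a set-boundary analysis. Your version is slightly more conceptual (it makes explicit that $\mathscr{P}_m^{\star,1}$ sits inside an honest cycle of depth $>\Gamma^{\star,1}$, foreshadowing the level-$2$ construction), while the paper's inductive argument is more self-contained and avoids the somewhat delicate case analysis of $\partial\mathcal{M}$ (in particular your implicit check that a boundary point adjacent to some $\mathcal{V}_l$ cannot simultaneously lie in another $\mathcal{V}_q$, which follows from the disjointness in Lemma~\ref{lem:Vi1 cycle} and a height comparison, deserves one more sentence).
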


\begin{proof}
By Lemma \ref{lem:4.1-1}, from each $\mathcal{P}_{j}\in\mathscr{Z}_{i}$
it is possible to return to $\mathcal{P}_{i}$ via $\mathfrak{R}^{\star,1}(\cdot,\,\cdot)$.
Thus, $\{\mathcal{P}_{i}\}\cup\mathscr{Z}_{i}$ forms an irreducible
class with respect to $\{\mathfrak{X}^{\star,1}(t)\}_{t\ge0}$. This
implies that $\{\mathcal{P}_{i}\}\cup\mathscr{Z}_{i}=\mathscr{P}_{m}^{\star,1}$
for some $m\in[1,\,\nu_{0}]$ which implies that $|\mathscr{P}_{m}^{\star,1}|\ge2$.
This proves the first statement.

Moreover, take another $\mathcal{P}_{j'}\in\mathscr{P}^{1}$ such
that $\Phi(\mathcal{P}_{i},\,\mathcal{P}_{j'})-\mathbb{H}(\mathcal{P}_{i})=\Gamma^{\star,1}$.
Then, there exist a sequence $\mathcal{P}_{i}=\mathcal{P}_{i_{0}},\,\mathcal{P}_{i_{1}},\,\dots,\,\mathcal{P}_{i_{N}}=\mathcal{P}_{j'}$
in $\mathscr{P}^{1}$ and configurations $\omega_{n,n'}$ in $\Delta^{1}$
such that for each $n\in[0,\,N-1]$, there exists a path
\begin{equation}
\mathcal{V}_{i_{n}}\to\omega_{n,1}\to\cdots\to\omega_{n,M_{n}}\to\mathcal{V}_{i_{n+1}}\quad\text{with height at most}\quad\mathbb{H}_{i}^{\star,1}:=\mathbb{H}(\mathcal{P}_{i})+\Gamma^{\star,1}.\label{eq:4.1-4 pf}
\end{equation}
We claim that $\mathcal{P}_{i_{n}}$ is reachable from $\mathcal{P}_{i}$
by $\{\mathfrak{X}^{\star,1}(t)\}_{t\ge0}$ and $\mathbb{H}(\mathcal{P}_{i_{n}})=\mathbb{H}(\mathcal{P}_{i})$
for each $n$, which concludes the proof of the second statement of
Lemma \ref{lem:4.1-4} by substituting $n=N$. Since the claim is
obvious for $n=0$, we verify the claim by proving the inductive step
$n\to n+1$.

Since $\mathbb{H}(\mathcal{P}_{i_{n}})=\mathbb{H}(\mathcal{P}_{i})$,
$\Gamma_{i_{n}}^{1}\ge\Gamma^{\star,1}$ and the path in \eqref{eq:4.1-4 pf}
has height at most $\mathbb{H}_{i}^{\star,1}$, it readily holds that
$\Gamma_{i_{n}}^{1}=\Gamma^{\star,1}$ and $\mathbb{H}(\omega_{n,1})=\mathbb{H}_{i}^{\star,1}$.
Moreover, suppose that there exists $n'$ such that $\mathbb{H}(\omega_{n,n'})<\mathbb{H}_{i}^{\star,1}$.
Take minimal such $n'$ so that $\mathbb{H}(\omega_{n,1})=\cdots=\mathbb{H}(\omega_{n,n'-1})=\mathbb{H}_{i}^{\star,1}$
and $\mathbb{H}(\omega_{n,n'})<\mathbb{H}_{i}^{\star,1}$. Then, the
fact that $\omega_{n,n'}\in\Delta^{1}$ implies that there exists
a downhill path from $\omega_{n,n'}$ to some other $\mathcal{P}_{j''}\in\mathscr{P}^{1}$.
From this we deduce that $\mathbb{H}(\omega_{n,n'})\ge\mathbb{H}(\mathcal{P}_{j''})+\Gamma_{j''}^{1}\ge\mathbb{H}(\mathcal{P}_{j''})+\Gamma^{\star,1}$.
Thus,
\[
\mathbb{H}_{i}^{\star,1}=\mathbb{H}(\omega_{n,1})>\mathbb{H}(\omega_{n,n'})\ge\mathbb{H}(\mathcal{P}_{j''})+\Gamma^{\star,1},\quad\text{thus}\quad\mathbb{H}(\mathcal{P}_{i})>\mathbb{H}(\mathcal{P}_{j''}).
\]
Moreover, there exists a downhill path $\omega_{n,1}\to\cdots\to\omega_{n,n'}\to\cdots\to\mathcal{V}_{j''}$,
thus we have $\mathfrak{R}^{\star,1}(\mathcal{P}_{i_{n}},\,\mathcal{P}_{j''})>0$
and $\mathcal{P}_{j''}\in\mathscr{Z}_{i}$ by the induction hypothesis.
This contradicts the original assumption in the lemma. Hence, $\mathbb{H}(\omega_{n,n'})=\mathbb{H}_{i}^{\star,1}$
for all $n'\in[1,\,M_{n}]$. Finally, if $\mathbb{H}(\mathcal{P}_{i_{n+1}})<\mathbb{H}(\mathcal{P}_{i})$
then the same argument implies that $\mathcal{P}_{i_{n+1}}\in\mathscr{Z}_{i}$
and thus we again obtain a contradiction. Therefore, we conclude that
$\mathbb{H}(\mathcal{P}_{i_{n+1}})=\mathbb{H}(\mathcal{P}_{i})$ and
also $\mathcal{P}_{i_{n+1}}\in\mathscr{Z}_{i}$, which completes the
induction statement.
\end{proof}
Now, we are ready to present a proof of Theorem \ref{thm:P-star1 class}.
\begin{proof}[Proof of Theorem \ref{thm:P-star1 class}]
 By Lemma \ref{lem:4.1-2}, it holds that $\Gamma_{i}^{1}>\Gamma^{\star,1}$
if and only if $\mathcal{P}_{i}$ is an absorbing state of $\{\mathfrak{X}^{\star,1}(t)\}_{t\ge0}$,
which is equivalent to $\mathcal{P}_{i}\in\mathscr{P}_{m}^{\star,1}$
for some $m\in[1,\,\nu_{1}]$ with $|\mathscr{P}_{m}^{\star,1}|=1$.
Thus, Theorem \ref{thm:P-star1 class}-(1) is readily verified.

Next, suppose that $\mathcal{P}_{i}\in\mathscr{P}_{{\rm tr}}^{\star,1}$.
Then, Lemmas \ref{lem:4.1-2}, \ref{lem:4.1-3} and \ref{lem:4.1-4}
constitute the full characterization of $\mathscr{P}^{1}$ such that
for $\mathcal{P}_{i}\in\mathscr{P}_{{\rm tr}}^{\star,1}$, we have
$\Gamma_{i}^{1}=\Gamma^{\star,1}$ and $\mathbb{H}(\mathcal{P}_{j})<\mathbb{H}(\mathcal{P}_{i})$
for some $\mathcal{P}_{j}\in\mathscr{Z}_{i}$, where \eqref{eq:Zi prop}
implies that $\Phi(\mathcal{P}_{i},\,\mathcal{P}_{j})-\mathbb{H}(\mathcal{P}_{i})=\Gamma^{\star,1}$.
Thus, Theorem \ref{thm:P-star1 class}-(2) is proved.

Finally, suppose that $\mathcal{P}_{i}\in\mathscr{P}_{m}^{\star,1}$
for some $m\in[1,\,\nu_{1}]$ with $|\mathscr{P}_{m}^{\star,1}|\ge2$.
Then, by the same classification, for each $\mathcal{P}_{j}\in\mathscr{P}^{1}$
with $\Phi(\mathcal{P}_{i},\,\mathcal{P}_{j})-\mathbb{H}(\mathcal{P}_{i})=\Gamma^{\star,1}$,
the second statement of Lemma \ref{lem:4.1-4} implies that $\mathcal{P}_{j}\in\mathscr{Z}_{i}$
and also $\mathbb{H}(\mathcal{P}_{j})=\mathbb{H}(\mathcal{P}_{i})$.
All that remains to be proved is that each $\mathcal{P}_{j'}\in\mathscr{P}_{m}^{\star,1}\setminus\{\mathcal{P}_{i}\}$
satisfies $\Phi(\mathcal{P}_{i},\,\mathcal{P}_{j'})-\mathbb{H}(\mathcal{P}_{i})=\Gamma^{\star,1}$
and $\mathbb{H}(\mathcal{P}_{j'})=\mathbb{H}(\mathcal{P}_{i})$, which
is again clear by \eqref{eq:Zi prop} and Lemma \ref{lem:4.1-4}.
This proves Theorem \ref{thm:P-star1 class}-(3).
\end{proof}

\subsection{\label{sec4.2}Proof of Theorems \ref{thm:nu dec} and \ref{thm:ground states rec}}

Now, according to the precise analysis conducted in the previous subsection,
here we prove Theorems \ref{thm:nu dec} and \ref{thm:ground states rec}
for $h=1$.

It readily holds by \eqref{eq:C-star C-sharp def} and Lemma \ref{lem:4.1-2}
that $\{\mathfrak{X}^{\star,1}(t)\}_{\ge0}$ is a nonzero Markov chain
in $\mathscr{P}^{1}$. This readily verifies via \eqref{eq:P-star1 dec}
that $\nu_{1}<\nu_{0}$, thus Theorem \ref{thm:nu dec} follows.

To prove Theorem \ref{thm:ground states rec} for $h=1$, suppose
the contrary. Then, there exist $\bm{s}\in\mathcal{S}$ and $\mathcal{P}_{i}\in\mathscr{P}_{{\rm tr}}^{\star,1}$
such that $\bm{s}\in\mathcal{P}_{i}$. Then, by Theorem \ref{thm:P-star1 class}-(2),
there exists another $\mathcal{P}_{j}\in\mathscr{P}^{1}$ such that
$\mathbb{H}(\mathcal{P}_{j})<\mathbb{H}(\mathcal{P}_{i})$. This contradicts
the fact that $\mathcal{S}$ is the collection of minimizers of the
Hamiltonian. This completes the proof of Theorem \ref{thm:ground states rec}
for $h=1$.

\subsection{\label{sec4.3}Proof of Theorem \ref{thm:main}}

In this subsection, we prove Theorem \ref{thm:main} in the case of
$h=1$. Main tool is the resolvent approach to metastability \cite{LMS resolvent}.

We denote by $\mathfrak{L}^{1}$ the infinitesimal generator of the
limiting dynamics $\{\mathfrak{X}^{\star,1}(t)\}_{t\ge0}$ in $\mathscr{P}^{1}$.
For any $\lambda>0$ and $g:\,\mathscr{P}^{1}\to\mathbb{R}$, denote
by $f:\,\mathscr{P}^{1}\to\mathbb{R}$ the unique solution to the
\emph{macroscopic} resolvent equation: $(\lambda-\mathfrak{L}^{1})f=g$.
Denote by $G:\,\Omega\to\mathbb{R}$ the \emph{lift} of $g$ given
as (cf. \eqref{eq:V-starh def} and \eqref{eq:Psih def})
\begin{equation}
G(\eta):=\begin{cases}
g(\Psi^{1}(\eta)) & \text{if}\quad\eta\in\mathcal{V}^{\star,1},\\
0 & \text{if}\quad\eta\in\Omega\setminus\mathcal{V}^{\star,1}.
\end{cases}\label{eq:G def}
\end{equation}
Then, for each positive real number $\Gamma$, denote by $F_{\beta}^{\Gamma}:\,\Omega\to\mathbb{R}$
the unique solution to the \emph{microscopic} resolvent equation,
where $L_{\beta}$ denotes the original infinitesimal generator:
\begin{equation}
(\lambda-e^{\Gamma\beta}L_{\beta})F_{\beta}^{\Gamma}=G.\label{eq:res micro}
\end{equation}
The explicit formulas for the resolvent solutions $f$ and $F_{\beta}^{\Gamma}$
are given as (cf. \cite[(4.1)]{LMS resolvent})
\[
f(\mathcal{P}_{i})={\bf E}_{\mathcal{P}_{i}}^{\star,1}\Big[\int_{0}^{\infty}e^{-\lambda t}g(\mathfrak{X}^{\star,1}(t)){\rm d}t\Big]\quad\text{and}\quad F_{\beta}^{\Gamma}(\eta)=\mathbb{E}_{\eta}\Big[\int_{0}^{\infty}e^{-\lambda t}G(\eta_{\beta}(e^{\Gamma\beta}t)){\rm d}t\Big],
\]
where ${\bf E}_{\cdot}^{\star,1}$ is the expectation of the law ${\bf P}_{\cdot}^{\star,1}$
of $\{\mathfrak{X}^{\star,1}(t)\}_{t\ge0}$. In particular, $f$ and
$F_{\beta}^{\Gamma}$ are uniformly bounded:
\begin{equation}
\|f\|_{\infty}\le\frac{1}{\lambda}\|g\|_{\infty}\quad\text{and}\quad\|F_{\beta}^{\Gamma}\|_{\infty}\le\frac{1}{\lambda}\|g\|_{\infty},\label{eq:f Fbeta unif bdd}
\end{equation}
where $\|\cdot\|_{\infty}$ denotes the supremum norm. By \cite[Theorem 2.3]{LMS resolvent},
proving Theorem \ref{thm:main} is equivalent to proving that
\begin{equation}
\lim_{\beta\to\infty}\sup_{\eta\in\mathcal{V}_{i}}|F_{\beta}^{\star,1}(\eta)-f(\mathcal{P}_{i})|=0\quad\text{for each}\quad i\in[1,\,\nu_{0}],\label{eq:res cond}
\end{equation}
where we abbreviated $F_{\beta}^{\star,1}:=F_{\beta}^{\Gamma^{\star,1}}$.

For $\Gamma>0$ and nonempty $\mathcal{A}\subseteq\Omega$, denote
by $\overline{F}_{\beta}^{\Gamma}(\mathcal{A})$ the average of $F_{\beta}^{\Gamma}$
in $\mathcal{A}$:
\begin{equation}
\overline{F}_{\beta}^{\Gamma}(\mathcal{A}):=\sum_{\xi\in\mathcal{A}}\frac{\mu_{\beta}(\xi)}{\mu_{\beta}(\mathcal{A})}F_{\beta}^{\Gamma}(\xi).\label{eq:Fbeta-bar def}
\end{equation}
First, we prove that for every $\Gamma>0$, $F_{\beta}^{\Gamma}$
is asymptotically constant in $\mathcal{V}_{i}$ for each $i\in[1,\,\nu_{0}]$.
\begin{lem}
\label{lem:Fbeta flat valley-1}For all $\Gamma>0$ and $i\in[1,\,\nu_{0}]$,
it holds that
\[
\lim_{\beta\to\infty}\sup_{\eta\in\mathcal{V}_{i}}|F_{\beta}^{\Gamma}(\eta)-\overline{F}_{\beta}^{\Gamma}(\mathcal{P}_{i})|=0.
\]
\end{lem}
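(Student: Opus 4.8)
The plan is to argue directly from the stochastic representation $F_\beta^\Gamma(\eta)=\mathbb{E}_\eta[\int_0^\infty e^{-\lambda r}G(\eta_\beta(e^{\Gamma\beta}r))\,{\rm d}r]$, using that on the time scale $e^{\Gamma\beta}$ the dynamics relaxes to a local equilibrium inside $\mathcal{V}_i$ — so that the starting point is forgotten — long before it can leave $\mathcal{V}_i$. (The naive energy estimate obtained by testing \eqref{eq:res micro} against $F_\beta^\Gamma$ in $L^2(\mu_\beta)$ does not by itself control the oscillation of $F_\beta^\Gamma$ on a high-lying $\mathcal{V}_i$, since the conductances there are themselves exponentially small; so a probabilistic argument seems necessary.) If $|\mathcal{V}_i|=1$ the claim is trivial, so assume $|\mathcal{V}_i|\ge2$.

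The structural input is that $\mathcal{P}_i$ is the only stable plateau contained in $\mathcal{V}_i$: indeed $\mathcal{F}(\mathcal{V}_i)=\mathcal{P}_i$ and $\mathcal{V}_i\cap\mathcal{P}_j=\emptyset$ for $j\ne i$ (Lemma \ref{lem:Vi1 cycle} and its proof), and every stable plateau contained in $\overline{\Omega}$ lies in $\mathscr{P}^1$. By Lemma \ref{lem:cyc bot P} it follows that $\mathcal{V}_i$ contains no nontrivial cycle disjoint from $\mathcal{P}_i$; equivalently, from every $\eta\in\mathcal{V}_i$ there is a path inside $\mathcal{V}_i$ to $\mathcal{P}_i$ along which $\mathbb{H}$ is non-increasing, since otherwise the maximal $\mathbb{H}(\eta)$-level set reachable from $\eta$ in $\mathcal{V}_i$ without descending would itself be a stable plateau in $\mathcal{V}_i$.

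From this I would extract, uniformly over the starting point and for $\beta$ large: (i) since $\mathcal{V}_i$ is a cycle, every edge from $\mathcal{V}_i$ to $\partial\mathcal{V}_i$ is uphill by at least $\delta_0:=\min\{\mathbb{H}(\zeta)-\mathbb{H}(\zeta'):\zeta,\zeta'\in\Omega,\ \mathbb{H}(\zeta)>\mathbb{H}(\zeta')\}$, hence the exit rate from $\mathcal{V}_i$ is $\le|\Omega|e^{-\delta_0\beta}$ and $\sup_{\eta\in\mathcal{V}_i}\mathbb{P}_\eta[\mathcal{T}_{\partial\mathcal{V}_i}\le T]\le|\Omega|Te^{-\delta_0\beta}\to0$ for each fixed $T$; (ii) since $\mathcal{V}_i$ contains no nontrivial cycle disjoint from $\mathcal{P}_i$, the Freidlin--Wentzell cycle-time estimates (cf. \cite[Section 6]{OV}) give $\sup_{\eta\in\mathcal{V}_i}\mathbb{E}_\eta[\mathcal{T}_{\mathcal{P}_i\cup\partial\mathcal{V}_i}]=O(1)$ and $\sup_{\eta\in\mathcal{V}_i}\mathbb{P}_\eta[\mathcal{T}_{\partial\mathcal{V}_i}<\mathcal{T}_{\mathcal{P}_i}]\to0$. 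Combining (i)--(ii): running two copies $\eta_\beta^{(1)},\eta_\beta^{(2)}$ of the unaccelerated dynamics from arbitrary $\eta,\eta'\in\mathcal{V}_i$, both stay in $\mathcal{V}_i$ and reach $\mathcal{P}_i$ within $O(1)$ time with probability $\to1$; once both lie in the flat connected set $\mathcal{P}_i$ (on which all internal rates are $1$ and, by (i)--(ii), excursions out are rare and short) they can be coupled to coincide in $O(1)$ further time, which is elementary for a finite connected graph. Hence for every $\epsilon>0$ there exist $T=T(\epsilon)$ and $\beta_0$ so that for $\beta\ge\beta_0$ the two chains coincide by real time $T$ with probability at least $1-\epsilon$, uniformly in $\eta,\eta'\in\mathcal{V}_i$.

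Finally, splitting the resolvent integral at accelerated time $Te^{-\Gamma\beta}$ and applying the Markov property at real time $T$,
\[
F_\beta^\Gamma(\eta)=\mathbb{E}_\eta\Big[\int_0^{Te^{-\Gamma\beta}}e^{-\lambda r}G(\eta_\beta(e^{\Gamma\beta}r))\,{\rm d}r\Big]+e^{-\lambda Te^{-\Gamma\beta}}\,\mathbb{E}_\eta\big[F_\beta^\Gamma(\eta_\beta(T))\big].
\]
The first term is $\le\|g\|_\infty Te^{-\Gamma\beta}\to0$; for the second, the coupling above together with $\|F_\beta^\Gamma\|_\infty\le\|g\|_\infty/\lambda$ (cf. \eqref{eq:f Fbeta unif bdd}) gives $|\mathbb{E}_\eta[F_\beta^\Gamma(\eta_\beta(T))]-\mathbb{E}_{\eta'}[F_\beta^\Gamma(\eta_\beta(T))]|\le2\epsilon\|g\|_\infty/\lambda$ for all $\eta,\eta'\in\mathcal{V}_i$. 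Therefore $\limsup_{\beta\to\infty}\sup_{\eta,\eta'\in\mathcal{V}_i}|F_\beta^\Gamma(\eta)-F_\beta^\Gamma(\eta')|\le2\epsilon\|g\|_\infty/\lambda$, and letting $\epsilon\downarrow0$ yields $\lim_{\beta\to\infty}\sup_{\eta,\eta'\in\mathcal{V}_i}|F_\beta^\Gamma(\eta)-F_\beta^\Gamma(\eta')|=0$; since $\overline{F}_\beta^\Gamma(\mathcal{P}_i)$ is a convex combination of the values $F_\beta^\Gamma(\xi)$ with $\xi\in\mathcal{P}_i\subseteq\mathcal{V}_i$, the lemma follows. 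The main obstacle is ingredient (ii): converting the structural statement ``no sub-cycle disjoint from $\mathcal{P}_i$'' into a $\beta$-uniform bound on the time to reach $\mathcal{P}_i$ inside $\mathcal{V}_i$, for which one must check that the higher-energy shoulders of $\mathcal{V}_i$ do not slow the descent — this is where the Freidlin--Wentzell cycle machinery is needed.
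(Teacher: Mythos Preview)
Your argument is correct. Both your proof and the paper's rest on the same structural fact---that $\mathcal{P}_i$ is the unique stable plateau in $\mathcal{V}_i$, so from every $\eta\in\mathcal{V}_i$ there is a non-increasing path to $\mathcal{P}_i$---but they package the conclusion differently. The paper reduces \eqref{eq:claim-1} to a black-box mixing condition (Condition~$\mathfrak{M}$ of \cite[Proposition~6.7]{LMS resolvent}): it takes $\theta_\beta=e^{\Gamma'\beta}$ with $\Gamma'\in(0,\Gamma)$, cites \cite[Theorem~6.23]{OV} for $\sup_{\eta\in\mathcal{V}_i}\mathbb{P}_\eta[\mathcal{T}_{\mathcal{V}_i^c}\le\theta_\beta]\to0$, and cites \cite[Proposition~3.24, Lemma~3.6]{NZB} to get that the mixing time of the \emph{reflected} process in $\mathcal{V}_i$ satisfies $\beta^{-1}\log t_{\rm mix}(\epsilon)\to\widetilde{\Gamma}(\mathcal{V}_i)$, and then computes $\widetilde{\Gamma}(\mathcal{V}_i)=0$ from the downhill-path property. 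Your route unpacks essentially the same mechanism without the reflected process or the cited mixing-time estimate: you control the exit from $\mathcal{V}_i$ over a \emph{fixed} real time $T$ directly from the cycle inequality, use the downhill paths to get an $O(1)$ hitting time of $\mathcal{P}_i$, couple on the flat plateau, and then split the resolvent integral at accelerated time $Te^{-\Gamma\beta}$ via the Markov property. What the paper's approach buys is modularity (the same Condition~$\mathfrak{M}$ is reused at higher levels $h$, where $\widetilde{\Gamma}(\mathcal{V}_i^h)=\Gamma^{\star,h-1}>0$ and one genuinely needs $\theta_\beta$ to grow); what your approach buys is self-containment and an explicit constant-order time scale for the coupling at level $1$. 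Your caveat about ingredient~(ii) is well placed but not really an obstacle here: the downhill-path property gives a $\beta$-uniform lower bound on the probability of reaching $\mathcal{P}_i$ along a fixed rate-$1$ path in bounded time, hence $\sup_{\eta\in\mathcal{V}_i}\mathbb{E}_\eta[\mathcal{T}_{\mathcal{P}_i\cup\partial\mathcal{V}_i}]=O(1)$ without invoking the full Freidlin--Wentzell machinery.
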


\begin{proof}
It suffices to prove that
\begin{equation}
\lim_{\beta\to\infty}\sup_{\eta,\,\xi\in\mathcal{V}_{i}}|F_{\beta}^{\Gamma}(\eta)-F_{\beta}^{\Gamma}(\xi)|=0.\label{eq:claim-1}
\end{equation}
By \cite[Proposition 6.7]{LMS resolvent}, it suffices to check the
mixing condition (Condition $\mathfrak{M}$ therein):\footnote{In this article, $f(\beta)=o(g(\beta))$ or $f(\beta)\ll g(\beta)$
indicates that $\lim_{\beta\to\infty}\frac{f(\beta)}{g(\beta)}=0$
and $f(\beta)=O(g(\beta))$ indicates that $|f(\beta)|\le Cg(\beta)$
for some constant $C>0$ independent of $\beta$.}\smallskip{}
\begin{itemize}
\item There exists $\theta_{\beta}\ll e^{\Gamma\beta}$ such that $\lim_{\beta\to\infty}\sup_{\eta\in\mathcal{V}_{i}}\mathbb{P}_{\eta}[\mathcal{T}_{\mathcal{V}_{i}^{c}}\le\theta_{\beta}]=0$.
\item The reflected process in $\mathcal{V}_{i}$ is ergodic and for each
$\epsilon>0$ its $\epsilon$-mixing time $t_{{\rm mix}}(\epsilon)$
is bounded by $\theta_{\beta}$ for all sufficiently large $\beta$.\smallskip{}
\end{itemize}
Take $\theta_{\beta}:=e^{\Gamma'\beta}$ where $\Gamma'\in(0,\,\Gamma)$.
Then since $\Gamma'<\Gamma$, it is clear that $\theta_{\beta}\ll e^{\Gamma\beta}$.
By \cite[Theorem 6.23-(i)]{OV}, $\lim_{\beta\to\infty}\mathbb{P}_{\eta}[\mathcal{T}_{\mathcal{V}_{i}^{c}}\le\theta_{\beta}]=0$
uniformly over $\eta\in\mathcal{V}_{i}$, thus the first condition
is verified. Moreover, since $\mathcal{V}_{i}$ is connected it is
clear that the reflected process in $\mathcal{V}_{i}$ is ergodic,
and by \cite[Proposition 3.24 and Lemma 3.6]{NZB}, it holds that
$\lim_{\beta\to\infty}\beta^{-1}\log t_{{\rm mix}}(\epsilon)=\widetilde{\Gamma}(\mathcal{V}_{i})$
where
\begin{equation}
\widetilde{\Gamma}(\mathcal{V}_{i}):=\max_{\eta\in\mathcal{V}_{i}\setminus\{\eta_{0}\}}(\Phi(\eta,\,\eta_{0})-\mathbb{H}(\eta))\quad\text{for any fixed}\quad\eta_{0}\in\mathcal{F}(\mathcal{V}_{i})=\mathcal{P}_{i}.\label{eq:Gamma-tilde def}
\end{equation}
If $\eta\in\mathcal{P}_{i}\setminus\{\eta_{0}\}$ then clearly $\Phi(\eta,\,\eta_{0})-\mathbb{H}(\eta)=0$,
and if $\eta\in\mathcal{V}_{i}\setminus\mathcal{P}_{i}$ then there
exists a downhill path from $\eta$ to $\mathcal{P}_{i}$. Along this
path, we obtain that $\Phi(\eta,\,\eta_{0})-\mathbb{H}(\eta)=0$.
Thus, $\widetilde{\Gamma}(\mathcal{V}_{i})=0$ and $t_{{\rm mix}}(\epsilon)\le e^{\beta\Gamma'}=\theta_{\beta}$
for all sufficiently large $\beta$, which verifies the second condition
and proves \eqref{eq:claim-1}.
\end{proof}
The next step is to prove that the value of $F_{\beta}^{\Gamma}(\eta)$
for $\eta\in\Delta^{1}$ can be approximated by a linear combination
of $\overline{F}_{\beta}^{\Gamma}(\mathcal{P}_{i})$ for $i\in[1,\,\nu_{0}]$.
\begin{lem}
\label{lem:Fbeta Delta1}For all $\Gamma>0$ and $\eta\in\Delta^{1}$,
it holds that
\[
\lim_{\beta\to\infty}\Big|F_{\beta}^{\Gamma}(\eta)-\sum_{i\in[1,\,\nu_{0}]}{\bf P}_{\eta}^{1}[\mathcal{T}_{\mathcal{P}_{i}}=\mathcal{T}_{\mathscr{P}^{1}}]\cdot\overline{F}_{\beta}^{\Gamma}(\mathcal{P}_{i})\Big|=0.
\]
\end{lem}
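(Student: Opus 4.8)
The plan is to exploit that $G$ vanishes on $\Delta^{1}$ (by \eqref{eq:G def}, since $\Delta^{1}\cap\mathcal{V}^{\star,1}=\emptyset$) and that $\Delta^{1}$ carries no metastable trap. Write $Y_{\beta}(t):=\eta_{\beta}(e^{\Gamma\beta}t)$ for the accelerated process, whose generator is $e^{\Gamma\beta}L_{\beta}$, and let $\sigma_{\beta}:=e^{-\Gamma\beta}\mathcal{T}_{(\Delta^{1})^{c}}$ be the exit time of $\Delta^{1}$ for $Y_{\beta}$. By \eqref{eq:res micro} we have $(e^{\Gamma\beta}L_{\beta}-\lambda)F_{\beta}^{\Gamma}(\xi)=-G(\xi)=0$ for every $\xi\in\Delta^{1}$, so $s\mapsto e^{-\lambda s}F_{\beta}^{\Gamma}(Y_{\beta}(s))$ is a bounded martingale up to time $\sigma_{\beta}$; optional stopping then gives
\[
F_{\beta}^{\Gamma}(\eta)=\mathbb{E}_{\eta}\big[e^{-\lambda\sigma_{\beta}}F_{\beta}^{\Gamma}(Y_{\beta}(\sigma_{\beta}))\big]\qquad\text{for all}\qquad\eta\in\Delta^{1}.
\]

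The next step is to show $\sigma_{\beta}\to0$ in probability, uniformly over $\eta\in\Delta^{1}$. The structural input is that \emph{$\Delta^{1}$ contains no nontrivial cycle}: by Lemma \ref{lem:cyc bot P} the bottom of any cycle is a union of stable plateaux, and each stable plateau lies in some $\mathcal{V}_{i}$ because $\mathcal{P}_{i}=\mathcal{F}(\mathcal{V}_{i})$ by Lemma \ref{lem:Vi1 cycle}; a cycle disjoint from $\bigcup_{i}\mathcal{V}_{i}$ would thus have empty bottom. From this one deduces that from every $\xi\in\Delta^{1}$ there is a path to $(\Delta^{1})^{c}$ along which $\mathbb{H}$ is non-increasing (descend greedily; whenever the flat component reached lies inside $\Delta^{1}$ it is not a stable plateau, hence has a strictly lower neighbour, so one continues), and along such a path every jump rate of $\{\eta_{\beta}(t)\}$ equals $1$. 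Iterating the resulting uniform lower bound $\inf_{\xi\in\Delta^{1}}\mathbb{P}_{\xi}[\mathcal{T}_{(\Delta^{1})^{c}}\le T_{0}]\ge c>0$, with $T_{0},c$ independent of $\beta$, shows $\mathcal{T}_{(\Delta^{1})^{c}}$ is bounded in probability uniformly in $\beta$, hence $\sigma_{\beta}\to0$. Combining this with the uniform bound $\|F_{\beta}^{\Gamma}\|_{\infty}\le\lambda^{-1}\|g\|_{\infty}$ from \eqref{eq:f Fbeta unif bdd} and using that the exit point $Y_{\beta}(\sigma_{\beta})=\eta_{\beta}(\mathcal{T}_{(\Delta^{1})^{c}})$ lies in $\bigcup_{i}\mathcal{V}_{i}$ with probability $1-o(1)$ (escaping $\overline{\Omega}$ requires an uphill move), Lemma \ref{lem:Fbeta flat valley-1} — which makes $F_{\beta}^{\Gamma}$ asymptotically equal to $\overline{F}_{\beta}^{\Gamma}(\mathcal{P}_{i})$ throughout $\mathcal{V}_{i}$ — reduces the displayed identity to
\[
F_{\beta}^{\Gamma}(\eta)=\sum_{i\in[1,\,\nu_{0}]}\mathbb{P}_{\eta}\big[\eta_{\beta}(\mathcal{T}_{(\Delta^{1})^{c}})\in\mathcal{V}_{i}\big]\cdot\overline{F}_{\beta}^{\Gamma}(\mathcal{P}_{i})+o(1).
\]

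It then remains to identify $\mathbb{P}_{\eta}[\eta_{\beta}(\mathcal{T}_{(\Delta^{1})^{c}})\in\mathcal{V}_{i}]\to{\bf P}_{\eta}^{1}[\mathcal{T}_{\mathcal{P}_{i}}=\mathcal{T}_{\mathscr{P}^{1}}]$, which I would prove by coupling the jump chain of $\{\eta_{\beta}(t)\}$ with that of $\{\mathfrak{X}^{1}(t)\}$ inside $\Delta^{1}$. For $\xi\in\Delta^{1}$, every neighbour $\zeta$ with $\mathbb{H}(\zeta)\le\mathbb{H}(\xi)$ lies in $\overline{\Omega}$, hence in $\Delta^{1}$ or in some $\mathcal{V}_{i}$, and in the latter case $\mathbb{H}(\zeta)<\mathbb{H}(\xi)$ since $\xi\in\partial\mathcal{V}_{i}$ and $\mathcal{V}_{i}$ is a cycle; by \eqref{eq:RC def}, $\{\mathfrak{X}^{1}(t)\}$ leaves $\xi$ to each such $\zeta\in\Delta^{1}$ at rate $1$ and to $\mathcal{P}_{i}$ at rate $|\{\zeta\in\mathcal{V}_{i}:\xi\sim\zeta\}|$. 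Hence, conditioned on taking only flat or downhill steps while in $\Delta^{1}$, the embedded chain of $\{\eta_{\beta}(t)\}$ (which is then uniform over the flat-or-downhill neighbours) has exactly the law of the embedded chain of $\{\mathfrak{X}^{1}(t)\}$, after lumping the neighbours in each $\mathcal{V}_{i}$ into $\mathcal{P}_{i}$ and stopping upon exit. Since each step of $\{\eta_{\beta}(t)\}$ in $\Delta^{1}$ is uphill with conditional probability $O(e^{-\beta})$ while, by the no-cycle argument above, the $\{\mathfrak{X}^{1}(t)\}$-excursion uses $O(1)$ steps in expectation, a step-by-step coupling survives the whole excursion with probability $1-o(1)$; on that event the two exit-valley distributions coincide, which yields the claim.

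The main obstacle is this last coupling step. The substantive point is recognizing that $\Delta^{1}$ contains no cycle of positive depth — this is what guarantees both that $\sigma_{\beta}\to0$ for \emph{every} $\Gamma>0$ and that the microscopic excursions through $\Delta^{1}$ are short enough that the $O(e^{-\beta})$ per-step probability of an uphill move is genuinely negligible — and then matching the conditioned microscopic dynamics with the prescription of the induced chain in Definition \ref{def:gen const}. The remaining ingredients (the martingale identity, the uniform bound \eqref{eq:f Fbeta unif bdd}, and Lemma \ref{lem:Fbeta flat valley-1}) are soft.
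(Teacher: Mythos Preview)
Your proof is correct and follows essentially the same approach as the paper: reduce $F_{\beta}^{\Gamma}(\eta)$ to $\mathbb{E}_{\eta}[F_{\beta}^{\Gamma}(\eta_{\beta}(\mathcal{T}_{\mathcal{V}^{\star,1}}))]+o(1)$ using that the excursion through $\Delta^{1}$ is short (because $\Delta^{1}$ contains no stable plateau), then invoke Lemma~\ref{lem:Fbeta flat valley-1} and the coupling $\mathbb{P}_{\eta}[\mathcal{T}_{\mathcal{V}_{i}}=\mathcal{T}_{\mathcal{V}^{\star,1}}]\to{\bf P}_{\eta}^{1}[\mathcal{T}_{\mathcal{P}_{i}}=\mathcal{T}_{\mathscr{P}^{1}}]$. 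The only stylistic difference is that you obtain the reduction via the martingale/optional-stopping identity exploiting $G\equiv 0$ on $\Delta^{1}$, whereas the paper splits the explicit integral representation of $F_{\beta}^{\Gamma}$ at the hitting time and bounds the pre-hitting piece by $\|g\|_{\infty}\cdot e^{(\Gamma'-\Gamma)\beta}$; your variant is slightly cleaner but not a different route.
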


\begin{proof}
Fix $\Gamma'\in(0,\,\Gamma)$. We may rewrite $F_{\beta}^{\Gamma}(\eta)$
as
\[
F_{\beta}^{\Gamma}(\eta)=\mathbb{E}_{\eta}\Big[\int_{0}^{\infty}e^{-\lambda t}G(\eta_{\beta}(e^{\Gamma\beta}t)){\rm d}t\Big]=e^{-\Gamma\beta}\cdot\mathbb{E}_{\eta}\Big[\int_{0}^{\infty}e^{-\lambda e^{-\Gamma\beta}s}G(\eta_{\beta}(s)){\rm d}s\Big].
\]
Divide the integrand in the right-hand side into regimes $\mathcal{T}_{\mathcal{V}^{\star,1}}>e^{\Gamma'\beta}$
and $\mathcal{T}_{\mathcal{V}^{\star,1}}\le e^{\Gamma'\beta}$. First,
observe that starting from $\eta\in\Delta^{1}$, following the original
dynamics $\{\eta_{\beta}(t)\}_{t\ge0}$ but neglecting the $O(e^{-\beta})$-transition
rates, the holding rates along the trajectory are of order $1$ until
it arrives at $\mathcal{V}^{\star,1}$, and in particular the hitting
time of $\mathcal{V}^{\star,1}$ is also of order $1$ with probability
tending to $1$. Then, by a standard coupling argument it holds that
$\lim_{\beta\to\infty}\mathbb{P}_{\eta}[\mathcal{T}_{\mathcal{V}^{\star,1}}>e^{\Gamma'\beta}]=0$,
thus as $\beta\to\infty$,
\[
e^{-\Gamma\beta}\cdot\mathbb{E}_{\eta}\Big[\int_{0}^{\infty}e^{-\lambda e^{-\Gamma\beta}s}G(\eta_{\beta}(s)){\bf 1}\{\mathcal{T}_{\mathcal{V}^{\star,1}}>e^{\Gamma'\beta}\}{\rm d}s\Big]\le\lambda^{-1}\|g\|_{\infty}\cdot\mathbb{P}_{\eta}[\mathcal{T}_{\mathcal{V}^{\star,1}}>e^{\Gamma'\beta}]\to0.
\]
In turn, consider the remaining part associated to $\mathcal{T}_{\mathcal{V}^{\star,1}}\le e^{\Gamma'\beta}$.
We divide the integral $\int_{0}^{\infty}$ into $\int_{0}^{\mathcal{T}_{\mathcal{V}^{\star,1}}}$
and $\int_{\mathcal{T}_{\mathcal{V}^{\star,1}}}^{\infty}$. The first
part can be estimated as
\[
e^{-\Gamma\beta}\cdot\mathbb{E}_{\eta}\Big[\int_{0}^{\mathcal{T}_{\mathcal{V}^{\star,1}}}e^{-\lambda e^{-\Gamma\beta}s}G(\eta_{\beta}(s)){\bf 1}\{\mathcal{T}_{\mathcal{V}^{\star,1}}\le e^{\Gamma'\beta}\}{\rm d}s\Big]\le e^{-\Gamma\beta}\cdot\|g\|_{\infty}e^{\Gamma'\beta}\to0=o(1).
\]
By the strong Markov property at $\mathcal{T}_{\mathcal{V}^{\star,1}}$
and the definition of $F_{\beta}^{\Gamma}$, the second part equals
\begin{align*}
 & e^{-\Gamma\beta}\cdot\mathbb{E}_{\eta}\Big[\int_{\mathcal{T}_{\mathcal{V}^{\star,1}}}^{\infty}e^{-\lambda e^{-\Gamma\beta}s}G(\eta_{\beta}(s)){\bf 1}\{\mathcal{T}_{\mathcal{V}^{\star,1}}\le e^{\Gamma'\beta}\}{\rm d}s\Big]\\
 & =\mathbb{E}_{\eta}\big[{\bf 1}\{\mathcal{T}_{\mathcal{V}^{\star,1}}\le e^{\Gamma'\beta}\}F_{\beta}^{\Gamma}(\eta_{\beta}(\mathcal{T}_{\mathcal{V}^{\star,1}}))\big].
\end{align*}
Thus so far, we proved that
\[
F_{\beta}^{\Gamma}(\eta)=\mathbb{E}_{\eta}\big[{\bf 1}\{\mathcal{T}_{\mathcal{V}^{\star,1}}\le e^{\Gamma'\beta}\}F_{\beta}^{\Gamma}(\eta_{\beta}(\mathcal{T}_{\mathcal{V}^{\star,1}}))\big]+o(1).
\]
Since $\mathbb{P}_{\eta}[\mathcal{T}_{\mathcal{V}^{\star,1}}\le e^{\Gamma'\beta}]=1-o(1)$,
it holds that
\begin{equation}
F_{\beta}^{\Gamma}(\eta)=\mathbb{E}_{\eta}[F_{\beta}^{\Gamma}(\eta_{\beta}(\mathcal{T}_{\mathcal{V}^{\star,1}}))]+o(1).\label{eq:Delta1-1}
\end{equation}
Now, we calculate the expectation in the right-hand side. By the same
coupling argument, it is routine to see that (see e.g. \cite[Lemma 4.1]{BL Kawasaki})
\begin{equation}
\lim_{\beta\to\infty}\mathbb{P}_{\eta}[\mathcal{T}_{\mathcal{V}_{i}}=\mathcal{T}_{\mathcal{V}^{\star,1}}]={\bf P}_{\eta}^{1}[\mathcal{T}_{\mathcal{P}_{i}}=\mathcal{T}_{\mathscr{P}^{1}}]\quad\text{for each}\quad i\in[1,\,\nu_{0}].\label{eq:Delta1-2}
\end{equation}
Moreover, we may expand $\mathbb{E}_{\eta}[F_{\beta}^{\Gamma}(\eta_{\beta}(\mathcal{T}_{\mathcal{V}^{\star,1}}))]$
as
\begin{equation}
\sum_{i\in[1,\,\nu_{0}]}\mathbb{E}_{\eta}[{\bf 1}\{\mathcal{T}_{\mathcal{V}_{i}}=\mathcal{T}_{\mathcal{V}^{\star,1}}\}F_{\beta}^{\Gamma}(\eta_{\beta}(\mathcal{T}_{\mathcal{V}_{i}}))]=\sum_{i\in[1,\,\nu_{0}]}\mathbb{P}_{\eta}[\mathcal{T}_{\mathcal{V}_{i}}=\mathcal{T}_{\mathcal{V}^{\star,1}}]\cdot\overline{F}_{\beta}^{\Gamma}(\mathcal{P}_{i})+o(1),\label{eq:Delta1-3}
\end{equation}
where the last equality holds by Lemma \ref{lem:Fbeta flat valley-1}.
Therefore, by \eqref{eq:Delta1-1}, \eqref{eq:Delta1-2} and \eqref{eq:Delta1-3},
we conclude that
\[
F_{\beta}^{\Gamma}(\eta)-\sum_{i\in[1,\,\nu_{0}]}{\bf P}_{\eta}^{1}[\mathcal{T}_{\mathcal{P}_{i}}=\mathcal{T}_{\mathscr{P}^{1}}]\cdot\overline{F}_{\beta}^{\Gamma}(\mathcal{P}_{i})=o(1)\to0.
\]
This concludes the proof.
\end{proof}
Now to prove \eqref{eq:res cond}, by Lemma \ref{lem:Fbeta flat valley-1}
it suffices to prove that
\begin{equation}
\lim_{\beta\to\infty}|\overline{F}_{\beta}^{\star,1}(\mathcal{P}_{i})-f(\mathcal{P}_{i})|=0\quad\text{for each}\quad i,\label{eq:res WTS}
\end{equation}
where $\overline{F}_{\beta}^{\star,1}:=\overline{F}_{\beta}^{\Gamma^{\star,1}}$.
The idea is to prove that
\[
(\lambda-\mathfrak{L}^{1})\overline{F}_{\beta}^{\star,1}=g+o(1)\quad\text{in}\quad\mathscr{P}^{1}.
\]
Provided that the displayed identity holds, we may calculate using
the explicit formula for the resolvent solution as
\[
\overline{F}_{\beta}^{\star,1}(\mathcal{P}_{i})={\bf E}_{\mathcal{P}_{i}}^{\star,1}\Big[\int_{0}^{\infty}e^{-\lambda t}g(\mathfrak{X}^{\star,1}(t)){\rm d}t\Big]+o(1)=f(\mathcal{P}_{i})+o(1),
\]
which proves \eqref{eq:res WTS}. Thus, all that remains to be proved
is that
\begin{equation}
(\lambda-\mathfrak{L}^{1})\overline{F}_{\beta}^{\star,1}(\mathcal{P}_{i})=g(\mathcal{P}_{i})+o(1)\quad\text{for each}\quad i.\label{eq:res asymp}
\end{equation}
We finish the proof of Theorem \ref{thm:main} by verifying \eqref{eq:res asymp}.

We integrate both sides of \eqref{eq:res micro} with $\Gamma=\Gamma^{\star,1}$
with respect to weight $\mu_{\beta}$ in the set $\mathcal{V}_{i}$:
\[
\lambda\sum_{\eta\in\mathcal{V}_{i}}\mu_{\beta}(\eta)F_{\beta}^{\star,1}(\eta)-e^{\Gamma^{\star,1}\beta}\sum_{\eta\in\mathcal{V}_{i}}\mu_{\beta}(\eta)L_{\beta}F_{\beta}^{\star,1}(\eta)=\sum_{\eta\in\mathcal{V}_{i}}\mu_{\beta}(\eta)G(\eta).
\]
By the definition of $G$ in \eqref{eq:G def}, the right-hand side
becomes
\[
\sum_{\eta\in\mathcal{V}_{i}}\mu_{\beta}(\eta)g(\mathcal{P}_{i})=\mu_{\beta}(\mathcal{V}_{i})g(\mathcal{P}_{i}),
\]
thus we obtain that
\begin{equation}
\lambda\sum_{\eta\in\mathcal{V}_{i}}\mu_{\beta}(\eta)F_{\beta}^{\star,1}(\eta)-e^{\Gamma^{\star,1}\beta}\sum_{\eta\in\mathcal{V}_{i}}\mu_{\beta}(\eta)L_{\beta}F_{\beta}^{\star,1}(\eta)=\mu_{\beta}(\mathcal{V}_{i})g(\mathcal{P}_{i}).\label{eq:4.3-1}
\end{equation}
By Lemma \ref{lem:Fbeta flat valley-1}, the first term in the left-hand
side of \eqref{eq:4.3-1} equals
\begin{equation}
\lambda\sum_{\eta\in\mathcal{V}_{i}}\mu_{\beta}(\eta)(\overline{F}_{\beta}^{\star,1}(\mathcal{P}_{i})+o(1))=\lambda\mu_{\beta}(\mathcal{V}_{i})(\overline{F}_{\beta}^{\star,1}(\mathcal{P}_{i})+o(1)).\label{eq:4.3-2}
\end{equation}
The second term in the left-hand side of \eqref{eq:4.3-1} becomes
\[
\sum_{\eta\in\mathcal{V}_{i}}\sum_{\xi\in\mathcal{V}_{i}\cup\partial\mathcal{V}_{i}}e^{\Gamma^{\star,1}\beta}\mu_{\beta}(\eta)r_{\beta}(\eta,\,\xi)(F_{\beta}^{\star,1}(\xi)-F_{\beta}^{\star,1}(\eta)).
\]
The double summation for $\eta,\,\xi\in\mathcal{V}_{i}$ vanishes
since $\mu_{\beta}(\eta)r_{\beta}(\eta,\,\xi)=\mu_{\beta}(\xi)r_{\beta}(\xi,\,\eta)$
by \eqref{eq:det bal}. Thus, this becomes
\begin{equation}
\sum_{\eta\in\mathcal{V}_{i}}\sum_{\xi\in\partial\mathcal{V}_{i}}e^{\Gamma^{\star,1}\beta}\mu_{\beta}(\eta)r_{\beta}(\eta,\,\xi)(F_{\beta}^{\star,1}(\xi)-F_{\beta}^{\star,1}(\eta)).\label{eq:4.3-3}
\end{equation}
For $\eta\in\mathcal{V}_{i}$ and $\xi\in\partial\mathcal{V}_{i}$
with $\eta\sim\xi$, by \eqref{eq:det bal},
\[
\mu_{\beta}(\eta)r_{\beta}(\eta,\,\xi)=\mu_{\beta}(\xi)=|\mathcal{P}_{i}|^{-1}\mu_{\beta}(\mathcal{P}_{i})\cdot e^{-\beta(\mathbb{H}(\xi)-\mathbb{H}(\mathcal{P}_{i}))},
\]
where $\mathbb{H}(\xi)-\mathbb{H}(\mathcal{P}_{i})\ge\Gamma_{i}^{1}\ge\Gamma^{\star,1}$.
Thus, the summand in \eqref{eq:4.3-3} is non-negligible with respect
to $\mu_{\beta}(\mathcal{P}_{i})$ only if $\mathbb{H}(\xi)-\mathbb{H}(\mathcal{P}_{i})=\Gamma_{i}^{1}=\Gamma^{\star,1}$.
Since $\mathcal{F}(\mathcal{V}_{i})=\mathcal{P}_{i}$ by Lemma \ref{lem:Vi1 cycle},
this implies $\mathfrak{R}^{1}(\mathcal{P}_{i},\,\xi)>0$ by \eqref{eq:RC def}.
According to this observation, we may rewrite \eqref{eq:4.3-3} as
\begin{equation}
|\mathcal{P}_{i}|^{-1}\mu_{\beta}(\mathcal{P}_{i}){\bf 1}\{\Gamma_{i}^{1}=\Gamma^{\star,1}\}\sum_{\xi\in\partial^{\star}\mathcal{V}_{i}}\sum_{\eta\in\mathcal{V}_{i}:\,\eta\sim\xi}(F_{\beta}^{\star,1}(\xi)-F_{\beta}^{\star,1}(\eta))+o(\mu_{\beta}(\mathcal{P}_{i})).\label{eq:4.1-4}
\end{equation}
By \eqref{eq:RC def}, the summation part in \eqref{eq:4.1-4} regarding
$F_{\beta}^{\star,1}(\xi)$ becomes
\begin{equation}
|\mathcal{P}_{i}|^{-1}\mu_{\beta}(\mathcal{P}_{i}){\bf 1}\{\Gamma_{i}^{1}=\Gamma^{\star,1}\}\sum_{\xi\in\partial^{\star}\mathcal{V}_{i}}\sum_{\eta\in\mathcal{V}_{i}:\,\eta\sim\xi}F_{\beta}^{\star,1}(\xi)=\mu_{\beta}(\mathcal{P}_{i})\sum_{\xi\in\Delta^{1}}\mathfrak{R}^{1}(\mathcal{P}_{i},\,\xi)F_{\beta}^{\star,1}(\xi).\label{eq:4.1-5}
\end{equation}
By Lemma \ref{lem:Fbeta Delta1}, we may rewrite this as
\[
\mu_{\beta}(\mathcal{P}_{i})\sum_{\xi\in\Delta^{1}}\mathfrak{R}^{1}(\mathcal{P}_{i},\,\xi)\Big(\sum_{j}{\bf P}_{\xi}^{1}[\mathcal{T}_{\mathcal{P}_{j}}=\mathcal{T}_{\mathscr{P}^{1}}]\cdot\overline{F}_{\beta}^{\star,1}(\mathcal{P}_{j})+o(1)\Big).
\]
Recalling \eqref{eq:R-Cstar def}, this becomes
\[
\mu_{\beta}(\mathcal{P}_{i})\sum_{j}\mathfrak{R}^{\star,1}(\mathcal{P}_{i},\,\mathcal{P}_{j})\overline{F}_{\beta}^{\star,1}(\mathcal{P}_{j})+o(\mu_{\beta}(\mathcal{P}_{i})).
\]
Since $F_{\beta}^{\star,1}$ is asymptotically constant in $\mathcal{V}_{i}$
by Lemma \ref{lem:Fbeta flat valley-1}, the summation part in \eqref{eq:4.1-4}
regarding $F_{\beta}^{\star,1}(\eta)$ becomes
\[
\mu_{\beta}(\mathcal{P}_{i})\overline{F}_{\beta}^{\star,1}(\mathcal{P}_{i})\sum_{\xi\in\Delta^{1}}\mathfrak{R}^{1}(\mathcal{P}_{i},\,\xi)+o(\mu_{\beta}(\mathcal{P}_{i})).
\]
Thus, by Lemma \ref{lem:R1 R-star1 eq} stated below, we may summarize
the second term in the left-hand side of \eqref{eq:4.3-1} as
\begin{equation}
e^{\Gamma^{\star,1}\beta}\sum_{\eta\in\mathcal{V}_{i}}\mu_{\beta}(\eta)L_{\beta}F_{\beta}^{\star,1}(\eta)=\mu_{\beta}(\mathcal{P}_{i})\sum_{j}\mathfrak{R}^{\star,1}(\mathcal{P}_{i},\,\mathcal{P}_{j})(\overline{F}_{\beta}^{\star,1}(\mathcal{P}_{j})-\overline{F}_{\beta}^{\star,1}(\mathcal{P}_{i}))+o(\mu_{\beta}(\mathcal{P}_{i})).\label{eq:4.1-6}
\end{equation}
By \eqref{eq:4.3-1}, \eqref{eq:4.3-2} and \eqref{eq:4.1-6} and
noting that $\mu_{\beta}(\mathcal{V}_{i})=(1+o(1))\mu_{\beta}(\mathcal{P}_{i})$
since $\mathcal{F}(\mathcal{V}_{i})=\mathcal{P}_{i}$ by Lemma \ref{lem:Vi1 cycle},
we conclude that
\[
\lambda\overline{F}_{\beta}^{\star,1}(\mathcal{P}_{i})-\sum_{j}\mathfrak{R}^{\star,1}(\mathcal{P}_{i},\,\mathcal{P}_{j})(\overline{F}_{\beta}^{\star,1}(\mathcal{P}_{j})-\overline{F}_{\beta}^{\star,1}(\mathcal{P}_{i}))=g(\mathcal{P}_{i})+o(1).
\]
By the definition of $\mathfrak{L}^{1}$, this is exactly \eqref{eq:res asymp}.
Therefore, to complete the proof of Theorem \ref{thm:main}, we are
left to verify Lemma \ref{lem:R1 R-star1 eq} below.
\begin{lem}
\label{lem:R1 R-star1 eq}For each $i\in[1,\,\nu_{0}]$,
\[
\sum_{\eta\in\Delta^{1}}\mathfrak{R}^{1}(\mathcal{P}_{i},\,\eta)=\sum_{j}\mathfrak{R}^{\star,1}(\mathcal{P}_{i},\,\mathcal{P}_{j}).
\]
\end{lem}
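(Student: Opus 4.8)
The plan is to unfold the definition of $\mathfrak{R}^{\star,1}$, interchange the order of summation, and reduce everything to the assertion that the chain $\{\mathfrak{X}^{1}(t)\}_{t\ge0}$ started from any point of $\Delta^{1}$ reaches $\mathscr{P}^{1}$ almost surely. Since for $h=1$ we have $\mathscr{C}^{\star,1}=\mathscr{C}^{1}$ and $\mathscr{P}^{(\mathscr{C}^{1})^{\star}}=\mathscr{P}^{1}$ by \eqref{eq:C-star1 C-sharp1 prop}--\eqref{eq:P-star1 P-sharp1 prop}, formula \eqref{eq:R-Cstar def} specializes to $\mathfrak{R}^{\star,1}(\mathcal{P}_{i},\,\mathcal{P}_{j})=\sum_{\eta\in\Delta^{1}}\mathfrak{R}^{1}(\mathcal{P}_{i},\,\eta)\,{\bf P}_{\eta}^{1}[\mathcal{T}_{\mathcal{P}_{j}}=\mathcal{T}_{\mathscr{P}^{1}}]$. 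Summing over $j$ and using that the events $\{\mathcal{T}_{\mathcal{P}_{j}}=\mathcal{T}_{\mathscr{P}^{1}}\}$, $j\in[1,\,\nu_{0}]$, are disjoint with union $\{\mathcal{T}_{\mathscr{P}^{1}}<\infty\}$ (the $\mathcal{P}_{j}$ are pairwise disjoint by Lemma \ref{lem:Vi1 cycle}), we obtain $\sum_{j}\mathfrak{R}^{\star,1}(\mathcal{P}_{i},\,\mathcal{P}_{j})=\sum_{\eta\in\Delta^{1}}\mathfrak{R}^{1}(\mathcal{P}_{i},\,\eta)\,{\bf P}_{\eta}^{1}[\mathcal{T}_{\mathscr{P}^{1}}<\infty]$. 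Hence it suffices to show ${\bf P}_{\eta}^{1}[\mathcal{T}_{\mathscr{P}^{1}}<\infty]=1$ for every $\eta\in\Delta^{1}$.

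The heart of the matter is the claim that no nonempty subset of $\Delta^{1}$ is closed under the rates $\mathfrak{R}^{1}$. Suppose $\mathcal{A}\subseteq\Delta^{1}$ is nonempty and closed, put $c:=\min_{\mathcal{A}}\mathbb{H}$, and let $\mathcal{Q}$ be a connected component of $\mathcal{F}(\mathcal{A})$. I claim that $\mathcal{Q}$ is a stable plateau; since it is connected with common energy $c$, only the inequality $\mathbb{H}(\zeta)>c$ on $\partial\mathcal{Q}$ needs to be checked. Let $\xi\in\mathcal{Q}$ and $\zeta\sim\xi$ with $\zeta\notin\mathcal{Q}$, and assume $\mathbb{H}(\zeta)\le c$ for contradiction. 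If $\zeta\notin\overline{\Omega}$, then since $\xi\in\overline{\Omega}$ we have $\mathbb{H}(\xi)\le\overline{\Phi}$, so $\mathbb{H}(\zeta)\le c=\mathbb{H}(\xi)\le\overline{\Phi}$, and inequality \eqref{eq:Phi max ineq} together with $\xi\sim\zeta$ gives $\Phi(\mathcal{S},\,\zeta)\le\overline{\Phi}$, contradicting $\zeta\notin\overline{\Omega}$; hence $\zeta\in\overline{\Omega}=\Delta^{1}\sqcup\bigcup_{k}\mathcal{V}_{k}$. If $\zeta\in\Delta^{1}$, the first line of \eqref{eq:RC def} gives $\mathfrak{R}^{1}(\xi,\,\zeta)=1$, so closedness forces $\zeta\in\mathcal{A}$, whence $\mathbb{H}(\zeta)=c$ by minimality and $\zeta\in\mathcal{Q}$ since $\zeta\sim\xi$, a contradiction. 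If $\zeta\in\mathcal{V}_{k}$ for some $k$, then $\xi\in\partial\mathcal{V}_{k}$, so the second line of \eqref{eq:RC def} together with $\mathcal{F}(\mathcal{V}_{k})=\mathcal{P}_{k}$ (Lemma \ref{lem:Vi1 cycle}) gives $\mathfrak{R}^{1}(\xi,\,\mathcal{P}_{k})\ge1$, and since $\mathcal{P}_{k}\in\mathscr{P}^{1}$ is disjoint from $\Delta^{1}\supseteq\mathcal{A}$ this again contradicts closedness. This proves the claim, so $\mathcal{Q}$ is a stable plateau contained in $\overline{\Omega}$; by the very definition of $\mathscr{P}^{1}$ and Lemma \ref{lem:Vi1 cycle} this gives $\mathcal{Q}=\mathcal{P}_{k}\subseteq\mathcal{V}_{k}$ for some $k$, contradicting $\mathcal{Q}\subseteq\Delta^{1}$.

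Finally, the reduction is closed by a maximum-principle argument. The function $\psi(\eta):={\bf P}_{\eta}^{1}[\mathcal{T}_{\mathscr{P}^{1}}=\infty]$ vanishes on $\mathscr{P}^{1}$, takes values in $[0,\,1]$, and is $\mathfrak{R}^{1}$-harmonic on $\Delta^{1}$ by the Markov property at the first jump. If $\psi$ were not identically $0$ on $\Delta^{1}$, set $M:=\max_{\Delta^{1}}\psi>0$ and $\mathcal{A}:=\{\eta\in\Delta^{1}:\psi(\eta)=M\}\ne\emptyset$; for $\eta\in\mathcal{A}$ the identity $\sum_{\zeta}\mathfrak{R}^{1}(\eta,\,\zeta)(\psi(\zeta)-M)=0$ has only nonpositive summands, so $\mathfrak{R}^{1}(\eta,\,\zeta)>0$ implies $\psi(\zeta)=M$, which is impossible on $\mathscr{P}^{1}$ and hence forces $\zeta\in\mathcal{A}$. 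Thus $\mathcal{A}$ is a nonempty closed subset of $\Delta^{1}$, contradicting the previous paragraph; therefore $\psi\equiv0$ on $\Delta^{1}$, i.e. ${\bf P}_{\eta}^{1}[\mathcal{T}_{\mathscr{P}^{1}}<\infty]=1$ for all $\eta\in\Delta^{1}$, which completes the proof. The only genuinely delicate point is the bookkeeping in the middle paragraph: one has to keep the three types of transitions in \eqref{eq:RC def} apart and make repeated use of the partition $\overline{\Omega}=\Delta^{1}\sqcup\bigcup_{k}\mathcal{V}_{k}$; everything else is routine.
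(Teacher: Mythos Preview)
Your proof is correct and follows the same strategy as the paper: both reduce the identity, via \eqref{eq:R-Cstar def}, to the claim that ${\bf P}_{\eta}^{1}[\mathcal{T}_{\mathscr{P}^{1}}<\infty]=1$ for every $\eta\in\Delta^{1}$. The paper dispatches this claim in one line (``the process jumps to configurations with lower or equal energy until it escapes $\Delta^{1}$''), while you spell out rigorously why no $\mathfrak{R}^{1}$-closed subset of $\Delta^{1}$ can exist---because its energy-minimizing component would be a stable plateau in $\overline{\Omega}$ and hence lie in some $\mathcal{V}_{k}$---and then close with a standard maximum-principle argument; this is the same idea, just made explicit.
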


\begin{proof}
Starting from a configuration in $\Delta^{1}$, the process $\{\mathfrak{X}^{1}(t)\}_{t\ge0}$
jumps to configurations with lower or equal energy until it escapes
$\Delta^{1}$. Thus, $\mathcal{T}_{\mathscr{P}^{1}}<\infty$ ${\bf P}_{\eta}^{1}$-almost
surely for any $\eta\in\Delta^{1}$, which guarantees that 
\[
\sum_{j}{\bf P}_{\eta}^{1}[\mathcal{T}_{\mathcal{P}_{j}}=\mathcal{T}_{\mathscr{P}^{1}}]=1.
\]
Thus by \eqref{eq:R-Cstar def}, it holds that
\[
\sum_{j}\mathfrak{R}^{\star,1}(\mathcal{P}_{i},\,\mathcal{P}_{j})=\sum_{\eta\in\Delta^{1}}\mathfrak{R}^{1}(\mathcal{P}_{i},\,\eta)\cdot\sum_{j}{\bf P}_{\eta}^{1}[\mathcal{T}_{\mathcal{P}_{j}}=\mathcal{T}_{\mathscr{P}^{1}}]=\sum_{\eta\in\Delta^{1}}\mathfrak{R}^{1}(\mathcal{P}_{i},\,\eta).
\]
\end{proof}

\section{\label{sec5}Inductive Step}

In Section \ref{sec5}, we fix $h\ge2$ and prove the inductive step
from level $h-1$ to level $h$ of the main results.

\subsubsection*{Inductive hypothesis}

Since we are dealing with the inductive step, we may assume that the
following \emph{a priori} lemma and theorem hold.
\begin{lem}
\label{lem:gen ind hyp}The following statements are valid for each
$h'\in[1,\,h-1]$.
\begin{enumerate}
\item For each $i\in[1,\,\nu_{h'-1}]$, it holds that $\mathbb{H}(\eta)=\mathbb{H}(\xi)$
for all $\eta,\,\xi\in\mathcal{P}_{i}^{h'}$.
\item Collections $\mathcal{V}_{i}^{h'}$ for $i\in[1,\,\nu_{h'-1}]$ are
disjoint cycles in $\overline{\Omega}$ with depth $\Gamma_{i}^{h'}$
and bottom $\mathcal{P}_{i}^{h'}$.
\item For every $\mathcal{P}\in\mathscr{P}^{1}$, there exists $\mathcal{C}\in\mathscr{C}^{h'}$
such that $\mathcal{P}\subseteq\mathcal{C}$.
\item It holds that $\mathscr{C}^{\star,h'}=\{\mathcal{V}_{i}^{h'}:\,i\in[1,\,\nu_{h'-1}]\}$
and
\[
\mathscr{C}^{\sharp,h'}=\{\mathcal{C}\in\mathscr{C}_{{\rm tr}}^{\star,h'-1}\cup\mathscr{C}^{\sharp,h'-1}:\,\mathcal{C}\cap\mathcal{V}_{i}^{h'}=\emptyset\quad\text{for all}\quad i\in[1,\,\nu_{h'-1}]\}.
\]
\item For all $i\in[1,\,\nu_{h'-1}]$ and $\eta,\,\eta'\in\mathcal{P}_{i}^{h'}$,
it holds that $\Phi(\eta,\,\eta')-\mathbb{H}(\mathcal{P}_{i}^{h'})\le\Gamma^{\star,h'-1}$.
\item It holds that $\Gamma^{\star,h'}>\Gamma^{\star,h'-1}$, where $\Gamma^{\star,0}:=0$.
\item It holds that $\widetilde{\Gamma}(\mathcal{V}_{i}^{h'})=\Gamma^{\star,h'-1}$
(cf. \eqref{eq:Gamma-tilde def}).
\item For $\Gamma>\Gamma^{\star,h'-1}$ and $i\in[1,\,\nu_{h'-1}]$, it
holds that $\lim_{\beta\to\infty}\sup_{\eta\in\mathcal{V}_{i}^{h'}}|F_{\beta}^{\Gamma}(\eta)-\overline{F}_{\beta}^{\Gamma}(\mathcal{P}_{i}^{h'})|=0$.
\item For every $v\in\Delta^{h'}\cap\mathscr{P}^{\sharp,h'}$, it holds
that $\mathcal{T}_{\mathscr{P}^{\star,h'}}<\infty$ ${\bf P}_{v}^{h'}$-almost
surely.
\end{enumerate}
\end{lem}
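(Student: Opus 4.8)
The plan is to prove Lemma~\ref{lem:gen ind hyp} by strong induction on the level index, since each of its nine assertions at level $h'$ refers only to objects ($\mathcal{P}_i^{h'}$, $\Gamma^{\star,h'}$, $\mathcal{V}_i^{h'}$, $\mathscr{C}^{h'}$, $\mathfrak{X}^{h'}$, $F_\beta^\Gamma$) built from the data at levels $\le h'$. Thus it suffices to verify the nine items at level $h'=1$ (base case) and to show, for $2\le h'\le h-1$, that the level-$h'$ version follows once all nine are known at every level strictly below $h'$. The inductive step is precisely the content of Sections~\ref{sec5.1}--\ref{sec5.4} read at the generic level $h'$ in place of $h$, so invoking Lemma~\ref{lem:gen ind hyp} while proving level $h$ is legitimate: by that stage the statement has been established at all levels $1,\dots,h-1$ (the recursion being well defined thanks to Theorem~\ref{thm:nu dec}).

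For the base case $h'=1$, recall $\Gamma^{\star,0}:=0$. Items (1)--(4) and (6) are immediate from Section~\ref{sec2.1}: Lemma~\ref{lem:Vi1 cycle} yields (1) and (2); the identities \eqref{eq:C-star1 C-sharp1 prop} together with $\mathscr{P}^{\mathscr{C}^1}=\mathscr{P}^1$ give (3) and (4) with $\mathscr{C}^{\sharp,1}=\emptyset$; and (6) is $\Gamma^{\star,1}>0$, which is \eqref{eq:Gamma-star1 def}. Item (5) at $h'=1$ asserts $\Phi(\eta,\eta')=\mathbb{H}(\mathcal{P}_i^1)$ for $\eta,\eta'\in\mathcal{P}_i^1$, which is exactly the requirement that a stable plateau be connected inside itself at its own energy (Definition~\ref{def:stab plat}). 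Item (7) at $h'=1$ is $\widetilde{\Gamma}(\mathcal{V}_i^1)=0$, established within the proof of Lemma~\ref{lem:Fbeta flat valley-1}, and item (8) at $h'=1$ is Lemma~\ref{lem:Fbeta flat valley-1} itself. Finally, item (9) at $h'=1$ is vacuous since $\mathscr{P}^{\sharp,1}=\emptyset$.

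For the inductive step ($2\le h'\le h-1$), the level-$h'$ forms of items (1), (2), (4), (6) are Lemmas~\ref{lem:Pih energy}, \ref{lem:Vih cycle}, \ref{lem:Ch prop}, \ref{lem:Gamma-star inc}, whose proofs in Section~\ref{sec5.1} use only the lower-level hypotheses; item (3) follows from (2) and (4) upon noting that every $\mathcal{P}\in\mathscr{P}^1$ lies, by item (3) at level $h'-1$, in some $\mathcal{V}_j^{h'-1}$, which is either absorbed into one of the $\mathcal{V}_i^{h'}$ or retained in $\mathscr{C}^{h'}$ by \eqref{eq:Ch def}. Items (5), (7), (8), (9) are the level-$h'$ analogues of what is proved in Sections~\ref{sec4.1} and \ref{sec4.3}: (5) is the counterpart of Theorem~\ref{thm:P-star1 class}-(3) for $\{\mathfrak{X}^{\star,h'-1}(t)\}_{t\ge0}$ (two elements of a nontrivial irreducible class communicate at barrier exactly $\Gamma^{\star,h'-1}$); (7) identifies $\widetilde{\Gamma}(\mathcal{V}_i^{h'})$ from the internal cycle decomposition of $\mathcal{V}_i^{h'}$; (8) is obtained from (7) by repeating the mixing-condition argument of Lemma~\ref{lem:Fbeta flat valley-1}, now with mixing-time exponent $\Gamma^{\star,h'-1}$ in place of $0$ (which is $<\Gamma$ whenever $\Gamma>\Gamma^{\star,h'-1}$, and also $<\Gamma_i^{h'}$ so the exit-time condition still holds); and (9) is the downhill-escape argument of Lemma~\ref{lem:R1 R-star1 eq}, using that a configuration in $\Delta^{h'}$ can descend through configurations of non-increasing energy until it reaches $\mathscr{P}^{\star,h'}$.

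The delicate part is the pair (5) and (7), i.e.\ controlling the geometry of $\mathcal{V}_i^{h'}$. One must show on one hand that $\mathcal{P}_i^{h'}=\bigcup_{\mathcal{P}\in\mathscr{P}_i^{\star,h'-1}}\mathcal{P}$ is joined to itself by paths of height at most $\mathbb{H}(\mathcal{P}_i^{h'})+\Gamma^{\star,h'-1}$, so that $\mathcal{V}_i^{h'}$ is connected with bottom $\mathcal{P}_i^{h'}$ (feeding item (2)), and on the other hand that every cycle contained in $\mathcal{V}_i^{h'}$ that does not contain $\mathcal{P}_i^{h'}$ has depth at most $\Gamma^{\star,h'-1}$ (feeding item (7), hence (8)). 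Both rely on the nesting dichotomy for cycles, on the construction \eqref{eq:Ch def} (which shows that the only cycles swallowed into $\mathcal{V}_i^{h'}$ beyond the merged $\mathcal{V}_j^{h'-1}$'s are transient or shallow ones of depth $\le\Gamma^{\star,h'-1}$), and crucially on item (7) at level $h'-1$, which bounds by $\Gamma^{\star,h'-2}$ the depth of any sub-cycle of $\mathcal{V}_j^{h'-1}$ missing $\mathcal{P}_j^{h'-1}$; the monotonicity in item (6) then closes the estimate. I expect this combinatorial bookkeeping of which cycles sit inside $\mathcal{V}_i^{h'}$ to be the main obstacle.
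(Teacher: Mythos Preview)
Your approach is essentially identical to the paper's: Lemma~\ref{lem:gen ind hyp} is treated as the standing inductive hypothesis, with the base case $h'=1$ verified item-by-item in Remark~\ref{rem:ini ind hyp} and the inductive step verified item-by-item in Section~\ref{sec5.5}, each pointing to exactly the lemmas you name (items~(1),(2),(4),(6) are Lemmas~\ref{lem:Pih energy}, \ref{lem:Vih cycle}, \ref{lem:Ch prop}, \ref{lem:Gamma-star inc}; item~(5) is \eqref{eq:flat claim} inside the proof of Lemma~\ref{lem:Fbeta flat valley-h}; items~(7),(8) are in Lemma~\ref{lem:Fbeta flat valley-h}; item~(9) is in the proof of Lemma~\ref{lem:Rh R-starh eq}; item~(3) is proved separately at the end of Section~\ref{sec5.5}).

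Two small corrections. First, item~(9) at $h'=1$ is not vacuous: the $\cap$ in the statement should be read as $\cup$ (compare the proof of Lemma~\ref{lem:Rh R-starh eq}), so one still needs the case $v\in\Delta^{1}$, which the paper handles in the proof of Lemma~\ref{lem:R1 R-star1 eq}. Second, in your sketch of item~(3) at the inductive step you write that every $\mathcal{P}\in\mathscr{P}^{1}$ lies in some $\mathcal{V}_j^{h'-1}$; in fact item~(3) at level $h'-1$ only guarantees $\mathcal{P}\subseteq\mathcal{C}$ for some $\mathcal{C}\in\mathscr{C}^{h'-1}$, which may lie in $\mathscr{C}^{\sharp,h'-1}$ rather than $\mathscr{C}^{\star,h'-1}$---the paper's proof of (3) in Section~\ref{sec5.5} handles this extra case explicitly.
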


\begin{thm}
\label{thm:P-star class ind hyp}For each $h'\in[1,\,h-1]$, the following
classification holds.
\begin{enumerate}
\item If $\mathcal{P}_{i}^{h'}\in\mathscr{P}_{m}^{\star,h'}$ for some $m\in[1,\,\nu_{h'}]$
with $|\mathscr{P}_{m}^{\star,h'}|=1$, then $\Gamma_{i}^{h'}>\Gamma^{\star,h'}$.
\item If $\mathcal{P}_{i}^{h'}\in\mathscr{P}_{{\rm tr}}^{\star,h'}$, then
$\Gamma_{i}^{h'}=\Gamma^{\star,h'}$ and there exists $\mathcal{P}_{j}^{h'}\in\mathscr{P}^{\star,h'}$
such that $\Phi(\mathcal{P}_{i}^{h'},\,\mathcal{P}_{j}^{h'})-\mathbb{H}(\mathcal{P}_{i}^{h'})=\Gamma^{\star,h'}$
and $\mathbb{H}(\mathcal{P}_{j}^{h'})<\mathbb{H}(\mathcal{P}_{i}^{h'})$.
\item If $\mathcal{P}_{i}^{h'}\in\mathscr{P}_{m}^{\star,h'}$ for some $m\in[1,\,\nu_{h'}]$
with $|\mathscr{P}_{m}^{\star,h'}|\ge2$, then $\Gamma_{i}^{h'}=\Gamma^{\star,h'}$
and $\mathbb{H}(\mathcal{P}_{j}^{h'})=\mathbb{H}(\mathcal{P}_{i}^{h'})$
for all $\mathcal{P}_{j}^{h'}\in\mathscr{P}^{\star,h'}$ such that
$\Phi(\mathcal{P}_{i}^{h'},\,\mathcal{P}_{j}^{h'})-\mathbb{H}(\mathcal{P}_{i}^{h'})=\Gamma^{\star,h'}$.
Moreover, in this case,
\[
\mathscr{P}_{m}^{\star,h'}\setminus\{\mathcal{P}_{i}^{h'}\}=\{\mathcal{P}_{j}^{h'}\in\mathscr{P}^{\star,h'}:\,\Phi(\mathcal{P}_{i}^{h'},\,\mathcal{P}_{j}^{h'})-\mathbb{H}(\mathcal{P}_{i}^{h'})=\Gamma^{\star,h'}\}.
\]
\end{enumerate}
\end{thm}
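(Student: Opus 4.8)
The plan is to prove the level‑$h$ instance of the classification exactly as was done for $h=1$ in Section~\ref{sec4.1}: establish the level‑$h$ analogues of Lemmas~\ref{lem:4.1-1}--\ref{lem:4.1-4} and then combine them verbatim as in the proof of Theorem~\ref{thm:P-star1 class}. The dictionary is to replace $\mathscr{P}^{1}$, $\mathcal{P}_{i}^{1}$, $\mathcal{V}_{i}^{1}$, $\Delta^{1}$, $\Gamma^{\star,1}$, $\mathfrak{X}^{\star,1}$, $\mathfrak{R}^{\star,1}$, $\mathfrak{R}^{1}$ by $\mathscr{P}^{\star,h}$, $\mathcal{P}_{i}^{h}$, $\mathcal{V}_{i}^{h}$, $\Delta^{h}$, $\Gamma^{\star,h}$, $\mathfrak{X}^{\star,h}$, $\mathfrak{R}^{\star,h}$, $\mathfrak{R}^{h}$, and to invoke the inductive hypothesis Lemma~\ref{lem:gen ind hyp}(2) in place of Lemma~\ref{lem:Vi1 cycle} (so $\mathcal{V}_{i}^{h}$ is a cycle with bottom $\mathcal{P}_{i}^{h}$ and depth $\Gamma_{i}^{h}$) and Lemma~\ref{lem:gen ind hyp}(3) in place of the remark that every stable plateau lies in some cycle of the current collection. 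The only structural novelty absent at level $1$ is that $\Omega^{h}=\Delta^{h}\cup\mathscr{P}^{\star,h}\cup\mathscr{P}^{\sharp,h}$ also carries the shallow‑cycle nodes $\mathscr{P}^{\sharp,h}$, so that an $\mathfrak{R}^{h}$‑positive trajectory between two elements of $\mathscr{P}^{\star,h}$ may transit through bottoms $\mathcal{F}(\mathcal{C})$ of cycles $\mathcal{C}\in\mathscr{C}^{\sharp,h}$; keeping these intermediate visits under control is the whole point of the adaptation.

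For the analogue of Lemma~\ref{lem:4.1-1}: if $\mathfrak{R}^{\star,h}(\mathcal{P}_{i}^{h},\mathcal{P}_{j}^{h})>0$ then, by \eqref{eq:RC def}--\eqref{eq:R-Cstar def}, $\Gamma_{i}^{h}=\Gamma^{\star,h}$ and there is an $\mathfrak{R}^{h}$‑positive path $\mathcal{P}_{i}^{h}=\omega_{0},\dots,\omega_{N}=\mathcal{P}_{j}^{h}$ in $\Omega^{h}$ with $\mathbb{H}(\omega_{1})=\mathbb{H}(\mathcal{P}_{i}^{h})+\Gamma^{\star,h}=:M$. Along this path the energy is non‑increasing between consecutive $\Delta^{h}$‑nodes (first case of \eqref{eq:RC def}), it drops strictly when the path enters a shallow bottom $\mathcal{F}(\mathcal{C})$ from $\partial\mathcal{C}$ (since $\mathbb{H}(\mathcal{F}(\mathcal{C}))<\min_{\partial\mathcal{C}}\mathbb{H}$), and it rises only when the path leaves $\mathcal{F}(\mathcal{C})$, to a point of $\partial^{\star}\mathcal{C}$ at energy $\min_{\partial\mathcal{C}}\mathbb{H}$, which is no larger than the energy at which the path had entered $\mathcal{C}$; a short induction gives $\mathbb{H}(\omega_{k})\le M$ for all $k$, and reading off the last step (from $\omega_{N-1}\in\partial\mathcal{V}_{j}^{h}$, $\mathcal{V}_{j}^{h}\in\mathscr{C}^{\star,h}$) yields $\mathbb{H}(\mathcal{P}_{j}^{h})\le\mathbb{H}(\mathcal{P}_{i}^{h})$, with equality forcing a flat path avoiding $\mathscr{P}^{\sharp,h}$, hence reversible, hence $\mathfrak{R}^{\star,h}(\mathcal{P}_{j}^{h},\mathcal{P}_{i}^{h})>0$. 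Iterating together with \eqref{eq:Phi max ineq} and \eqref{eq:Gammaih def} gives the level‑$h$ analogue of \eqref{eq:Zi prop}: every $\mathcal{P}_{j}^{h}$ reachable from $\mathcal{P}_{i}^{h}$ under $\{\mathfrak{X}^{\star,h}(t)\}_{t\ge0}$ satisfies $\Phi(\mathcal{P}_{i}^{h},\mathcal{P}_{j}^{h})-\mathbb{H}(\mathcal{P}_{i}^{h})=\Gamma^{\star,h}$.

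For the analogue of Lemma~\ref{lem:4.1-2}, the ``if'' direction is immediate from \eqref{eq:RC def}; for the converse one keeps the set $\mathcal{A}_{i}:=\{\eta\in\Omega\setminus\mathcal{V}_{i}^{h}:\text{there is a downhill path from }\partial^{\star}\mathcal{V}_{i}^{h}\text{ to }\eta\}$, checks as before that $\mathbb{H}$ cannot be constant on it (else $\mathcal{V}_{i}^{h}\cup\mathcal{A}_{i}$ would be a cycle with bottom $\mathcal{P}_{i}^{h}$ disjoint from $\breve{\mathcal{P}}_{i}^{h}$, contradicting \eqref{eq:Gammaih def}), so that a connected component of $\mathcal{F}(\mathcal{A}_{i})$ is a stable plateau $\mathcal{P}$; by Lemma~\ref{lem:gen ind hyp}(3) it lies in a cycle of $\mathscr{C}^{h}$, and if that cycle is shallow we prolong the downhill trajectory by the $\mathfrak{X}^{h}$‑trajectory from the node $\mathcal{F}(\mathcal{C})\in\mathscr{P}^{\sharp,h}$, which is absorbed in $\mathscr{P}^{\star,h}$ almost surely by Lemma~\ref{lem:gen ind hyp}(9), reaching some $\mathcal{P}_{j}^{h}$; in either case the resulting path starts at a node $\omega_{0}$ with $\mathfrak{R}^{h}(\mathcal{P}_{i}^{h},\omega_{0})>0$ and realizes $\{\mathcal{T}_{\mathcal{P}_{j}^{h}}=\mathcal{T}_{\mathscr{P}^{\star,h}}\}$ with positive probability, so $\mathfrak{R}^{\star,h}(\mathcal{P}_{i}^{h},\mathcal{P}_{j}^{h})>0$ by \eqref{eq:R-Cstar def}. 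Lemmas~\ref{lem:4.1-3} and \ref{lem:4.1-4} then need no new idea: the former because a plateau from which a strictly lower one is reachable cannot be re‑entered (by the analogue of Lemma~\ref{lem:4.1-1}), hence is transient; the latter by the same flat‑level propagation along a sequence of plateaux joined at communication height $\Gamma^{\star,h}$, where one now additionally rules out intermediate drops below $M$ and transits through $\mathscr{P}^{\sharp,h}$ using the energy bookkeeping of the previous paragraph together with Lemma~\ref{lem:gen ind hyp}(9) (either would produce a reachable plateau of strictly smaller energy, a contradiction). Assembling the four lemmas exactly as in the proof of Theorem~\ref{thm:P-star1 class} gives the three cases at level $h$, and with the base case (Theorem~\ref{thm:P-star1 class}) this establishes Theorem~\ref{thm:P-star class ind hyp} at all levels.

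The main obstacle, and the only place where the argument genuinely differs from the $h=1$ case, is the treatment of the shallow‑cycle nodes $\mathscr{P}^{\sharp,h}$: one must verify (i) that an exit from such a node lands at energy $\min_{\partial\mathcal{C}}\mathbb{H}$, no larger than the energy at which the trajectory had entered $\mathcal{C}$, so that the monotonicity underlying Lemma~\ref{lem:4.1-1} and \eqref{eq:Zi prop} survives, and (ii) that a downhill path funnelling into $\mathcal{F}(\mathcal{C})$ can always be continued to $\mathscr{P}^{\star,h}$, which is exactly Lemma~\ref{lem:gen ind hyp}(9). Everything else is a mechanical transcription of Section~\ref{sec4.1}.
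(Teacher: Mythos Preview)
Your proposal is essentially the paper's own approach: Section~\ref{sec5.2} establishes the level-$h$ analogues of Lemmas~\ref{lem:4.1-1}--\ref{lem:4.1-4} (these are Lemmas~\ref{lem:5.2-1}--\ref{lem:5.2-4}) and assembles them exactly as in the proof of Theorem~\ref{thm:P-star1 class}, with the shallow-cycle bookkeeping you describe being the only new ingredient.

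One organizational point worth flagging: you invoke Lemma~\ref{lem:gen ind hyp}(3) and (9) \emph{at level $h$}, but as inductive hypotheses they are available only for $h'\le h-1$; in the paper's ordering the level-$h$ instances are established later (Sections~\ref{sec5.5} and~\ref{sec5.4}). The paper sidesteps this by arguing with $\mathscr{C}^{h-1}$ instead of $\mathscr{C}^{h}$ in the analogue of Lemma~\ref{lem:4.1-2}: the stable plateau $\mathcal{P}\subseteq\mathcal{F}(\mathcal{A}_{i})$ lies in some $\mathcal{C}\in\mathscr{C}^{h-1}$ by the inductive Lemma~\ref{lem:gen ind hyp}(3), and if $\mathcal{C}\in\mathscr{C}_{{\rm tr}}^{\star,h-1}\cup\mathscr{C}^{\sharp,h-1}$ one iterates via the inductive Theorem~\ref{thm:P-star class ind hyp}(2) (strict energy descent at each step) rather than appealing to almost-sure absorption at level $h$. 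Your variant is not wrong---the level-$h$ versions of (3) and (9) depend only on Section~\ref{sec5.1} material, so there is no circularity---but you should either prove them first or switch to the paper's level-$(h-1)$ formulation to keep the induction clean.
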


\begin{rem}
\label{rem:ini ind hyp}For the initial step $h'=1$, these induction
hypotheses were readily verified as follows:\smallskip{}
\begin{itemize}
\item Lemma \ref{lem:gen ind hyp}-(1): holds by \eqref{eq:P1 def}.
\item Lemma \ref{lem:gen ind hyp}-(2): exactly Lemma \ref{lem:Vi1 cycle}.
\item Lemma \ref{lem:gen ind hyp}-(3): proved in Lemma \ref{lem:Vi1 cycle}.
\item Lemma \ref{lem:gen ind hyp}-(4): holds by \eqref{eq:C1 def} and
\eqref{eq:C-star1 C-sharp1 prop}.
\item Lemma \ref{lem:gen ind hyp}-(5): obvious.
\item Lemma \ref{lem:gen ind hyp}-(6): proved in \eqref{eq:Gamma-star1 def}.
\item Lemma \ref{lem:gen ind hyp}-(7): proved in the proof of Lemma \ref{lem:Fbeta flat valley-1}.
\item Lemma \ref{lem:gen ind hyp}-(8): exactly Lemma \ref{lem:Fbeta flat valley-1}.
\item Lemma \ref{lem:gen ind hyp}-(9): proved in the proof of Lemma \ref{lem:R1 R-star1 eq}.
\item Theorem \ref{thm:P-star class ind hyp}: exactly Theorem \ref{thm:P-star1 class}.
\end{itemize}
\end{rem}

Now, we proceed as follows. First, in Section \ref{sec5.1}, we prove
Lemmas \ref{lem:Pih energy}, \ref{lem:Gamma-star inc}, \ref{lem:Vih cycle}
and \ref{lem:Ch prop} that are crucial to proceed from level $h-1$
to level $h$. In Section \ref{sec5.2}, we prove key properties of
$\{\mathfrak{X}^{\star,h}(t)\}_{t\ge0}$ in $\mathscr{P}^{\star,h}$
in an analogous manner as in Section \ref{sec4.1}. Using these properties,
in Section \ref{sec5.3} we prove Theorems \ref{thm:nu dec} and \ref{thm:ground states rec}.
Next, in Section \ref{sec5.4}, we prove Theorem \ref{thm:main}.
Finally, in Section \ref{sec5.5}, we check that Lemma \ref{lem:gen ind hyp}
and Theorem \ref{thm:P-star class ind hyp} are valid also for $h'=h$,
which completes the full inductive procedure.

\subsection{\label{sec5.1}Proof of Lemmas \ref{lem:Pih energy}, \ref{lem:Gamma-star inc},
\ref{lem:Vih cycle} and \ref{lem:Ch prop}}
\begin{proof}[Proof of Lemma \ref{lem:Pih energy}]
 Recall from \eqref{eq:P-starh def} that for each $m\in[1,\,\nu_{h-1}]$,
\begin{equation}
\mathcal{P}_{m}^{h}=\bigcup_{i\in[1,\,\nu_{h-2}]:\,\mathcal{P}_{i}^{h-1}\in\mathscr{P}_{m}^{\star,h-1}}\mathcal{P}_{i}^{h-1}.\label{eq:Pih energy pf1}
\end{equation}
For fixed $i\in[1,\,\nu_{h-2}]$ such that $\mathcal{P}_{i}^{h-1}\in\mathscr{P}_{m}^{\star,h-1}$,
by Lemma \ref{lem:gen ind hyp}-(1) with $h'=h-1$, it holds that
\begin{equation}
\mathbb{H}(\eta)=\mathbb{H}(\mathcal{P}_{i}^{h-1})\quad\text{for all}\quad\eta\in\mathcal{P}_{i}^{h-1}.\label{eq:Pih energy pf2}
\end{equation}
If $m$ is such that $|\mathscr{P}_{m}^{\star,h-1}|=1$, then Lemma
\ref{lem:Pih energy} readily holds from \eqref{eq:Pih energy pf2}.
If $|\mathscr{P}_{m}^{\star,h-1}|\ge2$, then by Theorem \ref{thm:P-star class ind hyp}-(3)
with $h'=h-1$, any two sets $\mathcal{P},\,\mathcal{P}'\in\mathscr{P}_{m}^{\star,h-1}$
share the common energy value, i.e. $\mathbb{H}(\mathcal{P})=\mathbb{H}(\mathcal{P}')$.
Thus, in this case Lemma \ref{lem:Pih energy} holds again by \eqref{eq:Pih energy pf2}.
\end{proof}
Before proceeding, we need a lemma.
\begin{lem}
\label{lem:Gamma inc}Fix $m\in[1,\,\nu_{h-1}]$.
\begin{enumerate}
\item If $|\mathscr{P}_{m}^{\star,h-1}|=1$ with $\mathscr{P}_{m}^{\star,h-1}=\{\mathcal{P}_{i}^{h-1}\}$,
then $\Gamma_{m}^{h}\ge\Gamma_{i}^{h-1}$.
\item If $|\mathscr{P}_{m}^{\star,h-1}|\ge2$, then $\Gamma_{m}^{h}>\Gamma^{\star,h-1}$.
\end{enumerate}
\end{lem}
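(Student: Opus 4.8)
The plan is to reduce both parts to the elementary monotonicity $\Phi(\mathcal{A},\,\mathcal{B})\ge\Phi(\mathcal{A},\,\mathcal{B}')$ whenever $\mathcal{B}\subseteq\mathcal{B}'$, combined with one structural observation: for any $\mathcal{P}_i^{h-1}\in\mathscr{P}_m^{\star,h-1}$ one has the inclusion $\breve{\mathcal{P}}_m^h\subseteq\breve{\mathcal{P}}_i^{h-1}$. Indeed, by \eqref{eq:P-starh def} every configuration of $\breve{\mathcal{P}}_m^h=\bigcup_{m'\ne m}\mathcal{P}_{m'}^h$ lies in some stable plateau $\mathcal{P}_j^{h-1}\in\mathscr{P}_{m'}^{\star,h-1}$ with $m'\ne m$; since the recurrent components $\mathscr{P}_m^{\star,h-1}$ and $\mathscr{P}_{m'}^{\star,h-1}$ are disjoint and $\mathcal{P}_i^{h-1}\in\mathscr{P}_m^{\star,h-1}$, we get $j\ne i$, hence $\mathcal{P}_j^{h-1}\subseteq\breve{\mathcal{P}}_i^{h-1}$. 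This step is the only structural input and amounts to routine bookkeeping about which plateau a configuration belongs to.

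For part (1), where $\mathscr{P}_m^{\star,h-1}=\{\mathcal{P}_i^{h-1}\}$ and hence $\mathcal{P}_m^h=\mathcal{P}_i^{h-1}$ by \eqref{eq:P-starh def}, I would combine the above inclusion with the definition \eqref{eq:Gammaih def} of $\Gamma_i^{h-1}$: $\Phi(\mathcal{P}_m^h,\,\breve{\mathcal{P}}_m^h)=\Phi(\mathcal{P}_i^{h-1},\,\breve{\mathcal{P}}_m^h)\ge\Phi(\mathcal{P}_i^{h-1},\,\breve{\mathcal{P}}_i^{h-1})=\mathbb{H}(\mathcal{P}_i^{h-1})+\Gamma_i^{h-1}$, and subtracting $\mathbb{H}(\mathcal{P}_m^h)=\mathbb{H}(\mathcal{P}_i^{h-1})$ gives $\Gamma_m^h\ge\Gamma_i^{h-1}$ at once.

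For part (2), I would argue by contradiction. If $\Gamma_m^h\le\Gamma^{\star,h-1}$, then there is a path of height at most $\mathbb{H}(\mathcal{P}_m^h)+\Gamma^{\star,h-1}$ from a configuration in some $\mathcal{P}_i^{h-1}\in\mathscr{P}_m^{\star,h-1}$ to a configuration in some $\mathcal{P}_j^{h-1}\in\mathscr{P}_{m'}^{\star,h-1}$ with $m'\ne m$; using Lemma \ref{lem:Pih energy} to write $\mathbb{H}(\mathcal{P}_m^h)=\mathbb{H}(\mathcal{P}_i^{h-1})$, this gives $\Phi(\mathcal{P}_i^{h-1},\,\mathcal{P}_j^{h-1})\le\mathbb{H}(\mathcal{P}_i^{h-1})+\Gamma^{\star,h-1}$. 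On the other hand, since $j\ne i$ we have $\mathcal{P}_j^{h-1}\subseteq\breve{\mathcal{P}}_i^{h-1}$, and since $|\mathscr{P}_m^{\star,h-1}|\ge2$, Theorem \ref{thm:P-star class ind hyp}-(3) applied with $h'=h-1$ gives $\Gamma_i^{h-1}=\Gamma^{\star,h-1}$; hence $\Phi(\mathcal{P}_i^{h-1},\,\mathcal{P}_j^{h-1})\ge\Phi(\mathcal{P}_i^{h-1},\,\breve{\mathcal{P}}_i^{h-1})=\mathbb{H}(\mathcal{P}_i^{h-1})+\Gamma^{\star,h-1}$. Therefore $\Phi(\mathcal{P}_i^{h-1},\,\mathcal{P}_j^{h-1})-\mathbb{H}(\mathcal{P}_i^{h-1})=\Gamma^{\star,h-1}$, so the last assertion of Theorem \ref{thm:P-star class ind hyp}-(3) forces $\mathcal{P}_j^{h-1}\in\mathscr{P}_m^{\star,h-1}$, contradicting $m'\ne m$ and the disjointness of the recurrent components.

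The only conceptual point — and where I would concentrate the writing — is that Theorem \ref{thm:P-star class ind hyp}-(3), available here by the inductive hypothesis, is exactly what upgrades the weak bound $\Gamma_m^h\ge\Gamma^{\star,h-1}$ to a strict one: the level-$(h-1)$ classification already says that anything reachable from $\mathcal{P}_i^{h-1}$ at precisely the barrier height $\mathbb{H}(\mathcal{P}_i^{h-1})+\Gamma^{\star,h-1}$ must stay inside the component $\mathscr{P}_m^{\star,h-1}$, so no genuinely new path to $\breve{\mathcal{P}}_m^h$ can exist at that height. I do not foresee a real obstacle; the lemma is essentially a corollary of the structure already in hand at level $h-1$, and the remaining steps (monotonicity of $\Phi$, the inclusion of the breve-sets) are immediate.
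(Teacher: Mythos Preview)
Your proposal is correct and follows essentially the same approach as the paper: both parts use the inclusion $\breve{\mathcal{P}}_m^h\subseteq\breve{\mathcal{P}}_i^{h-1}$ together with monotonicity of $\Phi$, and the strictness in part (2) is obtained from the classification in Theorem \ref{thm:P-star class ind hyp}-(3) exactly as you describe. The only cosmetic difference is that the paper first records the weak bound $\Gamma_m^h\ge\Gamma^{\star,h-1}$ and then rules out equality, whereas you proceed directly by contradiction on $\Gamma_m^h\le\Gamma^{\star,h-1}$; the content is identical.
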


\begin{proof}
(1) By \eqref{eq:Gammaih def}, we may write
\[
\Gamma_{m}^{h}=\Phi(\mathcal{P}_{m}^{h},\,\breve{\mathcal{P}}_{m}^{h})-\mathbb{H}(\mathcal{P}_{m}^{h})=\Phi(\mathcal{P}_{i}^{h-1},\,\breve{\mathcal{P}}_{m}^{h})-\mathbb{H}(\mathcal{P}_{i}^{h-1}).
\]
Moreover, we may rewrite $\breve{\mathcal{P}}_{m}^{h}$ as
\begin{equation}
\bigcup_{m'\in[1,\,\nu_{h-1}]:\,m'\ne m}\mathcal{P}_{m'}^{h}=\bigcup_{m':\,m'\ne m}\bigcup_{j\in[1,\,\nu_{h-2}]:\,\mathcal{P}_{j}^{h-1}\in\mathscr{P}_{m'}^{h-1}}\mathcal{P}_{j}^{h-1}\subseteq\bigcup_{j\in[1,\,\nu_{h-2}]:\,j\ne i}\mathcal{P}_{j}^{h-1}=\breve{\mathcal{P}}_{i}^{h-1}.\label{eq:Gamma inc pf1}
\end{equation}
Thus, by the previous two displays, we have $\Gamma_{\ell}^{h}\ge\Phi(\mathcal{P}_{i}^{h-1},\,\breve{\mathcal{P}}_{i}^{h-1})-\mathbb{H}(\mathcal{P}_{i}^{h-1})=\Gamma_{i}^{h-1}$.

\noindent (2) By Theorem \ref{thm:P-star class ind hyp}-(3) for $h'=h-1$,
we have $\Gamma_{j}^{h-1}=\Gamma^{\star,h-1}$ for all $j\in[1,\,\nu_{h-2}]$
such that $\mathcal{P}_{j}^{h-1}\in\mathscr{P}_{m}^{\star,h-1}$.
Then by \eqref{eq:Gamma inc pf1}, for all such $j$,
\[
\Phi(\mathcal{P}_{j}^{h-1},\,\breve{\mathcal{P}}_{m}^{h})\ge\Phi(\mathcal{P}_{j}^{h-1},\,\breve{\mathcal{P}}_{j}^{h-1})=\Gamma_{j}^{h-1}+\mathbb{H}(\mathcal{P}_{j}^{h-1})=\Gamma^{\star,h-1}+\mathbb{H}(\mathcal{P}_{m}^{h}),
\]
where the last equality holds by Lemma \ref{lem:Pih energy}. Thus,
\[
\Gamma_{m}^{h}+\mathbb{H}(\mathcal{P}_{m}^{h})=\Phi(\mathcal{P}_{m}^{h},\,\breve{\mathcal{P}}_{m}^{h})=\min_{j:\,\mathcal{P}_{j}^{h-1}\in\mathscr{P}_{m}^{\star,h-1}}\Phi(\mathcal{P}_{j}^{h-1},\,\breve{\mathcal{P}}_{m}^{h})\ge\Gamma^{\star,h-1}+\mathbb{H}(\mathcal{P}_{m}^{h}),
\]
which proves that $\Gamma_{m}^{h}\ge\Gamma^{\star,h-1}$. Finally,
suppose that the equality holds. Then, there exist $\mathcal{P}_{j}^{h-1}\in\mathscr{P}_{m}^{\star,h-1}$
and another $\mathcal{P}\in\mathscr{P}_{m'}^{\star,h-1}$ with $m'\ne m$
such that $\Phi(\mathcal{P}_{j}^{h-1},\,\mathcal{P})-\mathbb{H}(\mathcal{P}_{j}^{h-1})=\Gamma^{\star,h-1}$.
Via Theorem \ref{thm:P-star class ind hyp}-(3), this implies that
$\mathcal{P}\in\mathscr{P}_{m}^{\star,h-1}$ which contradicts $m'\ne m$.
Therefore, we obtain $\Gamma_{m}^{h}>\Gamma^{\star,h-1}$ which concludes
the proof.
\end{proof}
\begin{proof}[Proof of Lemma \ref{lem:Gamma-star inc}]
 It suffices to prove that $\Gamma_{m}^{h}>\Gamma^{\star,h-1}$ for
all $m\in[1,\,\nu_{h-1}]$. If $|\mathscr{P}_{m}^{\star,h-1}|=1$
such that $\mathcal{P}_{m}^{h}=\mathcal{P}_{i}^{h-1}$, then by Theorem
\ref{thm:P-star class ind hyp}-(1) and Lemma \ref{lem:Gamma inc}-(1),
$\Gamma_{m}^{h}\ge\Gamma_{i}^{h-1}>\Gamma^{\star,h-1}$. If $|\mathscr{P}_{m}^{\star,h-1}|\ge2$,
then by Lemma \ref{lem:Gamma inc}-(2), we conclude that $\Gamma_{m}^{h}>\Gamma^{\star,h-1}$.
\end{proof}
\begin{proof}[Proof of Lemma \ref{lem:Vih cycle}]
 By \eqref{eq:Gammaih def} and \eqref{eq:Vih def}, we obtain similarly
as in the proof of Lemma \ref{lem:Vi1 cycle} that $\mathcal{V}_{i}^{h}\cap\mathcal{V}_{j}^{h}=\emptyset$
for all $i\ne j$. Moreover, it is routine to see that $\mathbb{H}(\eta)<\mathbb{H}(\mathcal{P}_{i}^{h})+\Gamma_{i}^{h}$
for all $\eta\in\mathcal{V}_{i}^{h}$ and $\mathbb{H}(\zeta)\ge\mathbb{H}(\mathcal{P}_{i}^{h})+\Gamma_{i}^{h}$
for all $\zeta\in\partial\mathcal{V}_{i}^{h}$ where equality holds
if and only if $\zeta\in\partial^{\star}\mathcal{V}_{i}^{h}$.

In the next step, we prove that each $\mathcal{V}_{i}^{h}$ is connected.
To see this, it suffices to prove that all elements of $\mathcal{P}_{i}^{h}$
are connected strictly below energy level $\mathbb{H}(\mathcal{P}_{i}^{h})+\Gamma_{i}^{h}$.
If $|\mathscr{P}_{i}^{\star,h-1}|=1$ such that $\mathscr{P}_{i}^{\star,h-1}=\{\mathcal{P}_{j}^{h-1}\}$,
then by Lemma \ref{lem:gen ind hyp}-(2), all elements of $\mathcal{P}_{i}^{h}=\mathcal{P}_{j}^{h-1}$
are connected strictly below energy level $\mathbb{H}(\mathcal{P}_{j}^{h-1})+\Gamma_{j}^{h-1}$,
where $\Gamma_{j}^{h-1}\le\Gamma_{i}^{h}$ by Lemma \ref{lem:Gamma inc}-(1).
This readily proves that $\mathcal{V}_{i}^{h}$ is connected if $|\mathscr{P}_{i}^{\star,h-1}|=1$.
Suppose on the contrary that $|\mathscr{P}_{i}^{\star,h-1}|\ge2$.
Then by Theorem \ref{thm:P-star class ind hyp}-(3), each $\mathcal{P}_{j}^{h-1}\in\mathscr{P}_{i}^{\star,h-1}$
satisfies $\Gamma_{j}^{h-1}=\Gamma^{\star,h-1}$ and they are connected
within barrier $\mathbb{H}(\mathcal{P}_{i}^{h})+\Gamma^{\star,h-1}$,
where $\Gamma^{\star,h-1}<\Gamma_{i}^{h}$ by Lemma \ref{lem:Gamma inc}-(2).
Thus, in any case, we conclude that $\mathcal{V}_{i}^{h}$ is connected.
Collecting all the above observations, $\mathcal{V}_{i}^{h}$ is a
cycle for every $i\in[1,\,\nu_{h-1}]$.

It remains to prove that $\mathcal{F}(\mathcal{V}_{i}^{h})=\mathcal{P}_{i}^{h}$.
Note that $\mathcal{F}(\mathcal{V}_{i}^{h})$ is a union of stable
plateaux in $\mathscr{P}^{1}$. If $\mathcal{F}(\mathcal{V}_{i}^{h})\ne\mathcal{P}_{i}^{h}$,
then there exists $\mathcal{P}\in\mathscr{P}^{1}$ such that $\mathcal{P}\subseteq\mathcal{F}(\mathcal{V}_{i}^{h})$
and $\mathcal{P}\cap\mathcal{P}_{i}^{h}=\emptyset$. By Lemma \ref{lem:gen ind hyp}-(3),
$\mathcal{P}\subseteq\mathcal{C}$ for some $\mathcal{C}\in\mathscr{C}^{h-1}$.
Recall \eqref{eq:Ch-1 Ph-1 ind hyp}. If $\mathcal{C}\in\mathscr{C}_{i}^{\star,h-1}$
such that $\mathcal{C}=\mathcal{V}_{j}^{h-1}$ for some $j\in[1,\,\nu_{h-2}]$,
then by Lemma \ref{lem:gen ind hyp}-(2), $\mathcal{F}(\mathcal{V}_{j}^{h-1})=\mathcal{P}_{j}^{h-1}$
thus $\mathbb{H}(\mathcal{P})\ge\mathbb{H}(\mathcal{P}_{j}^{h-1})=\mathbb{H}(\mathcal{P}_{i}^{h})$.
If $\mathbb{H}(\mathcal{P})>\mathbb{H}(\mathcal{P}_{i}^{h})$ then
we have a contradiction to $\mathcal{P}\subseteq\mathcal{F}(\mathcal{V}_{i}^{h})$.
If $\mathbb{H}(\mathcal{P})=\mathbb{H}(\mathcal{P}_{i}^{h})$ then
$\mathcal{P}\subseteq\mathcal{P}_{j}^{h-1}\subseteq\mathcal{P}_{i}^{h}$,
which contradicts $\mathcal{P}\cap\mathcal{P}_{i}^{h}=\emptyset$.
Moreover, $\mathcal{C}\notin\mathscr{C}_{i'}^{\star,h-1}$ for any
$i'\ne i$ since collections $\mathcal{V}_{j}^{h-1}$ for $j\in[1,\,\nu_{h-2}]$
are disjoint by Lemma \ref{lem:gen ind hyp}-(2). Hence, we obtain
that
\[
\mathcal{C}\in\mathscr{C}_{{\rm tr}}^{\star,h-1}\cup\mathscr{C}^{\sharp,h-1}.
\]
By Lemma \ref{lem:gen ind hyp}-(4), $\mathcal{C}\in\mathscr{C}_{{\rm tr}}^{\star,h'}$
for some $h'\in[1,\,h-1]$. Write $\mathcal{C}=\mathcal{V}_{j}^{h'}$
such that $\mathbb{H}(\mathcal{P})\ge\mathbb{H}(\mathcal{P}_{j}^{h'})$.
By Theorem \ref{thm:P-star class ind hyp}-(2), $\Gamma_{j}^{h'}=\Gamma^{\star,h'}$
thus $\Phi(\mathcal{P},\,\mathcal{P}_{j}^{h'})<\mathbb{H}(\mathcal{P}_{j}^{h'})+\Gamma^{\star,h'}$.
The elements of $\mathcal{P}_{j}^{h'}$ are connected below barrier
$\mathbb{H}(\mathcal{P}_{j}^{h'})+\Gamma^{\star,h'-1}$ by Lemma \ref{lem:gen ind hyp}-(5).
In addition, again by Theorem \ref{thm:P-star class ind hyp}-(2),
there exists another $\mathcal{P}_{j'}^{h'}\in\mathscr{P}^{\star,h'}$
such that
\[
\Phi(\mathcal{P}_{j}^{h'},\,\mathcal{P}_{j'}^{h'})-\mathbb{H}(\mathcal{P}_{j}^{h'})=\Gamma^{\star,h'}\quad\text{and}\quad\mathbb{H}(\mathcal{P}_{j'}^{h'})<\mathbb{H}(\mathcal{P}_{j}^{h'}).
\]
Moreover, by Lemmas \ref{lem:gen ind hyp}-(6) and \ref{lem:Gamma inc},
we have $\Gamma^{\star,h'-1}<\Gamma^{\star,h'}\le\Gamma^{\star,h-1}<\Gamma_{i}^{h}$.
Hence, by concatenating through $\mathcal{P}\to\mathcal{P}_{j}^{h'}\to\mathcal{P}_{j'}^{h'}$,
we obtain that $\mathcal{P}_{j'}^{h'}\subseteq\mathcal{V}_{i}^{h}$.
This contradicts the fact that $\mathcal{P}\subseteq\mathcal{F}(\mathcal{V}_{i}^{h})$
since $\mathbb{H}(\mathcal{P})\ge\mathbb{H}(\mathcal{P}_{j}^{h'})>\mathbb{H}(\mathcal{P}_{j'}^{h'})$.
Thus, we conclude that $\mathcal{F}(\mathcal{V}_{i}^{h})=\mathcal{P}_{i}^{h}$
which completes the proof of Lemma \ref{lem:Vih cycle}.
\end{proof}
\begin{proof}[Proof of Lemma \ref{lem:Ch prop}]
 It suffices to prove that $\Gamma^{\mathcal{V}_{i}^{h}}\ge\Gamma^{\star,h}$
for all $i\in[1,\,\nu_{h-1}]$ and $\Gamma^{\mathcal{C}}<\Gamma^{\star,h}$
for all $\mathcal{C}\in\mathscr{C}_{{\rm tr}}^{\star,h-1}\cup\mathscr{C}^{\sharp,h-1}$.
The former inequality is obvious by Lemma \ref{lem:Vih cycle} and
\eqref{eq:Gamma-starh def}. For the latter inequality, if $\mathcal{C}\in\mathscr{C}_{{\rm tr}}^{\star,h-1}\cup\mathscr{C}^{\sharp,h-1}$
then by Lemma \ref{lem:gen ind hyp}-(4), it holds that $\mathcal{C}\in\mathscr{C}_{{\rm tr}}^{\star,h'}$
for some $h'\in[1,\,h-1]$. Then, by Lemma \ref{lem:gen ind hyp}-(2)
and Theorem \ref{thm:P-star class ind hyp}-(2),
\[
\Gamma^{\mathcal{C}}=\Gamma^{\star,h'}<\Gamma^{\star,h},
\]
where the strict inequality follows from Lemma \ref{lem:gen ind hyp}-(6).
This concludes the proof.
\end{proof}

\subsection{\label{sec5.2}Classification of $\mathscr{P}^{\star,h}$}

In this subsection, we prove the following theorem.
\begin{thm}
\label{thm:P-starh class}The following classification holds.
\begin{enumerate}
\item If $\mathcal{P}_{i}^{h}\in\mathscr{P}_{m}^{\star,h}$ for some $m\in[1,\,\nu_{h}]$
with $|\mathscr{P}_{m}^{\star,h}|=1$, then $\Gamma_{i}^{h}>\Gamma^{\star,h}$.
\item If $\mathcal{P}_{i}^{h}\in\mathscr{P}_{{\rm tr}}^{\star,h}$, then
$\Gamma_{i}^{h}=\Gamma^{\star,h}$ and there exists $\mathcal{P}_{j}^{h}\in\mathscr{P}^{\star,h}$
such that $\Phi(\mathcal{P}_{i}^{h},\,\mathcal{P}_{j}^{h})-\mathbb{H}(\mathcal{P}_{i}^{h})=\Gamma^{\star,h}$
and $\mathbb{H}(\mathcal{P}_{j}^{h})<\mathbb{H}(\mathcal{P}_{i}^{h})$.
\item If $\mathcal{P}_{i}^{h}\in\mathscr{P}_{m}^{\star,h}$ for some $m\in[1,\,\nu_{h}]$
with $|\mathscr{P}_{m}^{\star,h}|\ge2$, then $\Gamma_{i}^{h}=\Gamma^{\star,h}$
and $\mathbb{H}(\mathcal{P}_{j}^{h})=\mathbb{H}(\mathcal{P}_{i}^{h})$
for all $\mathcal{P}_{j}^{h}\in\mathscr{P}^{\star,h}$ such that $\Phi(\mathcal{P}_{i}^{h},\,\mathcal{P}_{j}^{h})-\mathbb{H}(\mathcal{P}_{i}^{h})=\Gamma^{\star,h}$.
Moreover, in this case,
\[
\mathscr{P}_{m}^{\star,h}\setminus\{\mathcal{P}_{i}^{h}\}=\{\mathcal{P}_{j}^{h}\in\mathscr{P}^{\star,h}:\,\Phi(\mathcal{P}_{i}^{h},\,\mathcal{P}_{j}^{h})-\mathbb{H}(\mathcal{P}_{i}^{h})=\Gamma^{\star,h}\}.
\]
\end{enumerate}
\end{thm}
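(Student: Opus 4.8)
The plan is to run, at level $h$, the exact same argument scheme that proved Theorem \ref{thm:P-star1 class} at level $1$ (Section \ref{sec4.1}), now invoking the inductive hypotheses in Lemma \ref{lem:gen ind hyp} and Theorem \ref{thm:P-star class ind hyp} in place of the explicit level-$1$ facts. The essential structural inputs are already in hand: by Lemma \ref{lem:Vih cycle} the sets $\mathcal{V}_i^h$ are disjoint cycles with bottom $\mathcal{P}_i^h$ and depth $\Gamma_i^h$; by Lemma \ref{lem:Ch prop} we have $\mathscr{C}^{\star,h}=\{\mathcal{V}_i^h:i\in[1,\nu_{h-1}]\}$ so that $\mathscr{P}^{(\mathscr{C}^h)^\star}=\mathscr{P}^{\star,h}$; and by Lemma \ref{lem:gen ind hyp}-(9) together with the remark that $\{\mathfrak{X}^h(t)\}$ moves monotonically downhill in energy through $\Delta^h\cup\mathscr{P}^{\sharp,h}$, every trajectory started in $\Delta^h$ reaches $\mathscr{P}^{\star,h}$ in finite time almost surely. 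With these in place, I would first establish the analogue of Lemma \ref{lem:4.1-1}: if $\mathfrak{R}^{\star,h}(\mathcal{P}_i^h,\mathcal{P}_j^h)>0$ then $\mathbb{H}(\mathcal{P}_i^h)\ge\mathbb{H}(\mathcal{P}_j^h)$, and equality forces a return rate $\mathfrak{R}^{\star,h}(\mathcal{P}_j^h,\mathcal{P}_i^h)>0$ — this follows by tracking $\mathbb{H}$ along the witnessing $\mathfrak{R}^h$-positive chain through $\Delta^h$, exactly as in the level-$1$ proof, using \eqref{eq:RC def} and \eqref{eq:R-Cstar def}.

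Next I would prove the analogue of Lemma \ref{lem:4.1-2}: $\Gamma_i^h=\Gamma^{\star,h}$ if and only if $\mathcal{P}_i^h$ has an outgoing $\mathfrak{R}^{\star,h}$-edge. The ``if'' direction is immediate from \eqref{eq:RC def}. For ``only if'', when $\Gamma_i^h=\Gamma^{\star,h}$ one sets $\mathbb{H}_i^{\star,h}:=\mathbb{H}(\mathcal{P}_i^h)+\Gamma^{\star,h}$, considers the set $\mathcal{A}_i$ of configurations reachable from $\partial^\star\mathcal{V}_i^h$ by a downhill path outside $\mathcal{V}_i^h$, and argues that if $\mathcal{F}(\mathcal{A}_i)$ sat entirely at level $\mathbb{H}_i^{\star,h}$ then $\mathcal{V}_i^h\cup\mathcal{A}_i$ would be a connected set of energy $\le\mathbb{H}_i^{\star,h}$ disjoint from $\breve{\mathcal{P}}_i^h$, contradicting the definition \eqref{eq:Gammaih def} of $\Gamma_i^h$; hence $\mathcal{F}(\mathcal{A}_i)$ contains a stable plateau below level $\mathbb{H}_i^{\star,h}$, which by Lemma \ref{lem:gen ind hyp}-(3) lies inside some $\mathcal{V}_j^h$ (one has to check the cycle containing that plateau has depth $<\Gamma_i^h$ so it is swallowed into $\mathcal{V}_j^h$; this uses Lemma \ref{lem:Ch prop} and the depth bounds from Lemmas \ref{lem:gen ind hyp}-(6) and \ref{lem:Gamma inc}). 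The downhill path then realizes $\mathfrak{R}^{\star,h}(\mathcal{P}_i^h,\mathcal{P}_j^h)>0$. With these two lemmas, part (1) of the theorem is immediate: $\Gamma_i^h>\Gamma^{\star,h}$ iff $\mathcal{P}_i^h$ is $\mathfrak{X}^{\star,h}$-absorbing iff it forms a singleton irreducible class.

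For parts (2) and (3), I would introduce, for each $i$ with $\Gamma_i^h=\Gamma^{\star,h}$, the set $\mathscr{Z}_i^h$ of plateaux reachable from $\mathcal{P}_i^h$ via $\mathfrak{X}^{\star,h}$, establish (by induction along reachability and the level-$h$ analogue of \eqref{eq:Zi prop}) that $\Phi(\mathcal{P}_i^h,\mathcal{P}_j^h)-\mathbb{H}(\mathcal{P}_i^h)=\Gamma^{\star,h}$ for all $\mathcal{P}_j^h\in\mathscr{Z}_i^h$, and then split into cases: if some $\mathcal{P}_j^h\in\mathscr{Z}_i^h$ has strictly smaller energy, then by the level-$h$ Lemma \ref{lem:4.1-1} analogue $\mathcal{P}_i^h$ is not reachable back, so $\mathcal{P}_i^h\in\mathscr{P}_{{\rm tr}}^{\star,h}$, giving (2); otherwise all of $\mathscr{Z}_i^h$ sits at the same energy, $\{\mathcal{P}_i^h\}\cup\mathscr{Z}_i^h$ is an irreducible class, and the level-$h$ analogue of Lemma \ref{lem:4.1-4} (whose proof — tracing a would-be lower plateau along an energy-$\mathbb{H}_i^{\star,h}$ chain and deriving a contradiction — carries over verbatim, with $\Delta^1,\mathscr{P}^1$ replaced by $\Delta^h,\mathscr{P}^{\star,h}$) identifies this class with $\mathscr{P}_m^{\star,h}$ and shows it is exactly $\{\mathcal{P}_j^h:\Phi(\mathcal{P}_i^h,\mathcal{P}_j^h)-\mathbb{H}(\mathcal{P}_i^h)=\Gamma^{\star,h}\}$, giving (3). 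The main obstacle I anticipate is the bookkeeping in the ``only if'' part of the Lemma \ref{lem:4.1-2} analogue and in the Lemma \ref{lem:4.1-4} analogue: at level $h$ the ambient graph is $\Omega^h$ rather than $\overline{\Omega}$, so one must be careful that a downhill path witnessed at the contracted level lifts correctly, that the cycle swallowing a newly found deep plateau indeed has small enough depth to be absorbed into some $\mathcal{V}_j^h$, and that the energy-monotonicity arguments use $\Gamma^{\star,h-1}<\Gamma^{\star,h}$ (Lemma \ref{lem:Gamma-star inc}) in the right places. None of this is conceptually new, but it is where all the inductive hypotheses must be threaded together correctly.
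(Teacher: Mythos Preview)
Your plan matches the paper's approach (Section \ref{sec5.2}) lemma by lemma. One substantive correction to your sketch: in the ``only if'' direction of the Lemma \ref{lem:4.1-2} analogue, the stable plateau $\mathcal{P}\subseteq\mathcal{F}(\mathcal{A}_i)$ need \emph{not} lie in any $\mathcal{V}_j^h$ --- having small depth does not force the containing cycle to be swallowed, since it may belong to $\mathscr{C}^{\sharp,h}$ and sit disjoint from every $\mathcal{V}_j^h$. What Lemma \ref{lem:gen ind hyp}-(3) (at level $h'=h-1$) actually gives is $\mathcal{P}\subseteq\mathcal{C}$ for some $\mathcal{C}\in\mathscr{C}^{h-1}$; if $\mathcal{C}\in\mathscr{C}_{{\rm tr}}^{\star,h-1}\cup\mathscr{C}^{\sharp,h-1}$ (equivalently $\mathcal{C}\in\mathscr{C}_{{\rm tr}}^{\star,h'}$ for some $h'<h$, by Lemma \ref{lem:gen ind hyp}-(4)), you must invoke the inductive Theorem \ref{thm:P-star class ind hyp}-(2) to pass to a plateau of \emph{strictly} smaller energy reachable via $\{\mathfrak{X}^{\star,h'}(t)\}$, and then iterate; the strict energy decrease forces termination in some $\mathscr{C}_{i'}^{\star,h-1}$ with $i'\neq i$, from which $\mathfrak{R}^{\star,h}(\mathcal{P}_i^h,\mathcal{P}_{i'}^h)>0$ follows. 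A smaller point in the Lemma \ref{lem:4.1-1} analogue: the $\mathfrak{R}^h$-positive chain from $\mathcal{P}_i^h$ to $\mathcal{P}_j^h$ runs through $\Delta^h\cup\mathscr{P}^{\sharp,h}$, not just $\Delta^h$, so you also need to verify that traversing a shallow cycle preserves energy-monotonicity (entry and exit both occur at level $\mathbb{H}(\partial^\star\mathcal{C})$).
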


The overall outline of the proof of Theorem \ref{thm:P-starh class}
is same as in Section \ref{sec4.1}; we focus on the differences and
omit the details.
\begin{lem}
\label{lem:5.2-1}For distinct $i,\,j$ such that $\mathfrak{R}^{\star,h}(\mathcal{P}_{i}^{h},\,\mathcal{P}_{j}^{h})>0$,
we have $\mathbb{H}(\mathcal{P}_{i}^{h})\ge\mathbb{H}(\mathcal{P}_{j}^{h})$.
Moreover, if $\mathbb{H}(\mathcal{P}_{i}^{h})=\mathbb{H}(\mathcal{P}_{j}^{h})$
then we also have $\mathfrak{R}^{\star,h}(\mathcal{P}_{j}^{h},\,\mathcal{P}_{i}^{h})>0$.
\end{lem}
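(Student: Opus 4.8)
The plan is to reproduce the argument of Lemma~\ref{lem:4.1-1} at level $h$; the one genuinely new feature is that an excursion of $\{\mathfrak{X}^{h}(t)\}_{t\ge0}$ realizing $\mathfrak{R}^{\star,h}(\mathcal{P}_{i}^{h},\,\mathcal{P}_{j}^{h})>0$ may now pass through bottoms of shallow cycles in $\mathscr{P}^{\sharp,h}$, along which the energy can temporarily dip. First I would unfold such an excursion. By \eqref{eq:R-Cstar def}, $\mathfrak{R}^{\star,h}(\mathcal{P}_{i}^{h},\,\mathcal{P}_{j}^{h})>0$ produces a finite sequence $\mathcal{P}_{i}^{h}=v_{0},\,v_{1},\,\dots,\,v_{N}=\mathcal{P}_{j}^{h}$ in $\Omega^{h}$ with $\mathfrak{R}^{h}(v_{\ell},\,v_{\ell+1})>0$ for every $\ell$ and with $v_{1},\,\dots,\,v_{N-1}\in\Delta^{h}\cup\mathscr{P}^{\sharp,h}$, since hitting another element of $\mathscr{P}^{\star,h}$ would terminate the excursion before reaching $\mathcal{P}_{j}^{h}$. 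Reading off \eqref{eq:RC def}: the first transition leaves the contracted bottom $\mathcal{P}_{i}^{h}=\mathcal{F}(\mathcal{V}_{i}^{h})$, which forces $\Gamma_{i}^{h}\le\Gamma^{\star,h}$, hence $\Gamma_{i}^{h}=\Gamma^{\star,h}$ by \eqref{eq:Gamma-starh def}, and $v_{1}\in\partial^{\star}\mathcal{V}_{i}^{h}$ with $\mathbb{H}(v_{1})=\mathbb{H}(\mathcal{P}_{i}^{h})+\Gamma^{\star,h}$ by Lemma~\ref{lem:Vih cycle}; symmetrically the last transition enters $\mathcal{V}_{j}^{h}$, so $v_{N-1}\in\partial\mathcal{V}_{j}^{h}$ and $\mathbb{H}(v_{N-1})\ge\mathbb{H}(\mathcal{P}_{j}^{h})+\Gamma_{j}^{h}\ge\mathbb{H}(\mathcal{P}_{j}^{h})+\Gamma^{\star,h}$.

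Next I would control the energy along $v_{1},\,\dots,\,v_{N-1}$. By \eqref{eq:RC def}, a transition is energy non-increasing when it is a jump between configurations or a jump into a contracted bottom (the latter is even strictly decreasing, since $\mathbb{H}$ on a cycle lies strictly below $\mathbb{H}$ on its boundary), and it can increase the energy only when a shallow-cycle bottom $\mathcal{F}(\mathcal{C})$ with $\mathcal{C}\in\mathscr{C}^{\sharp,h}$ is left towards $\partial^{\star}\mathcal{C}$, in which case the gain equals $\Gamma^{\mathcal{C}}<\Gamma^{\star,h}$. Grouping each such enter--exit passage through a shallow cycle $\mathcal{C}$ and using that the configuration just before entering $\mathcal{C}$ lies in $\partial\mathcal{C}$, hence has energy at least $\min_{\mathcal{C}}\mathbb{H}+\Gamma^{\mathcal{C}}$, which is exactly the energy just after exiting, one checks that deleting these detours turns $v_{1},\,\dots,\,v_{N-1}$ into a genuinely energy non-increasing sequence of configurations. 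Consequently $\mathbb{H}(\mathcal{P}_{i}^{h})+\Gamma^{\star,h}=\mathbb{H}(v_{1})\ge\mathbb{H}(v_{N-1})\ge\mathbb{H}(\mathcal{P}_{j}^{h})+\Gamma^{\star,h}$, which gives the first assertion $\mathbb{H}(\mathcal{P}_{i}^{h})\ge\mathbb{H}(\mathcal{P}_{j}^{h})$.

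Finally, for the second assertion assume $\mathbb{H}(\mathcal{P}_{i}^{h})=\mathbb{H}(\mathcal{P}_{j}^{h})$. Then all of the above inequalities are equalities: in particular $\Gamma_{j}^{h}=\Gamma^{\star,h}$ and $v_{N-1}\in\partial^{\star}\mathcal{V}_{j}^{h}$, the shortcut sequence of configurations has constant energy $\mathbb{H}(\mathcal{P}_{i}^{h})+\Gamma^{\star,h}$, and each shallow cycle $\mathcal{C}$ visited is entered from and exited to configurations of energy $\min_{\mathcal{C}}\mathbb{H}+\Gamma^{\mathcal{C}}=\min_{\partial\mathcal{C}}\mathbb{H}$, i.e., lying in $\partial^{\star}\mathcal{C}$. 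I would then verify, step by step via \eqref{eq:RC def}, that the reversed sequence $\mathcal{P}_{j}^{h}=v_{N},\,v_{N-1},\,\dots,\,v_{0}=\mathcal{P}_{i}^{h}$ again has all $\mathfrak{R}^{h}$-rates positive: equal-energy jumps between configurations reverse; the entry into $\mathcal{V}_{j}^{h}$ reverses to an exit since $\Gamma_{j}^{h}=\Gamma^{\star,h}$ and $v_{N-1}\in\partial^{\star}\mathcal{V}_{j}^{h}$; and each shallow enter/exit detour reverses because the relevant boundary configurations now lie in the respective $\partial^{\star}$. As the reversed sequence still meets $\mathscr{P}^{\star,h}$ only at its endpoints, \eqref{eq:R-Cstar def} yields $\mathfrak{R}^{\star,h}(\mathcal{P}_{j}^{h},\,\mathcal{P}_{i}^{h})>0$. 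The step I expect to be most delicate is precisely this energy bookkeeping through shallow-cycle detours, together with the $\partial^{\star}$-membership checks in \eqref{eq:RC def} needed to reverse them, and the minor technicality that a configuration arising in $\partial^{\star}$ or $\partial$ of some cycle may itself have been contracted into another cycle of $\mathscr{C}^{h}$; this last point is dealt with exactly as in the disjointness and bottom-identification arguments inside the proof of Lemma~\ref{lem:Vih cycle}.
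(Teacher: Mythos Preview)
Your proposal is correct and follows essentially the same approach as the paper's proof. Both arguments unfold the excursion $\mathcal{P}_{i}^{h}=v_{0},v_{1},\dots,v_{N}=\mathcal{P}_{j}^{h}$ with $\mathfrak{R}^{h}$-positive steps, observe $\mathbb{H}(v_{1})=\mathbb{H}(\mathcal{P}_{i}^{h})+\Gamma^{\star,h}$ and $\mathbb{H}(v_{N-1})\ge\mathbb{H}(\mathcal{P}_{j}^{h})+\Gamma^{\star,h}$, and handle each shallow-cycle detour by comparing the energy of the configuration just before entering with that just after exiting (the paper phrases this as $\mathbb{H}(\omega_{n})\ge\mathbb{H}(\partial^{\star}\mathcal{V}_{j'}^{h'})=\mathbb{H}(\omega_{n+2})$ when $\omega_{n+1}\in\mathscr{P}^{\sharp,h}$, using Lemma~\ref{lem:gen ind hyp}-(4) to identify the shallow cycle); the reversal in the equality case is then identical to Lemma~\ref{lem:4.1-1}.
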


\begin{proof}
As in the proof of Lemma \ref{lem:4.1-1}, we have $\Gamma_{i}^{h}=\Gamma^{\star,h}$
and there exists a sequence $\mathcal{P}_{i}^{h},\,\omega_{1},\,\dots,\,\omega_{N},\,\mathcal{P}_{j}^{h}$
with $\omega_{1},\,\dots,\,\omega_{N}\in\Delta^{h}\cup\mathscr{P}^{\sharp,h}$
along which $\mathfrak{R}^{h}(\cdot,\,\cdot)$ is positive. Here,
first it is clear that $\omega_{1}\in\partial^{\star}\mathcal{V}_{i}^{h}\subseteq\Delta^{h}$
and $\mathbb{H}(\omega_{1})=\mathbb{H}(\mathcal{P}_{i}^{h})+\Gamma_{i}^{h}$.
Given $\omega_{n}\in\Delta^{h}$ for $n\in[1,\,N-1]$, if $\omega_{n+1}\in\Delta^{h}$
then by \eqref{eq:RC def}, $\mathbb{H}(\omega_{n})\ge\mathbb{H}(\omega_{n+1})$.
If $\omega_{n+1}\in\mathscr{P}^{\sharp,h}$ such that $\omega_{n+1}\in\mathscr{P}_{{\rm tr}}^{\star,h'}$
for some $h'\in[1,\,h-1]$ by Lemma \ref{lem:gen ind hyp}-(4), say
$\omega_{n+1}=\mathcal{P}_{j'}^{h'}$, then it readily holds that
$\omega_{n+2}\in\Delta^{h}$ and $\omega_{n+2}\in\partial^{\star}\mathcal{V}_{j'}^{h'}$,
thus
\[
\mathbb{H}(\omega_{n})\ge\mathbb{H}(\partial^{\star}\mathcal{V}_{j'}^{h'})=\mathbb{H}(\omega_{n+2}).
\]
Finally, it is clear that $\omega_{N}\in\Delta^{h}$ and $\mathbb{H}(\omega_{N})\ge\mathbb{H}(\mathcal{P}_{j}^{h})+\Gamma_{j}^{h}$.
Hence, by an inductive argument, it holds that
\[
\mathbb{H}(\mathcal{P}_{i}^{h})+\Gamma^{\star,h}=\mathbb{H}(\omega_{1})\ge\cdots\ge\mathbb{H}(\omega_{N})\ge\mathbb{H}(\mathcal{P}_{j}^{h})+\Gamma_{j}^{h}.
\]
The rest of the proof is exactly same as the proof of Lemma \ref{lem:4.1-1}.
\end{proof}
\begin{lem}
\label{lem:5.2-2}It holds that $\Gamma_{i}^{h}=\Gamma^{\star,h}$
if and only if there exists $j\ne i$ with $\mathfrak{R}^{\star,h}(\mathcal{P}_{i}^{h},\,\mathcal{P}_{j}^{h})>0$.
\end{lem}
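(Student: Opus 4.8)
The plan is to run the proof of Lemma~\ref{lem:4.1-2} at level $h$, the only new feature being that $\{\mathfrak{X}^{h}(t)\}_{t\ge0}$ may now travel through the shallow cycles collected in $\mathscr{P}^{\sharp,h}$, which are transient for it. The ``if'' direction is immediate from the definitions: if $\mathfrak{R}^{\star,h}(\mathcal{P}_{i}^{h},\,\mathcal{P}_{j}^{h})>0$ for some $j\ne i$, then \eqref{eq:R-Cstar def} forces $\mathfrak{R}^{h}(\mathcal{P}_{i}^{h},\,\eta)>0$ for some $\eta$, and since $\mathcal{F}(\mathcal{V}_{i}^{h})=\mathcal{P}_{i}^{h}$ by Lemma~\ref{lem:Vih cycle} the only clause of \eqref{eq:RC def} producing a positive outgoing rate from $\mathcal{P}_{i}^{h}$ is the third, which needs $\Gamma^{\mathcal{V}_{i}^{h}}=\Gamma_{i}^{h}\le\Gamma^{\star,h}$; together with the trivial bound $\Gamma_{i}^{h}\ge\Gamma^{\star,h}$ from \eqref{eq:Gamma-starh def} this yields $\Gamma_{i}^{h}=\Gamma^{\star,h}$.

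For the ``only if'' direction I would fix $\mathcal{P}_{i}^{h}$ with $\Gamma_{i}^{h}=\Gamma^{\star,h}$, write $\mathbb{H}_{i}^{\star,h}:=\mathbb{H}(\mathcal{P}_{i}^{h})+\Gamma^{\star,h}$ (so $\mathbb{H}\equiv\mathbb{H}_{i}^{\star,h}$ on $\partial^{\star}\mathcal{V}_{i}^{h}$ by Lemma~\ref{lem:Vih cycle}), and imitate Lemma~\ref{lem:4.1-2}. Set
\[
\mathcal{A}_{i}:=\{\eta\in\Omega\setminus\mathcal{V}_{i}^{h}:\,\text{there is a downhill path from}\ \partial^{\star}\mathcal{V}_{i}^{h}\ \text{to}\ \eta\},
\]
so that $\mathbb{H}\le\mathbb{H}_{i}^{\star,h}$ on $\mathcal{A}_{i}$. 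If $\mathbb{H}$ were constant on $\mathcal{A}_{i}$ then $\mathcal{V}_{i}^{h}\cup\mathcal{A}_{i}$ would be a connected set on which $\Phi(\mathcal{P}_{i}^{h},\,\cdot)-\mathbb{H}(\mathcal{P}_{i}^{h})\le\Gamma^{\star,h}$ and which is disjoint from $\breve{\mathcal{P}}_{i}^{h}$, contradicting \eqref{eq:Gammaih def}; hence $\mathbb{H}(\mathcal{F}(\mathcal{A}_{i}))<\mathbb{H}_{i}^{\star,h}$, and (exactly as in Lemma~\ref{lem:4.1-2}) every connected component $\mathcal{P}$ of $\mathcal{F}(\mathcal{A}_{i})$ is a stable plateau, disjoint from $\mathcal{V}_{i}^{h}$ and contained in a maximal element of $\mathscr{P}^{1}$.

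It then remains to route such a component $\mathcal{P}$ into some $\mathcal{V}_{j}^{h}$ with $j\ne i$; this is where the shallow cycles enter and is the place I expect real work. By Lemma~\ref{lem:gen ind hyp}-(3) (and the inclusions of cycles established along the proofs of Lemmas~\ref{lem:Gamma inc} and~\ref{lem:Vih cycle}), $\mathcal{P}\subseteq\mathcal{C}$ for some $\mathcal{C}\in\mathscr{C}^{h}$; since a downhill path cannot exit a cycle, the downhill path from $\partial^{\star}\mathcal{V}_{i}^{h}$ enters $\mathcal{C}$ and stays there, so $\mathcal{P}\subseteq\mathcal{F}(\mathcal{C})$ and the path lies in $\Delta^{h}$ until it enters $\mathcal{C}$. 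If $\mathcal{C}=\mathcal{V}_{j}^{h}$, then $j\ne i$ (as $\mathcal{P}\cap\mathcal{V}_{i}^{h}=\emptyset$ and the $\mathcal{V}$'s are disjoint), and prefixing the jump $\mathcal{P}_{i}^{h}\to\partial^{\star}\mathcal{V}_{i}^{h}$ and contracting the tail inside $\mathcal{V}_{j}^{h}$ to the jump into $\mathcal{F}(\mathcal{V}_{j}^{h})=\mathcal{P}_{j}^{h}$ produces a chain of positive $\mathfrak{R}^{h}$-transitions from $\mathcal{P}_{i}^{h}$ to $\mathcal{P}_{j}^{h}$ that, apart from its endpoints, lies in $\Delta^{h}$; hence ${\bf P}_{\eta}^{h}[\mathcal{T}_{\mathcal{P}_{j}^{h}}=\mathcal{T}_{\mathscr{P}^{\star,h}}]>0$ for the first $\Delta^{h}$-vertex $\eta$ of this chain, and $\mathfrak{R}^{\star,h}(\mathcal{P}_{i}^{h},\,\mathcal{P}_{j}^{h})>0$ by \eqref{eq:R-Cstar def}. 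If instead $\mathcal{C}\in\mathscr{C}^{\sharp,h}$, then by Lemma~\ref{lem:gen ind hyp}-(4) it equals some $\mathcal{V}_{j'}^{h'}$ with $\mathcal{P}_{j'}^{h'}\in\mathscr{P}_{{\rm tr}}^{\star,h'}$ and $h'<h$, so $\mathcal{P}\subseteq\mathcal{P}_{j'}^{h'}$; because the downhill path reached $\mathcal{P}$ only after crossing $\partial\mathcal{V}_{j'}^{h'}$, the saddle $\mathbb{H}(\mathcal{P}_{j'}^{h'})+\Gamma^{\star,h'}$ lies $\le\mathbb{H}_{i}^{\star,h}$, and Theorem~\ref{thm:P-star class ind hyp}-(2) supplies $\mathcal{P}_{j''}^{h'}$ with $\mathbb{H}(\mathcal{P}_{j''}^{h'})<\mathbb{H}(\mathcal{P}_{j'}^{h'})$ reachable over that very saddle (here $\Gamma^{\star,h'}<\Gamma^{\star,h}$ by Lemmas~\ref{lem:gen ind hyp}-(6) and~\ref{lem:Gamma inc}); descending from $\mathcal{P}_{j''}^{h'}$ and iterating strictly lowers $\mathbb{H}$, so the procedure terminates at a component inside some $\mathcal{V}_{j}^{h}$, $j\ne i$, and we conclude as in the previous case.

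The hard part is the bookkeeping in the shallow-cycle case: one must check that the composite path never exceeds $\mathbb{H}_{i}^{\star,h}$ while descending through the $\mathscr{P}^{\sharp,h}$-cycles, that each corresponding move of $\{\mathfrak{X}^{h}(t)\}_{t\ge0}$ has positive rate (this is where the second and third clauses of \eqref{eq:RC def} at the $\mathscr{P}^{\sharp,h}$-vertices are used), and that the trace chain does not visit some other element of $\mathscr{P}^{\star,h}$ before reaching $\mathcal{P}_{j}^{h}$; all of these are controlled by Lemma~\ref{lem:gen ind hyp}-(2)--(6), Lemma~\ref{lem:Gamma inc} and Lemma~\ref{lem:Gamma-star inc}, in the same spirit as the argument preceding Lemma~\ref{lem:Vih cycle}.
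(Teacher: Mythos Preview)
Your proposal is correct and follows essentially the same route as the paper's proof: define $\mathcal{A}_i$ via downhill paths from $\partial^{\star}\mathcal{V}_i^h$, locate a stable plateau $\mathcal{P}$ in its bottom, place $\mathcal{P}$ inside a cycle from the hierarchy, and then either land directly in some $\mathcal{V}_j^h$ or iterate through transient shallow cycles via Theorem~\ref{thm:P-star class ind hyp}-(2) until reaching one. The only cosmetic difference is that the paper places $\mathcal{P}$ in some $\mathcal{C}\in\mathscr{C}^{h-1}$ (where Lemma~\ref{lem:gen ind hyp}-(3) is already available) and splits according to $\mathscr{C}_{{\rm rec}}^{\star,h-1}$ versus $\mathscr{C}_{{\rm tr}}^{\star,h-1}\cup\mathscr{C}^{\sharp,h-1}$, whereas you upgrade to $\mathscr{C}^h$ and split $\mathscr{C}^{\star,h}$ versus $\mathscr{C}^{\sharp,h}$; the two are equivalent once you invoke the inclusions from Lemmas~\ref{lem:Gamma inc} and~\ref{lem:Ch prop}, as you do.

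One small slip: the assertion ``$\mathcal{P}\subseteq\mathcal{F}(\mathcal{C})$'' (and hence ``$\mathcal{P}\subseteq\mathcal{P}_{j'}^{h'}$'') need not hold---a downhill path entering a cycle can terminate at a stable plateau strictly above the bottom. This does not damage your argument, because what you actually use is that the $\mathfrak{X}^h$-chain, upon the original path entering $\mathcal{C}$, jumps to the contracted vertex $\mathcal{F}(\mathcal{C})$ regardless of where the configuration-level path lands; the subsequent escape via Theorem~\ref{thm:P-star class ind hyp}-(2) is then launched from $\mathcal{P}_{j'}^{h'}=\mathcal{F}(\mathcal{C})$, not from $\mathcal{P}$. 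Your final paragraph correctly flags the remaining bookkeeping, and the paper is equally terse there.
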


\begin{proof}
We only deal with the only if part. Fix $\mathcal{P}_{i}^{h}\in\mathscr{P}^{\star,h}$
with $\Gamma_{i}^{h}=\Gamma^{\star,h}$. We abbreviate $\mathbb{H}_{i}^{\star,h}:=\mathbb{H}(\mathcal{P}_{i}^{h})+\Gamma^{\star,h}$
and define $\mathcal{A}_{i}$ as the collection of configurations
in $\Omega\setminus\mathcal{V}_{i}^{h}$ reachable from $\partial^{\star}\mathcal{V}_{i}^{h}$
by a downhill path. Then, as in the proof of Lemma \ref{lem:4.1-2},
$\mathbb{H}(\mathcal{F}(\mathcal{A}_{i}))<\mathbb{H}_{i}^{\star,h}$
and there exists $\mathcal{P}\in\mathscr{P}^{1}$ such that $\mathcal{P}\subseteq\mathcal{F}(\mathcal{A}_{i})$.
According to Lemma \ref{lem:gen ind hyp}-(3), $\mathcal{P}\subseteq\mathcal{C}$
for some $\mathcal{C}\in\mathscr{C}^{h-1}$. If $\mathcal{C}\in\mathscr{C}_{i'}^{\star,h-1}$
(cf. \eqref{eq:Ch-1 Ph-1 ind hyp}) for $i'\ne i$, then we have a
path $\mathcal{P}_{i}^{h},\,\omega_{1},\,\dots,\,\omega_{N-1},\,\mathcal{P}_{i'}^{h}$
along which $\mathfrak{R}^{h}(\cdot,\,\cdot)$ is positive (where
$\omega_{1},\,\dots,\,\omega_{N-1}\in\Delta^{h}$), thus $\mathfrak{R}^{\star,h}(\mathcal{P}_{i}^{h},\,\mathcal{P}_{i'}^{h})>0$.
If $\mathcal{C}\in\mathscr{C}_{{\rm tr}}^{\star,h-1}\cup\mathscr{C}^{\sharp,h-1}$
such that $\mathcal{C}\in\mathscr{C}_{{\rm tr}}^{\star,h'}$ for some
$h'\in[1,\,h-1]$ (cf. Lemma \ref{lem:gen ind hyp}-(4)), say $\mathcal{C}=\mathcal{V}_{j}^{h'}$,
then by Theorem \ref{thm:P-star class ind hyp}-(2), we can find another
$\mathcal{P}'\in\mathscr{P}^{\star,h'}$ reachable from $\mathcal{P}_{j}^{h'}$
via $\{\mathfrak{X}^{\star,h'}(t)\}_{t\ge0}$ such that 
\begin{equation}
\mathbb{H}(\mathcal{P}')<\mathbb{H}(\mathcal{P}_{j}^{h'}).\label{eq:5.2-2 pf}
\end{equation}
If $\mathcal{P}'$ belongs to a recurrent collection $\mathscr{P}_{{\rm rec}}^{\star,h-1}$,
i.e. if $\mathcal{P}'\subseteq\mathcal{P}_{i'}^{h}$ for some $i'\ne i$,
then we are done. If not, we may iterate this procedure to end up
in the recurrent collection $\mathscr{P}_{{\rm rec}}^{\star,h-1}$
in finite steps due to the strict inequality in \eqref{eq:5.2-2 pf},
thus we conclude the proof.
\end{proof}
For each $i\in[1,\,\nu_{h-1}]$ with $\Gamma_{i}^{h}=\Gamma^{\star,h}$,
define
\begin{equation}
\mathscr{Z}_{i}^{h}:=\{\mathcal{P}_{j}^{h}\in\mathscr{P}^{\star,h}\setminus\{\mathcal{P}_{i}^{h}\}:\,\mathcal{P}_{j}^{h}\quad\text{is reachable from}\quad\mathcal{P}_{i}^{h}\quad\text{via}\quad\{\mathfrak{X}^{\star,h}(t)\}_{t\ge0}\},\label{eq:Zih def}
\end{equation}
which is nonempty by Lemma \ref{lem:5.2-2}. Moreover, $\mathbb{H}(\mathcal{P}_{j}^{h})\le\mathbb{H}(\mathcal{P}_{i}^{h})$
for all $\mathcal{P}_{j}^{h}\in\mathscr{Z}_{i}^{h}$ by Lemma \ref{lem:5.2-1}
and
\begin{equation}
\Phi(\mathcal{P}_{i}^{h},\,\mathcal{P}_{j}^{h})-\mathbb{H}(\mathcal{P}_{i}^{h})=\Gamma^{\star,h}\quad\text{for each}\quad\mathcal{P}_{j}^{h}\in\mathscr{Z}_{i}^{h}.\label{eq:Zih prop}
\end{equation}

\begin{lem}
\label{lem:5.2-3}Suppose that $\Gamma_{i}^{h}=\Gamma^{\star,h}$
and $\mathbb{H}(\mathcal{P}_{j}^{h})<\mathbb{H}(\mathcal{P}_{i}^{h})$
for some $\mathcal{P}_{j}^{h}\in\mathscr{Z}_{i}^{h}$. Then, $\mathcal{P}_{i}^{h}\in\mathscr{P}_{{\rm tr}}^{\star,h}$.
\end{lem}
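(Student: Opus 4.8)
The plan is to mimic the two-line proof of Lemma \ref{lem:4.1-3} from the initial step, replacing Lemma \ref{lem:4.1-1} by its inductive counterpart Lemma \ref{lem:5.2-1}. First, by the very definition of $\mathscr{Z}_i^h$ in \eqref{eq:Zih def}, the set $\mathcal{P}_j^h$ is reachable from $\mathcal{P}_i^h$ along a finite chain of positive $\mathfrak{R}^{\star,h}(\cdot,\,\cdot)$-transitions, so one direction of communication between $\mathcal{P}_i^h$ and $\mathcal{P}_j^h$ is granted for free.

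Next I would invoke Lemma \ref{lem:5.2-1}, which asserts that whenever $\mathfrak{R}^{\star,h}(\mathcal{P}_a^h,\,\mathcal{P}_b^h)>0$ one has $\mathbb{H}(\mathcal{P}_a^h)\ge\mathbb{H}(\mathcal{P}_b^h)$. Concatenating this monotonicity along any path of positive rates emanating from $\mathcal{P}_j^h$ (a one-step induction on the length of the connecting chain), every element of $\mathscr{P}^{\star,h}$ reachable from $\mathcal{P}_j^h$ has Hamiltonian value at most $\mathbb{H}(\mathcal{P}_j^h)$, which is strictly below $\mathbb{H}(\mathcal{P}_i^h)$ by the hypothesis of the lemma. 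In particular $\mathcal{P}_i^h$ is \emph{not} reachable from $\mathcal{P}_j^h$ via $\{\mathfrak{X}^{\star,h}(t)\}_{t\ge0}$.

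Finally I would conclude with the standard recurrent/transient dichotomy for the finite-state chain $\{\mathfrak{X}^{\star,h}(t)\}_{t\ge0}$: since $\mathcal{P}_i^h$ communicates \emph{to} $\mathcal{P}_j^h$ but not \emph{from} it, the state $\mathcal{P}_i^h$ cannot lie in any closed irreducible component, hence $\mathcal{P}_i^h\in\mathscr{P}_{\rm tr}^{\star,h}$ in the decomposition \eqref{eq:P-starh dec}. There is essentially no obstacle here; the only point deserving a line of care is the passage from ``$\mathfrak{R}^{\star,h}$-monotonicity across a single edge'' to ``monotonicity along reachability'', which the above one-step induction handles, and which could otherwise be quietly borrowed from the corresponding argument in Section \ref{sec4.1}.
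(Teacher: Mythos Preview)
Your proposal is correct and follows essentially the same approach as the paper's proof, which invokes Lemma~\ref{lem:5.2-1} to conclude that $\mathcal{P}_i^h$ is not reachable from $\mathcal{P}_j^h$ and then uses the definition \eqref{eq:Zih def} to deduce transience. Your explicit remark about chaining the one-step monotonicity of Lemma~\ref{lem:5.2-1} along a reachability path is the only elaboration beyond the paper's two-line version, and it is harmless.
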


\begin{proof}
By Lemma \ref{lem:5.2-1}, $\mathcal{P}_{i}^{h}$ is not reachable
from $\mathcal{P}_{j}^{h}$ via $\{\mathfrak{X}^{\star,h}(t)\}_{t\ge0}$.
Thus by \eqref{eq:Zih def}, $\mathcal{P}_{i}^{h}\in\mathscr{P}_{{\rm tr}}^{\star,h}$.
\end{proof}
\begin{lem}
\label{lem:5.2-4}Suppose that $\Gamma_{i}^{h}=\Gamma^{\star,h}$
and $\mathbb{H}(\mathcal{P}_{j}^{h})=\mathbb{H}(\mathcal{P}_{i}^{h})$
for all $\mathcal{P}_{j}^{h}\in\mathscr{Z}_{i}^{h}$. Then, $\mathcal{P}_{i}^{h}\in\mathscr{P}_{m}^{\star,h}$
for some $m\in[1,\,\nu_{h-1}]$ with $|\mathscr{P}_{m}^{\star,h}|\ge2$.
In this case, for each $\mathcal{P}_{j'}^{h}\in\mathscr{P}^{\star,h}$
with $\Phi(\mathcal{P}_{i}^{h},\,\mathcal{P}_{j'}^{h})-\mathbb{H}(\mathcal{P}_{i}^{h})=\Gamma^{\star,h}$,
it holds that $\mathcal{P}_{j'}^{h}\in\mathscr{Z}_{i}^{h}$.
\end{lem}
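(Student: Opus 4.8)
I would follow the proof of Lemma~\ref{lem:4.1-4} almost verbatim, using Lemmas~\ref{lem:5.2-1}, \ref{lem:5.2-2} and \ref{lem:5.2-3} in place of Lemmas~\ref{lem:4.1-1}, \ref{lem:4.1-2} and \ref{lem:4.1-3}, and inserting wherever the level-$1$ argument reads ``$\in\Delta^{1}$'' the extra bookkeeping needed when the path visits a shallow cycle of $\mathscr{C}^{\sharp,h}$. For the first assertion: any $\mathcal{P}_{j}^{h}\in\mathscr{Z}_{i}^{h}$ is joined to $\mathcal{P}_{i}^{h}$ by a chain of positive $\mathfrak{R}^{\star,h}$-jumps, along which $\mathbb{H}$ is non-increasing by Lemma~\ref{lem:5.2-1}; since its two endpoints carry the common value $\mathbb{H}(\mathcal{P}_{i}^{h})$ (the latter by the hypothesis), $\mathbb{H}$ is in fact constant along the chain, so by the second part of Lemma~\ref{lem:5.2-1} each step is reversible. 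Hence $\{\mathcal{P}_{i}^{h}\}\cup\mathscr{Z}_{i}^{h}$ is a closed irreducible class of $\{\mathfrak{X}^{\star,h}(t)\}_{t\ge0}$ --- closedness being immediate since $\mathscr{Z}_{i}^{h}$ collects \emph{all} plateaux reachable from $\mathcal{P}_{i}^{h}$ --- so it coincides with one of the components $\mathscr{P}_{m}^{\star,h}$, and $\mathscr{Z}_{i}^{h}\ne\emptyset$ forces $|\mathscr{P}_{m}^{\star,h}|\ge2$.

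For the second assertion, fix $\mathcal{P}_{j'}^{h}$ with $\Phi(\mathcal{P}_{i}^{h},\,\mathcal{P}_{j'}^{h})-\mathbb{H}(\mathcal{P}_{i}^{h})=\Gamma^{\star,h}$, abbreviate $\mathbb{H}_{i}^{\star,h}:=\mathbb{H}(\mathcal{P}_{i}^{h})+\Gamma^{\star,h}$, and pick a path in $\overline{\Omega}$ from $\mathcal{P}_{i}^{h}$ to $\mathcal{P}_{j'}^{h}$ of height $\mathbb{H}_{i}^{\star,h}$. Writing $\mathcal{V}_{i_{0}}^{h}=\mathcal{V}_{i}^{h},\,\mathcal{V}_{i_{1}}^{h},\,\dots,\,\mathcal{V}_{i_{N}}^{h}$ for the cycles visited successively by this path (with $\mathcal{P}_{i_{N}}^{h}=\mathcal{P}_{j'}^{h}$), I would prove by induction on $n$ that $\mathcal{P}_{i_{n}}^{h}$ is reachable from $\mathcal{P}_{i}^{h}$ via $\{\mathfrak{X}^{\star,h}(t)\}_{t\ge0}$ (the base case $n=0$ being trivial); taking $n=N$ then gives $\mathcal{P}_{j'}^{h}\in\mathscr{Z}_{i}^{h}$. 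In the inductive step, the inductive hypothesis and the hypothesis of the lemma force $\mathbb{H}(\mathcal{P}_{i_{n}}^{h})=\mathbb{H}(\mathcal{P}_{i}^{h})$, so (using $\Gamma_{i_{n}}^{h}\ge\Gamma^{\star,h}$, the height bound, and $\mathcal{F}(\mathcal{V}_{i_{n}}^{h})=\mathcal{P}_{i_{n}}^{h}$) one gets $\Gamma_{i_{n}}^{h}=\Gamma^{\star,h}$ and the path exits $\mathcal{V}_{i_{n}}^{h}$ through $\partial^{\star}\mathcal{V}_{i_{n}}^{h}$ at energy exactly $\mathbb{H}_{i}^{\star,h}$. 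Suppose the path segment joining $\mathcal{V}_{i_{n}}^{h}$ to $\mathcal{V}_{i_{n+1}}^{h}$ ever went strictly below $\mathbb{H}_{i}^{\star,h}$; since this segment lies in $\Delta^{h}$ together with possibly some cycles of $\mathscr{C}^{\sharp,h}$, this means it either descends inside $\Delta^{h}$ or enters a cycle $\mathcal{C}\in\mathscr{C}^{\sharp,h}$. Following a downhill continuation and, in the latter case, descending through the earlier levels exactly as in the proof of Lemma~\ref{lem:5.2-2} (via Lemma~\ref{lem:gen ind hyp}-(4) and Theorem~\ref{thm:P-star class ind hyp}-(2), iterating until a recurrent collection is reached), one arrives at a recurrent plateau contained in some $\mathcal{P}_{i'}^{h}$ with $\mathbb{H}(\mathcal{P}_{i'}^{h})<\mathbb{H}(\mathcal{P}_{i}^{h})$ and reachable from $\mathcal{P}_{i_{n}}^{h}$, hence from $\mathcal{P}_{i}^{h}$; thus $\mathcal{P}_{i'}^{h}\in\mathscr{Z}_{i}^{h}$ with strictly smaller energy, contradicting the hypothesis. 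So the segment stays at energy $\mathbb{H}_{i}^{\star,h}$ throughout $\Delta^{h}$; concatenating it with $\mathcal{P}_{i_{n}}^{h}\to\partial^{\star}\mathcal{V}_{i_{n}}^{h}$ and a downhill path inside $\mathcal{V}_{i_{n+1}}^{h}$ produces a path in the contracted graph $\Omega^{h}$ along which $\mathfrak{R}^{h}(\cdot,\,\cdot)$ is positive (cf.~\eqref{eq:RC def}), whence $\mathfrak{R}^{\star,h}(\mathcal{P}_{i_{n}}^{h},\,\mathcal{P}_{i_{n+1}}^{h})>0$ by \eqref{eq:R-Cstar def} and $\mathcal{P}_{i_{n+1}}^{h}$ is reachable from $\mathcal{P}_{i}^{h}$, completing the induction.

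The hard part is the shallow-cycle bookkeeping in the inductive step: an excursion of the path into a cycle $\mathcal{C}\in\mathscr{C}^{\sharp,h}$ lands at its bottom $\mathcal{F}(\mathcal{C})\in\mathscr{P}^{\sharp,h}$, which is not itself a candidate element of $\mathscr{Z}_{i}^{h}$, so one must descend through the hierarchy of earlier levels down to a recurrent plateau and then verify that the resulting plateau is genuinely reachable from $\mathcal{P}_{i}^{h}$ in the contracted dynamics $\{\mathfrak{X}^{h}(t)\}_{t\ge0}$ without prematurely hitting $\mathscr{P}^{\star,h}$. Since this is precisely the mechanism already carried out in the proof of Lemma~\ref{lem:5.2-2}, the cleanest route is to quote that argument rather than repeat it.
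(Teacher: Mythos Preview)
Your overall structure matches the paper's (which itself only sketches the argument by reference to Lemma~\ref{lem:4.1-4}), and the first assertion is handled correctly. However, there is a genuine gap in your treatment of shallow-cycle excursions in the inductive step. You argue that if the path segment enters some $\mathcal{C}\in\mathscr{C}^{\sharp,h}$ then, by descending through earlier levels as in Lemma~\ref{lem:5.2-2}, one reaches a plateau $\mathcal{P}_{i'}^{h}$ with $\mathbb{H}(\mathcal{P}_{i'}^{h})<\mathbb{H}(\mathcal{P}_{i}^{h})$, contradicting the hypothesis. But that strict inequality need not hold: the bottom $\mathcal{F}(\mathcal{C})$ has energy $\mathbb{H}(\partial^{\star}\mathcal{C})-\Gamma^{\mathcal{C}}>\mathbb{H}_{i}^{\star,h}-\Gamma^{\star,h}=\mathbb{H}(\mathcal{P}_{i}^{h})$, and the hierarchical descent only guarantees reaching a star plateau with energy strictly below $\mathbb{H}(\mathcal{F}(\mathcal{C}))$, not below $\mathbb{H}(\mathcal{P}_{i}^{h})$. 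So an excursion into a shallow cycle entered at height $\mathbb{H}_{i}^{\star,h}$ is not a contradiction at all---and trying to force one leaves the argument stuck.

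The paper's resolution is not to rule out such excursions but to absorb them into the contracted path. An excursion through $\mathcal{C}\in\mathscr{C}^{\sharp,h}$ corresponds in $\Omega^{h}$ to the two-step move $\eta\to\mathcal{F}(\mathcal{C})\to\eta'$, and by \eqref{eq:RC def} this has $\mathfrak{R}^{h}(\cdot,\cdot)>0$ precisely because $\Gamma^{\mathcal{C}}<\Gamma^{\star,h}$ and $\eta'\in\partial^{\star}\mathcal{C}$. Hence the inductive claim splits into: (i) every configuration of the segment lying in $\Delta^{h}$ has energy exactly $\mathbb{H}_{i}^{\star,h}$ (your case~(a), which you handle correctly by downhill continuation to a star cycle of strictly smaller bottom energy), and (ii) every entry or exit of a cycle in $\mathscr{C}^{h}$, star or shallow, occurs through $\partial^{\star}$. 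With (i) and (ii) in hand, the contracted path from $\mathcal{P}_{i_{n}}^{h}$ to $\mathcal{P}_{i_{n+1}}^{h}$ visits only $\Delta^{h}\cup\mathscr{P}^{\sharp,h}$ between its endpoints and has $\mathfrak{R}^{h}>0$ at each step, so \eqref{eq:R-Cstar def} yields $\mathfrak{R}^{\star,h}(\mathcal{P}_{i_{n}}^{h},\mathcal{P}_{i_{n+1}}^{h})>0$ as required.
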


\begin{proof}
The first statement follows in the same way as in the proof of Lemma
\ref{lem:4.1-4}. Moreover, the second statement can also be proved
in the same way by demonstrating inductively that in the path from
$\mathcal{V}_{i}^{h}$ to $\mathcal{V}_{j'}^{h}$ as written in \eqref{eq:4.1-4 pf},
each jump inside $\Delta^{h}$ preserves the energy and each jump
from $\eta\in\Delta^{h}$ to $\mathcal{C}\in\mathscr{C}^{h}$ or vice
versa happens only when $\eta\in\partial^{\star}\mathcal{C}$. We
leave the tedious details to the readers.
\end{proof}
Now, we present a proof of Theorem \ref{thm:P-starh class}.
\begin{proof}[Proof of Theorem \ref{thm:P-starh class}]
 By Lemma \ref{lem:5.2-2}, $\Gamma_{i}^{h}>\Gamma^{\star,h}$ if
and only if $\mathcal{P}_{i}^{h}\in\mathscr{P}_{m}^{\star,h}$ with
$|\mathscr{P}_{m}^{\star,h}|=1$, thus Theorem \ref{thm:P-starh class}-(1)
follows.

For $\mathcal{P}_{i}^{h}\in\mathscr{P}_{{\rm tr}}^{\star,h}$, Lemmas
\ref{lem:5.2-2}, \ref{lem:5.2-3} and \ref{lem:5.2-4} imply that
$\mathbb{H}(\mathcal{P}_{j}^{h})<\mathbb{H}(\mathcal{P}_{i}^{h})$
for some $\mathcal{P}_{j}^{h}\in\mathscr{Z}_{i}^{h}$ where $\Phi(\mathcal{P}_{i}^{h},\,\mathcal{P}_{j}^{h})-\mathbb{H}(\mathcal{P}_{i}^{h})=\Gamma^{\star,h}$
(cf. \eqref{eq:Zih prop}), thus Theorem \ref{thm:P-starh class}-(2)
is verified.

Finally, suppose that $\mathcal{P}_{i}^{h}\in\mathscr{P}_{m}^{\star,h}$
with $|\mathscr{P}_{m}^{\star,h}|\ge2$. Then for each $\mathcal{P}_{j}^{h}\in\mathscr{P}^{\star,h}\setminus\{\mathcal{P}_{i}^{h}\}$
with $\Phi(\mathcal{P}_{i}^{h},\,\mathcal{P}_{j}^{h})-\mathbb{H}(\mathcal{P}_{i}^{h})=\Gamma^{\star,h}$,
the second statement of Lemma \ref{lem:5.2-4} tells us that $\mathcal{P}_{j}^{h}\in\mathscr{Z}_{i}^{h}$
and $\mathbb{H}(\mathcal{P}_{j}^{h})=\mathbb{H}(\mathcal{P}_{i}^{h})$.
Moreover, Lemma \ref{lem:5.2-4} implies again that each $\mathcal{P}_{j'}^{h}\in\mathscr{P}_{m}^{\star,h}\setminus\{\mathcal{P}_{i}^{h}\}$
satisfies $\Phi(\mathcal{P}_{i}^{h},\,\mathcal{P}_{j'}^{h})-\mathbb{H}(\mathcal{P}_{i}^{h})=\Gamma^{\star,h}$
and $\mathbb{H}(\mathcal{P}_{j'}^{h})=\mathbb{H}(\mathcal{P}_{i}^{h})$.
This concludes the proof of Theorem \ref{thm:P-starh class}-(3).
\end{proof}

\subsection{\label{sec5.3}Proof of Theorems \ref{thm:nu dec} and \ref{thm:ground states rec}}

In this subsection, we prove Theorems \ref{thm:nu dec} and \ref{thm:ground states rec}.
To prove Theorem \ref{thm:nu dec}, note from \eqref{eq:P-starh def}
that $\nu_{h-1}=|\mathscr{P}^{\star,h}|$. Lemma \ref{lem:5.2-2}
implies that $\{\mathfrak{X}^{\star,h}(t)\}_{\ge0}$ is a nonzero
Markov chain in $\mathscr{P}^{\star,h}$. This readily proves $\nu_{h}<\nu_{h-1}$.

To prove Theorem \ref{thm:ground states rec}, suppose the contrary.
Then, there exist $\bm{s}\in\mathcal{S}$ and $\mathcal{P}\in\mathscr{P}^{1}$
such that $\bm{s}\in\mathcal{P}$ and $\mathcal{P}\nsubseteq\mathcal{P}_{i}^{h}$
for any $\mathcal{P}_{i}^{h}\in\mathscr{P}_{{\rm rec}}^{\star,h}$.
By Lemma \ref{lem:gen ind hyp}-(3)(4) and the facts that $\mathcal{P}\nsubseteq\mathcal{P}_{i}^{h}$
and $\mathcal{S}=\mathcal{F}(\Omega)$, it must hold that $\mathcal{P}\subseteq\mathcal{C}$
for some $\mathcal{C}\in\mathscr{C}_{{\rm tr}}^{\star,h'}$ where
$h'\in[1,\,h]$. Then, by Theorems \ref{thm:P-star class ind hyp}-(2)
and \ref{thm:P-starh class}-(2), there exists $\mathcal{P}_{j}^{h'}\in\mathscr{P}^{\star,h'}$
such that $\mathbb{H}(\mathcal{P}_{j}^{h'})<\mathbb{H}(\mathcal{P})$,
which contradicts $\mathcal{S}=\mathcal{F}(\Omega)$.

\subsection{\label{sec5.4}Proof of Theorem \ref{thm:main}}

Denote by $\mathfrak{L}^{h}$ the generator of $\{\mathfrak{X}^{\star,h}(t)\}_{t\ge0}$
in $\mathscr{P}^{\star,h}$. For given $\lambda>0$ and $g:\,\mathscr{P}^{\star,h}\to\mathbb{R}$,
denote by $f:\,\mathscr{P}^{\star,h}\to\mathbb{R}$ the unique solution
to the $h$-th macroscopic resolvent equation $(\lambda-\mathfrak{L}^{h})f=g$
and denote by $G:\,\Omega\to\mathbb{R}$ the lift of $g$ given as
$G(\eta):={\bf 1}\{\eta\in\mathcal{V}^{\star,h}\}g(\Psi^{h}(\eta))$.
Then, abbreviating $F_{\beta}^{\star,h}:=F_{\beta}^{\Gamma^{\star,h}}$,
again by \cite[Theorem 2.3]{LMS resolvent}, we are left to prove
that
\begin{equation}
\lim_{\beta\to\infty}\sup_{\eta\in\mathcal{V}_{i}^{h}}|F_{\beta}^{\star,h}(\eta)-f(\mathcal{P}_{i}^{h})|=0\quad\text{for each}\quad i\in[1,\,\nu_{h-1}].\label{eq:res cond-h}
\end{equation}
First, we prove that for $\Gamma>\Gamma^{\star,h-1}$, $F_{\beta}^{\Gamma}$
is flat in each $\mathcal{V}_{i}^{h}$ for $i\in[1,\,\nu_{h-1}]$.
\begin{lem}
\label{lem:Fbeta flat valley-h}For all $\Gamma>\Gamma^{\star,h-1}$
and $i\in[1,\,\nu_{h-1}]$, it holds that
\[
\lim_{\beta\to\infty}\sup_{\eta\in\mathcal{V}_{i}^{h}}|F_{\beta}^{\Gamma}(\eta)-\overline{F}_{\beta}^{\Gamma}(\mathcal{P}_{i}^{h})|=0.
\]
\end{lem}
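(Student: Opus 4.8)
The plan is to repeat the argument of Lemma \ref{lem:Fbeta flat valley-1} with $\mathcal{V}_i$ replaced by $\mathcal{V}_i^h$. As there, it suffices to show $\lim_{\beta\to\infty}\sup_{\eta,\,\xi\in\mathcal{V}_i^h}|F_\beta^\Gamma(\eta)-F_\beta^\Gamma(\xi)|=0$, and by \cite[Proposition 6.7]{LMS resolvent} this follows once we verify the mixing condition (Condition $\mathfrak{M}$ therein) for the cycle $\mathcal{V}_i^h$. The only change from the initial step is that $\mathcal{V}_i^h$ now relaxes internally on the scale $e^{\Gamma^{\star,h-1}\beta}$ rather than $O(1)$, so I would take $\theta_\beta:=e^{\Gamma'\beta}$ with $\Gamma'\in(\Gamma^{\star,h-1},\,\min\{\Gamma,\,\Gamma_i^h\})$; this interval is nonempty since $\Gamma>\Gamma^{\star,h-1}$ by hypothesis and $\Gamma_i^h\ge\Gamma^{\star,h}>\Gamma^{\star,h-1}$ by \eqref{eq:Gamma-starh def} and Lemma \ref{lem:Gamma-star inc}, and clearly $\theta_\beta\ll e^{\Gamma\beta}$.

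For the first part of Condition $\mathfrak{M}$, Lemma \ref{lem:Vih cycle} tells us that $\mathcal{V}_i^h$ is a cycle of depth $\Gamma_i^h>\Gamma'$, so \cite[Theorem 6.23-(i)]{OV} gives $\lim_{\beta\to\infty}\sup_{\eta\in\mathcal{V}_i^h}\mathbb{P}_\eta[\mathcal{T}_{(\mathcal{V}_i^h)^c}\le\theta_\beta]=0$. For the second part, $\mathcal{V}_i^h$ is connected (again Lemma \ref{lem:Vih cycle}), hence the reflected process there is ergodic, and by \cite[Proposition 3.24 and Lemma 3.6]{NZB} one has $\lim_{\beta\to\infty}\beta^{-1}\log t_{{\rm mix}}(\epsilon)=\widetilde{\Gamma}(\mathcal{V}_i^h)$ (cf. \eqref{eq:Gamma-tilde def}); thus it remains to prove $\widetilde{\Gamma}(\mathcal{V}_i^h)\le\Gamma^{\star,h-1}$, which yields $t_{{\rm mix}}(\epsilon)\le\theta_\beta$ for all large $\beta$ and completes the verification. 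I expect this bound on $\widetilde{\Gamma}(\mathcal{V}_i^h)$ to be the only substantial point; note that it simultaneously establishes Lemma \ref{lem:gen ind hyp}-(7) at level $h$, which is needed in Section \ref{sec5.5}.

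To prove $\widetilde{\Gamma}(\mathcal{V}_i^h)\le\Gamma^{\star,h-1}$, fix $\eta_0\in\mathcal{F}(\mathcal{V}_i^h)=\mathcal{P}_i^h$. Since the elements of $\mathcal{P}_i^h$ are connected at height at most $\mathbb{H}(\mathcal{P}_i^h)+\Gamma^{\star,h-1}$ by Lemma \ref{lem:gen ind hyp}-(5) and Theorem \ref{thm:P-star class ind hyp}-(3) applied at level $h-1$, and $\mathbb{H}(\eta)\ge\mathbb{H}(\mathcal{P}_i^h)$ for $\eta\in\mathcal{V}_i^h$, it suffices to show $\Phi(\eta,\,\mathcal{P}_i^h)-\mathbb{H}(\eta)\le\Gamma^{\star,h-1}$ for every $\eta\in\mathcal{V}_i^h$. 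I would prove this by induction on $\mathbb{H}(\eta)$: it is trivial when $\eta\in\mathcal{P}_i^h$, and for $\eta\in\mathcal{V}_i^h\setminus\mathcal{P}_i^h$ one reduces, via the standard cycle description of communication heights (cf. \cite[Section 6]{OV}) applied to the target set $\{\xi:\mathbb{H}(\xi)<\mathbb{H}(\eta)\}$, to the key claim that every cycle $\mathcal{D}\subseteq\mathcal{V}_i^h$ disjoint from $\mathcal{P}_i^h$ satisfies $\Gamma^{\mathcal{D}}\le\Gamma^{\star,h-1}$ (the borderline case where $\mathbb{H}(\eta)$ is within $\Gamma^{\star,h-1}$ of $\mathbb{H}(\mathcal{P}_i^h)+\Gamma_i^h$ being handled directly by connectedness of $\mathcal{V}_i^h$). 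To establish this claim I would pick a stable plateau $\mathcal{Q}\subseteq\mathcal{F}(\mathcal{D})$, take by Lemma \ref{lem:gen ind hyp}-(3) a cycle $\mathcal{C}\in\mathscr{C}^{h-1}$ with $\mathcal{Q}\subseteq\mathcal{C}$, and run a case analysis: if $\mathcal{C}=\mathcal{V}_j^{h-1}$ is recurrent at level $h-1$, then $\mathcal{V}_j^{h-1}\subseteq\mathcal{V}_m^h$ for the corresponding $m$ (comparing radii as in Lemma \ref{lem:Gamma inc}), so the disjointness of $\mathcal{V}_1^h,\dots,\mathcal{V}_{\nu_{h-1}}^h$ (Lemma \ref{lem:Vih cycle}) forces $m=i$ and $\mathcal{P}_j^{h-1}\subseteq\mathcal{P}_i^h$, and one then contracts $\mathcal{D}$ against $\mathcal{V}_j^{h-1}$ using Lemma \ref{lem:gen ind hyp}-(7) at level $h-1$; if instead $\mathcal{C}\in\mathscr{C}_{{\rm tr}}^{\star,h'}$ for some $h'\le h-1$ (Lemma \ref{lem:gen ind hyp}-(4)), then $\Gamma^{\mathcal{C}}=\Gamma^{\star,h'}\le\Gamma^{\star,h-1}$ (Theorem \ref{thm:P-star class ind hyp}-(2)) and one uses Theorem \ref{thm:P-star class ind hyp}-(2) together with Lemma \ref{lem:gen ind hyp}-(5),(7) to exhibit, from $\mathcal{F}(\mathcal{D})$, a path reaching strictly lower energy at height at most $\mathbb{H}(\mathcal{F}(\mathcal{D}))+\Gamma^{\star,h-1}$, whence $\Gamma^{\mathcal{D}}\le\Gamma^{\star,h-1}$. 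The main obstacle is organizing this last case analysis so that every configuration of $\mathcal{V}_i^h$ can be drained into $\mathcal{P}_i^h$ without ever climbing more than $\Gamma^{\star,h-1}$ above its current energy; once that is done the induction closes, and the lemma follows from \cite[Proposition 6.7]{LMS resolvent}.
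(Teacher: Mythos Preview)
Your proposal is correct and follows the same overall strategy as the paper: verify Condition~$\mathfrak{M}$ of \cite[Proposition 6.7]{LMS resolvent} with $\theta_\beta=e^{\Gamma'\beta}$, reducing everything to the bound $\widetilde{\Gamma}(\mathcal{V}_i^h)\le\Gamma^{\star,h-1}$, and then establish that bound using the inductive hypotheses (Lemma~\ref{lem:gen ind hyp}-(3),(4),(5),(7) and Theorem~\ref{thm:P-star class ind hyp}-(2),(3)).

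The only difference is organizational. The paper proves $\widetilde{\Gamma}(\mathcal{V}_i^h)\le\Gamma^{\star,h-1}$ by a direct descent: from any $\eta\in\mathcal{V}_i^h$ one first follows a downhill path to a stable plateau $\mathcal{P}\in\mathscr{P}^1$, then locates the cycle $\mathcal{C}\in\mathscr{C}^{h-1}$ containing $\mathcal{P}$ via Lemma~\ref{lem:gen ind hyp}-(3); if $\mathcal{C}$ is transient one invokes Theorem~\ref{thm:P-star class ind hyp}-(2) to drop to a strictly lower plateau with depth $\le\Gamma^{\star,h-1}$ and iterates, while if $\mathcal{C}$ is recurrent Lemma~\ref{lem:gen ind hyp}-(7) sends you into $\mathcal{F}(\mathcal{C})\subseteq\mathcal{P}_i^h$ with depth $\le\Gamma^{\star,h-2}$; finally one connects inside $\mathcal{P}_i^h$ exactly as you do. Your route through the auxiliary ``cycle claim'' (every cycle $\mathcal{D}\subseteq\mathcal{V}_i^h$ disjoint from $\mathcal{P}_i^h$ has depth $\le\Gamma^{\star,h-1}$) and an induction on $\mathbb{H}(\eta)$ is equivalent and the case analysis you sketch is the same one the paper runs, but the paper's direct plateau-to-plateau iteration avoids the intermediate abstraction and the somewhat delicate reduction step you allude to.
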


\begin{proof}
The strategy is the same as in the proof of Lemma \ref{lem:Fbeta flat valley-1};
define $\theta_{\beta}:=e^{\Gamma'\beta}$ for any fixed $\Gamma'\in(\Gamma^{\star,h-1},\,\Gamma)$.
The only difference here is that by \cite[Proposition 3.24 and Lemma 3.6]{NZB},
\[
\lim_{\beta\to\infty}\beta^{-1}\log t_{{\rm mix}}(\epsilon)=\widetilde{\Gamma}(\mathcal{V}_{i}^{h}),
\]
where $\widetilde{\Gamma}(\mathcal{V}_{i}^{h})$ is as defined in
\eqref{eq:Gamma-tilde def}. Thus, all that remains to be proved is
that $\widetilde{\Gamma}(\mathcal{V}_{i}^{h})\le\Gamma^{\star,h-1}$.

To measure $\widetilde{\Gamma}(\mathcal{V}_{i}^{h})$, without loss
of generality, we may assume that we start from a stable plateau $\mathcal{P}\in\mathscr{P}^{1}$
in $\mathcal{V}_{i}^{h}$. First, we prove that we reach the bottom
$\mathcal{F}(\mathcal{V}_{i}^{h})=\mathcal{P}_{i}^{h}$ with a path
of depth at most $\Gamma^{\star,h-1}$. Recall from Lemma \ref{lem:gen ind hyp}-(3)
that $\mathcal{P}\subseteq\mathcal{C}$ for some cycle $\mathcal{C}\in\mathscr{C}^{h-1}$.
If $\mathcal{C}\in\mathscr{C}_{{\rm tr}}^{\star,h-1}\cup\mathscr{C}^{\sharp,h-1}$,
then by Lemma \ref{lem:gen ind hyp}-(4) and Theorem \ref{thm:P-star class ind hyp}-(2)
there exists another $\mathcal{P}'\in\mathscr{P}^{1}$ such that
\[
\Phi(\mathcal{P},\,\mathcal{P}')-\mathbb{H}(\mathcal{P})\le\Phi(\mathcal{P},\,\mathcal{P}')-\mathbb{H}(\mathcal{F}(\mathcal{C}))\le\Gamma^{\star,h-1}
\]
and $\mathbb{H}(\mathcal{P}')<\mathbb{H}(\mathcal{F}(\mathcal{C}))\le\mathbb{H}(\mathcal{P})$,
thus with a strictly lower energy so that we may iterate the argument.
If $\mathcal{C}\in\mathscr{C}_{{\rm rec}}^{\star,h-1}$, then from
$\mathcal{P}$ we arrive at $\mathcal{F}(\mathcal{C})$ with a path
of depth at most $\Gamma^{\star,h-1}$ by Lemma \ref{lem:gen ind hyp}-(7),
where $\mathcal{F}(\mathcal{C})\subseteq\mathcal{F}(\mathcal{V}_{i}^{h})=\mathcal{P}_{i}^{h}$
by \eqref{eq:P-starh C-starh rec} and \eqref{eq:P-starh def}.

Thus, it remains to prove that all elements in $\mathcal{P}_{i}^{h}$
communicate with depth at most $\Gamma^{\star,h-1}$, i.e.,
\begin{equation}
\Phi(\eta,\,\eta')-\mathbb{H}(\mathcal{P}_{i}^{h})\le\Gamma^{\star,h-1}\quad\text{for all}\quad\eta,\,\eta'\in\mathcal{P}_{i}^{h}.\label{eq:flat claim}
\end{equation}
To prove this, if $|\mathscr{P}_{i}^{\star,h-1}|=1$ then $\mathcal{P}_{i}^{h}=\mathcal{P}_{j}^{h-1}$
for some $j\in[1,\,\nu_{h-2}]$, thus \eqref{eq:flat claim} follows
directly from Lemma \ref{lem:gen ind hyp}-(5)(6). If $|\mathscr{P}_{i}^{\star,h-1}|\ge2$,
then configurations in each $\mathcal{P}_{j}^{h-1}\in\mathscr{P}_{i}^{\star,h-1}$
are connected within $\mathbb{H}(\mathcal{P}_{i}^{h})+\Gamma^{\star,h-2}$
by Lemma \ref{lem:gen ind hyp}-(5) and any two $\mathcal{P}_{j}^{h-1},\,\mathcal{P}_{j'}^{h-1}\in\mathscr{P}_{i}^{\star,h-1}$
are connected within $\mathbb{H}(\mathcal{P}_{i}^{h})+\Gamma^{\star,h-1}$
by Theorem \ref{thm:P-star class ind hyp}-(3). Therefore, the proof
of Lemma \ref{lem:Fbeta flat valley-h} is completed.
\end{proof}
Now, we provide a lemma analogous to Lemma \ref{lem:Fbeta Delta1}.
We remark that Theorem \ref{thm:exit H1} is a crucial element to
prove the following lemma.
\begin{lem}
\label{lem:Fbeta Deltah}For $\Gamma>\Gamma^{\star,h-1}$ and $v\in\Delta^{h}\cup\mathscr{P}^{\sharp,h}$,
it holds that
\[
\lim_{\beta\to\infty}\Big|\overline{F}_{\beta}^{\Gamma}(v)-\sum_{i\in[1,\,\nu_{h-1}]}{\bf P}_{v}^{h}[\mathcal{T}_{\mathcal{P}_{i}^{h}}=\mathcal{T}_{\mathscr{P}^{\star,h}}]\cdot\overline{F}_{\beta}^{\Gamma}(\mathcal{P}_{i}^{h})\Big|=0.
\]
\end{lem}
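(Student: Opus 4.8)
The plan is to mimic the proof of Lemma \ref{lem:Fbeta Delta1}, but with two complications: the starting point $v$ may lie in $\mathscr{P}^{\sharp,h}$ (a contracted shallow cycle) rather than in $\Delta^{h}$, and the excursions of the microscopic process before it settles into $\mathcal{V}^{\star,h}$ may now visit shallow cycles of depth up to $\Gamma^{\star,h-1}$, which are no longer of order-$1$ holding time. First I would reduce to the case $v\in\Delta^{h}$: if $v=\mathcal{P}_{j'}^{h'}\in\mathscr{P}^{\sharp,h}$ with $\mathcal{P}_{j'}^{h'}\in\mathscr{P}_{{\rm tr}}^{\star,h'}$ (by Lemma \ref{lem:gen ind hyp}-(4)), then by Lemma \ref{lem:gen ind hyp}-(8) for $h'$, $\overline{F}_{\beta}^{\Gamma}(\mathcal{V}_{j'}^{h'})=\overline{F}_{\beta}^{\Gamma}(\mathcal{P}_{j'}^{h'})+o(1)$ since $\Gamma>\Gamma^{\star,h-1}\ge\Gamma^{\star,h'}>\Gamma^{\star,h'-1}$, and starting the microscopic dynamics from $\mathcal{V}_{j'}^{h'}$, it exits through $\partial^{\star}\mathcal{V}_{j'}^{h'}\subseteq\Delta^{h}$ in time $\ll e^{\Gamma\beta}$ (using $\widetilde\Gamma(\mathcal{V}_{j'}^{h'})=\Gamma^{\star,h'-1}<\Gamma$ and \cite[Theorem 6.23]{OV}), so it suffices to control $\overline{F}_{\beta}^{\Gamma}$ averaged over $\partial^{\star}\mathcal{V}_{j'}^{h'}$. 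This is where Theorem \ref{thm:exit H1} enters: the exit distribution of the cycle $\mathcal{V}_{j'}^{h'}$ over $\partial^{\star}\mathcal{V}_{j'}^{h'}$ is, asymptotically, exactly the measure $\xi_{0}\mapsto |\{\eta\in\mathcal{V}_{j'}^{h'}:\eta\sim\xi_{0}\}| / \sum_{\xi}|\{\eta:\eta\sim\xi\}|$, which by \eqref{eq:RC def} is precisely the normalized rate $\mathfrak{R}^{h}(\mathcal{P}_{j'}^{h'},\cdot)$; thus the microscopic transition from the contracted point matches the macroscopic one.

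Next, for $v\in\Delta^{h}$, I would run the same decomposition as in Lemma \ref{lem:Fbeta Delta1}: rewrite $F_{\beta}^{\Gamma}(v)=e^{-\Gamma\beta}\mathbb{E}_{v}[\int_{0}^{\infty}e^{-\lambda e^{-\Gamma\beta}s}G(\eta_{\beta}(s))\,\mathrm{d}s]$, fix $\Gamma'\in(\Gamma^{\star,h-1},\Gamma)$, and split according to whether $\mathcal{T}_{\mathcal{V}^{\star,h}}$ exceeds $e^{\Gamma'\beta}$ or not. The key input is that, starting from $\Delta^{h}$ and following the dynamics with the $O(e^{-\beta})$-rates neglected, the process reaches $\mathcal{V}^{\star,h}$ before time $e^{\Gamma'\beta}$ with probability $1-o(1)$: indeed $\Delta^{h}=\overline\Omega\setminus(\bigcup_{\mathcal{C}\in\mathscr{C}^{h}}\mathcal{C})$, so every configuration in $\Delta^{h}$ either admits a downhill path into some $\mathcal{V}_{i}^{h}$ or into some shallow cycle $\mathcal{C}\in(\mathscr{C}^{h})^{\sharp}$, and each such shallow cycle has depth $<\Gamma^{\star,h}$; by Lemma \ref{lem:gen ind hyp}-(4), each such $\mathcal{C}$ equals some $\mathcal{V}_{j}^{h'}$ with $\mathcal{P}_{j}^{h'}\in\mathscr{P}_{{\rm tr}}^{\star,h'}$, and by Theorem \ref{thm:P-star class ind hyp}-(2) it in turn has a downhill exit route to a stable plateau of strictly lower energy — so after finitely many such descents (which I would formalize as a ``deepest relevant cycle'' bound, all depths being $\le\Gamma^{\star,h-1}<\Gamma'$) the process lands in $\mathcal{V}^{\star,h}$, each sojourn lasting $\ll e^{\Gamma'\beta}$ by \cite[Theorem 6.23]{OV} and a union bound over the bounded number of visited cycles. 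Then the $\int_{0}^{\mathcal{T}_{\mathcal{V}^{\star,h}}}$ part is $O(e^{(\Gamma'-\Gamma)\beta})=o(1)$, and the $\int_{\mathcal{T}_{\mathcal{V}^{\star,h}}}^{\infty}$ part equals $\mathbb{E}_{v}[\mathbf{1}\{\mathcal{T}_{\mathcal{V}^{\star,h}}\le e^{\Gamma'\beta}\}F_{\beta}^{\Gamma}(\eta_{\beta}(\mathcal{T}_{\mathcal{V}^{\star,h}}))]$ by the strong Markov property, so $F_{\beta}^{\Gamma}(v)=\mathbb{E}_{v}[F_{\beta}^{\Gamma}(\eta_{\beta}(\mathcal{T}_{\mathcal{V}^{\star,h}}))]+o(1)$.

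To finish, I would establish the coupling/convergence of the hitting distribution: $\lim_{\beta\to\infty}\mathbb{P}_{v}[\mathcal{T}_{\mathcal{V}_{i}^{h}}=\mathcal{T}_{\mathcal{V}^{\star,h}}]={\bf P}_{v}^{h}[\mathcal{T}_{\mathcal{P}_{i}^{h}}=\mathcal{T}_{\mathscr{P}^{\star,h}}]$, which is the analogue of \eqref{eq:Delta1-2}. For this, I couple the microscopic dynamics (with negligible rates dropped) with $\{\mathfrak{X}^{h}(t)\}_{t\ge0}$: the only nontrivial steps are passages through the intermediate shallow cycles, and there the microscopic exit distribution over $\partial^{\star}\mathcal{C}$ matches $\mathfrak{R}^{h}(\mathcal{F}(\mathcal{C}),\cdot)$ (normalized) by Theorem \ref{thm:exit H1}, exactly as in the reduction step above; meanwhile the strictly decreasing energy along cycle exits (Theorem \ref{thm:P-star class ind hyp}-(2)) guarantees ${\bf P}_{v}^{h}$-a.s. absorption in $\mathscr{P}^{\star,h}$, so both sides are honest probability vectors summing to $1$. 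Combining this with Lemma \ref{lem:Fbeta flat valley-h} (to replace $F_{\beta}^{\Gamma}(\eta_{\beta}(\mathcal{T}_{\mathcal{V}_{i}^{h}}))$ by $\overline{F}_{\beta}^{\Gamma}(\mathcal{P}_{i}^{h})$, valid since $\Gamma>\Gamma^{\star,h-1}$) and expanding over $i$ gives the claim; averaging over $v$ when $v$ is a contracted point is then immediate from the flatness lemmas. I expect the main obstacle to be the rigorous bookkeeping in the reduction through nested shallow cycles — specifically, showing that the coupling error accumulated over the (uniformly bounded, but structurally intricate) chain of descents through $\mathscr{C}^{\sharp,h}\cup\mathscr{C}_{{\rm tr}}^{\star,h-1}$ is $o(1)$, and that Theorem \ref{thm:exit H1} can be applied at each stage with the correct cycle even when these cycles are themselves contracted — but none of this requires new ideas beyond those already deployed for $h=1$.
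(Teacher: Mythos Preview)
Your proposal is correct and follows essentially the same approach as the paper: both arguments fix $\Gamma'\in(\Gamma^{\star,h-1},\Gamma)$, use that every shallow cycle in $\mathscr{C}^{\sharp,h}$ has depth at most $\Gamma^{\star,h-1}$ together with \cite[Theorem 6.23]{OV} to bound $\mathcal{T}_{\mathcal{V}^{\star,h}}$, invoke Theorem~\ref{thm:exit H1} so that microscopic exits from shallow cycles match the rates $\mathfrak{R}^{h}(\mathcal{F}(\mathcal{C}),\cdot)$ in \eqref{eq:RC def}, and then couple with $\{\mathfrak{X}^{h}(t)\}$ and apply Lemma~\ref{lem:Fbeta flat valley-h}. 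The only cosmetic difference is that the paper handles $v\in\mathscr{P}^{\sharp,h}$ by picking any $\eta$ in the corresponding cycle (using flatness from Lemma~\ref{lem:gen ind hyp}-(8)) rather than first exiting to $\partial^{\star}\mathcal{V}_{j'}^{h'}$, but this is the same idea.
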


\begin{proof}
The idea is identical as in the proof of Lemma \ref{lem:Fbeta Delta1}.
Fix $\Gamma'\in(\Gamma^{\star,h-1},\,\Gamma)$. We claim that $\lim_{\beta\to\infty}\mathbb{P}_{\eta}[\mathcal{T}_{\mathcal{V}^{\star,h}}>e^{\Gamma'\beta}]=0$
for any $\eta\in\overline{\Omega}\setminus\mathcal{V}^{\star,h}$.
Indeed, each configuration belonging to $\overline{\Omega}\setminus\mathcal{V}^{\star,h}$
is either non-stable ($\Delta^{h}$) or contained in some $\mathcal{C}\in\mathscr{C}^{\sharp,h}$.
By Lemma \ref{lem:gen ind hyp}-(2)(4)(6) and Theorem \ref{thm:P-star class ind hyp}-(2),
for such $\mathcal{C}$ we have $\Gamma^{\mathcal{C}}\le\Gamma^{\star,h-1}$.
Thus by \cite[Theorem 6.23-(ii)]{OV}, the escape time of such a non-stable
configuration in $\Delta^{h}$ or $\mathcal{C}\in\mathscr{C}^{\sharp,h}$
is at most $e^{(\Gamma^{\star,h-1}+\epsilon)\beta}$ with probability
tending to $1$ for any fixed $\epsilon>0$. Thus, by the usual coupling
argument, for any $\eta\in\overline{\Omega}\setminus\mathcal{V}^{\star,h}$
it holds that $\lim_{\beta\to\infty}\mathbb{P}_{\eta}[\mathcal{T}_{\mathcal{V}^{\star,h}}>e^{\Gamma'\beta}]=0$.

Now to prove Lemma \ref{lem:Fbeta Deltah}, it suffices to fix $v\in\Delta^{h}\cup\mathscr{P}^{\sharp,h}$
and a configuration $\eta$ such that $\eta=v$ if $v\in\Delta^{h}$
and $\eta\in\mathcal{C}$ if $v=\mathcal{F}(\mathcal{C})\in\mathscr{P}^{\sharp,h}$,
and prove that
\[
F_{\beta}^{\Gamma}(\eta)=\sum_{i\in[1,\,\nu_{h-1}]}{\bf P}_{v}^{h}[\mathcal{T}_{\mathcal{P}_{i}^{h}}=\mathcal{T}_{\mathscr{P}^{\star,h}}]\cdot\overline{F}_{\beta}^{\Gamma}(\mathcal{P}_{i}^{h})+o(1).
\]
Indeed, if $v=\mathcal{F}(\mathcal{C})\in\mathscr{P}^{\sharp,h}$,
it suffices to pick any $\eta\in\mathcal{C}$ since by Lemma \ref{lem:gen ind hyp}-(6)(8),
$F_{\beta}^{\Gamma}$ is flat in $\mathcal{C}$.

It has been verified in the proof of Lemma \ref{lem:Fbeta Delta1}
that the jump probability from $\xi\in\Delta^{h}$ of the original
process $\{\eta_{\beta}(t)\}_{t\ge0}$ is asymptotically equal to
the jump probability from $\xi$ of $\{\mathfrak{X}^{h}(t)\}_{t\ge0}$.
Moreover, by Theorem \ref{thm:exit H1} and \eqref{eq:RC def}, for
each $\mathcal{C}\in\mathscr{C}^{\sharp,h}$, the exit distribution
on $\partial\mathcal{C}$ from $\mathcal{C}$ of $\{\eta_{\beta}(t)\}_{t\ge0}$
is asymptotically equal to the probability distribution on $\partial\mathcal{C}$
of the first jump from $\mathcal{C}$ according to $\{\mathfrak{X}^{h}(t)\}_{t\ge0}$.
Hence, the usual coupling technique between these two processes (as
in \eqref{eq:Delta1-2}) implies that
\[
\lim_{\beta\to\infty}\mathbb{P}_{\eta}[\mathcal{T}_{\mathcal{V}_{i}^{h}}=\mathcal{T}_{\mathcal{V}^{\star,h}}]={\bf P}_{v}^{h}[\mathcal{T}_{\mathcal{P}_{i}^{h}}=\mathcal{T}_{\mathscr{P}^{\star,h}}]\quad\text{for each}\quad\mathcal{V}_{i}^{h}\in\mathscr{C}^{\star,h}.
\]
Using this asymptotic identity, we may proceed in a similar way as
in the proof of Lemma \ref{lem:Fbeta Delta1} to obtain that
\[
F_{\beta}^{\Gamma}(\eta)=\sum_{i\in[1,\,\nu_{h-1}]}{\bf P}_{v}^{h}[\mathcal{T}_{\mathcal{P}_{i}^{h}}=\mathcal{T}_{\mathscr{P}^{\star,h}}]\cdot\overline{F}_{\beta}^{\Gamma}(\mathcal{P}_{i}^{h})+o(1),
\]
using again the fact that $F_{\beta}^{\Gamma}$ is flat in each $\mathcal{V}_{i}^{h}\in\mathscr{C}^{\star,h}$
from Lemma \ref{lem:Fbeta flat valley-h}. This concludes the proof.
\end{proof}
Write $\overline{F}_{\beta}^{\star,h}:=\overline{F}_{\beta}^{\Gamma^{\star,h}}$.
As in Section \ref{sec4.3}, to conclude the proof of Theorem \ref{thm:main},
it suffices to prove that
\begin{equation}
(\lambda-\mathfrak{L}^{h})\overline{F}_{\beta}^{\star,h}(\mathcal{P}_{i}^{h})=g(\mathcal{P}_{i}^{h})+o(1)\quad\text{for}\quad\mathcal{P}_{i}^{h}\in\mathscr{P}^{\star,h}.\label{eq:res asymp-h}
\end{equation}
By integrating both sides of \eqref{eq:res micro} with $\Gamma=\Gamma^{\star,h}$
with respect to $\mu_{\beta}$ in $\mathcal{V}_{i}^{h}$ and substituting
the definition of $G$, it holds that
\begin{equation}
\lambda\sum_{\eta\in\mathcal{V}_{i}^{h}}\mu_{\beta}(\eta)F_{\beta}^{\star,h}(\eta)-e^{\Gamma^{\star,h}\beta}\sum_{\eta\in\mathcal{V}_{i}^{h}}\mu_{\beta}(\eta)L_{\beta}F_{\beta}^{\star,h}(\eta)=\mu_{\beta}(\mathcal{V}_{i}^{h})g(\mathcal{P}_{i}^{h}).\label{eq:5.4-1}
\end{equation}
By the definition of $\overline{F}_{\beta}^{\star,h}$ and Lemma \ref{lem:Fbeta flat valley-h},
the first term in the left-hand side of \eqref{eq:5.4-1} equals
\begin{equation}
\lambda\mu_{\beta}(\mathcal{V}_{i}^{h})(\overline{F}_{\beta}^{\star,h}(\mathcal{P}_{i}^{h})+o(1)).\label{eq:5.4-2}
\end{equation}
As calculated in \eqref{eq:4.1-4}, the second term in the left-hand
side of \eqref{eq:5.4-1} becomes
\begin{equation}
|\mathcal{P}_{i}^{h}|^{-1}\mu_{\beta}(\mathcal{P}_{i}^{h}){\bf 1}\{\Gamma_{i}^{h}=\Gamma^{\star,h}\}\sum_{\xi\in\partial^{\star}\mathcal{V}_{i}^{h}}\sum_{\eta\in\mathcal{V}_{i}^{h}:\,\eta\sim\xi}(F_{\beta}^{\star,h}(\xi)-F_{\beta}^{\star,h}(\eta))+o(\mu_{\beta}(\mathcal{P}_{i}^{h})).\label{eq:5.4-3}
\end{equation}
For the $F_{\beta}^{\star,h}(\xi)$-part in \eqref{eq:5.4-3}, by
\eqref{eq:RC def} and Lemma \ref{lem:Fbeta Deltah}, we may rewrite
this as
\[
\mu_{\beta}(\mathcal{P}_{i}^{h})\sum_{\xi\in\Delta^{h}}\mathfrak{R}^{h}(\mathcal{P}_{i}^{h},\,\xi)\Big(\sum_{j\in[1,\,\nu_{h-1}]}{\bf P}_{\xi}^{h}[\mathcal{T}_{\mathcal{P}_{j}^{h}}=\mathcal{T}_{\mathscr{P}^{\star,h}}]\cdot\overline{F}_{\beta}^{\star,h}(\mathcal{P}_{j}^{h})+o(1)\Big).
\]
By \eqref{eq:R-Cstar def}, this equals
\[
\mu_{\beta}(\mathcal{P}_{i}^{h})\sum_{j\in[1,\,\nu_{h-1}]}\mathfrak{R}^{\star,h}(\mathcal{P}_{i}^{h},\,\mathcal{P}_{j}^{h})\overline{F}_{\beta}^{\star,h}(\mathcal{P}_{j}^{h})+o(\mu_{\beta}(\mathcal{P}_{i}^{h})).
\]
For the $F_{\beta}^{\star,h}(\eta)$-part in \eqref{eq:5.4-3}, since
$F_{\beta}^{\star,h}$ is asymptotically constant in $\mathcal{V}_{i}^{h}$
by Lemmas \ref{lem:Gamma-star inc} and \ref{lem:Fbeta flat valley-h},
along with Lemma \ref{lem:Rh R-starh eq} stated below, it holds that

\begin{align*}
 & |\mathcal{P}_{i}^{h}|^{-1}\mu_{\beta}(\mathcal{P}_{i}^{h}){\bf 1}\{\Gamma_{i}^{h}=\Gamma^{\star,h}\}\sum_{\xi\in\partial^{\star}\mathcal{V}_{i}^{h}}\sum_{\eta\in\mathcal{V}_{i}^{h}:\,\eta\sim\xi}F_{\beta}^{\star,h}(\eta)\\
 & =\mu_{\beta}(\mathcal{P}_{i}^{h})\overline{F}_{\beta}^{\star,h}(\mathcal{P}_{i}^{h})\sum_{j}\mathfrak{R}^{\star,h}(\mathcal{P}_{i}^{h},\,\mathcal{P}_{j}^{h})+o(\mu_{\beta}(\mathcal{P}_{i}^{h})).
\end{align*}
Thus, the second term in the left-hand side of \eqref{eq:5.4-1} becomes
\begin{equation}
\mu_{\beta}(\mathcal{P}_{i}^{h})\sum_{j}\mathfrak{R}^{\star,h}(\mathcal{P}_{i}^{h},\,\mathcal{P}_{j}^{h})(\overline{F}_{\beta}^{\star,h}(\mathcal{P}_{j}^{h})-\overline{F}_{\beta}^{\star,h}(\mathcal{P}_{i}^{h}))+o(\mu_{\beta}(\mathcal{P}_{i}^{h})).\label{eq:5.4-4}
\end{equation}
By \eqref{eq:5.4-1}, \eqref{eq:5.4-2}, \eqref{eq:5.4-4} and the
fact that $\mu_{\beta}(\mathcal{V}_{i}^{h})=(1+o(1))\mu_{\beta}(\mathcal{P}_{i}^{h})$,
we conclude that
\[
\lambda\overline{F}_{\beta}^{\star,h}(\mathcal{P}_{i}^{h})-\sum_{j}\mathfrak{R}^{\star,h}(\mathcal{P}_{i}^{h},\,\mathcal{P}_{j}^{h})(\overline{F}_{\beta}^{\star,h}(\mathcal{P}_{j}^{h})-\overline{F}_{\beta}^{\star,h}(\mathcal{P}_{i}^{h}))=g(\mathcal{P}_{i}^{h})+o(1),
\]
which is exactly \eqref{eq:res asymp-h}. Thus, proving the next lemma
completes the proof of Theorem \ref{thm:main}.
\begin{lem}
\label{lem:Rh R-starh eq}For each $i\in[1,\,\nu_{h-1}]$, it holds
that
\[
\sum_{\eta\in\Delta^{h}}\mathfrak{R}^{h}(\mathcal{P}_{i}^{h},\,\eta)=\sum_{j\in[1,\,\nu_{h-1}]}\mathfrak{R}^{\star,h}(\mathcal{P}_{i}^{h},\,\mathcal{P}_{j}^{h}).
\]
\end{lem}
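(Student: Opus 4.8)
The plan is to mimic the proof of Lemma~\ref{lem:R1 R-star1 eq}. First I would apply the defining formula \eqref{eq:R-Cstar def} with $\mathscr{C}=\mathscr{C}^h$ and sum over $j$, obtaining
\[
\sum_{j\in[1,\,\nu_{h-1}]}\mathfrak{R}^{\star,h}(\mathcal{P}_i^h,\,\mathcal{P}_j^h)=\sum_{\eta\in\Delta^h}\mathfrak{R}^h(\mathcal{P}_i^h,\,\eta)\cdot\sum_{j\in[1,\,\nu_{h-1}]}{\bf P}_\eta^h[\mathcal{T}_{\mathcal{P}_j^h}=\mathcal{T}_{\mathscr{P}^{\star,h}}],
\]
so that the assertion is reduced to proving $\sum_{j}{\bf P}_\eta^h[\mathcal{T}_{\mathcal{P}_j^h}=\mathcal{T}_{\mathscr{P}^{\star,h}}]=1$ for every $\eta\in\Delta^h$, i.e.\ that $\mathcal{T}_{\mathscr{P}^{\star,h}}<\infty$ holds ${\bf P}_\eta^h$-almost surely. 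I would in fact establish this for every starting state in $\Delta^h\cup\mathscr{P}^{\sharp,h}$; this is precisely Lemma~\ref{lem:gen ind hyp}-(9) for $h'=h$, so the same argument also closes that induction hypothesis, as in Remark~\ref{rem:ini ind hyp}.

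Since $\Omega^h$ is finite, it is enough to show that from every state of $\Delta^h\cup\mathscr{P}^{\sharp,h}$ there is a path of positive $\mathfrak{R}^h$-rate into $\mathscr{P}^{\star,h}$; then $\mathscr{P}^{\star,h}$ is hit almost surely in finite time. For $\eta\in\Delta^h$ this is straightforward: by \eqref{eq:RC def}, from a state of $\Delta^h$ every $\mathfrak{R}^h$-jump either moves to a neighbor of weakly smaller energy inside $\Delta^h$ or lands on $\mathcal{F}(\mathcal{C})$ for some $\mathcal{C}\in\mathscr{C}^h$, in which case $\mathbb{H}(\mathcal{F}(\mathcal{C}))=\min_{\mathcal{C}}\mathbb{H}<\mathbb{H}(\eta)$ by the cycle property. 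Following a weakly downhill route and using Lemma~\ref{lem:gen ind hyp}-(3)---every stable plateau of $\mathscr{P}^1$ lies inside some cycle of $\mathscr{C}^h$, so a constant-energy level set that is closed under $\mathfrak{R}^h$ would be a stable plateau disjoint from every cycle of $\mathscr{C}^h$, which is impossible---the route cannot stay in $\Delta^h$ forever and hence reaches a state of $\mathscr{P}^{\mathscr{C}^h}=\mathscr{P}^{\star,h}\cup\mathscr{P}^{\sharp,h}$.

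The remaining and genuinely delicate case is a starting state $\mathcal{F}(\mathcal{C})\in\mathscr{P}^{\sharp,h}$. Here I would invoke the inductive classification: by Lemma~\ref{lem:Ch prop} and Lemma~\ref{lem:gen ind hyp}-(4), $\mathcal{C}=\mathcal{V}_j^{h'}\in\mathscr{C}_{{\rm tr}}^{\star,h'}$ for some $h'\in[1,\,h-1]$, so $\mathcal{F}(\mathcal{C})=\mathcal{P}_j^{h'}\in\mathscr{P}_{{\rm tr}}^{\star,h'}$ and Theorem~\ref{thm:P-star class ind hyp}-(2) furnishes a $\mathcal{P}_{j''}^{h'}\in\mathscr{P}^{\star,h'}$ with $\mathbb{H}(\mathcal{P}_{j''}^{h'})<\mathbb{H}(\mathcal{P}_j^{h'})$ and $\Phi(\mathcal{P}_j^{h'},\,\mathcal{P}_{j''}^{h'})=\mathbb{H}(\mathcal{P}_j^{h'})+\Gamma^{\star,h'}=:H_0$. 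Let $\mathcal{K}$ be the connected component of the sublevel set $\{\mathbb{H}\le H_0\}$ containing $\mathcal{P}_j^{h'}$: it is a cycle (its outer boundary lies strictly above $H_0$), it contains both $\mathcal{V}_j^{h'}$ and $\mathcal{P}_{j''}^{h'}$, hence $\mathbb{H}(\mathcal{F}(\mathcal{K}))\le\mathbb{H}(\mathcal{P}_{j''}^{h'})<\mathbb{H}(\mathcal{P}_j^{h'})$, and $\partial^\star\mathcal{V}_j^{h'}$---which lies in $\Delta^h$ because the cycles of $\mathscr{C}^h$ are pairwise disjoint---sits at the top level $H_0$ of $\mathcal{K}$. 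From $\mathcal{F}(\mathcal{C})$ the chain first jumps onto $\partial^\star\mathcal{V}_j^{h'}\subseteq\mathcal{K}$, and from there the weakly downhill argument of the previous paragraph, run inside $\mathcal{K}$, brings it to the contraction of some $\mathcal{C}'\in\mathscr{C}^h$ with $\mathcal{C}'\subseteq\mathcal{K}$; if $\mathcal{C}'\in\mathscr{C}^{\star,h}$ we are done, and otherwise $\mathcal{F}(\mathcal{C}')\in\mathscr{P}^{\sharp,h}$ with $\mathbb{H}(\mathcal{F}(\mathcal{C}'))\le H_0$, and one iterates. The main obstacle is exactly to make this iteration terminate: one must show that each step can be arranged either to hit $\mathscr{P}^{\star,h}$ or to strictly decrease a well-founded quantity (morally $\mathbb{H}(\mathcal{F}(\mathcal{C}'))$, or the size of the enclosing sublevel cycle), using that every cycle of $\mathscr{C}^{\sharp,h}$ has depth $<\Gamma^{\star,h}$ together with the strict energy drops granted by Theorem~\ref{thm:P-star class ind hyp}-(2). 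Once $\mathcal{T}_{\mathscr{P}^{\star,h}}<\infty$ a.s.\ is secured, the desired identity follows immediately from the reduction in the first paragraph.
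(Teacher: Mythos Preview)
Your reduction in the first paragraph to proving $\mathcal{T}_{\mathscr{P}^{\star,h}}<\infty$ almost surely from every state of $\Delta^h\cup\mathscr{P}^{\sharp,h}$ is exactly how the paper begins, and your observation that this simultaneously verifies Lemma~\ref{lem:gen ind hyp}-(9) at level $h$ is correct.

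From that point the paper takes a different and shorter route than your direct path-building iteration. Rather than analyzing downhill runs in $\Delta^h$ and then iterating through the cycles of $\mathscr{C}^{\sharp,h}$ via Theorem~\ref{thm:P-star class ind hyp}-(2), the paper reduces wholesale to level $h-1$. Using Lemma~\ref{lem:Gamma inc} it first notes that $\mathcal{V}_j^{h-1}\subseteq\mathcal{V}_i^h$ for every $\mathcal{P}_j^{h-1}\in\mathscr{P}_i^{\star,h-1}$, whence $\Delta^h\subseteq\Delta^{h-1}$ and, combined with Lemma~\ref{lem:Ch prop},
\[
\Delta^h\cup\mathscr{P}^{\sharp,h}\subseteq\Delta^{h-1}\cup\mathscr{P}_{\rm tr}^{\star,h-1}\cup\mathscr{P}^{\sharp,h-1}.
\]
It then argues that $\{\mathfrak{X}^h(t)\}$ and $\{\mathfrak{X}^{h-1}(t)\}$ have the same law until they hit $\mathscr{P}^{\star,h}$ and $\mathscr{P}_{\rm rec}^{\star,h-1}$, respectively. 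The inductive hypothesis Lemma~\ref{lem:gen ind hyp}-(9) gives ${\bf P}_v^{h-1}[\mathcal{T}_{\mathscr{P}^{\star,h-1}}<\infty]=1$ for $v\in\Delta^{h-1}\cup\mathscr{P}^{\sharp,h-1}$, and once in $\mathscr{P}_{\rm tr}^{\star,h-1}$ the chain reaches $\mathscr{P}_{\rm rec}^{\star,h-1}$ trivially. This finishes the proof without any hand-built iteration.

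Your approach is also sound: the termination obstacle you flag is real but can be closed exactly as in the proof of Lemma~\ref{lem:5.2-2}, where the same strict energy drop from Theorem~\ref{thm:P-star class ind hyp}-(2) is used to force finiteness of an analogous iteration. What the paper's argument buys is that the entire termination issue is absorbed into the single-step induction on $h$, so no separate well-founded descent needs to be tracked.
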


\begin{proof}
Following the idea presented in the proof of Lemma \ref{lem:R1 R-star1 eq},
it suffices to prove that $\mathcal{T}_{\mathscr{P}^{\star,h}}<\infty$
a.s. starting from any element in $\Delta^{h}\cup\mathscr{P}^{\sharp,h}$.

By Lemma \ref{lem:Gamma inc}, it holds that $\mathcal{V}_{j}^{h-1}\subseteq\mathcal{V}_{i}^{h}$
for all $\mathcal{P}_{j}^{h-1}\in\mathscr{P}_{i}^{\star,h-1}$ which
implies $\Delta^{h}\subseteq\Delta^{h-1}$. Thus, we obtain
\[
\Delta^{h}\cup\mathscr{P}^{\sharp,h}\subseteq\Delta^{h-1}\cup\mathscr{P}_{{\rm tr}}^{\star,h-1}\cup\mathscr{P}^{\sharp,h-1}.
\]
By Lemma \ref{lem:gen ind hyp}-(9) and the fact that ${\bf P}_{\cdot}^{h}$
and ${\bf P}_{\cdot}^{h-1}$ follow the same law before hitting $\mathscr{P}^{\star,h}$
and $\mathscr{P}_{{\rm rec}}^{\star,h-1}$, respectively, all it remains
to be proved is that ${\bf P}_{v}^{h-1}[\mathcal{T}_{\mathscr{P}_{{\rm rec}}^{\star,h-1}}<\infty]=1$
for all $v\in\mathscr{P}_{{\rm tr}}^{\star,h-1}$, which is trivial
by the definition of the transient collection $\mathscr{P}_{{\rm tr}}^{\star,h-1}$.
This concludes the proof.
\end{proof}

\subsection{\label{sec5.5}Inductive hypothesis at level $h$}

Finally, we check here that the inductive hypotheses at level $h'=h$
are readily verified along this section (except for Lemma \ref{lem:gen ind hyp}-(3)
which is proved right below), which completes the full logic of induction
in $h$.\smallskip{}

\begin{itemize}
\item Lemma \ref{lem:gen ind hyp}-(1): exactly Lemma \ref{lem:Pih energy}.
\item Lemma \ref{lem:gen ind hyp}-(2): exactly Lemma \ref{lem:Vih cycle}.
\item Lemma \ref{lem:gen ind hyp}-(3): to be proved below.
\item Lemma \ref{lem:gen ind hyp}-(4): exactly Lemma \ref{lem:Ch prop}.
\item Lemma \ref{lem:gen ind hyp}-(5): proved in \eqref{eq:flat claim}.
\item Lemma \ref{lem:gen ind hyp}-(6): exactly Lemma \ref{lem:Gamma-star inc}.
\item Lemma \ref{lem:gen ind hyp}-(7): proved in the proof of Lemma \ref{lem:Fbeta flat valley-h}.
\item Lemma \ref{lem:gen ind hyp}-(8): exactly Lemma \ref{lem:Fbeta flat valley-h}.
\item Lemma \ref{lem:gen ind hyp}-(9): proved in the proof of Lemma \ref{lem:Rh R-starh eq}.
\item Theorem \ref{thm:P-star class ind hyp}: exactly Theorem \ref{thm:P-starh class}.
\end{itemize}
\begin{proof}[Proof of Lemma \ref{lem:gen ind hyp}-(3) for $h'=h$]
 Fix $\mathcal{P}\in\mathscr{P}^{1}$. By Lemma \ref{lem:gen ind hyp}-(3)
for $h'=h-1$, it holds that $\mathcal{P}\subseteq\mathcal{C}$ for
some $\mathcal{C}\in\mathscr{C}^{h-1}$. Recall \eqref{eq:Ch-1 Ph-1 ind hyp}.
If $\mathcal{C}\in\mathscr{C}_{m}^{\star,h-1}$ for some $m\in[1,\,\nu_{h-1}]$,
then by Lemma \ref{lem:Gamma inc}, $\mathcal{C}\subseteq\mathcal{V}_{m}^{h}$,
thus $\mathcal{P}\subseteq\mathcal{V}_{m}^{h}$. If $\mathcal{C}\in\mathscr{C}_{{\rm tr}}^{\star,h-1}\cup\mathscr{C}^{\sharp,h-1}$
such that $\mathcal{C}\cap\mathcal{V}_{i}^{h}\ne\emptyset$ for some
$i\in[1,\,\nu_{h-1}]$, then similarly $\mathcal{C}\subseteq\mathcal{V}_{i}^{h}$,
thus $\mathcal{P}\subseteq\mathcal{V}_{i}^{h}$. Finally, if $\mathcal{C}\in\mathscr{C}_{{\rm tr}}^{\star,h-1}\cup\mathscr{C}^{\sharp,h-1}$
such that $\mathcal{C}\cap\mathcal{V}_{i}^{h}=\emptyset$ for all
$i\in[1,\,\nu_{h-1}]$, then $\mathcal{C}\in\mathscr{C}^{\sharp,h}$
by Lemma \ref{lem:Ch prop}. These three cases complete the proof
of Lemma \ref{lem:gen ind hyp}-(3) for $h'=h$.
\end{proof}

\section{\label{sec6}Ground States and Upper Bound of Energy Barrier}

\subsection{\label{sec6.1}Proof of Theorem \ref{thm:Ham S}}

As a starting point of our analysis of the Kawasaki dynamics, we first
prove Theorem \ref{thm:Ham S}. We need some preliminary definitions.
\begin{notation}[Square bracket in torus]
\label{nota:llb rrb}For an integer $M\ge1$ and $m,\,m'\in\mathbb{T}_{M}$,
we write $[m,\,m']_{M}:=\{m,\,m+1,\,\dots,\,m'\}\subseteq\mathbb{T}_{M}$.
Due to the periodic structure of $\mathbb{T}_{M}$, it is clear that
$[m,\,m']_{M}$ is always nonempty.
\end{notation}

\begin{defn}[Bridges, crosses and sticks]
\label{def:bridge cross stick}Recall from \eqref{eq:ck def} that
$\mathfrak{c}^{k}$ denotes the column $\{k\}\times\mathbb{T}_{L}$
in $\Lambda$ for each $k\in\mathbb{T}_{K}$.
\begin{itemize}
\item For $\ell\in\mathbb{T}_{L}$, denote by $\mathfrak{r}^{\ell}$ the
$\ell$-th row $\mathbb{T}_{K}\times\{\ell\}$ in $\Lambda$.
\item Given $\eta\in\Omega$ and $s\in\{0,\,1\}$, a column (resp. row)
in $\Lambda$ is a \emph{vertical} (resp. \emph{horizontal}) \emph{$s$-bridge}
of $\eta$ if all spins in the column (resp. row) are $s$. If there
exist both vertical and horizontal $s$-bridges, their union is called
an \emph{$s$-cross}.
\item For $k,\,k'\in\mathbb{T}_{K}$, define (cf. Notation \ref{nota:llb rrb})
\[
\mathfrak{c}^{[k,\,k']}:=\mathfrak{c}^{k}\cup\mathfrak{c}^{k+1}\cup\cdots\cup\mathfrak{c}^{k'}=\bigcup_{i\in[k,\,k']_{K}}\mathfrak{c}^{i}.
\]
\item For $k\in\mathbb{T}_{K}$, $\ell\in\mathbb{T}_{L}$ and $m\in[1,\,L-1]$,
define
\[
\mathfrak{s}_{m;\ell}^{k}:=\{k\}\times[\ell,\,\ell+m-1]_{L}
\]
and call it the \emph{$m$-stick} in $\mathfrak{c}^{k}$ starting
from position $\ell$. Moreover, we denote by 
\[
\mathfrak{S}_{m}^{k}:=\{\mathfrak{s}_{m;\ell}^{k}:\,\ell\in\mathbb{T}_{L}\}
\]
the collection of all $m$-sticks in $\mathfrak{c}^{k}$.
\end{itemize}
\end{defn}

\begin{defn}[Interface function $\mathbb{I}$]
\label{def:interface}$ $
\begin{itemize}
\item For each $\eta\in\Omega$, define
\begin{equation}
\mathbb{I}(\eta):=\sum_{\{x,\,y\}\in E}{\bf 1}\{\eta(x)\ne\eta(y)\},\label{eq:I def}
\end{equation}
which counts the number of interfaces between the two spins $0$ and
$1$ in $\eta$.
\item For column $\mathfrak{c}^{k}$ and row $\mathfrak{r}^{\ell}$ (cf.
Definition \ref{def:bridge cross stick}), define
\[
\mathbb{I}_{\mathfrak{c}^{k}}(\eta):=\sum_{\{x,\,y\}\in E:\,\{x,\,y\}\subseteq\mathfrak{c}^{k}}{\bf 1}\{\eta(x)\ne\eta(y)\}
\]
and
\[
\mathbb{I}_{\mathfrak{r}^{\ell}}(\eta):=\sum_{\{x,\,y\}\in E:\,\{x,\,y\}\subseteq\mathfrak{r}^{\ell}}{\bf 1}\{\eta(x)\ne\eta(y)\}.
\]
Then, by \eqref{eq:I def}, it is clear that
\begin{equation}
\mathbb{I}(\eta)=\sum_{k\in\mathbb{T}_{K}}\mathbb{I}_{\mathfrak{c}^{k}}(\eta)+\sum_{\ell\in\mathbb{T}_{L}}\mathbb{I}_{\mathfrak{r}^{\ell}}(\eta).\label{eq:I dec}
\end{equation}
Since the boundary is periodic, if $\mathfrak{c}^{k}$ (resp. $\mathfrak{r}^{\ell}$)
is not a bridge of $\eta$, then $\mathbb{I}_{\mathfrak{c}^{k}}(\eta)\ge2$
(resp. $\mathbb{I}_{\mathfrak{r}^{\ell}}(\eta)\ge2$). Moreover, $\mathbb{I}_{\mathfrak{c}^{k}}(\eta)$
and $\mathbb{I}_{\mathfrak{r}^{\ell}}(\eta)$ are always even integers
since there are exactly two spins in the system. In turn, by \eqref{eq:I dec},
$\mathbb{I}(\eta)$ is also an even integer.
\end{itemize}
\end{defn}

A simple connection between $\mathbb{I}$ and the Hamiltonian $\mathbb{H}$
is that
\begin{equation}
\mathbb{H}(\eta)=-2L\mathscr{N}_{0}+\frac{1}{2}\cdot\mathbb{I}(\eta)\quad\text{for all}\quad\eta\in\Omega.\label{eq:H I identity}
\end{equation}
Indeed, since $\eta(x)\in\{0,\,1\}$ for each $x\in V$, we may rewrite
the Hamiltonian as
\[
\mathbb{H}(\eta)=-\sum_{\{x,\,y\}\in E}{\bf 1}\{\eta(x)=\eta(y)=1\}=-\frac{1}{2}\cdot\sum_{x\in V}{\bf 1}\{\eta(x)=1\}\sum_{y\in V:\,\{x,\,y\}\in E}{\bf 1}\{\eta(y)=1\}.
\]
The second equality holds by a simple double-counting argument. Since
every $x\in V$ has exactly four neighboring sites and there exist
two types of spins in the system, this equals
\[
-\frac{1}{2}\cdot\sum_{x\in V}{\bf 1}\{\eta(x)=1\}\cdot\Big[4-\sum_{y\in V:\,\{x,\,y\}\in E}{\bf 1}\{\eta(y)=0\}\Big].
\]
Rearranging and employing the same double-counting argument once more,
we conclude that
\[
\mathbb{H}(\eta)=-2L\mathscr{N}_{0}+\frac{1}{2}\cdot\sum_{\{x,\,y\}\in E}{\bf 1}\{\eta(x)\ne\eta(y)\}=-2L\mathscr{N}_{0}+\frac{1}{2}\cdot\mathbb{I}(\eta),
\]
which is exactly \eqref{eq:H I identity}.
\begin{proof}[Proof of Theorem \ref{thm:Ham S}]
 By \eqref{eq:H I identity}, Theorem \ref{thm:Ham S} is equivalent
to:
\begin{equation}
\mathbb{I}(\eta)\ge2L\quad\text{for all}\quad\eta\in\Omega\quad\text{and}\quad\mathbb{I}(\eta)=2L\quad\text{if and only if}\quad\eta\in\mathcal{S}.\label{eq:Ham S WTS}
\end{equation}
To verify \eqref{eq:Ham S WTS}, we fix $\eta\in\Omega$ and divide
into four cases depending on whether $\eta$ has vertical or horizontal
bridges.\smallskip{}
\begin{itemize}
\item \textbf{(Case 1: $\eta$ has no bridges)} In this case, we have $\mathbb{I}_{\mathfrak{c}^{k}}(\eta)\ge2$
and $\mathbb{I}_{\mathfrak{r}^{\ell}}(\eta)\ge2$ for all $k\in\mathbb{T}_{K}$
and $\ell\in\mathbb{T}_{L}$. Thus, \eqref{eq:I dec} implies that
$\mathbb{I}(\eta)\ge2K+2L>2L$.\smallskip{}
\item \textbf{(Case 2: $\eta$ has a cross)} Suppose first that $\eta$
has a $0$-cross. Then, $\eta$ can be regarded as a configuration
of spins $1$ in a finite lattice with open boundary conditions. Thus,
by a standard isoperimetric inequality \cite[Corollary 2.5]{AC},
it holds that
\[
\mathbb{I}(\eta)\ge4\sqrt{\sum_{x\in V}{\bf 1}\{\eta(x)=1\}}=4\sqrt{L\mathscr{N}_{0}}>4\sqrt{\frac{L^{2}}{4}}=2L,
\]
where the second inequality follows by \eqref{eq:N0 assump}. On the
other hand, if $\eta$ has a $1$-cross, then the same logic implies
that
\[
\mathbb{I}(\eta)\ge4\sqrt{\sum_{x\in V}{\bf 1}\{\eta(x)=0\}}=4\sqrt{L(K-\mathscr{N}_{0})}>4\sqrt{\frac{L^{2}}{4}}=2L,
\]
where the second inequality again follows by \eqref{eq:N0 assump}.\smallskip{}
\item \textbf{(Case 3: $\eta$ has a horizontal bridge but no vertical bridge)}
Since $\eta$ has no vertical bridge, $\mathbb{I}_{\mathfrak{c}^{k}}(\eta)\ge2$
for all $k\in\mathbb{T}_{K}$. Along with \eqref{eq:I dec}, this
implies that $\mathbb{I}(\eta)\ge2K>2L$.\smallskip{}
\item \textbf{(Case 4: $\eta$ has a vertical bridge but no horizontal bridge)}
Similarly, we have $\mathbb{I}_{\mathfrak{r}^{\ell}}(\eta)\ge2$ for
all $\ell\in\mathbb{T}_{L}$, thus $\mathbb{I}(\eta)\ge2L$. To obtain
the equality here, it is required that
\[
\mathbb{I}_{\mathfrak{r}^{\ell}}(\eta)=2\quad\text{for all}\quad\ell\in\mathbb{T}_{L}\quad\text{and}\quad\mathbb{I}_{\mathfrak{c}^{k}}(\eta)=0\quad\text{for all}\quad k\in\mathbb{T}_{K}.
\]
This is achieved if and only if $\eta\in\mathcal{S}$.\smallskip{}
\end{itemize}
The presented four cases readily verify \eqref{eq:Ham S WTS}, thereby
complete the proof of Theorem \ref{thm:Ham S}.
\end{proof}

\subsection{\label{sec6.2}Upper bound of energy barrier}

We divide the proof of Theorem \ref{thm:E barrier} into the verification
of two lemmas: upper bound in Lemma \ref{lem:E barrier UB} and lower
bound in Lemma \ref{lem:E barrier LB}. In this subsection, we handle
the upper bound.
\begin{lem}
\label{lem:E barrier UB}We have $\Phi(\bm{\sigma^{k}},\,\bm{\sigma^{k'}})\le\mathbb{H}_{0}+4$
for all $k\ne k'$.
\end{lem}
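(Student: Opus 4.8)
The plan is to use the concatenation inequality \eqref{eq:Phi max ineq}: since $\bm{\sigma^{k'}}$ is obtained from $\bm{\sigma^{k}}$ by a sequence of unit column–shifts around the torus $\mathbb{T}_{K}$, it suffices to produce, for each $k$, an explicit path $\omega:\,\bm{\sigma^{k}}\to\bm{\sigma^{k+1}}$ with $\Phi_{\omega}\le\mathbb{H}_{0}+4$; concatenating such paths then yields $\Phi(\bm{\sigma^{k}},\,\bm{\sigma^{k'}})\le\mathbb{H}_{0}+4$ for all $k\ne k'$.

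To shift the strip by one column we must relocate the $L$ particles occupying $\mathfrak{c}^{k}$ to the empty column $\mathfrak{c}^{k+\mathscr{N}_{0}}$, leaving the bulk columns $\mathfrak{c}^{k+1},\,\dots,\,\mathfrak{c}^{k+\mathscr{N}_{0}-1}$ untouched. I would do this one row at a time, for $\ell=0,\,1,\,\dots,\,L-1$ in this order: let $\eta^{(\ell)}$ be the configuration in which $\mathfrak{c}^{k}$ is occupied exactly at rows $\ell,\,\dots,\,L-1$, the column $\mathfrak{c}^{k+\mathscr{N}_{0}}$ exactly at rows $0,\,\dots,\,\ell-1$, and the bulk columns are full (so $\eta^{(0)}=\bm{\sigma^{k}}$ and $\eta^{(L)}=\bm{\sigma^{k+1}}$). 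Starting from $\eta^{(\ell)}$, I (i) detach the particle at $(k,\,\ell)$ by moving it to $(k-1,\,\ell)$, (ii) slide it along row $\mathfrak{r}^{\ell}$ the ``long way around'', through the columns $\mathfrak{c}^{k-1},\,\mathfrak{c}^{k-2},\,\dots,\,\mathfrak{c}^{k+\mathscr{N}_{0}+1}$, which are precisely the complement of the strip and hence vacant, until it reaches $(k+\mathscr{N}_{0}+1,\,\ell)$, and (iii) attach it at $(k+\mathscr{N}_{0},\,\ell)$, arriving at $\eta^{(\ell+1)}$.

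The core of the proof is the energy bookkeeping along this path, most conveniently carried out through the identity \eqref{eq:H I identity} (equivalently, by counting the occupied--occupied bonds broken and created at each elementary exchange). One verifies: $\mathbb{H}(\eta^{(\ell)})=\mathbb{H}_{0}+2$ for $1\le\ell\le L-1$, while $\mathbb{H}(\eta^{(0)})=\mathbb{H}(\eta^{(L)})=\mathbb{H}_{0}$; every sliding move in step (ii) leaves the energy unchanged, since the travelling particle has no occupied neighbour at any point (all columns it visits, and their row-neighbours, are vacant); the detaching move (i) raises the energy by $3$ if $\ell=0$, by $2$ if $1\le\ell\le L-2$, and by $1$ if $\ell=L-1$; and the attaching move (iii) then brings the energy back down to $\mathbb{H}(\eta^{(\ell+1)})$. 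Hence the highest energy visited while processing row $\ell$ equals $\mathbb{H}_{0}+3$ for $\ell\in\{0,\,L-1\}$ and $\mathbb{H}_{0}+4$ for $1\le\ell\le L-2$, so $\Phi_{\omega}=\mathbb{H}_{0}+4$, which is the asserted bound.

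The main obstacle is not conceptual but lies in the careful local bookkeeping: one must compute the neighbour counts of the detaching and attaching sites separately for the first row, the generic interior rows, and the last row (where the cyclic adjacency of rows $0$ and $L-1$ changes the count), and confirm that the sliding particle never brushes against the bulk columns $\mathfrak{c}^{k+1},\,\dots,\,\mathfrak{c}^{k+\mathscr{N}_{0}-1}$ -- which is where the hypothesis $\mathscr{N}_{0}<K$ (ensuring the complement of the strip is a genuine arc of vacant columns) enters. These verifications are routine given Definition \ref{def:bridge cross stick} and the bond count underlying \eqref{eq:H I identity}.
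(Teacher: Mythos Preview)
Your proof is correct and reaches the same height $\mathbb{H}_{0}+4$, but via a genuinely different path from the paper's. The paper pushes a \emph{hole} through the occupied bulk: for each row $q$ it first moves the particle at $(k+\mathscr{N}_{0}-1,\,q)$ into column $k+\mathscr{N}_{0}$, then slides the vacancy leftward through columns $k+\mathscr{N}_{0}-1,\,\dots,\,k$ (the ``short way'', inside the strip). You instead detach the particle at $(k,\,\ell)$ and route it through the vacant region the ``long way around''. Both paths visit exactly the same intermediate shallow bottoms $\eta^{(\ell)}=\bm{\sigma_{\ell;\ell,0}^{k}}$, and the energy profile per row is identical ($\mathbb{H}_{0}+3$ for $\ell\in\{0,\,L-1\}$, $\mathbb{H}_{0}+4$ for $1\le\ell\le L-2$), so neither approach gives a sharper bound. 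The paper's path is shorter ($L\mathscr{N}_{0}$ steps versus roughly $L(K-\mathscr{N}_{0})$) and is quoted verbatim in Lemma \ref{lem:shallow bot}-(2)(3), but your row-$0$ subpath also has height $\mathbb{H}_{0}+3$, so it would serve those later references equally well. One small sharpening: what you actually need is $K-\mathscr{N}_{0}\ge2$ (so that column $k-1$ is distinct from column $k+\mathscr{N}_{0}$ and the detached particle does not immediately brush the partially filled target column); this follows from $\mathscr{N}_{0}<K/2$ in \eqref{eq:N0 assump}, not merely from $\mathscr{N}_{0}<K$.
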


\begin{proof}
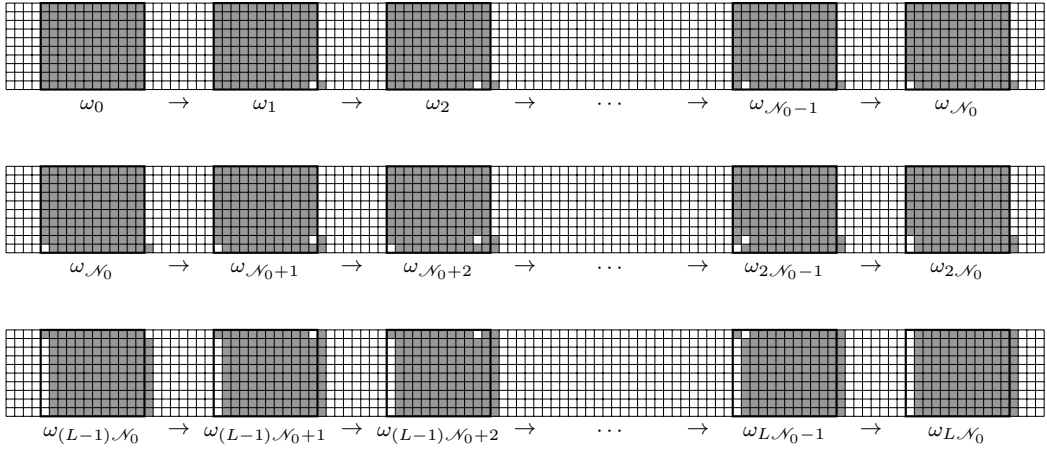
\begin{figure}
\begin{tikzpicture}[scale=0.115]
\fill[white] (-4,0) rectangle (116,10);

\fill[black!40!white] (0,0) rectangle (12,10);
\draw (6,0) node[below]{\scriptsize $\omega_0$};

\draw (16,0) node[below]{\scriptsize $\to$};

\fill[black!40!white] (20,0) rectangle (32,10);
\fill[black!40!white] (32,0) rectangle (33,1);
\fill[white] (31,0) rectangle (32,1);
\draw (26,0) node[below]{\scriptsize $\omega_1$};

\draw (36,0) node[below]{\scriptsize $\to$};

\fill[black!40!white] (40,0) rectangle (52,10);
\fill[black!40!white] (52,0) rectangle (53,1);
\fill[white] (50,0) rectangle (51,1);
\draw (46,0) node[below]{\scriptsize $\omega_2$};

\draw (56,0) node[below]{\scriptsize $\to$};
\draw (66,0) node[below]{\scriptsize $\cdots$};
\draw (76,0) node[below]{\scriptsize $\to$};

\fill[black!40!white] (80,0) rectangle (92,10);
\fill[black!40!white] (92,0) rectangle (93,1);
\fill[white] (81,0) rectangle (82,1);
\draw (86,0) node[below]{\scriptsize $\omega_{\mathscr{N}_0-1}$};

\draw (96,0) node[below]{\scriptsize $\to$};

\fill[black!40!white] (100,0) rectangle (112,10);
\fill[black!40!white] (112,0) rectangle (113,1);
\fill[white] (100,0) rectangle (101,1);
\draw (106,0) node[below]{\scriptsize $\omega_{\mathscr{N}_0}$};

\draw[very thin] (-4,0) grid (116,10);
\foreach \i in {0,1,2,4,5} {
\draw[thick] (0+20*\i,0) rectangle (12+20*\i,10);
}
\end{tikzpicture}\\
\vspace{5mm}
\begin{tikzpicture}[scale=0.115]
\fill[white] (-4,0) rectangle (116,10);

\fill[black!40!white] (0,0) rectangle (12,10);
\fill[black!40!white] (12,0) rectangle (13,1);
\fill[white] (0,0) rectangle (1,1);
\draw (6,0) node[below]{\scriptsize $\omega_{\mathscr{N}_0}$};

\draw (16,0) node[below]{\scriptsize $\to$};

\fill[black!40!white] (20,0) rectangle (32,10);
\fill[black!40!white] (32,0) rectangle (33,2);
\fill[white] (20,0) rectangle (21,1);
\fill[white] (31,1) rectangle (32,2);
\draw (26,0) node[below]{\scriptsize $\omega_{\mathscr{N}_0+1}$};

\draw (36,0) node[below]{\scriptsize $\to$};

\fill[black!40!white] (40,0) rectangle (52,10);
\fill[black!40!white] (52,0) rectangle (53,2);
\fill[white] (40,0) rectangle (41,1);
\fill[white] (50,1) rectangle (51,2);
\draw (46,0) node[below]{\scriptsize $\omega_{\mathscr{N}_0+2}$};

\draw (56,0) node[below]{\scriptsize $\to$};
\draw (66,0) node[below]{\scriptsize $\cdots$};
\draw (76,0) node[below]{\scriptsize $\to$};

\fill[black!40!white] (80,0) rectangle (92,10);
\fill[black!40!white] (92,0) rectangle (93,2);
\fill[white] (80,0) rectangle (81,1);
\fill[white] (81,1) rectangle (82,2);
\draw (86,0) node[below]{\scriptsize $\omega_{2\mathscr{N}_0-1}$};

\draw (96,0) node[below]{\scriptsize $\to$};

\fill[black!40!white] (100,0) rectangle (112,10);
\fill[black!40!white] (112,0) rectangle (113,2);
\fill[white] (100,0) rectangle (101,2);
\draw (106,0) node[below]{\scriptsize $\omega_{2\mathscr{N}_0}$};

\draw[very thin] (-4,0) grid (116,10);
\foreach \i in {0,1,2,4,5} {
\draw[thick] (0+20*\i,0) rectangle (12+20*\i,10);
}
\end{tikzpicture}\\
\vspace{5mm}
\begin{tikzpicture}[scale=0.115]
\fill[white] (-4,0) rectangle (116,10);

\fill[black!40!white] (0,0) rectangle (12,10);
\fill[black!40!white] (12,0) rectangle (13,9);
\fill[white] (0,0) rectangle (1,9);
\draw (6,0) node[below]{\scriptsize $\omega_{(L-1)\mathscr{N}_0}$};

\draw (16,0) node[below]{\scriptsize $\to$};

\fill[black!40!white] (20,0) rectangle (33,10);
\fill[white] (20,0) rectangle (21,9);
\fill[white] (31,9) rectangle (32,10);
\draw (26,0) node[below]{\scriptsize $\omega_{(L-1)\mathscr{N}_0+1}$};

\draw (36,0) node[below]{\scriptsize $\to$};

\fill[black!40!white] (40,0) rectangle (53,10);
\fill[white] (40,0) rectangle (41,9);
\fill[white] (50,9) rectangle (51,10);
\draw (46,0) node[below]{\scriptsize $\omega_{(L-1)\mathscr{N}_0+2}$};

\draw (56,0) node[below]{\scriptsize $\to$};
\draw (66,0) node[below]{\scriptsize $\cdots$};
\draw (76,0) node[below]{\scriptsize $\to$};

\fill[black!40!white] (80,0) rectangle (93,10);
\fill[white] (80,0) rectangle (81,9);
\fill[white] (81,9) rectangle (82,10);
\draw (86,0) node[below]{\scriptsize $\omega_{L\mathscr{N}_0-1}$};

\draw (96,0) node[below]{\scriptsize $\to$};

\fill[black!40!white] (101,0) rectangle (113,10);
\draw (106,0) node[below]{\scriptsize $\omega_{L\mathscr{N}_0}$};

\draw[very thin] (-4,0) grid (116,10);
\foreach \i in {0,1,2,4,5} {
\draw[thick] (0+20*\i,0) rectangle (12+20*\i,10);
}
\end{tikzpicture}\caption{\label{Fig6.1}Path $\omega:\,\bm{\sigma^{k}}\to\bm{\sigma^{k+1}}$
constructed in the proof of Lemma \ref{lem:E barrier UB}.}
\end{figure}

It suffices to prove that
\begin{equation}
\Phi(\bm{\sigma^{k}},\,\bm{\sigma^{k+1}})\le\mathbb{H}_{0}+4\quad\text{for all}\quad k\in\mathbb{T}_{K}.\label{eq:UB WTS}
\end{equation}
Indeed, we may iterate \eqref{eq:UB WTS} along the sequence $\bm{\sigma^{k}}\to\bm{\sigma^{k+1}}\to\cdots\to\bm{\sigma^{k'}}$
and apply \eqref{eq:Phi max ineq} to conclude the proof of Lemma
\ref{lem:E barrier UB}.

Recall \eqref{eq:eta-xy def} and define $\omega_{n+1}:=(\omega_{n})^{(k+\mathscr{N}_{0}-p-1,\,q)\leftrightarrow(k+\mathscr{N}_{0}-p,\,q)}$
for $n=p+\mathscr{N}_{0}q$ where $p\in[0,\,\mathscr{N}_{0}-1]$ and
$q\in[0,\,L-1]$ (refer to Figure \ref{Fig6.1} for an illustration).
Then, $\omega$ is a path from $\bm{\sigma^{k}}$ and $\bm{\sigma^{k+1}}$
and it is straightforward to check that:\smallskip{}
\begin{itemize}
\item $\mathbb{H}(\omega_{0})=\mathbb{H}(\omega_{L\mathscr{N}_{0}})=\mathbb{H}_{0}$;
\item $\mathbb{H}(\omega_{p+\mathscr{N}_{0}q})=\mathbb{H}_{0}+2$ if $p=0$
and $q\in[1,\,L-1]$;
\item $\mathbb{H}(\omega_{p+\mathscr{N}_{0}q})=\mathbb{H}_{0}+3$ if $p\in[1,\,\mathscr{N}_{0}-1]$
and $q\in\{0,\,L-1\}$;
\item $\mathbb{H}(\omega_{p+\mathscr{N}_{0}q})=\mathbb{H}_{0}+4$ if $p\in[1,\,\mathscr{N}_{0}-1]$
and $q\in[1,\,L-2]$.\smallskip{}
\end{itemize}
In particular, it holds that $\Phi_{\omega}=\mathbb{H}_{0}+4$ (cf.
\eqref{eq:Phi-omega def}), thus the existence of the path $\omega:\,\bm{\sigma^{k}}\to\bm{\sigma^{k+1}}$
readily proves that $\Phi(\bm{\sigma^{k}},\,\bm{\sigma^{k+1}})\le\mathbb{H}_{0}+4$.
This completes the proof of \eqref{eq:UB WTS}.
\end{proof}

\section{\label{sec7}Energy Landscape and Lower Bound of Energy Barrier}

In this section, we explore more detailed features of the energy landscape
of the model. Unless otherwise specified, we fix $k\in\mathbb{T}_{K}$
throughout this section. First, we introduce some notation.
\begin{notation}
\label{nota:allowed jump path}$ $
\begin{itemize}
\item A jump from $\eta$ to $\xi$ is an \emph{allowed} jump if $\eta\sim\xi$
and $\mathbb{H}(\eta),\,\mathbb{H}(\xi)\le\mathbb{H}_{0}+4$.
\item A path $\omega=(\omega_{n})_{n=0}^{N}$ is an \emph{allowed} path
if $\Phi_{\omega}\le\mathbb{H}_{0}+4$, i.e., if $\mathbb{H}(\omega_{n})\le\mathbb{H}_{0}+4$
for all $n\in[0,\,N]$.
\end{itemize}
\end{notation}

We adopt the following notation of neighborhoods from \cite[Definition 6.2]{KS IsingPotts-2D}:
\begin{defn}[Neighborhoods]
\label{def:nbd}$ $
\begin{itemize}
\item For $\eta\in\Omega$, define 
\[
\mathcal{N}(\eta):=\{\xi\in\Omega:\,\Phi(\eta,\,\xi)<\mathbb{H}_{0}+4\}\quad\text{and}\quad\widehat{\mathcal{N}}(\eta):=\{\xi\in\Omega:\,\Phi(\eta,\,\xi)\le\mathbb{H}_{0}+4\}.
\]
\item For $\mathcal{B}\subseteq\Omega$ and $\eta\notin\mathcal{B}$, define
\[
\widehat{\mathcal{N}}(\eta;\mathcal{B}):=\{\xi\in\Omega:\,\text{there exists an allowed path}\quad\omega:\,\eta\to\xi\quad\text{such that}\quad\omega\cap\mathcal{B}=\emptyset\}.
\]
In this terminology, it is clear that $\widehat{\mathcal{N}}(\eta;\emptyset)=\widehat{\mathcal{N}}(\eta)$.
\item For disjoint subsets $\mathcal{A},\,\mathcal{B}\subseteq\Omega$,
define
\[
\mathcal{N}(\mathcal{A}):=\bigcup_{\eta\in\mathcal{A}}\mathcal{N}(\eta),\quad\widehat{\mathcal{N}}(\mathcal{A}):=\bigcup_{\eta\in\mathcal{A}}\widehat{\mathcal{N}}(\eta)\quad\text{and}\quad\widehat{\mathcal{N}}(\mathcal{A};\mathcal{B}):=\bigcup_{\eta\in\mathcal{A}}\widehat{\mathcal{N}}(\eta;\mathcal{B}).
\]
\end{itemize}
\end{defn}

According to Definition \ref{def:nbd} and \eqref{eq:Kawa Omega-bar},
it is clear that 
\begin{equation}
\overline{\Omega}=\widehat{\mathcal{N}}(\mathcal{S}).\label{eq:Omega-bar N-hatS}
\end{equation}
The following decomposition lemma is useful along the investigation.
\begin{lem}[Decomposition of neighborhoods]
\label{lem:nbd dec}Fix two disjoint subsets $\mathcal{A},\,\mathcal{B}\subseteq\Omega$.
\begin{enumerate}
\item It holds that $\widehat{\mathcal{N}}(\mathcal{A}\cup\mathcal{B})=\widehat{\mathcal{N}}(\mathcal{A};\mathcal{B})\cup\widehat{\mathcal{N}}(\mathcal{B};\mathcal{A}).$
\item It holds that $\widehat{\mathcal{N}}(\mathcal{A}\cup\mathcal{B})=\widehat{\mathcal{N}}(\mathcal{A};\mathcal{B})\cup\widehat{\mathcal{N}}(\mathcal{B};\widehat{\mathcal{N}}(\mathcal{A};\mathcal{B})).$
\end{enumerate}
\end{lem}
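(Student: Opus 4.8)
The plan is to deduce part (2) from part (1) by a path-surgery (``last-visit'') argument. First I would dispatch the inclusion ``$\supseteq$'' by monotonicity: dropping a set that is to be avoided only enlarges the corresponding neighborhood, so $\widehat{\mathcal{N}}(\mathcal{A};\mathcal{B})\subseteq\widehat{\mathcal{N}}(\mathcal{A})$ and $\widehat{\mathcal{N}}(\mathcal{B};\widehat{\mathcal{N}}(\mathcal{A};\mathcal{B}))\subseteq\widehat{\mathcal{N}}(\mathcal{B})$, while by definition $\widehat{\mathcal{N}}(\mathcal{A})\cup\widehat{\mathcal{N}}(\mathcal{B})=\widehat{\mathcal{N}}(\mathcal{A}\cup\mathcal{B})$; this already yields $\widehat{\mathcal{N}}(\mathcal{A};\mathcal{B})\cup\widehat{\mathcal{N}}(\mathcal{B};\widehat{\mathcal{N}}(\mathcal{A};\mathcal{B}))\subseteq\widehat{\mathcal{N}}(\mathcal{A}\cup\mathcal{B})$. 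At this stage I would also note that $\widehat{\mathcal{N}}(\mathcal{A};\mathcal{B})\cap\mathcal{B}=\emptyset$ (the endpoint of a $\mathcal{B}$-avoiding path cannot lie in $\mathcal{B}$), so that $\widehat{\mathcal{N}}(\mathcal{B};\widehat{\mathcal{N}}(\mathcal{A};\mathcal{B}))$ is indeed a legitimate object in the sense of Definition \ref{def:nbd}.

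For the reverse inclusion I would invoke part (1), which gives $\widehat{\mathcal{N}}(\mathcal{A}\cup\mathcal{B})=\widehat{\mathcal{N}}(\mathcal{A};\mathcal{B})\cup\widehat{\mathcal{N}}(\mathcal{B};\mathcal{A})$. Hence it suffices to show that every $\xi\in\widehat{\mathcal{N}}(\mathcal{B};\mathcal{A})$ which does not already belong to $\widehat{\mathcal{N}}(\mathcal{A};\mathcal{B})$ lies in $\widehat{\mathcal{N}}(\mathcal{B};\widehat{\mathcal{N}}(\mathcal{A};\mathcal{B}))$. So I would fix such a $\xi$ together with an allowed path $\omega=(\omega_n)_{n=0}^{N}$ from some $\omega_0\in\mathcal{B}$ to $\omega_N=\xi$ with $\omega\cap\mathcal{A}=\emptyset$, let $m$ be the \emph{largest} index with $\omega_m\in\mathcal{B}$, and pass to the tail $\omega'=(\omega_m,\dots,\omega_N)$. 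By maximality of $m$ and $\omega\cap\mathcal{A}=\emptyset$, we have $\omega_n\notin\mathcal{A}\cup\mathcal{B}$ for all $n\in[m+1,\,N]$, so $\omega'$ is an allowed path from $\omega_m\in\mathcal{B}$ to $\xi$ meeting $\mathcal{A}\cup\mathcal{B}$ only at its initial point.

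The crux is to verify that $\omega'$ avoids $\widehat{\mathcal{N}}(\mathcal{A};\mathcal{B})$. Suppose not, say $\omega_n\in\widehat{\mathcal{N}}(\mathcal{A};\mathcal{B})$ for some $n\in[m,\,N]$. Since $\omega_m\in\mathcal{B}$ and $\mathcal{B}\cap\widehat{\mathcal{N}}(\mathcal{A};\mathcal{B})=\emptyset$ we must have $n>m$, and there is an allowed path $\widetilde{\omega}$ from $\mathcal{A}$ to $\omega_n$ with $\widetilde{\omega}\cap\mathcal{B}=\emptyset$. Concatenating $\widetilde{\omega}$ with $(\omega_n,\omega_{n+1},\dots,\omega_N)$ produces an allowed path from $\mathcal{A}$ to $\xi$, and it avoids $\mathcal{B}$: the first piece does by hypothesis, and the second piece does by maximality of $m$ since $n>m$. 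This would force $\xi\in\widehat{\mathcal{N}}(\mathcal{A};\mathcal{B})$, contradicting the choice of $\xi$. Hence $\omega'$ avoids $\widehat{\mathcal{N}}(\mathcal{A};\mathcal{B})$, which gives $\xi\in\widehat{\mathcal{N}}(\omega_m;\widehat{\mathcal{N}}(\mathcal{A};\mathcal{B}))\subseteq\widehat{\mathcal{N}}(\mathcal{B};\widehat{\mathcal{N}}(\mathcal{A};\mathcal{B}))$, completing the proof.

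The step I expect to require the most care is exactly this truncation-then-concatenation: concatenating a $\mathcal{B}$-avoiding path from $\mathcal{A}$ with the \emph{whole} path $\omega$ need not avoid $\mathcal{B}$, precisely because $\omega$ starts in $\mathcal{B}$; it is the passage to the tail after the last visit to $\mathcal{B}$ that makes the glued path stay $\mathcal{B}$-free and thereby propagates the stronger ``avoid $\widehat{\mathcal{N}}(\mathcal{A};\mathcal{B})$'' constraint. (Should part (1) not be taken as given, it is obtained by the same last-visit surgery applied directly to $\mathcal{A}\cup\mathcal{B}$.)
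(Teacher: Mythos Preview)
Your argument is correct. Both you and the paper take part~(1) as the starting point for part~(2), but the reductions differ. The paper applies part~(1) not to the original pair $(\mathcal{A},\mathcal{B})$ but to the pair $(\widehat{\mathcal{N}}(\mathcal{A};\mathcal{B}),\mathcal{B})$, obtaining
\[
\widehat{\mathcal{N}}\big(\widehat{\mathcal{N}}(\mathcal{A};\mathcal{B})\cup\mathcal{B}\big)=\widehat{\mathcal{N}}(\widehat{\mathcal{N}}(\mathcal{A};\mathcal{B});\mathcal{B})\cup\widehat{\mathcal{N}}(\mathcal{B};\widehat{\mathcal{N}}(\mathcal{A};\mathcal{B})),
\]
and then reduces the claim to two auxiliary identities, $\widehat{\mathcal{N}}(\widehat{\mathcal{N}}(\mathcal{A};\mathcal{B})\cup\mathcal{B})=\widehat{\mathcal{N}}(\mathcal{A}\cup\mathcal{B})$ and the idempotence $\widehat{\mathcal{N}}(\widehat{\mathcal{N}}(\mathcal{A};\mathcal{B});\mathcal{B})=\widehat{\mathcal{N}}(\mathcal{A};\mathcal{B})$, each proved by straightforward concatenation. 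You instead apply part~(1) to $(\mathcal{A},\mathcal{B})$ itself and handle the residual inclusion $\widehat{\mathcal{N}}(\mathcal{B};\mathcal{A})\setminus\widehat{\mathcal{N}}(\mathcal{A};\mathcal{B})\subseteq\widehat{\mathcal{N}}(\mathcal{B};\widehat{\mathcal{N}}(\mathcal{A};\mathcal{B}))$ in one shot via the last-visit truncation and a single concatenation. The underlying mechanism (glue a $\mathcal{B}$-avoiding path from $\mathcal{A}$ onto a $\mathcal{B}$-free tail) is the same; your route is a bit more direct and avoids naming the two closure identities, while the paper's route isolates those identities, which are natural standalone facts about the $\widehat{\mathcal{N}}(\cdot;\cdot)$ operator.
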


We provide a proof of Lemma \ref{lem:nbd dec} in Appendix \ref{appenA}.

\subsection{\label{sec7.1}Shallow bottoms}

First, we define a specific collection of configurations called \emph{shallow
bottoms}. Refer to Figure \ref{Fig7.1} for illustrations.

\begin{figure}
\begin{tikzpicture}[scale=0.115]
\fill[white] (-4,0) rectangle (56,10);

\fill[black!40!white] (1,0) rectangle (12,10);
\fill[black!40!white] (0,4) rectangle (1,10);
\fill[black!40!white] (12,5) rectangle (13,9);
\draw (6,0) node[below]{\scriptsize $\bm{\sigma_{4;4,5}^k}$};

\fill[black!40!white] (21,0) rectangle (32,10);
\fill[black!40!white] (20,3) rectangle (21,7);
\fill[black!40!white] (32,2) rectangle (33,8);
\draw (26,0) node[below]{\scriptsize $\bm{\sigma_{6;3,2}^k}$};

\fill[black!40!white] (41,0) rectangle (52,10);
\fill[black!40!white] (40,5) rectangle (41,8);
\fill[black!40!white] (52,1) rectangle (53,8);
\draw (46,0) node[below]{\scriptsize $\bm{\sigma_{7;5,1}^k}$};

\draw[very thin] (-4,0) grid (56,10);
\foreach \i in {0,1,2} {
\draw[thick] (0+20*\i,0) rectangle (12+20*\i,10);
}
\end{tikzpicture}\caption{\label{Fig7.1}Examples of shallow bottoms defined in Definition \ref{def:shallow bot}.}
\end{figure}
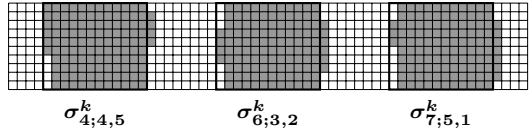

\begin{defn}[Shallow bottoms $\bm{\sigma_{m;\ell,\ell'}^{k}}$]
\label{def:shallow bot}For $m\in[1,\,L-1]$ and $\ell,\,\ell'\in\mathbb{T}_{L}$,
let $\bm{\sigma_{m;\ell,\ell'}^{k}}\in\Omega$ be the configuration
such that (cf. Definition \ref{def:bridge cross stick})
\[
\{x\in V:\,\bm{\sigma_{m;\ell,\ell'}^{k}}(x)=1\}=\mathfrak{c}^{[k+1,\,k+\mathscr{N}_{0}-1]}\cup\mathfrak{s}_{L-m;\ell}^{k}\cup\mathfrak{s}_{m;\ell'}^{k+\mathscr{N}_{0}}.
\]
Then, collect 
\begin{equation}
\mathcal{S}_{m}^{k}:=\{\bm{\sigma_{m;\ell,\ell'}^{k}}:\,\ell,\,\ell'\in\mathbb{T}_{L}\}.\label{eq:Skm def}
\end{equation}
It is easy to check that $\mathbb{H}(\bm{\sigma_{m;\ell,\ell'}^{k}})=\mathbb{H}_{0}+2$.
\end{defn}

First, we investigate some basic properties of $\bm{\sigma_{m;\ell,\ell'}^{k}}$.
Recall Definition \ref{def:nbd}.
\begin{lem}
\label{lem:shallow bot}Fix $\ell,\,\ell'\in\mathbb{T}_{L}$.
\begin{enumerate}
\item For all $m\in[1,\,L-1]$ and $\ell'',\,\ell'''\in\mathbb{T}_{L}$,
it holds that $\bm{\sigma_{m;\ell'',\ell'''}^{k}}\in\mathcal{N}(\bm{\sigma_{m;\ell,\ell'}^{k}})$.
\item We have $\bm{\sigma_{1;\ell,\ell'}^{k}}\in\mathcal{N}(\bm{\sigma^{k}})$
and $\bm{\sigma_{L-1;\ell,\ell'}^{k}}\in\mathcal{N}(\bm{\sigma^{k+1}})$.
\item For all $m\in[1,\,L-1]$, it holds that $\bm{\sigma_{m;\ell,\ell'}^{k}}\in\widehat{\mathcal{N}}(\mathcal{S})$.
\end{enumerate}
\end{lem}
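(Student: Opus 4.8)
The plan is to prove part (1) by an explicit ``stick--sliding'' argument, and then to deduce parts (2) and (3) cheaply from part (1) together with the path $\omega:\bm{\sigma^{k}}\to\bm{\sigma^{k+1}}$ already constructed in the proof of Lemma \ref{lem:E barrier UB}.

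\textbf{Part (1).} The key observation is that in $\bm{\sigma_{m;\ell,\ell'}^{k}}$ each of the two non-full columns $\mathfrak{c}^{k}$ and $\mathfrak{c}^{k+\mathscr{N}_{0}}$ carries a single stick lying against the full block $\mathfrak{c}^{[k+1,\,k+\mathscr{N}_{0}-1]}$, and such a stick can be translated within its own column by one unit without ever raising the energy above $\mathbb{H}_{0}+3$. Concretely, consider the stick in $\mathfrak{c}^{k}$, which has length $L-m$ and leaves $m$ contiguous empty sites in $\mathfrak{c}^{k}$. If $2\le m\le L-2$, I would peel the extremal particle of the stick onto the adjacent empty site of $\mathfrak{c}^{k}$ (energy $\mathbb{H}_{0}+2\to\mathbb{H}_{0}+3$, since the peeled particle retains exactly its single horizontal bond to $\mathfrak{c}^{k+1}$), move it one site at a time through the block of empty sites of $\mathfrak{c}^{k}$ --- at each intermediate position it is glued to $\mathfrak{c}^{k+1}$ by exactly one bond, so the energy stays at $\mathbb{H}_{0}+3$ --- and re-attach it at the opposite end of the stick (energy $\mathbb{H}_{0}+3\to\mathbb{H}_{0}+2$), the net effect being a unit shift of the stick. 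In the degenerate cases $m=1$ (stick of length $L-1$, equivalently a single hole in $\mathfrak{c}^{k}$) and $m=L-1$ (stick of length $1$, i.e. a lone particle glued to $\mathfrak{c}^{k+1}$), a single exchange already shifts the stick at constant energy $\mathbb{H}_{0}+2$. The identical reasoning applies to the stick in $\mathfrak{c}^{k+\mathscr{N}_{0}}$, of length $m$ with $L-m$ empty sites. Since these two sliding procedures take place in disjoint columns and leave everything else untouched, iterating them --- first bringing the $\mathfrak{c}^{k}$-stick from position $\ell$ to $\ell''$, then the $\mathfrak{c}^{k+\mathscr{N}_{0}}$-stick from $\ell'$ to $\ell'''$ --- yields an allowed path of height $\le\mathbb{H}_{0}+3<\mathbb{H}_{0}+4$ joining $\bm{\sigma_{m;\ell,\ell'}^{k}}$ to $\bm{\sigma_{m;\ell'',\ell'''}^{k}}$, which is precisely $\bm{\sigma_{m;\ell'',\ell'''}^{k}}\in\mathcal{N}(\bm{\sigma_{m;\ell,\ell'}^{k}})$.

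\textbf{Parts (2) and (3).} Reading off the path $\omega=(\omega_{n})_{n=0}^{L\mathscr{N}_{0}}$ from the proof of Lemma \ref{lem:E barrier UB}, one checks that $\omega_{\mathscr{N}_{0}m}=\bm{\sigma_{m;m,0}^{k}}$ for each $m\in[1,\,L-1]$, and that the two end-segments $\omega_{0},\dots,\omega_{\mathscr{N}_{0}}$ and $\omega_{(L-1)\mathscr{N}_{0}},\dots,\omega_{L\mathscr{N}_{0}}$ have height only $\mathbb{H}_{0}+3$ (the value $\mathbb{H}_{0}+4$ is attained only along the interior rows $q\in[1,\,L-2]$). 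Hence $\bm{\sigma_{1;1,0}^{k}}\in\mathcal{N}(\bm{\sigma^{k}})$ and $\bm{\sigma_{L-1;L-1,0}^{k}}\in\mathcal{N}(\bm{\sigma^{k+1}})$, and combining this with part (1) and the sub-additivity \eqref{eq:Phi max ineq} of $\Phi$ (which makes both $\mathcal{N}$ and $\widehat{\mathcal{N}}$ ``transitive'') gives part (2). For part (3), the full path $\omega$ has height $\mathbb{H}_{0}+4$, so $\bm{\sigma_{m;m,0}^{k}}=\omega_{\mathscr{N}_{0}m}\in\widehat{\mathcal{N}}(\bm{\sigma^{k}})\subseteq\widehat{\mathcal{N}}(\mathcal{S})$ by \eqref{eq:Omega-bar N-hatS}; one more application of part (1) and \eqref{eq:Phi max ineq} extends this to every $\bm{\sigma_{m;\ell,\ell'}^{k}}$.

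The main obstacle is the bookkeeping inside part (1): one must verify carefully that along the peel--slide--re-attach move the migrating particle keeps exactly one bond at every intermediate step (so the height never jumps to $\mathbb{H}_{0}+4$, which would be fatal for membership in $\mathcal{N}$), and handle the small-$m$ and large-$m$ configurations as separate cases; once the geometry is laid out, everything reduces to a routine count using the identity $\mathbb{H}(\eta)=-2L\mathscr{N}_{0}+\tfrac{1}{2}\mathbb{I}(\eta)$ of \eqref{eq:H I identity}.
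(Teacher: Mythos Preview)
Your proposal is correct and follows essentially the same strategy as the paper: a stick-sliding argument for part (1), then invoking the canonical path $\omega:\bm{\sigma^{k}}\to\bm{\sigma^{k+1}}$ from Lemma \ref{lem:E barrier UB} (noting $\omega_{m\mathscr{N}_{0}}=\bm{\sigma_{m;m,0}^{k}}$ and that the two terminal segments have height only $\mathbb{H}_{0}+3$) to get parts (2) and (3).

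The one cosmetic difference is in the mechanics of the slide in part (1). The paper shifts the stick in $\mathfrak{c}^{k}$ by propagating a \emph{hole} through the occupied interval (the ``caterpillar'': move the top particle up, then the one below it up, and so on), so the intermediate configurations have two adjacent sticks separated by a single gap. Your version instead peels the extremal particle and walks it through the \emph{empty} block to re-attach on the other side. Both give exactly the same height $\mathbb{H}_{0}+3$ (one vertical bond lost relative to $\bm{\sigma_{m;\ell,\ell'}^{k}}$), so either works; your approach is arguably a bit more transparent in the degenerate cases $m\in\{1,L-1\}$, which you handle explicitly while the paper's formulation absorbs them silently.
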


\begin{proof}
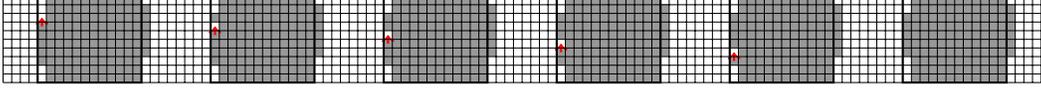
\begin{figure}
\begin{tikzpicture}[scale=0.115]
\fill[white] (-4,0) rectangle (116,10);

\fill[black!40!white] (1,0) rectangle (12,10);
\fill[black!40!white] (0,2) rectangle (1,7);
\fill[black!40!white] (12,3) rectangle (13,9);
\draw[->,thick,red] (0.5,6.5)--(0.5,7.5);

\fill[black!40!white] (21,0) rectangle (32,10);
\fill[black!40!white] (20,2) rectangle (21,6);
\fill[black!40!white] (20,7) rectangle (21,8);
\fill[black!40!white] (32,3) rectangle (33,9);
\draw[->,thick,red] (20.5,5.5)--(20.5,6.5);

\fill[black!40!white] (41,0) rectangle (52,10);
\fill[black!40!white] (40,2) rectangle (41,5);
\fill[black!40!white] (40,6) rectangle (41,8);
\fill[black!40!white] (52,3) rectangle (53,9);
\draw[->,thick,red] (40.5,4.5)--(40.5,5.5);

\fill[black!40!white] (61,0) rectangle (72,10);
\fill[black!40!white] (60,2) rectangle (61,4);
\fill[black!40!white] (60,5) rectangle (61,8);
\fill[black!40!white] (72,3) rectangle (73,9);
\draw[->,thick,red] (60.5,3.5)--(60.5,4.5);

\fill[black!40!white] (81,0) rectangle (92,10);
\fill[black!40!white] (80,2) rectangle (81,3);
\fill[black!40!white] (80,4) rectangle (81,8);
\fill[black!40!white] (92,3) rectangle (93,9);
\draw[->,thick,red] (80.5,2.5)--(80.5,3.5);

\fill[black!40!white] (101,0) rectangle (112,10);
\fill[black!40!white] (100,3) rectangle (101,8);
\fill[black!40!white] (112,3) rectangle (113,9);

\draw[very thin] (-4,0) grid (116,10);
\foreach \i in {0,...,5} {
\draw[thick] (0+20*\i,0) rectangle (12+20*\i,10);
}
\end{tikzpicture}\caption{\label{Fig7.2}Path from $\bm{\sigma_{m;\ell,\ell'}^{k}}$ to $\bm{\sigma_{m;\ell+1,\ell'}^{k}}$
with height $\mathbb{H}_{0}+3$ explained in the proof of Lemma \ref{lem:shallow bot}-(1).}
\end{figure}

\noindent (1) First, we prove that $\bm{\sigma_{m;\ell'',\ell'}^{k}}\in\mathcal{N}(\bm{\sigma_{m;\ell,\ell'}^{k}})$.
By iterating through $\ell\to\ell+1\to\cdots\to\ell''$, it suffices
to demonstrate that $\bm{\sigma_{m;\ell+1,\ell'}^{k}}\in\mathcal{N}(\bm{\sigma_{m;\ell,\ell'}^{k}})$.
To prove this, from $\bm{\sigma_{m;\ell,\ell'}^{k}}$ we move the
particle at $(k,\,\ell+L-m-i)$ to $(k,\,\ell+L-m-i+1)$ consecutively
for $i\in[1,\,L-m]$ as depicted in Figure \ref{Fig7.2}. This path
from $\bm{\sigma_{m;\ell,\ell'}^{k}}$ to $\bm{\sigma_{m;\ell+1,\ell'}^{k}}$
has height $\mathbb{H}_{0}+3$, thus it proves that $\bm{\sigma_{m;\ell+1,\ell'}^{k}}\in\mathcal{N}(\bm{\sigma_{m;\ell,\ell'}^{k}})$
as desired. Moreover, we may construct a similar path of particle
movements inside column $\mathfrak{c}^{k+\mathscr{N}_{0}}$ to deduce
that $\bm{\sigma_{m;\ell'',\ell'''}^{k}}\in\mathcal{N}(\bm{\sigma_{m;\ell'',\ell'}^{k}})$.
Thus, we conclude that $\bm{\sigma_{m;\ell'',\ell'''}^{k}}\in\mathcal{N}(\bm{\sigma_{m;\ell,\ell'}^{k}})$.

\noindent (2) For the first statement, by part (1), it suffices to
prove that $\bm{\sigma_{1;1,0}^{k}}\in\mathcal{N}(\bm{\sigma^{k}})$.
This is demonstrated by the subpath $(\omega_{n})_{n=0}^{\mathscr{N}_{0}}$
of the path $\omega:\,\bm{\sigma^{k}}\to\bm{\sigma^{k+1}}$ defined
in the proof of Lemma \ref{lem:E barrier UB} (see also the first
line of figures in Figure \ref{Fig6.1}), which has height $\mathbb{H}_{0}+3$
as observed in the proof. The second statement can be proved in the
same way.

\noindent (3) By part (1), it suffices to prove that $\bm{\sigma_{m;m,0}^{k}}\in\widehat{\mathcal{N}}(\mathcal{S})$
for all $m\in[1,\,L-1]$. As in the proof of part (2), each $\bm{\sigma_{m;m,0}^{k}}$
appears in the allowed path $\omega:\,\bm{\sigma^{k}}\to\bm{\sigma^{k+1}}$
defined in the proof of Lemma \ref{lem:E barrier UB} and this implies
that $\bm{\sigma_{m;m,0}^{k}}\in\widehat{\mathcal{N}}(\mathcal{S})$
as desired.
\end{proof}

\subsection{\label{sec7.2}Energetic valleys}

In this subsection, we define \emph{energetic valleys} around the
ground states $\bm{\sigma^{k}}$ and the shallow bottoms $\bm{\sigma_{m;\ell,\ell'}^{k}}$.
\begin{defn}
\label{def:energetic valleys}Recall Definition \ref{def:nbd} and
define 
\[
\mathcal{N}^{k}:=\mathcal{N}(\bm{\sigma^{k}}).
\]
Moreover, by Lemma \ref{lem:shallow bot}-(1), we may define 
\[
\mathcal{N}_{m}^{k}:=\mathcal{N}(\bm{\sigma_{m;\ell,\ell'}^{k}})\quad\text{for}\quad m\in[1,\,L-1]\quad\text{and}\quad\ell,\,\ell'\in\mathbb{T}_{L},
\]
which does not depend on the selection of $\ell$ and $\ell'$. Then,
collect
\begin{equation}
\mathcal{V}:=\bigcup_{k\in\mathbb{T}_{K}}\mathcal{N}^{k}\cup\bigcup_{k\in\mathbb{T}_{K}}\bigcup_{m=2}^{L-2}\mathcal{N}_{m}^{k}.\label{eq:V def}
\end{equation}
Note that we may exclude $\mathcal{N}_{1}^{k}$ and $\mathcal{N}_{L-1}^{k}$
in \eqref{eq:V def} since $\mathcal{N}_{1}^{k}=\mathcal{N}^{k}$
and $\mathcal{N}_{L-1}^{k}=\mathcal{N}^{k+1}$ by Lemma \ref{lem:shallow bot}-(2).
\end{defn}

\subsubsection{\label{sec7.2.1}Collections $\mathcal{N}_{m}^{k}$ for $m\in[2,\,L-2]$}

Here, we investigate the local geometry of the valleys that appear
in the right-hand side of \eqref{eq:V def}. First, we handle $\mathcal{N}_{m}^{k}$
for $m\in[2,\,L-2]$.

\begin{figure}
\begin{tikzpicture}[scale=0.115]
\fill[white] (-4,0) rectangle (76,10);

\fill[black!40!white] (1,0) rectangle (12,10);
\fill[black!40!white] (0,0) rectangle (1,3);
\fill[black!40!white] (0,7) rectangle (1,8);
\fill[black!40!white] (12,0) rectangle (13,6);

\fill[black!40!white] (21,0) rectangle (32,10);
\fill[black!40!white] (20,0) rectangle (21,2);
\fill[black!40!white] (20,3) rectangle (21,5);
\fill[black!40!white] (32,0) rectangle (33,6);

\fill[black!40!white] (41,0) rectangle (52,10);
\fill[black!40!white] (40,0) rectangle (41,4);
\fill[black!40!white] (52,0) rectangle (53,5);
\fill[black!40!white] (52,7) rectangle (53,8);

\fill[black!40!white] (61,0) rectangle (72,10);
\fill[black!40!white] (60,0) rectangle (61,4);
\fill[black!40!white] (72,0) rectangle (73,2);
\fill[black!40!white] (72,3) rectangle (73,7);

\draw[very thin] (-4,0) grid (76,10);
\foreach \i in {0,...,3} {
\draw[thick] (0+20*\i,0) rectangle (12+20*\i,10);
}
\end{tikzpicture}\caption{\label{Fig7.3}Examples of configurations in $\mathcal{N}_{m}^{k}$
for $m\in[2,\,L-2]$.}
\end{figure}
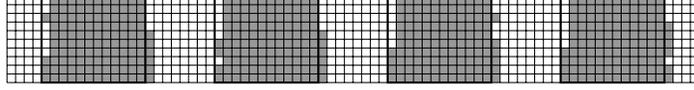

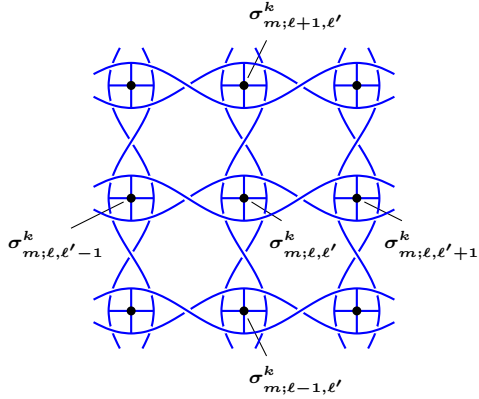
\begin{figure}
\begin{tikzpicture}
\draw[thin] (0.1,-0.05)--(0.5,-0.3); \draw (0.8,-0.3) node[below]{\tiny $\bm{\sigma_{m;\ell,\ell'}^k}$};
\draw[thin] (1.6,-0.05)--(2.1,-0.3); \draw (2.5,-0.3) node[below]{\tiny $\bm{\sigma_{m;\ell,\ell'+1}^k}$};
\draw[thin] (-1.6,-0.05)--(-2.3,-0.4); \draw (-2.5,-0.3) node[below]{\tiny $\bm{\sigma_{m;\ell,\ell'-1}^k}$};
\draw[thin] (0.05,1.6)--(0.3,2.1); \draw (0.7,2.1) node[above]{\tiny $\bm{\sigma_{m;\ell+1,\ell'}^k}$};
\draw[thin] (0.05,-1.6)--(0.3,-2.1); \draw (0.7,-2.1) node[below]{\tiny $\bm{\sigma_{m;\ell-1,\ell'}^k}$};

\foreach \i in {-1.5,0,1.5} {
\draw[blue,thick] plot[domain=-2:2,smooth]({cos(120*\x)*0.3+\i},{\x});
\fill[white] (\i,0.75) circle (0.05);
\fill[white] (\i,-0.75) circle (0.05);
\draw[blue,thick] plot[domain=-2:2,smooth]({-cos(120*\x)*0.3+\i},{\x});
}

\foreach \i in {-1.5,0,1.5} { \foreach \j in {-1.5,0,1.5} {
\fill[white] (0.25-\i,0.25-\j) circle (0.05);
\fill[white] (0.25-\i,-0.25-\j) circle (0.05);
\fill[white] (-0.25-\i,0.25-\j) circle (0.05);
\fill[white] (-0.25-\i,-0.25-\j) circle (0.05);
} }

\foreach \i in {-1.5,0,1.5} {
\draw[blue,thick] plot[domain=-2:2,smooth]({\x},{cos(120*\x)*0.3+\i});
\fill[white] (0.75,\i) circle (0.05);
\fill[white] (-0.75,\i) circle (0.05);
\draw[blue,thick] plot[domain=-2:2,smooth]({\x},{-cos(120*\x)*0.3+\i});
}

\foreach \i in {-1.5,0,1.5} { \foreach \j in {-1.5,0,1.5} {
\draw[blue,thick] (\i-0.3,\j)--(\i+0.3,\j); \draw[blue,thick] (\i,\j-0.3)--(\i,\j+0.3);
\fill (\i,\j) circle (0.06);
%\draw[] (\i+0.25,\j-0.05)--(\i+0.35,\j+0.05); \draw[] (\i+0.35,\j-0.05)--(\i+0.25,\j+0.05);
%\draw[] (\i-0.25,\j-0.05)--(\i-0.35,\j+0.05); \draw[] (\i-0.35,\j-0.05)--(\i-0.25,\j+0.05);
%\draw[] (\i-0.05,\j+0.25)--(\i+0.05,\j+0.35); \draw[] (\i-0.05,\j+0.35)--(\i+0.05,\j+0.25);
%\draw[] (\i-0.05,\j-0.25)--(\i+0.05,\j-0.35); \draw[] (\i-0.05,\j-0.35)--(\i+0.05,\j-0.25);
} }

\foreach \i in {-0.75,0.75} { \foreach \j in {-1.5,0,1.5} {
%\draw (\i+0.2,\j+0.1)--(\i+0.3,\j+0.2); \draw (\i+0.3,\j+0.1)--(\i+0.2,\j+0.2);
%\draw (\i-0.2,\j+0.1)--(\i-0.3,\j+0.2); \draw (\i-0.3,\j+0.1)--(\i-0.2,\j+0.2);
%\draw (\i+0.2,\j-0.1)--(\i+0.3,\j-0.2); \draw (\i+0.3,\j-0.1)--(\i+0.2,\j-0.2);
%\draw (\i-0.22,\j-0.1)--(\i-0.3,\j-0.2); \draw (\i-0.3,\j-0.1)--(\i-0.2,\j-0.2);
} }

\foreach \i in {-1.5,0,1.5} { \foreach \j in {-0.75,0.75} {
%\draw (\i+0.1,\j+0.2)--(\i+0.2,\j+0.3); \draw (\i+0.2,\j+0.2)--(\i+0.1,\j+0.3);
%\draw (\i-0.1,\j+0.2)--(\i-0.2,\j+0.3); \draw (\i-0.2,\j+0.2)--(\i-0.1,\j+0.3);
%\draw (\i+0.1,\j-0.2)--(\i+0.2,\j-0.3); \draw (\i+0.2,\j-0.2)--(\i+0.1,\j-0.3);
%\draw (\i-0.1,\j-0.2)--(\i-0.2,\j-0.3); \draw (\i-0.2,\j-0.2)--(\i-0.1,\j-0.3);
} }
\end{tikzpicture}\caption{\label{Fig7.4}Structure of $\mathcal{N}_{m}^{k}$ explained in Section
\ref{sec7.2.1}. Each dot represents the configurations in $\mathcal{S}_{m}^{k}$
(cf. \eqref{eq:Skm def}) and the blue lines denote the allowed jumps
between the configurations.}
\end{figure}

Refer to Figure \ref{Fig7.4} for the structure. To characterize the
configurations in $\mathcal{N}_{m}^{k}$ for $m\in[2,\,L-2]$, we
explore the configurations reachable by a path starting from $\bm{\sigma_{m;\ell,\ell'}^{k}}$
for fixed $\ell,\,\ell'\in\mathbb{T}_{L}$ and with height at most
$\mathbb{H}_{0}+3$. One can easily see that any horizontal particle
jumps from $\bm{\sigma_{m;\ell,\ell'}^{k}}$ raise the energy to at
least $\mathbb{H}_{0}+4$. Thus, particle jumps may only occur vertically
in columns $\mathfrak{c}^{k}$ or $\mathfrak{c}^{k+\mathscr{N}_{0}}$,
in four possible ways. If a particle moves in $\mathfrak{c}^{k}$,
then the energy becomes $\mathbb{H}_{0}+3$. Thereafter, the possible
options are to move the empty site inside the occupied sites or move
the occupied site inside the empty sites, thereby return to energy
$\mathbb{H}_{0}+2$ by arriving at another $\bm{\sigma_{m;\ell'',\ell'}^{k}}$
for $\ell''\in\mathbb{T}_{L}$ (illustrated by blue vertical lines
in Figure \ref{Fig7.4}). Note here that we can arrive at $\bm{\sigma_{m;\ell'',\ell'}^{k}}$
for every $\ell''\in\mathbb{T}_{L}$ before visiting any other configurations
with energy $\mathbb{H}_{0}+2$. The same logic holds for the movements
in $\mathfrak{c}^{k+\mathscr{N}_{0}}$ as well (blue horizontal lines
in Figure \ref{Fig7.4}). Thus, we proved that $\mathcal{N}_{m}^{k}$
consists of the configurations obtained from $\mathcal{S}_{m}^{k}$
by particle jumps in $\mathfrak{c}^{k}$ or $\mathfrak{c}^{k+\mathscr{N}_{0}}$
within energy $\mathbb{H}_{0}+3$. See Figure \ref{Fig7.3} for examples
of configurations in $\mathcal{N}_{m}^{k}$. In particular, we proved
the following lemma.
\begin{lem}
\label{lem:Nkm stab plat}For $m\in[2,\,L-2]$, the $L^{2}$ stable
plateaux in $\mathcal{N}_{m}^{k}$ are $\{\bm{\sigma_{m;\ell,\ell'}^{k}}\}$
for all $\ell,\,\ell'\in\mathbb{T}_{L}$.
\end{lem}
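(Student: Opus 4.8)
The plan is to read the stable plateaux of $\mathcal{N}_m^k$ directly off the structural description of this set established in the paragraph preceding the lemma. That discussion shows three things that I will take as given: (i) every configuration of $\mathcal{N}_m^k$ has energy $\mathbb{H}_0+2$ or $\mathbb{H}_0+3$; (ii) the configurations of $\mathcal{N}_m^k$ of energy $\mathbb{H}_0+2$ are exactly the elements of $\mathcal{S}_m^k=\{\bm{\sigma_{m;\ell,\ell'}^k}:\ell,\ell'\in\mathbb{T}_L\}$; and (iii) every configuration of $\mathcal{N}_m^k$ of energy $\mathbb{H}_0+3$ is obtained from $\mathcal{S}_m^k$ by vertical particle exchanges confined to $\mathfrak{c}^k$ or $\mathfrak{c}^{k+\mathscr{N}_0}$ and is joined, inside $\mathcal{N}_m^k$ and by a path of height at most $\mathbb{H}_0+3$, to some $\bm{\sigma_{m;\ell'',\ell'''}^k}\in\mathcal{S}_m^k$. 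I would first record that the $L^2$ configurations $\bm{\sigma_{m;\ell,\ell'}^k}$ are pairwise distinct: since $2\le m\le L-2$, both $\mathfrak{s}_{L-m;\ell}^k$ and $\mathfrak{s}_{m;\ell'}^{k+\mathscr{N}_0}$ are proper contiguous arcs in their columns, so their occupied sets determine $\ell$ and $\ell'$ uniquely.

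For the existence half I would verify Definition \ref{def:stab plat} directly for each singleton $\{\bm{\sigma_{m;\ell,\ell'}^k}\}$: it is nonempty, connected, and of constant energy, so it only remains to see that every $\zeta\in\partial\{\bm{\sigma_{m;\ell,\ell'}^k}\}$ has $\mathbb{H}(\zeta)>\mathbb{H}_0+2$. Such a $\zeta$ differs from $\bm{\sigma_{m;\ell,\ell'}^k}$ by one nearest-neighbour exchange; a horizontal exchange raises the energy to at least $\mathbb{H}_0+4$, and since the only columns of $\bm{\sigma_{m;\ell,\ell'}^k}$ that are neither full nor empty are $\mathfrak{c}^k$ and $\mathfrak{c}^{k+\mathscr{N}_0}$, a vertical exchange occurs in one of these two columns and raises the energy to $\mathbb{H}_0+3$; either way $\mathbb{H}(\zeta)\ge\mathbb{H}_0+3>\mathbb{H}_0+2$. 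Alternatively, since $\mathcal{N}_m^k=\mathcal{N}(\bm{\sigma_{m;\ell,\ell'}^k})\ne\Omega$ is seen via \eqref{eq:Phi max ineq} to be a cycle with $\mathcal{F}(\mathcal{N}_m^k)=\mathcal{S}_m^k$, Lemma \ref{lem:cyc bot P} already shows that the connected components of $\mathcal{S}_m^k$ are stable plateaux, and these components are exactly the singletons by the distinctness and non-adjacency of the $\bm{\sigma_{m;\ell,\ell'}^k}$ noted below.

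To show these are the only stable plateaux in $\mathcal{N}_m^k$, let $\mathcal{P}\subseteq\mathcal{N}_m^k$ be a stable plateau; by (i) its common energy is $\mathbb{H}_0+2$ or $\mathbb{H}_0+3$. If $\mathbb{H}(\mathcal{P})=\mathbb{H}_0+3$, fix $\xi\in\mathcal{P}$ and use (iii) to get a path $\omega:\xi\to S$ inside $\mathcal{N}_m^k$ of height at most $\mathbb{H}_0+3$ with $S\in\mathcal{S}_m^k$; since $\mathbb{H}(S)=\mathbb{H}_0+2\ne\mathbb{H}(\mathcal{P})$ we have $S\notin\mathcal{P}$, so the first vertex of $\omega$ outside $\mathcal{P}$ is a $\zeta\in\partial\mathcal{P}$ with $\mathbb{H}(\zeta)\le\mathbb{H}_0+3=\mathbb{H}(\mathcal{P})$, contradicting Definition \ref{def:stab plat}. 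Hence $\mathbb{H}(\mathcal{P})=\mathbb{H}_0+2$ and $\mathcal{P}\subseteq\mathcal{S}_m^k$ by (ii); and since distinct elements of $\mathcal{S}_m^k$ differ by a translation of a stick of length at least $2$ (either in $\mathfrak{c}^k$ or in $\mathfrak{c}^{k+\mathscr{N}_0}$) and are therefore never nearest neighbours in $\Omega$, the connected set $\mathcal{P}$ is a single $\{\bm{\sigma_{m;\ell,\ell'}^k}\}$. This exhausts the stable plateaux of $\mathcal{N}_m^k$ and gives the count $L^2$.

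The only substantive input is the structural description recalled in the first paragraph, which is exactly what the discussion before the lemma supplies by exhaustively inspecting the allowed moves out of a shallow bottom — in particular that no allowed path out of $\bm{\sigma_{m;\ell,\ell'}^k}$ ever drops below energy $\mathbb{H}_0+2$ (so no ground state, and no energy-$(\mathbb{H}_0+1)$ configuration, lies in $\mathcal{N}_m^k$) and that every energy-$(\mathbb{H}_0+3)$ configuration of $\mathcal{N}_m^k$ links downward to $\mathcal{S}_m^k$. Granting this, the remaining work (distinctness of the $\bm{\sigma_{m;\ell,\ell'}^k}$, the neighbour-energy bound, and the elimination of the $\mathbb{H}_0+3$ level) is routine; I expect the elimination of the $\mathbb{H}_0+3$ level to be the subtlest point, since an individual energy-$(\mathbb{H}_0+3)$ configuration need not be adjacent to a shallow bottom but only connected to one inside $\mathcal{N}_m^k$ below energy $\mathbb{H}_0+4$, which is why the path argument above is phrased in terms of hitting $\partial\mathcal{P}$ along a low path rather than via a single downhill step.
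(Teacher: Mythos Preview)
Your proposal is correct and follows essentially the same approach as the paper: the paper simply states that the lemma follows immediately from the structural description of $\mathcal{N}_m^k$ given in the preceding paragraph (``In particular, we proved the following lemma''), and you have carefully written out why that structural description forces the stable plateaux to be exactly the singletons $\{\bm{\sigma_{m;\ell,\ell'}^{k}}\}$. Your explicit verification of distinctness, the neighbour-energy bound, and the path argument eliminating the $\mathbb{H}_0+3$ level are all sound and make precise what the paper leaves implicit.
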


\subsubsection{\label{sec7.2.2}Collections $\mathcal{N}^{k}$}

We investigate $\mathcal{N}^{k}$ in Section \ref{sec7.2.2}.
\begin{notation}
\label{nota:sitting stick}Suppose that $k\in\mathbb{T}_{K}$ and
$k'\in\mathbb{T}_{K}$ satisfy $k'\in\{k-1,\,k+1\}$. If $W,\,W'$
are sticks in $\mathfrak{c}^{k},\,\mathfrak{c}^{k'}$, respectively
(cf. Definition \ref{def:bridge cross stick}), we say that \emph{$W'$
sits on $W$} and write $W\vdash W'$ if the set of vertical positions
of $W'$ is contained in the set of vertical positions of $W$. In
this case, it is automatic that the length of $W$ is bigger than
or equal to the length of $W'$.
\end{notation}

\begin{figure}
\begin{tikzpicture}[scale=0.115]
\fill[white] (-4,0) rectangle (96,10);

\fill[black!40!white] (0,0) rectangle (11,10);
\fill[black!40!white] (11,0) rectangle (12,9);
\fill[black!40!white] (12,9) rectangle (13,10);
\draw (6,0) node[below]{\scriptsize $\mathcal{F}^k$};

\fill[black!40!white] (20,0) rectangle (31,10);
\fill[black!40!white] (31,0) rectangle (32,9);
\fill[black!40!white] (32,0) rectangle (33,1);
\draw (26,0) node[below]{\scriptsize $\mathcal{R}^k$};

\fill[black!40!white] (40,0) rectangle (51,10);
\fill[black!40!white] (51,0) rectangle (52,8);
\fill[black!40!white] (52,7) rectangle (53,9);
\draw (46,0) node[below]{\scriptsize $\mathcal{R}_{\mathfrak{0}}^k$};

\fill[black!40!white] (60,0) rectangle (72,10);
\fill[black!40!white] (72,0) rectangle (73,1);
\fill[white] (65,5) rectangle (66,6);
\draw (66,0) node[below]{\scriptsize $\mathcal{P}_1^k$};

\fill[black!40!white] (80,0) rectangle (92,10);
\fill[black!40!white] (94,5) rectangle (95,6);
\fill[white] (91,9) rectangle (92,10);
\draw (86,0) node[below]{\scriptsize $\mathcal{Q}_L^{k-1}$};

\draw[very thin] (-4,0) grid (96,10);
\foreach \i in {0,...,4} {
\draw[thick] (0+20*\i,0) rectangle (12+20*\i,10);
}
\end{tikzpicture}\\
\vspace{5mm}
\begin{tikzpicture}[scale=0.115]
\fill[white] (-4,0) rectangle (96,10);

\fill[black!40!white] (1,0) rectangle (12,10);
\fill[black!40!white] (-1,9) rectangle (0,10);
\fill[black!40!white] (0,0) rectangle (1,9);
\draw (6,0) node[below]{\scriptsize $\mathcal{G}^k$};

\fill[black!40!white] (21,0) rectangle (32,10);
\fill[black!40!white] (19,0) rectangle (20,1);
\fill[black!40!white] (20,0) rectangle (21,9);
\draw (26,0) node[below]{\scriptsize $\mathcal{L}^k$};

\fill[black!40!white] (41,0) rectangle (52,10);
\fill[black!40!white] (39,7) rectangle (40,9);
\fill[black!40!white] (40,0) rectangle (41,8);
\draw (46,0) node[below]{\scriptsize $\mathcal{L}_{\mathfrak{0}}^k$};

\fill[black!40!white] (60,0) rectangle (72,10);
\fill[black!40!white] (59,0) rectangle (60,1);
\fill[white] (66,5) rectangle (67,6);
\draw (66,0) node[below]{\scriptsize $\mathcal{P}_L^{k-1}$};

\fill[black!40!white] (80,0) rectangle (92,10);
\fill[black!40!white] (77,5) rectangle (78,6);
\fill[white] (80,9) rectangle (81,10);
\draw (86,0) node[below]{\scriptsize $\mathcal{Q}_1^k$};

\draw[very thin] (-4,0) grid (96,10);
\foreach \i in {0,...,4} {
\draw[thick] (0+20*\i,0) rectangle (12+20*\i,10);
}
\end{tikzpicture}\caption{\label{Fig7.5}Examples of configurations in $\mathcal{N}^{k}$.}
\end{figure}
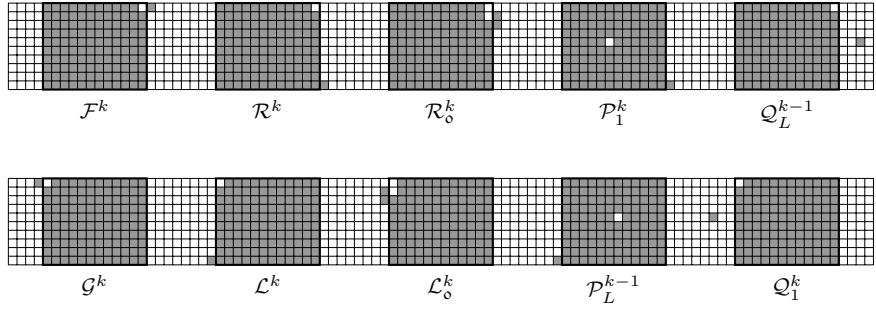

\begin{figure}
\begin{tikzpicture}
\fill[blue!40!white] (-0.5,2) rectangle (0.5,2.5); \draw[thick] (-0.5,2) rectangle (0.5,2.5);
\fill[blue!40!white] (-0.5,-2) rectangle (0.5,-2.5); \draw[thick] (-0.5,-2) rectangle (0.5,-2.5);
\draw (0,2.25) node{\small $\mathcal{R}_{\mathfrak{0}}^k$};
\draw (0,-2.25) node{\small $\mathcal{L}_{\mathfrak{0}}^k$};

\fill[black!20!white,rounded corners] (-0.5,1.5) rectangle (0.5,2); \draw[thick,rounded corners] (-0.5,1.5) rectangle (0.5,2);
\fill[black!20!white,rounded corners] (-0.5,-1.5) rectangle (0.5,-2); \draw[thick,rounded corners] (-0.5,-1.5) rectangle (0.5,-2);
\draw (0,1.75) node{\small $\mathcal{R}^k$};
\draw (0,-1.75) node{\small $\mathcal{L}^k$};

\foreach \j in {-1,1} {
\fill[blue!40!white] (-2,\j-0.5) rectangle (2,\j+0.5); \draw[thick] (-2,\j-0.5) rectangle (2,\j+0.5);
\draw[thick] (-0.25,\j-0.5)--(-0.25,\j+0.5); \draw[thick] (0.25,\j-0.5)--(0.25,\j+0.5); }
\draw (0,1) node{\small $\mathcal{F}^k$};
\draw (0,-1) node{\small $\mathcal{G}^k$};
\draw (1.125,1) node{\small $\mathcal{P}_1^k$};
\draw (-1.125,1) node{\small $\mathcal{Q}_L^{k-1}$};
\draw (-1.125,-1) node{\small $\mathcal{P}_L^{k-1}$};
\draw (1.125,-1) node{\small $\mathcal{Q}_1^k$};

\foreach \i in {-2.5,2.5} {
\fill[black!20!white,rounded corners] (\i-0.5,-1.5) rectangle (\i+0.5,1.5); \draw[thick,rounded corners] (\i-0.5,-1.5) rectangle (\i+0.5,1.5); }
\draw (2.5,0) node{\small $\mathcal{S}_1^k$};
\draw (-2.5,0) node{\small $\mathcal{S}_{L-1}^{k-1}$};

\draw[blue,thick] (0,-0.5)--(0,0.5);
\fill (0,0) circle (0.06);
\draw (0,0) node[right]{\small $\bm{\sigma^k}$};
\end{tikzpicture}\caption{\label{Fig7.6}Structure of $\mathcal{N}^{k}$ explained in Section
\ref{sec7.2.2}.}
\end{figure}
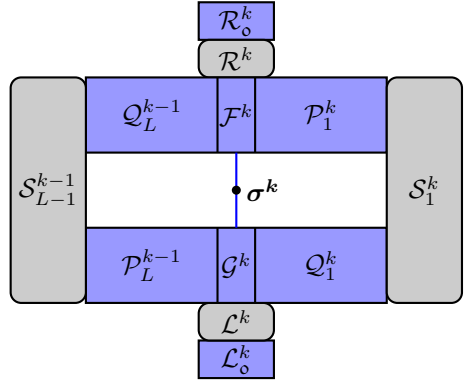

First, we present a list of collections in Table \ref{Tab1} (cf.
Definition \ref{def:bridge cross stick} and Notation \ref{nota:sitting stick})
which are illustrated in Figure \ref{Fig7.5}. In each table in this
article, the first column contains the names of the collections and
the second column, unless otherwise stated, gives the positions of
the $L\mathscr{N}_{0}$ particles of the configurations belonging
to the corresponding collection.

\begin{table}
\begin{tabular}{|c|l|}
\hline 
{\footnotesize{}$\mathcal{F}^{k}$} & {\footnotesize{}Reachable from $\bm{\sigma^{k}}$ by a horizontal
jump in $\mathfrak{c}^{[k+\mathscr{N}_{0}-1,\,k+\mathscr{N}_{0}]}$}\tabularnewline
\hline 
{\footnotesize{}$\mathcal{R}^{k}$} & {\footnotesize{}$\mathfrak{c}^{[k,\,k+\mathscr{N}_{0}-2]}\cup W\cup W'$
where $W\in\mathfrak{S}_{L-1}^{k+\mathscr{N}_{0}-1}$ and $W'\in\mathfrak{S}_{1}^{k+\mathscr{N}_{0}}$
such that $W\vdash W'$}\tabularnewline
\hline 
{\footnotesize{}$\mathcal{R}_{\mathfrak{0}}^{k}$} & {\footnotesize{}Reachable from $\mathcal{R}^{k}$ by a jump $\mathfrak{c}^{k+\mathscr{N}_{0}-1}\to\mathfrak{c}^{k+\mathscr{N}_{0}}$
within energy $\mathbb{H}_{0}+3$}\tabularnewline
\hline 
{\footnotesize{}$\mathcal{P}_{1}^{k}$} & {\footnotesize{}$\mathfrak{c}^{[k,\,k+\mathscr{N}_{0}-1]}\cup\{w\}\setminus\{w'\}$
where $w\in\mathfrak{c}^{k+\mathscr{N}_{0}}$ and $w'\in\mathfrak{c}^{[k+1,\,k+\mathscr{N}_{0}-2]}$}\tabularnewline
\hline 
{\footnotesize{}$\mathcal{Q}_{L}^{k-1}$} & {\footnotesize{}$\mathfrak{c}^{[k,\,k+\mathscr{N}_{0}-1]}\cup\{w\}\setminus\{w'\}$
where $w\in\mathfrak{c}^{[k+\mathscr{N}_{0}+1,\,k-2]}$ and $w'\in\mathfrak{c}^{k+\mathscr{N}_{0}-1}$}\tabularnewline
\hline 
\end{tabular}

\vspace{2mm}

\caption{\label{Tab1}Collections of configurations in $\mathcal{N}^{k}$.
We denote by $\mathcal{G}^{k}$, $\mathcal{L}^{k}$, $\mathcal{L}_{\mathfrak{0}}^{k}$,
$\mathcal{P}_{L}^{k-1}$ and $\mathcal{Q}_{1}^{k}$ the collections
obtained from $\mathcal{F}^{k}$, $\mathcal{R}^{k}$, $\mathcal{R}_{\mathfrak{0}}^{k}$,
$\mathcal{P}_{1}^{k}$ and $\mathcal{Q}_{L}^{k-1}$, respectively,
by horizontally reflecting the configurations with respect to the
vertical center line of the rectangle $\mathfrak{c}^{[k,\,k+\mathscr{N}_{0}-1]}$;
see Figure \ref{Fig7.5}.}
\end{table}

We claim that the following decomposition holds:
\begin{equation}
\mathcal{N}^{k}=\{\bm{\sigma^{k}}\}\cup\mathcal{S}_{1}^{k}\cup\mathcal{S}_{L-1}^{k-1}\cup(\mathcal{F}^{k}\cup\mathcal{R}^{k}\cup\mathcal{R}_{\mathfrak{0}}^{k}\cup\mathcal{P}_{1}^{k}\cup\mathcal{Q}_{L}^{k-1})\cup(\mathcal{G}^{k}\cup\mathcal{L}^{k}\cup\mathcal{L}_{\mathfrak{0}}^{k}\cup\mathcal{P}_{L}^{k-1}\cup\mathcal{Q}_{1}^{k}).\label{eq:Nk-dec}
\end{equation}
Starting from $\bm{\sigma^{k}}$, the first jump is either from a
particle in $\mathfrak{c}^{k+\mathscr{N}_{0}-1}$ to its right into
$\mathfrak{c}^{k+\mathscr{N}_{0}}$ (thereby enter collection $\mathcal{F}^{k}$)
or a particle in $\mathfrak{c}^{k}$ to its left into $\mathfrak{c}^{k-1}$
(enter collection $\mathcal{G}^{k}$). Thus, we divide into two cases.\smallskip{}

\begin{itemize}
\item Suppose that a $\mathfrak{c}^{k+\mathscr{N}_{0}-1}\to\mathfrak{c}^{k+\mathscr{N}_{0}}$-jump
occurs and we obtain a configuration in $\mathcal{F}^{k}$. Then,
excluding the possibility of returning to $\bm{\sigma^{k}}$, the
next jump occurs as one of the three following options: a vertical
jump occurs in $\mathfrak{c}^{[k+\mathscr{N}_{0}-1,\,k+\mathscr{N}_{0}]}$
(collection $\mathcal{R}^{k}$), or the empty site in $\mathfrak{c}^{k+\mathscr{N}_{0}-1}$
moves left and enters the sea of occupied sites (collection $\mathcal{P}_{1}^{k}$),
or the occupied site in $\mathfrak{c}^{k+\mathscr{N}_{0}}$ moves
right and enters the sea of empty sites (collection $\mathcal{Q}_{L}^{k-1}$).
\begin{itemize}
\item If we enter $\mathcal{R}^{k}$, then all configurations in $\mathcal{R}^{k}$
are connected to each other. When we escape $\mathcal{R}^{k}$, other
than returning to $\mathcal{F}^{k}$, we may visit $\mathcal{R}_{\mathfrak{0}}^{k}\cup\mathcal{P}_{1}^{k}\cup\mathcal{Q}_{L}^{k-1}$.
In particular, if we enter $\mathcal{R}_{\mathfrak{0}}^{k}$, the
only option is to return to $\mathcal{R}^{k}$.
\item If we enter $\mathcal{P}_{1}^{k}$, then a new type emerges only if
the empty site travels left and hits $\mathfrak{c}^{k}$, thereby
obtaining $\mathcal{S}_{1}^{k}$. After then, all configurations in
$\mathcal{S}_{1}^{k}$ are reachable without escaping $\mathcal{S}_{1}^{k}$.
When we escape $\mathcal{S}_{1}^{k}$, either the empty site in $\mathfrak{c}^{k}$
may enter the sea of occupied sites (return to $\mathcal{P}_{1}^{k}$),
or the occupied site in $\mathfrak{c}^{k+\mathscr{N}_{0}}$ may enter
the sea of empty sites (visit $\mathcal{Q}_{1}^{k}$).
\item If we enter $\mathcal{Q}_{L}^{k-1}$, then a new type emerges only
if the occupied site travels right and hits $\mathfrak{c}^{k-1}$,
thereby obtaining $\mathcal{S}_{L-1}^{k-1}$. After then, the options
are to return to $\mathcal{Q}_{L}^{k-1}$ or to visit $\mathcal{P}_{L}^{k-1}$.\smallskip{}
\end{itemize}
\item By the same logic, we may handle the other case when a $\mathfrak{c}^{k}\to\mathfrak{c}^{k-1}$-jump
occurs and we obtain a configuration in $\mathcal{G}^{k}$.\smallskip{}
\end{itemize}
The above classification exhausts all possible types of configurations
in $\mathcal{N}^{k}$, thereby verifying \eqref{eq:Nk-dec}. Refer
to Figure \ref{Fig7.6} for the overall structure of $\mathcal{N}^{k}$.
In particular, the following lemma is immediate.
\begin{lem}
\label{lem:Nk stab plat}The five stable plateaux in $\mathcal{N}^{k}$
are $\{\bm{\sigma^{k}}\}$, $\mathcal{S}_{1}^{k}$, $\mathcal{S}_{L-1}^{k-1}$,
$\mathcal{R}^{k}$ and $\mathcal{L}^{k}$. Moreover, $\mathbb{H}(\mathcal{R}^{k})=\mathbb{H}(\mathcal{L}^{k})=\mathbb{H}_{0}+2$.
\end{lem}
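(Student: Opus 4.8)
The plan is to extract the five stable plateaux directly from the decomposition \eqref{eq:Nk-dec} and the adjacency structure spelled out in the paragraph preceding the lemma (and summarised in Figure \ref{Fig7.6}). First I would record the energy of each of the eleven constituent sets. By \eqref{eq:H I identity} it suffices to count interfaces: one gets $\mathbb{H}=\mathbb{H}_{0}$ on $\{\bm{\sigma^{k}}\}$, $\mathbb{H}=\mathbb{H}_{0}+2$ on $\mathcal{S}_{1}^{k}\cup\mathcal{S}_{L-1}^{k-1}\cup\mathcal{R}^{k}\cup\mathcal{L}^{k}$, and $\mathbb{H}=\mathbb{H}_{0}+3$ on each of the remaining six pieces $\mathcal{F}^{k},\mathcal{R}_{\mathfrak{0}}^{k},\mathcal{P}_{1}^{k},\mathcal{Q}_{L}^{k-1}$ and their reflected counterparts $\mathcal{G}^{k},\mathcal{L}_{\mathfrak{0}}^{k},\mathcal{P}_{L}^{k-1},\mathcal{Q}_{1}^{k}$. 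The values on the shallow bottoms are already recorded in Definition \ref{def:shallow bot}; the only genuinely new computation is the interface count for $\mathcal{R}^{k}$ (equivalently $\mathcal{L}^{k}$), which gives $\mathbb{I}=2L+4$ and hence $\mathbb{H}(\mathcal{R}^{k})=\mathbb{H}(\mathcal{L}^{k})=\mathbb{H}_{0}+2$, which is the last assertion of the lemma.

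Next I would verify that the five sets $\{\bm{\sigma^{k}}\}$, $\mathcal{S}_{1}^{k}$, $\mathcal{S}_{L-1}^{k-1}$, $\mathcal{R}^{k}$, $\mathcal{L}^{k}$ satisfy Definition \ref{def:stab plat}. Constancy of the energy is immediate from the first step. Connectedness of each set (inside the set, via moves that do not change the energy) is exactly what the classification in Section \ref{sec7.2.2} produces: for $\mathcal{S}_{1}^{k}$ one slides the single hole in $\mathfrak{c}^{k}$ and the attached particle in $\mathfrak{c}^{k+\mathscr{N}_{0}}$, for $\mathcal{R}^{k}$ one slides the adjacent pair $W\vdash W'$, and symmetrically for $\mathcal{S}_{L-1}^{k-1}$ and $\mathcal{L}^{k}$; each such elementary move keeps a particle with exactly the same number of occupied neighbours, so the energy is preserved. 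For the boundary condition, every $\Omega$-neighbour of a configuration in one of these five sets that does not already lie in the same set belongs to one of the $\mathbb{H}_{0}+3$ pieces (for $\{\bm{\sigma^{k}}\}$ the only moves out are the horizontal jumps landing in $\mathcal{F}^{k}\cup\mathcal{G}^{k}$; for $\mathcal{R}^{k}$ they land in $\mathcal{F}^{k}\cup\mathcal{R}_{\mathfrak{0}}^{k}\cup\mathcal{P}_{1}^{k}\cup\mathcal{Q}_{L}^{k-1}$; and so on), all of which have strictly larger energy. Hence the five sets are stable plateaux.

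Finally I would show there are no others. Every configuration of $\mathcal{N}^{k}$ not contained in the five sets above lies at energy $\mathbb{H}_{0}+3$, and the case analysis of Section \ref{sec7.2.2} exhibits for each of them a finite path of energy-preserving allowed jumps followed by one strictly downhill allowed jump into $\{\bm{\sigma^{k}}\}$, $\mathcal{S}_{1}^{k}$, $\mathcal{S}_{L-1}^{k-1}$, $\mathcal{R}^{k}$ or $\mathcal{L}^{k}$ (for instance $\mathcal{F}^{k}\to\{\bm{\sigma^{k}}\}$, $\mathcal{R}_{\mathfrak{0}}^{k}\to\mathcal{R}^{k}$, $\mathcal{P}_{1}^{k}\to\mathcal{S}_{1}^{k}$ after sliding the hole onto $\mathfrak{c}^{k}$, and the reflected versions). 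In particular the $\mathbb{H}_{0}+3$-connected component of any such configuration has a boundary configuration of strictly smaller energy, so by Definition \ref{def:stab plat} it cannot lie in a stable plateau. Combining the three steps gives exactly the five plateaux claimed. I expect the only nonroutine part to be the interface count establishing $\mathbb{H}(\mathcal{R}^{k})=\mathbb{H}(\mathcal{L}^{k})=\mathbb{H}_{0}+2$ together with the bookkeeping showing the elementary slides inside $\mathcal{R}^{k}$, $\mathcal{L}^{k}$ and the two shallow-bottom plateaux are energy-neutral; everything else is a direct reading of the structure already established in Section \ref{sec7.2.2}, which is why the lemma is stated as immediate.
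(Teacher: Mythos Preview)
Your proposal is correct and is exactly the intended reading of the paper: the lemma is stated as ``immediate'' from the decomposition \eqref{eq:Nk-dec} and the adjacency description in Section~\ref{sec7.2.2}, and you have faithfully unpacked that into the three natural steps (energy levels, verification of Definition~\ref{def:stab plat} for the five candidates, and elimination of the $\mathbb{H}_{0}+3$ pieces via a downhill neighbour). One trivial slip: you speak of ``the remaining six pieces'' but then list eight (the four sets $\mathcal{F}^{k},\mathcal{R}_{\mathfrak{0}}^{k},\mathcal{P}_{1}^{k},\mathcal{Q}_{L}^{k-1}$ together with their four reflections); the argument is unaffected.
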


As a byproduct, we obtain the following lower-bound part of Theorem
\ref{thm:E barrier}.
\begin{lem}
\label{lem:E barrier LB}For distinct $k,\,k'\in\mathbb{T}_{K}$,
it holds that $\Phi(\bm{\sigma^{k}},\,\bm{\sigma^{k'}})\ge\mathbb{H}_{0}+4$.
\end{lem}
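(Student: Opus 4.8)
The plan is to read off the result from the explicit description of the energetic valley $\mathcal{N}^{k}=\mathcal{N}(\bm{\sigma^{k}})$ established in \eqref{eq:Nk-dec}. First I note that, since $\mathbb{H}$ is integer-valued and $\mathbb{H}_{0}\in\mathbb{Z}$, the asserted inequality $\Phi(\bm{\sigma^{k}},\,\bm{\sigma^{k'}})\ge\mathbb{H}_{0}+4$ is equivalent to $\Phi(\bm{\sigma^{k}},\,\bm{\sigma^{k'}})>\mathbb{H}_{0}+3$, which by the definition of $\mathcal{N}(\cdot)$ in Definition \ref{def:nbd} is in turn equivalent to $\bm{\sigma^{k'}}\notin\mathcal{N}^{k}$. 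Hence it suffices to show that for $k'\ne k$ the ground state $\bm{\sigma^{k'}}$ does not belong to $\mathcal{N}^{k}$.

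To see this I would invoke the decomposition \eqref{eq:Nk-dec}, which expresses $\mathcal{N}^{k}$ as the union of $\{\bm{\sigma^{k}}\}$ together with finitely many collections ($\mathcal{S}_{1}^{k}$, $\mathcal{S}_{L-1}^{k-1}$, $\mathcal{F}^{k}$, $\mathcal{R}^{k}$, $\mathcal{R}_{\mathfrak{0}}^{k}$, $\mathcal{P}_{1}^{k}$, $\mathcal{Q}_{L}^{k-1}$, and their horizontal reflections). Every configuration in each of these collections has energy strictly larger than $\mathbb{H}_{0}$: the shallow bottoms in $\mathcal{S}_{1}^{k}$ and $\mathcal{S}_{L-1}^{k-1}$ have energy $\mathbb{H}_{0}+2$ by Definition \ref{def:shallow bot}, the stable plateaux $\mathcal{R}^{k}$ and $\mathcal{L}^{k}$ have energy $\mathbb{H}_{0}+2$ by Lemma \ref{lem:Nk stab plat}, and the remaining collections --- obtained from $\bm{\sigma^{k}}$ by energy-raising particle jumps that never descend back to level $\mathbb{H}_{0}$ --- consist of configurations of energy $\mathbb{H}_{0}+3$. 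Consequently $\bm{\sigma^{k}}$ is the unique configuration in $\mathcal{N}^{k}$ with energy $\mathbb{H}_{0}$. Since $\mathbb{H}(\bm{\sigma^{k'}})=\mathbb{H}_{0}$ by Theorem \ref{thm:Ham S} (cf. also \eqref{eq:H0 def}) while $\bm{\sigma^{k'}}\ne\bm{\sigma^{k}}$ for $k'\ne k$, we conclude $\bm{\sigma^{k'}}\notin\mathcal{N}^{k}$, which is the desired claim.

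I do not expect a genuine obstacle in this argument: the hard work --- the exhaustive case analysis guaranteeing that the decomposition \eqref{eq:Nk-dec} is complete, i.e. that no allowed path of height at most $\mathbb{H}_{0}+3$ out of $\bm{\sigma^{k}}$ escapes the listed collections --- was already carried out in Section \ref{sec7.2.2}. The only point requiring minor care is the bookkeeping of the energy values of the auxiliary collections, but these are pinned down by their definitions (e.g. the ``within energy $\mathbb{H}_{0}+3$'' clauses in Table \ref{Tab1}). Combining the present lemma with Lemma \ref{lem:E barrier UB} then yields $\Phi(\bm{\sigma^{k}},\,\bm{\sigma^{k'}})=\mathbb{H}_{0}+4$ for all distinct $k,\,k'\in\mathbb{T}_{K}$, completing the proof of Theorem \ref{thm:E barrier}.
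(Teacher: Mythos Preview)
Your argument is correct and rests on the same foundation as the paper's proof: the explicit decomposition \eqref{eq:Nk-dec} of $\mathcal{N}^{k}$ obtained from the case analysis in Section~\ref{sec7.2.2}. The paper phrases the conclusion slightly differently, observing that the explicit collections listed in \eqref{eq:Nk-dec} for two distinct indices $k\ne k'$ are manifestly disjoint, whence $\mathcal{N}^{k}\cap\mathcal{N}^{k'}=\emptyset$, which is equivalent (via \eqref{eq:Phi max ineq}) to $\Phi(\bm{\sigma^{k}},\bm{\sigma^{k'}})\ge\mathbb{H}_{0}+4$. Your route is marginally more economical: rather than checking full disjointness of the two valleys, you only need the single membership $\bm{\sigma^{k'}}\notin\mathcal{N}^{k}$, which follows immediately from the observation that every collection in \eqref{eq:Nk-dec} other than $\{\bm{\sigma^{k}}\}$ sits at energy $\mathbb{H}_{0}+2$ or $\mathbb{H}_{0}+3$. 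Both arguments are equally short once \eqref{eq:Nk-dec} is in hand, and neither requires anything beyond what Section~\ref{sec7.2.2} already establishes.
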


\begin{proof}
All types of configurations classified in Section \ref{sec7.2.2}
are explicit, thus any configuration cannot belong to both $\mathcal{N}^{k}$
and $\mathcal{N}^{k'}$ for $k\ne k'$. This implies that $\mathcal{N}^{k}\cap\mathcal{N}^{k'}=\emptyset$
which is equivalent to $\Phi(\bm{\sigma^{k}},\,\bm{\sigma^{k'}})\ge\mathbb{H}_{0}+4$.
\end{proof}
\begin{proof}[Proof of Theorem \ref{thm:E barrier}]
 Lemmas \ref{lem:E barrier UB} and \ref{lem:E barrier LB} readily
verify the theorem.
\end{proof}

\subsection{\label{sec7.3}Passages}

In this subsection, we define the transition passages. Refer to Figure
\ref{Fig7.7}.

\begin{figure}
\begin{tikzpicture}[scale=0.115]
\fill[white] (-4,0) rectangle (76,10);

\fill[black!40!white] (1,0) rectangle (12,10);
\fill[black!40!white] (0,2) rectangle (1,7);
\fill[black!40!white] (12,2) rectangle (13,8);
\fill[white] (4,3) rectangle (5,4);
\draw (6,0) node[below]{\scriptsize $\mathcal{P}_{6;2,2}^k$};

\fill[black!40!white] (21,0) rectangle (32,10);
\fill[black!40!white] (20,5) rectangle (21,9);
\fill[black!40!white] (32,1) rectangle (33,8);
\fill[white] (29,7) rectangle (30,8);
\draw (26,0) node[below]{\scriptsize $\mathcal{P}_{7;5,1}^k$};

\fill[black!40!white] (41,0) rectangle (52,10);
\fill[black!40!white] (40,4) rectangle (41,9);
\fill[black!40!white] (52,5) rectangle (53,9);
\fill[black!40!white] (54,3) rectangle (55,4);
\draw (46,0) node[below]{\scriptsize $\mathcal{Q}_{5;4,5}^k$};

\fill[black!40!white] (61,0) rectangle (72,10);
\fill[black!40!white] (60,4) rectangle (61,7);
\fill[black!40!white] (72,2) rectangle (73,8);
\fill[black!40!white] (74,1) rectangle (75,2);
\draw (66,0) node[below]{\scriptsize $\mathcal{Q}_{7;4,2}^k$};

\draw[very thin] (-4,0) grid (76,10);
\foreach \i in {0,...,3} {
\draw[thick] (0+20*\i,0) rectangle (12+20*\i,10);
}
\end{tikzpicture}\caption{\label{Fig7.7}Examples of configurations belonging to $\mathcal{P}_{m}^{k}$
(left two) defined in Definition \ref{def:passage P} and to $\mathcal{Q}_{m}^{k}$
(right two) defined in Definition \ref{def:passage Q}.}
\end{figure}
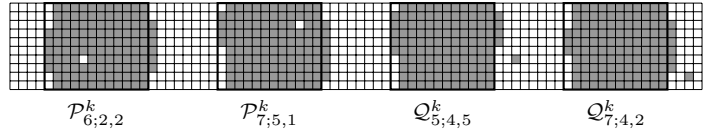

\begin{defn}[Major passages $\mathcal{P}_{m}^{k}$]
\label{def:passage P}For $m\in[2,\,L-1]$ and $\ell,\,\ell'\in\mathbb{T}_{L}$,
let $\mathcal{P}_{m;\ell,\ell'}^{k}\subseteq\Omega$ be the collection
of configurations $\eta$ such that
\[
\{x:\,\eta(x)=1\}=\mathfrak{c}^{[k+1,\,k+\mathscr{N}_{0}-1]}\cup\mathfrak{s}_{L-m+1;\ell}^{k}\cup\mathfrak{s}_{m;\ell'}^{k+\mathscr{N}_{0}}\setminus\{w_{\eta}\},
\]
where $w_{\eta}$ is a site in $\mathfrak{c}^{[k+2,\,k+\mathscr{N}_{0}-2]}$.
Then, we collect
\[
\mathcal{P}_{m}^{k}:=\bigcup_{\ell,\,\ell'\in\mathbb{T}_{L}}\mathcal{P}_{m;\ell,\ell'}^{k}.
\]
By a simple calculation, it holds that $\mathbb{H}(\eta)=\mathbb{H}_{0}+4$
for all $\eta\in\mathcal{P}_{m}^{k}$.
\end{defn}

\begin{defn}[Minor passages $\mathcal{Q}_{m}^{k}$]
\label{def:passage Q}For $m\in[2,\,L-1]$ and $\ell,\,\ell'\in\mathbb{T}_{L}$,
let $\mathcal{Q}_{m;\ell,\ell'}^{k}\subseteq\Omega$ be the collection
of configurations $\eta$ such that
\[
\{x:\,\eta(x)=1\}=\mathfrak{c}^{[k+1,\,k+\mathscr{N}_{0}-1]}\cup\mathfrak{s}_{L-m;\ell}^{k}\cup\mathfrak{s}_{m-1;\ell'}^{k+\mathscr{N}_{0}}\cup\{w_{\eta}\},
\]
where $w_{\eta}$ is a site in $\mathfrak{c}^{[k+\mathscr{N}_{0}+2,\,k-2]}$.
Similarly,
\[
\mathcal{Q}_{m}^{k}:=\bigcup_{\ell,\,\ell'\in\mathbb{T}_{L}}\mathcal{Q}_{m;\ell,\ell'}^{k}.
\]
Again, it holds that $\mathbb{H}(\eta)=\mathbb{H}_{0}+4$ for all
$\eta\in\mathcal{Q}_{m}^{k}$.
\end{defn}

Here, notice that the number of possible horizontal locations of $w_{\eta}$
for each $\eta\in\mathcal{P}_{m;\ell,\ell'}^{k}$ is $\mathscr{N}_{0}-3$,
whereas the corresponding number for $\eta\in\mathcal{Q}_{m;\ell,\ell'}^{k}$
is $K-\mathscr{N}_{0}-3$. Since $\mathscr{N}_{0}-3<K-\mathscr{N}_{0}-3$
according to the assumption in \eqref{eq:N0 assump}, we deduce intuitively
that in the course of typical transitions, $\mathcal{P}_{m;\ell,\ell'}^{k}$
is more likely to be chosen than $\mathcal{Q}_{m;\ell,\ell'}^{k}$.
This is why we call $\mathcal{P}_{m}^{k}$ and $\mathcal{Q}_{m}^{k}$
the collections of \emph{major} and \emph{minor} passage configurations,
respectively.
\begin{lem}
\label{lem:passage}For all $m\in[2,\,L-1]$, it holds that $\mathcal{P}_{m}^{k}\subseteq\widehat{\mathcal{N}}(\mathcal{S})$
and $\mathcal{Q}_{m}^{k}\subseteq\widehat{\mathcal{N}}(\mathcal{S})$.
\end{lem}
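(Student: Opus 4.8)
The plan is to reduce both kinds of passage configurations to shallow bottoms. Recall from \eqref{eq:Omega-bar N-hatS} that $\widehat{\mathcal{N}}(\mathcal{S})=\overline{\Omega}$, and that an allowed path may be freely concatenated with another allowed path (Notation~\ref{nota:allowed jump path}). Hence it suffices to exhibit, for each $\eta\in\mathcal{P}_{m}^{k}$ and each $\eta\in\mathcal{Q}_{m}^{k}$, an allowed path from $\eta$ to some shallow bottom $\bm{\sigma_{m';\ell,\ell'}^{k}}$ (cf. \eqref{eq:Skm def} and Definition~\ref{def:shallow bot}): since $\bm{\sigma_{m';\ell,\ell'}^{k}}\in\widehat{\mathcal{N}}(\mathcal{S})$ by Lemma~\ref{lem:shallow bot}-(3), it is joined to $\mathcal{S}$ by an allowed path, whence so is $\eta$, giving $\eta\in\widehat{\mathcal{N}}(\mathcal{S})$.

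For a configuration $\eta\in\mathcal{P}_{m;\ell,\ell'}^{k}$ (if this set is empty there is nothing to prove), the single vacant site $w_{\eta}$ lies inside one of the fully occupied columns $\mathfrak{c}^{k+1},\dots,\mathfrak{c}^{k+\mathscr{N}_{0}-1}$. First I would transport this vacancy, one nearest-neighbor exchange at a time, through these columns to the site $v:=(k+\mathscr{N}_{0}-1,\,\ell'+m-1)$, the site of $\mathfrak{c}^{k+\mathscr{N}_{0}-1}$ lying immediately to the left of the top particle of the stick $\mathfrak{s}_{m;\ell'}^{k+\mathscr{N}_{0}}$. Using \eqref{eq:H I identity}, a short local count of interfaces shows that each such exchange either keeps the energy equal to $\mathbb{H}_{0}+4$ (when the vacated site stays internal to the occupied block) or strictly lowers it to $\mathbb{H}_{0}+3$ (when the vacancy becomes adjacent to an empty site); in either case the jump is allowed. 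Once the vacancy reaches $v$, exchanging it with the top particle of $\mathfrak{s}_{m;\ell'}^{k+\mathscr{N}_{0}}$ refills $\mathfrak{c}^{k+\mathscr{N}_{0}-1}$, shortens the stick in $\mathfrak{c}^{k+\mathscr{N}_{0}}$ to length $m-1$, and lowers the energy to $\mathbb{H}_{0}+2$; the resulting configuration is exactly $\bm{\sigma_{m-1;\ell,\ell'}^{k}}\in\mathcal{S}_{m-1}^{k}$. Since $m-1\in[1,\,L-1]$, Lemma~\ref{lem:shallow bot}-(3) applies and this case is finished.

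For $\eta\in\mathcal{Q}_{m;\ell,\ell'}^{k}$ the reduction is simpler, since the extra particle $w_{\eta}$ is an isolated particle sitting in the empty region $\mathfrak{c}^{[k+\mathscr{N}_{0}+2,\,k-2]}$. I would move it, step by step through the connected empty sea, to the site $(k+\mathscr{N}_{0},\,\ell'+m-1)$ directly above the stick $\mathfrak{s}_{m-1;\ell'}^{k+\mathscr{N}_{0}}$; every displacement of an isolated particle through empty space keeps the energy at $\mathbb{H}_{0}+4$, and the final attaching exchange lowers it to $\mathbb{H}_{0}+2$, producing $\bm{\sigma_{m;\ell,\ell'}^{k}}\in\mathcal{S}_{m}^{k}$, which lies in $\widehat{\mathcal{N}}(\mathcal{S})$ by Lemma~\ref{lem:shallow bot}-(3). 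Here one only needs that the empty sea is wide enough for the isolated particle to reach its target without ever touching another particle, which follows from the bound $\mathscr{N}_{0}<\tfrac{K}{2}$ in \eqref{eq:N0 assump}.

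The main obstacle I expect is purely combinatorial bookkeeping in the $\mathcal{P}_{m}^{k}$ case: one must check that the prescribed sequence of vacancy moves is genuinely realizable as nearest-neighbor exchanges and never pushes the energy above $\mathbb{H}_{0}+4$, in particular in the moves where the vacancy passes next to the columns $\mathfrak{c}^{k}$ and $\mathfrak{c}^{k+\mathscr{N}_{0}}$, whose occupied portions are sticks rather than full columns, and in any degenerate small-$\mathscr{N}_{0}$ situations. All of these are routine local evaluations of $\mathbb{I}$ via \eqref{eq:H I identity}, and none of them is delicate.
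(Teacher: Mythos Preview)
Your proposal is correct and follows essentially the same idea as the paper: transport the defect (the vacancy $w_\eta$ for $\mathcal{P}_m^k$, the isolated particle $w_\eta$ for $\mathcal{Q}_m^k$) through the uniform sea by nearest-neighbor exchanges that never exceed energy $\mathbb{H}_0+4$, until it merges with one of the boundary sticks and produces a shallow bottom, which lies in $\widehat{\mathcal{N}}(\mathcal{S})$ by Lemma~\ref{lem:shallow bot}-(3). The only inessential differences are that the paper sends the vacancy toward column $\mathfrak{c}^{k}$ (landing at $\bm{\sigma_{m;\ell,\ell'}^{k}}$) rather than toward $\mathfrak{c}^{k+\mathscr{N}_0}$ as you do (landing at $\bm{\sigma_{m-1;\ell,\ell'}^{k}}$), and handles $\mathcal{Q}_m^k$ by the $0\leftrightarrow1$ symmetry instead of writing out the explicit path.
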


\begin{proof}
First, we prove that $\mathcal{P}_{m}^{k}\subseteq\widehat{\mathcal{N}}(\mathcal{S})$
for $m\in[2,\,L-1]$. Starting from a configuration $\eta\in\mathcal{P}_{m;\ell,\ell'}^{k}$
for $\ell,\,\ell'\in\mathbb{T}_{L}$, the empty site in $\mathfrak{c}^{[k+2,\,k+\mathscr{N}_{0}-2]}$
(denoted as $w_{\eta}$ in Definition \ref{def:passage P}) can move
inside the sea of occupied sites without changing the energy. If the
empty site hits the boundary of the sea at $(k,\,\ell+L-m)$, then
the resulting configuration is $\bm{\sigma_{m;\ell,\ell'}^{k}}$.
This implies that there exists an allowed path (cf. Notation \ref{nota:allowed jump path})
from $\eta$ to $\bm{\sigma_{m;\ell,\ell'}^{k}}$. Since $\bm{\sigma_{m;\ell,\ell'}^{k}}\in\widehat{\mathcal{N}}(\mathcal{S})$
by Lemma \ref{lem:shallow bot}-(3), it holds that $\eta\in\widehat{\mathcal{N}}(\mathcal{S})$.
Thus, we conclude that $\mathcal{P}_{m}^{k}\subseteq\widehat{\mathcal{N}}(\mathcal{S})$.
By switching the roles of spins $0$ and $1$, the same strategy applies
to $\mathcal{Q}_{m}^{k}$ as well; we omit the detail.
\end{proof}
\begin{defn}
\label{lem:way}Collect 
\begin{equation}
\mathcal{W}:=\bigcup_{k\in\mathbb{T}_{K}}\bigcup_{m=2}^{L-1}(\mathcal{P}_{m}^{k}\cup\mathcal{Q}_{m}^{k}).\label{eq:W def}
\end{equation}
\end{defn}

By the definitions of $\mathcal{V}$ and $\mathcal{W}$ in Definitions
\ref{def:energetic valleys} and \ref{lem:way}, due to Lemmas \ref{lem:shallow bot}-(3)
and \ref{lem:passage}, we have $\mathcal{V}\cup\mathcal{W}\subseteq\widehat{\mathcal{N}}(\mathcal{S})$.
Thus, since $\mathcal{S}\subseteq\bigcup_{k\in\mathbb{T}_{K}}\mathcal{N}^{k}\subseteq\mathcal{V}$,
it holds that
\[
\widehat{\mathcal{N}}(\mathcal{S})\subseteq\widehat{\mathcal{N}}(\mathcal{V}\cup\mathcal{W})\subseteq\widehat{\mathcal{N}}(\mathcal{S}),\quad\text{thus}\quad\widehat{\mathcal{N}}(\mathcal{S})=\widehat{\mathcal{N}}(\mathcal{V}\cup\mathcal{W}).
\]
Then by \eqref{eq:Omega-bar N-hatS} and Lemma \ref{lem:nbd dec}-(2)
with $\mathcal{A}=\mathcal{V}$ and $\mathcal{B}=\mathcal{W}$, it
holds that
\[
\overline{\Omega}=\widehat{\mathcal{N}}(\mathcal{V};\mathcal{W})\cup\widehat{\mathcal{N}}(\mathcal{W};\widehat{\mathcal{N}}(\mathcal{V};\mathcal{W})).
\]
Rearranging using Definitions \ref{def:energetic valleys} and \ref{lem:way},
we obtain that
\begin{equation}
\begin{aligned}\overline{\Omega}= & \bigcup_{k\in\mathbb{T}_{K}}\widehat{\mathcal{N}}(\mathcal{N}^{k};\mathcal{W})\cup\bigcup_{k\in\mathbb{T}_{K}}\bigcup_{m=2}^{L-2}\widehat{\mathcal{N}}(\mathcal{N}_{m}^{k};\mathcal{W})\\
 & \cup\bigcup_{k\in\mathbb{T}_{K}}\bigcup_{m=2}^{L-1}\widehat{\mathcal{N}}(\mathcal{P}_{m}^{k};\widehat{\mathcal{N}}(\mathcal{V};\mathcal{W}))\cup\bigcup_{k\in\mathbb{T}_{K}}\bigcup_{m=2}^{L-1}\widehat{\mathcal{N}}(\mathcal{Q}_{m}^{k};\widehat{\mathcal{N}}(\mathcal{V};\mathcal{W})).
\end{aligned}
\label{eq:Omega-bar dec}
\end{equation}
According to this expression, we define the following collections.
\begin{defn}
\label{def:E landscape}Define 
\[
\widehat{\mathcal{N}}^{k}:=\widehat{\mathcal{N}}(\mathcal{N}^{k};\mathcal{W})\quad\text{and}\quad\widehat{\mathcal{N}}_{m}^{k}:=\widehat{\mathcal{N}}(\mathcal{N}_{m}^{k};\mathcal{W})\quad\text{for}\quad m\in[1,\,L-1].
\]
Moreover, for each $m\in[2,\,L-1]$, define
\[
\widehat{\mathcal{P}}_{m}^{k}:=\widehat{\mathcal{N}}(\mathcal{P}_{m}^{k};\widehat{\mathcal{N}}(\mathcal{V};\mathcal{W}))\quad\text{and}\quad\widehat{\mathcal{Q}}_{m}^{k}:=\widehat{\mathcal{N}}(\mathcal{Q}_{m}^{k};\widehat{\mathcal{N}}(\mathcal{V};\mathcal{W})).
\]
Then, by \eqref{eq:Omega-bar dec}, it holds that
\begin{equation}
\overline{\Omega}=\bigcup_{k\in\mathbb{T}_{K}}\widehat{\mathcal{N}}^{k}\cup\bigcup_{k\in\mathbb{T}_{K}}\bigcup_{m=2}^{L-2}\widehat{\mathcal{N}}_{m}^{k}\cup\bigcup_{k\in\mathbb{T}_{K}}\bigcup_{m=2}^{L-1}\widehat{\mathcal{P}}_{m}^{k}\cup\bigcup_{k\in\mathbb{T}_{K}}\bigcup_{m=2}^{L-1}\widehat{\mathcal{Q}}_{m}^{k}.\label{eq:E landscape dec}
\end{equation}
\end{defn}

Now, we analyze the local geometry near each collection in the right-hand
side of \eqref{eq:E landscape dec}. In the remainder of this subsection,
we investigate the passage part of $\overline{\Omega}$, which is
\[
\bigcup_{k\in\mathbb{T}_{K}}\bigcup_{m=2}^{L-1}\widehat{\mathcal{P}}_{m}^{k}\cup\bigcup_{k\in\mathbb{T}_{K}}\bigcup_{m=2}^{L-1}\widehat{\mathcal{Q}}_{m}^{k}.
\]
It turns out that the structure is quite simple in the passage part.

\subsubsection{\label{sec7.3.1}Bulk collections}

First, we start with the bulk passage parts $\widehat{\mathcal{P}}_{m}^{k}$
and $\widehat{\mathcal{Q}}_{m}^{k}$ for $m\in[3,\,L-2]$. To start,
we focus on $\widehat{\mathcal{P}}_{m}^{k}$.

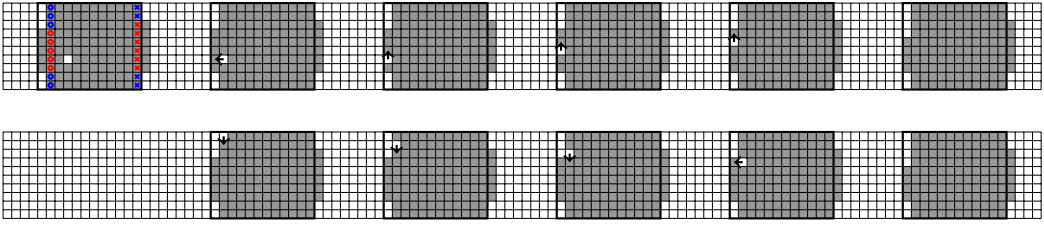
\begin{figure}
\begin{tikzpicture}[scale=0.115]
\fill[white] (-4,0) rectangle (116,10);

\fill[black!40!white] (1,0) rectangle (12,10);
\fill[black!40!white] (0,2) rectangle (1,7);
\fill[black!40!white] (12,2) rectangle (13,8);
\fill[white] (3,3) rectangle (4,4);
\foreach \i in {2,...,6} {
\draw[thick,red] (1.5,0.5+\i) circle (0.25);
}
\foreach \i in {2,...,7} {
\draw[thick,red] (11.25,0.25+\i)--(11.75,0.75+\i); \draw[thick,red] (11.25,0.75+\i)--(11.75,0.25+\i);
}
\foreach \i in {0,1,7,8,9} {
\draw[thick,blue] (1.5,0.5+\i) circle (0.25);
}
\foreach \i in {0,1,8,9} {
\draw[thick,blue] (11.25,0.25+\i)--(11.75,0.75+\i); \draw[thick,blue] (11.25,0.75+\i)--(11.75,0.25+\i);
}

\fill[black!40!white] (21,0) rectangle (32,10);
\fill[black!40!white] (20,2) rectangle (21,7);
\fill[black!40!white] (32,2) rectangle (33,8);
\fill[white] (21,3) rectangle (22,4);
\draw[->,thick] (21.5,3.5)--(20.5,3.5);

\fill[black!40!white] (41,0) rectangle (52,10);
\fill[black!40!white] (40,2) rectangle (41,7);
\fill[black!40!white] (52,2) rectangle (53,8);
\fill[white] (40,3) rectangle (41,4);
\draw[->,thick] (40.5,3.5)--(40.5,4.5);

\fill[black!40!white] (61,0) rectangle (72,10);
\fill[black!40!white] (60,2) rectangle (61,7);
\fill[black!40!white] (72,2) rectangle (73,8);
\fill[white] (60,4) rectangle (61,5);
\draw[->,thick] (60.5,4.5)--(60.5,5.5);

\fill[black!40!white] (81,0) rectangle (92,10);
\fill[black!40!white] (80,2) rectangle (81,7);
\fill[black!40!white] (92,2) rectangle (93,8);
\fill[white] (80,5) rectangle (81,6);
\draw[->,thick] (80.5,5.5)--(80.5,6.5);

\fill[black!40!white] (101,0) rectangle (112,10);
\fill[black!40!white] (100,2) rectangle (101,6);
\fill[black!40!white] (112,2) rectangle (113,8);

\draw[very thin] (-4,0) grid (116,10);
\foreach \i in {0,...,5} {
\draw[thick] (0+20*\i,0) rectangle (12+20*\i,10);
}
\end{tikzpicture}\\
\vspace{5mm}
\begin{tikzpicture}[scale=0.115]
\fill[white] (-4,0) rectangle (116,10);

\fill[black!40!white] (21,0) rectangle (32,10);
\fill[black!40!white] (20,2) rectangle (21,7);
\fill[black!40!white] (32,2) rectangle (33,8);
\fill[white] (21,9) rectangle (22,10);
\draw[->,thick] (21.5,9.5)--(21.5,8.5);

\fill[black!40!white] (41,0) rectangle (52,10);
\fill[black!40!white] (40,2) rectangle (41,7);
\fill[black!40!white] (52,2) rectangle (53,8);
\fill[white] (41,8) rectangle (42,9);
\draw[->,thick] (41.5,8.5)--(41.5,7.5);

\fill[black!40!white] (61,0) rectangle (72,10);
\fill[black!40!white] (60,2) rectangle (61,7);
\fill[black!40!white] (72,2) rectangle (73,8);
\fill[white] (61,7) rectangle (62,8);
\draw[->,thick] (61.5,7.5)--(61.5,6.5);

\fill[black!40!white] (81,0) rectangle (92,10);
\fill[black!40!white] (80,2) rectangle (81,7);
\fill[black!40!white] (92,2) rectangle (93,8);
\fill[white] (81,6) rectangle (82,7);
\draw[->,thick] (81.5,6.5)--(80.5,6.5);

\fill[black!40!white] (101,0) rectangle (112,10);
\fill[black!40!white] (100,2) rectangle (101,6);
\fill[black!40!white] (112,2) rectangle (113,8);

\draw[very thin] (-4,0) grid (116,10);
\foreach \i in {1,...,5} {
\draw[thick] (0+20*\i,0) rectangle (12+20*\i,10);
}
\end{tikzpicture}\caption{\label{Fig7.8}Configuration $\eta\in\mathcal{P}_{m;\ell,\ell'}^{k}$
(first figure) and allowed paths from $\xi$ to $\bm{\sigma_{m+1;\ell,\ell'}^{k}}$
avoiding $\mathcal{W}$ (first and second lines) explained in Section
\ref{sec7.3.1}.}
\end{figure}

We classify all configurations reachable via an allowed path starting
from $\mathcal{P}_{m}^{k}$ and avoiding $\widehat{\mathcal{N}}(\mathcal{V};\mathcal{W})$.
Fix a configuration $\eta\in\mathcal{P}_{m;\ell,\ell'}^{k}$ for some
$\ell,\,\ell'\in\mathbb{T}_{L}$ and note that $\mathbb{H}(\eta)=\mathbb{H}_{0}+4$.
To maintain the energy lower than or equal to $\mathbb{H}_{0}+4$,
only the isolated empty site (indicated as $w_{\eta}$ in Definition
\ref{def:passage P}) can move inside the rectangle $\mathfrak{c}^{[k+2,\,k+\mathscr{N}_{0}-2]}$.
These movements produce configurations still contained in $\mathcal{P}_{m;\ell,\ell'}^{k}$
unless the empty site escapes the rectangle and hits $\mathfrak{c}^{k+1}\cup\mathfrak{c}^{k+\mathscr{N}_{0}-1}$
(marked red and blue in the first figure of Figure \ref{Fig7.8}),
which can be decomposed as follows:
\begin{equation}
\mathfrak{c}^{k+1}\cup\mathfrak{c}^{k+\mathscr{N}_{0}-1}=\mathfrak{s}_{L-m+1;\ell}^{k+1}\cup\mathfrak{s}_{m-1;\ell-m+1}^{k+1}\cup\mathfrak{s}_{m;\ell'}^{k+\mathscr{N}_{0}-1}\cup\mathfrak{s}_{L-m;\ell'+m}^{k+\mathscr{N}_{0}-1}.\label{eq:hit bdry}
\end{equation}
Suppose first that the empty site hits the first set $\mathfrak{s}_{L-m+1;\ell}^{k+1}$
in the right-hand side of \eqref{eq:hit bdry} (marked with red circles
in Figure \ref{Fig7.8}). Denote by $\xi$ the resulting configuration.
Then, we can further move the empty site left to enter $\mathfrak{s}_{L-m+1;\ell}^{k}$
and then move it upwards until it hits $(k,\,\ell+L-m)$, so that
the resulting configuration is $\bm{\sigma_{m;\ell,\ell'}^{k}}$ (see
the first line of figures in Figure \ref{Fig7.8} for the path). This
is an allowed path from $\xi$ to $\bm{\sigma_{m;\ell,\ell'}^{k}}\in\mathcal{N}_{m}^{k}\subseteq\mathcal{V}$
avoiding $\mathcal{W}$. Hence, it holds that $\xi\in\widehat{\mathcal{N}}(\mathcal{V};\mathcal{W})$,
thus $\xi\notin\widehat{\mathcal{N}}(\mathcal{P}_{m}^{k};\widehat{\mathcal{N}}(\mathcal{V};\mathcal{W}))=\widehat{\mathcal{P}}_{m}^{k}$.
In particular, since $m\in[3,\,L-2]$ and $\xi\notin\mathcal{N}_{m}^{k}$,
by Definition \ref{def:E landscape} it holds that
\[
\xi\in\widehat{\mathcal{N}}(\mathcal{N}_{m}^{k};\mathcal{W})\setminus\mathcal{N}_{m}^{k}=\widehat{\mathcal{N}}_{m}^{k}\setminus\mathcal{N}_{m}^{k}.
\]
Similarly, if the empty site hits the third set $\mathfrak{s}_{m;\ell'}^{k+\mathscr{N}_{0}-1}$
in the right-hand side of \eqref{eq:hit bdry} (marked with red crosses
in Figure \ref{Fig7.8}), it holds that $\xi\notin\widehat{\mathcal{P}}_{m}^{k}$
and $\xi\in\widehat{\mathcal{N}}_{m-1}^{k}\setminus\mathcal{N}_{m-1}^{k}$.

Next, suppose that the empty site hits the second set $\mathfrak{s}_{m-1;\ell-m+1}^{k+1}$
in the right-hand side of \eqref{eq:hit bdry} (marked with blue circles
in Figure \ref{Fig7.8}). Then, we can further move the empty site
downwards until it hits $(k+1,\,\ell-m)$ and then move it left, so
that the resulting configuration is again $\bm{\sigma_{m;\ell,\ell'}^{k}}$
(see the second line of figures in Figure \ref{Fig7.8} for the path).
By the same logic, we obtain that $\xi\notin\widehat{\mathcal{P}}_{m}^{k}$
and $\xi\in\widehat{\mathcal{N}}_{m}^{k}\setminus\mathcal{N}_{m}^{k}$.
Finally, if the empty site hits the fourth set $\mathfrak{s}_{L-m;\ell'+m}^{k+\mathscr{N}_{0}-1}$
in the right-hand side of \eqref{eq:hit bdry} (marked with blue crosses
in Figure \ref{Fig7.8}), then the same strategy applies and we obtain
$\xi\notin\widehat{\mathcal{P}}_{m}^{k}$ and $\xi\in\widehat{\mathcal{N}}_{m-1}^{k}\setminus\mathcal{N}_{m-1}^{k}$.

The above deductions imply that any allowed path starting from $\mathcal{P}_{m}^{k}$
and avoiding $\widehat{\mathcal{N}}(\mathcal{V};\mathcal{W})$ either
stays in $\mathcal{P}_{m}^{k}$ or escapes $\widehat{\mathcal{P}}_{m}^{k}$
and visits either $\widehat{\mathcal{N}}_{m}^{k}\setminus\mathcal{N}_{m}^{k}$
or $\widehat{\mathcal{N}}_{m-1}^{k}\setminus\mathcal{N}_{m-1}^{k}$.

Switching the roles of spins $0$ and $1$, collection $\widehat{\mathcal{Q}}_{m}^{k}$
can be analyzed in the exact same way. As a result, any allowed path
from $\mathcal{Q}_{m}^{k}$ and avoiding $\widehat{\mathcal{N}}(\mathcal{V};\mathcal{W})$
either stays in $\mathcal{Q}_{m}^{k}$ or escapes $\widehat{\mathcal{Q}}_{m}^{k}$
and visits either $\widehat{\mathcal{N}}_{m}^{k}\setminus\mathcal{N}_{m}^{k}$
or $\widehat{\mathcal{N}}_{m-1}^{k}\setminus\mathcal{N}_{m-1}^{k}$.

\subsubsection{\label{sec7.3.2}Edge collections}

Here, we consider the edge passage parts $\widehat{\mathcal{P}}_{2}^{k}$,
$\widehat{\mathcal{P}}_{L-1}^{k}$, $\widehat{\mathcal{Q}}_{2}^{k}$
and $\widehat{\mathcal{Q}}_{L-1}^{k}$.

First, consider $\widehat{\mathcal{P}}_{2}^{k}$. In this special
case of $m=2$ in $\widehat{\mathcal{P}}_{m}^{k}$, the same deduction
described in Section \ref{sec7.3.1} works well such that the path
stays in $\mathcal{P}_{2}^{k}$ unless the empty site hits\footnote{In fact, there is an additional possibility that the only empty site
in $\mathfrak{c}^{k}$ slides vertically without changing the energy,
but the resulting configurations are still contained in $\mathcal{P}_{2}^{k}$
unless the empty site hits $\mathfrak{c}^{k+1}\cup\mathfrak{c}^{k+\mathscr{N}_{0}-1}$.}
\begin{equation}
\mathfrak{s}_{L-1;\ell}^{k+1}\cup\mathfrak{s}_{1;\ell-1}^{k+1}\cup\mathfrak{s}_{2;\ell'}^{k+\mathscr{N}_{0}-1}\cup\mathfrak{s}_{L-2;\ell'+2}^{k+\mathscr{N}_{0}-1}.\label{eq:hit bdry 2}
\end{equation}
After hitting the sets in \eqref{eq:hit bdry 2}, the exact same strategy
applies as well to verify that any path starting from $\mathcal{P}_{2}^{k}$
and avoiding $\widehat{\mathcal{N}}(\mathcal{V};\mathcal{W})$ either
stays in $\mathcal{P}_{2}^{k}$ or escapes $\widehat{\mathcal{P}}_{2}^{k}$
and visits either $\widehat{\mathcal{N}}^{k}\setminus\mathcal{N}^{k}$
or $\widehat{\mathcal{N}}_{2}^{k}\setminus\mathcal{N}_{2}^{k}$. Moreover,
the other collection $\widehat{\mathcal{P}}_{L-1}^{k}$ can be analyzed
in the exact same way such that any path starting from $\mathcal{P}_{L-1}^{k}$
and avoiding $\widehat{\mathcal{N}}(\mathcal{V};\mathcal{W})$ either
stays in $\mathcal{P}_{L-1}^{k}$ or escapes $\widehat{\mathcal{P}}_{L-1}^{k}$
and visits either $\widehat{\mathcal{N}}_{L-2}^{k}\setminus\mathcal{N}_{L-2}^{k}$
or $\widehat{\mathcal{N}}^{k+1}\setminus\mathcal{N}^{k+1}$. We may
apply the same deduction to the minor collections as well.

Gathering all the observations, we summarize the results regarding
the passage part as follows.
\begin{lem}
\label{lem:Pkm Qkm stab plat}It holds that
\[
\widehat{\mathcal{P}}_{m}^{k}=\mathcal{P}_{m}^{k}\quad\text{and}\quad\widehat{\mathcal{Q}}_{m}^{k}=\mathcal{Q}_{m}^{k}\quad\text{for each}\quad m\in[2,\,L-1].
\]
Moreover, there is no stable plateau in $\widehat{\mathcal{P}}_{m}^{k}\cup\widehat{\mathcal{Q}}_{m}^{k}$
for all $m\in[2,\,L-1]$.
\end{lem}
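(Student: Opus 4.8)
The plan is to deduce both statements directly from the exhaustive path analysis already carried out in Sections \ref{sec7.3.1} and \ref{sec7.3.2}, so the remaining task is essentially bookkeeping. Recall from Definition \ref{def:E landscape} that $\widehat{\mathcal{P}}_m^k=\widehat{\mathcal{N}}(\mathcal{P}_m^k;\widehat{\mathcal{N}}(\mathcal{V};\mathcal{W}))$, so a configuration $\xi$ lies in $\widehat{\mathcal{P}}_m^k$ precisely when there is an allowed path from $\mathcal{P}_m^k$ to $\xi$ that avoids $\widehat{\mathcal{N}}(\mathcal{V};\mathcal{W})$; in particular every configuration on such a path, $\xi$ included, lies outside $\widehat{\mathcal{N}}(\mathcal{V};\mathcal{W})$. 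First I would record the monotonicity fact that $\widehat{\mathcal{N}}_{m'}^{k'}=\widehat{\mathcal{N}}(\mathcal{N}_{m'}^{k'};\mathcal{W})\subseteq\widehat{\mathcal{N}}(\mathcal{V};\mathcal{W})$ and $\widehat{\mathcal{N}}^{k'}=\widehat{\mathcal{N}}(\mathcal{N}^{k'};\mathcal{W})\subseteq\widehat{\mathcal{N}}(\mathcal{V};\mathcal{W})$, which holds since $\mathcal{N}_{m'}^{k'},\mathcal{N}^{k'}\subseteq\mathcal{V}$ and $\widehat{\mathcal{N}}(\,\cdot\,;\mathcal{W})$ is monotone in its first argument.

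With this in hand, the equality $\widehat{\mathcal{P}}_m^k=\mathcal{P}_m^k$ follows at once: the analysis of Section \ref{sec7.3.1} (for $m\in[3,L-2]$) and of Section \ref{sec7.3.2} (for $m\in\{2,L-1\}$) shows that an allowed path issued from $\mathcal{P}_m^k$ either never leaves $\mathcal{P}_m^k$, or the very first configuration $\xi$ at which it leaves lies in $\widehat{\mathcal{N}}_{m}^k\cup\widehat{\mathcal{N}}_{m-1}^k$ in the bulk case, and in $\widehat{\mathcal{N}}^{k}\cup\widehat{\mathcal{N}}_2^k$ or $\widehat{\mathcal{N}}_{L-2}^k\cup\widehat{\mathcal{N}}^{k+1}$ in the two edge cases. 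By the monotonicity fact such a $\xi$ belongs to $\widehat{\mathcal{N}}(\mathcal{V};\mathcal{W})$, hence cannot appear on a path witnessing membership in $\widehat{\mathcal{P}}_m^k$; therefore any such path stays in $\mathcal{P}_m^k$, giving $\widehat{\mathcal{P}}_m^k\subseteq\mathcal{P}_m^k$, while the opposite inclusion is immediate because $\mathcal{P}_m^k\subseteq\mathcal{W}$ is disjoint from $\widehat{\mathcal{N}}(\mathcal{V};\mathcal{W})$. The identity $\widehat{\mathcal{Q}}_m^k=\mathcal{Q}_m^k$ follows from the same argument under the $0\leftrightarrow1$ spin interchange, exactly as Sections \ref{sec7.3.1} and \ref{sec7.3.2} treated the minor passages.

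For the second assertion I would argue by contradiction. Suppose $\mathcal{P}\subseteq\widehat{\mathcal{P}}_m^k\cup\widehat{\mathcal{Q}}_m^k=\mathcal{P}_m^k\cup\mathcal{Q}_m^k$ is a stable plateau; then $\mathbb{H}(\mathcal{P})=\mathbb{H}_0+4$ since every configuration in $\mathcal{P}_m^k\cup\mathcal{Q}_m^k$ has energy $\mathbb{H}_0+4$ by Definitions \ref{def:passage P} and \ref{def:passage Q}. Pick $\eta\in\mathcal{P}$; by the displacement of the isolated empty (respectively occupied) site described in Sections \ref{sec7.3.1}--\ref{sec7.3.2}, there is an allowed path from $\eta$ to a configuration in $\mathcal{V}$ of energy at most $\mathbb{H}_0+2$ (a shallow bottom $\bm{\sigma_{m;\ell,\ell'}^k}$, or for the edge cases a ground state). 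Since this path terminates strictly below $\mathbb{H}_0+4=\mathbb{H}(\mathcal{P})$, it must leave $\mathcal{P}$, and the first configuration $\zeta$ on it outside $\mathcal{P}$ lies in $\partial\mathcal{P}$ with $\mathbb{H}(\zeta)\le\mathbb{H}_0+4=\mathbb{H}(\mathcal{P})$, contradicting the second requirement in Definition \ref{def:stab plat}. Hence no stable plateau is contained in $\widehat{\mathcal{P}}_m^k\cup\widehat{\mathcal{Q}}_m^k$.

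The heavy lifting — the case-by-case tracking of every escape route out of $\mathcal{P}_m^k$ and $\mathcal{Q}_m^k$ — has already been completed in the two preceding subsections, so what remains is organizational. The only points demanding a little care are the edge cases $m\in\{2,L-1\}$, where one must also account (as in the footnote of Section \ref{sec7.3.2}) for the single empty site in $\mathfrak{c}^k$ sliding vertically without changing the energy, and the set-up of the $0\leftrightarrow1$ symmetry, which must be arranged so that the correct target valley is identified for the minor passages. I expect the real (and already-surmounted) obstacle to be exactly this: confirming that no escape route from $\mathcal{P}_m^k$ or $\mathcal{Q}_m^k$ has been overlooked, rather than any new energy estimate.
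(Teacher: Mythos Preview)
Your proposal is correct and follows exactly the paper's approach: the lemma is stated in the paper as an immediate summary (``Gathering all the observations, we summarize\dots'') of the exhaustive path analysis in Sections \ref{sec7.3.1}--\ref{sec7.3.2}, and you have accurately reconstructed the bookkeeping that turns that analysis into the two assertions. Your added details---the monotonicity inclusion $\widehat{\mathcal{N}}_{m'}^{k'}\subseteq\widehat{\mathcal{N}}(\mathcal{V};\mathcal{W})$, the disjointness $\mathcal{P}_m^k\cap\widehat{\mathcal{N}}(\mathcal{V};\mathcal{W})=\emptyset$ for the reverse inclusion, and the contradiction argument for the absence of stable plateaux---are all sound and make explicit what the paper leaves implicit.
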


\subsection{\label{sec7.4}Bulk part}

In this subsection, we investigate the collections $\widehat{\mathcal{N}}_{m}^{k}=\widehat{\mathcal{N}}(\mathcal{N}_{m}^{k};\mathcal{W})$
for $m\in[2,\,L-2]$ that appear in the right-hand side of \eqref{eq:E landscape dec}.

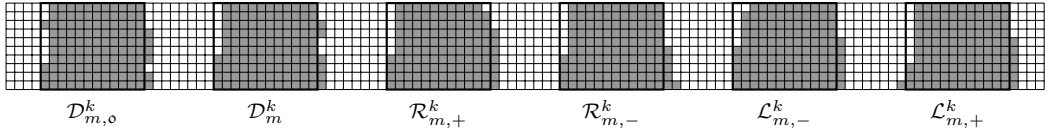
\begin{figure}
\begin{tikzpicture}[scale=0.115]
\fill[white] (-4,0) rectangle (116,10);

\fill[black!40!white] (1,0) rectangle (12,10);
\fill[black!40!white] (0,0) rectangle (1,3);
\fill[black!40!white] (0,7) rectangle (1,8);
\fill[black!40!white] (12,0) rectangle (13,2);
\fill[black!40!white] (12,3) rectangle (13,7);
\draw (6,0) node[below]{\scriptsize $\mathcal{D}_{m,\mathfrak{0}}^k$};

\fill[black!40!white] (21,0) rectangle (32,10);
\fill[black!40!white] (20,0) rectangle (21,4);
\fill[black!40!white] (32,0) rectangle (33,4);
\fill[black!40!white] (32,6) rectangle (33,8);
\draw (26,0) node[below]{\scriptsize $\mathcal{D}_m^k$};

\fill[black!40!white] (41,0) rectangle (52,10);
\fill[black!40!white] (40,0) rectangle (41,4);
\fill[black!40!white] (52,0) rectangle (53,7);
\fill[white] (51,9) rectangle (52,10);
\draw (46,0) node[below]{\scriptsize $\mathcal{R}_{m,+}^k$};

\fill[black!40!white] (61,0) rectangle (72,10);
\fill[black!40!white] (60,0) rectangle (61,4);
\fill[black!40!white] (72,0) rectangle (73,5);
\fill[black!40!white] (73,0) rectangle (74,1);
\draw (66,0) node[below]{\scriptsize $\mathcal{R}_{m,-}^k$};

\fill[black!40!white] (81,0) rectangle (92,10);
\fill[black!40!white] (80,0) rectangle (81,5);
\fill[black!40!white] (92,0) rectangle (93,6);
\fill[white] (81,9) rectangle (82,10);
\draw (86,0) node[below]{\scriptsize $\mathcal{L}_{m,-}^k$};

\fill[black!40!white] (101,0) rectangle (112,10);
\fill[black!40!white] (99,0) rectangle (100,1);
\fill[black!40!white] (100,0) rectangle (101,3);
\fill[black!40!white] (112,0) rectangle (113,6);
\draw (106,0) node[below]{\scriptsize $\mathcal{L}_{m,+}^k$};

\draw[very thin] (-4,0) grid (116,10);
\foreach \i in {0,...,5} {
\draw[thick] (0+20*\i,0) rectangle (12+20*\i,10);
}
\end{tikzpicture}\caption{\label{Fig7.9}Examples of configurations in $\widehat{\mathcal{N}}_{m}^{k}$
for $m=6$.}
\end{figure}

\begin{figure}
\begin{tikzpicture}
\foreach \i in {-2.5,2.5} { \foreach \j in {-1.25,1.25} {
\fill[black!20!white,rounded corners] (\i-0.5,\j-0.25) rectangle (\i+0.5,\j+0.25); \draw[thick,rounded corners] (\i-0.5,\j-0.25) rectangle (\i+0.5,\j+0.25); }}
\draw(2.5,1.25) node{\small $\mathcal{R}_{m,+}^k$};
\draw(-2.5,-1.25) node{\small $\mathcal{R}_{m,-}^k$};
\draw(2.5,-1.25) node{\small $\mathcal{L}_{m,+}^k$};
\draw(-2.5,1.25) node{\small $\mathcal{L}_{m,-}^k$};

\foreach \i in {-2,2} { \foreach \j in {-0.75,0.75} {
\fill[red!40!white] (\i-1,\j-0.25) rectangle (\i+1,\j+0.25); \draw[thick] (\i-1,\j-0.25) rectangle (\i+1,\j+0.25); }}
\draw (2,0.75) node{\small $\mathcal{P}_{m,+}^k$};
\draw (-2,-0.75) node{\small $\mathcal{Q}_{m,-}^k$};
\draw (-2,0.75) node{\small $\mathcal{P}_{m,-}^k$};
\draw (2,-0.75) node{\small $\mathcal{Q}_{m,+}^k$};

\foreach \i in {-4,4} { \foreach \j in {-1,1} {
\fill[white] (\i-1,\j+0.5) rectangle (\i+1,\j-0.5); \draw[thick] (\i-1,\j+0.5)--(\i+1,\j+0.5); \draw[thick] (\i-1,\j-0.5)--(\i+1,\j-0.5); }}
\draw[thick] (3,1.5)--(3,0.5); \draw[thick] (3,-1.5)--(3,-0.5); \draw[thick] (-3,1.5)--(-3,0.5); \draw[thick] (-3,-1.5)--(-3,-0.5);
\draw (4,1) node{\small $\mathcal{P}^k_{m+1}$};
\draw (4,-1) node{\small $\mathcal{Q}^k_{m+1}$};
\draw (-4,1) node{\small $\mathcal{P}^k_m$};
\draw (-4,-1) node{\small $\mathcal{Q}^k_m$};

\fill[black!20!white,rounded corners] (-0.5,2) rectangle (0.5,2.5); \draw[thick,rounded corners] (-0.5,2) rectangle (0.5,2.5);
\draw (0,2.25) node{\small $\mathcal{D}_m^k$};

\fill[red!40!white] (-0.5,1.5) rectangle (0.5,2); \draw[thick] (-0.5,1.5) rectangle (0.5,2);
\draw (0,1.75) node{\small $\mathcal{D}_{m,\mathfrak{0}}^k$};

\fill[black!20!white,rounded corners] (1,1.5) rectangle (-1,-1.5); \draw[thick,rounded corners] (1,1.5) rectangle (-1,-1.5);
\draw (0,0) node{\small $\mathcal{N}_m^k$};
\end{tikzpicture}\caption{\label{Fig7.10}Structure of $\widehat{\mathcal{N}}_{m}^{k}$ explained
in Section \ref{sec7.4}.}
\end{figure}
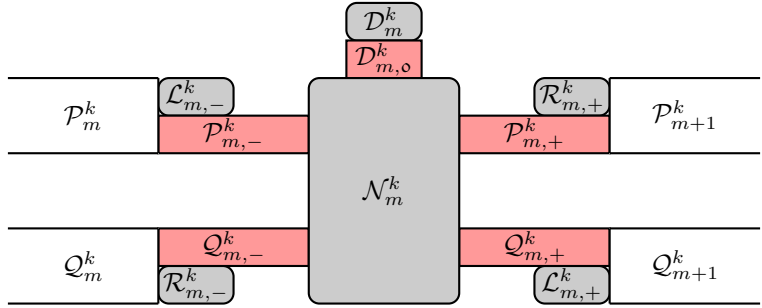

Fix $m\in[2,\,L-2]$. Recall from Section \ref{sec7.2.1} that $\mathcal{N}_{m}^{k}$
consists of the configurations reachable from $\mathcal{S}_{m}^{k}$
by particle jumps in $\mathfrak{c}^{k}\cup\mathfrak{c}^{k+\mathscr{N}_{0}}$
within energy $\mathbb{H}_{0}+3$. We define a list of collections
in Table \ref{Tab2} which are illustrated in Figure \ref{Fig7.9}.

\begin{table}
\begin{tabular}{|c|l|}
\hline 
{\footnotesize{}$\mathcal{D}_{m,\mathfrak{0}}^{k}$} & {\footnotesize{}Reachable from $\mathcal{S}_{m}^{k}$ by allowed jumps
in $\mathfrak{c}^{k}\cup\mathfrak{c}^{k+\mathscr{N}_{0}}$, of energy
$\mathbb{H}_{0}+4$}\tabularnewline
\hline 
{\footnotesize{}$\mathcal{D}_{m}^{k}$} & {\footnotesize{}Reachable from $\mathcal{S}_{m}^{k}$ by allowed jumps
in $\mathfrak{c}^{k}\cup\mathfrak{c}^{k+\mathscr{N}_{0}}$, of energy
$\mathbb{H}_{0}+3$, excluding $\mathcal{N}_{m}^{k}$}\tabularnewline
\hline 
{\footnotesize{}$\mathcal{R}_{m,+}^{k}$} & {\footnotesize{}$\mathfrak{c}^{[k+1,\,k+\mathscr{N}_{0}-2]}\cup W\cup W'\cup W''$
where $W\in\mathfrak{S}_{L-m}^{k}$, $W'\in\mathfrak{S}_{L-1}^{k+\mathscr{N}_{0}-1}$,
$W''\in\mathfrak{S}_{m+1}^{k+\mathscr{N}_{0}}$ s.t. $W'\vdash W''$}\tabularnewline
\hline 
{\footnotesize{}$\mathcal{P}_{m,+}^{k}$} & {\footnotesize{}Reachable from $\mathcal{R}_{m,+}^{k}$ by allowed
jumps in $\mathfrak{c}^{k}\cup\mathfrak{c}^{[k+\mathscr{N}_{0}-1,\,k+\mathscr{N}_{0}]}$,
excluding $\mathcal{N}_{m}^{k}\cup\mathcal{R}_{m,+}^{k}$}\tabularnewline
\hline 
{\footnotesize{}$\mathcal{R}_{m,-}^{k}$} & {\footnotesize{}$\mathfrak{c}^{[k+1,\,k+\mathscr{N}_{0}-1]}\cup W\cup W'\cup W''$
where $W\in\mathfrak{S}_{L-m}^{k}$, $W'\in\mathfrak{S}_{m-1}^{k+\mathscr{N}_{0}}$,
$W''\in\mathfrak{S}_{1}^{k+\mathscr{N}_{0}+1}$ s.t. $W'\vdash W''$}\tabularnewline
\hline 
{\footnotesize{}$\mathcal{Q}_{m,-}^{k}$} & {\footnotesize{}Reachable from $\mathcal{R}_{m,-}^{k}$ by allowed
jumps in $\mathfrak{c}^{k}\cup\mathfrak{c}^{[k+\mathscr{N}_{0},\,k+\mathscr{N}_{0}+1]}$,
excluding $\mathcal{N}_{m}^{k}\cup\mathcal{R}_{m,-}^{k}$}\tabularnewline
\hline 
{\footnotesize{}$\mathcal{L}_{m,-}^{k}$} & {\footnotesize{}$\mathfrak{c}^{[k+2,\,k+\mathscr{N}_{0}-1]}\cup W\cup W'\cup W''$
where $W\in\mathfrak{S}_{L-m+1}^{k}$, $W'\in\mathfrak{S}_{L-1}^{k+1}$,
$W''\in\mathfrak{S}_{m}^{k+\mathscr{N}_{0}}$ s.t. $W'\vdash W$}\tabularnewline
\hline 
{\footnotesize{}$\mathcal{P}_{m,-}^{k}$} & {\footnotesize{}Reachable from $\mathcal{L}_{m,-}^{k}$ by allowed
jumps in $\mathfrak{c}^{[k,\,k+1]}\cup\mathfrak{c}^{k+\mathscr{N}_{0}}$,
excluding $\mathcal{N}_{m}^{k}\cup\mathcal{L}_{m,-}^{k}$}\tabularnewline
\hline 
{\footnotesize{}$\mathcal{L}_{m,+}^{k}$} & {\footnotesize{}$\mathfrak{c}^{[k+1,\,k+\mathscr{N}_{0}-1]}\cup W\cup W'\cup W''$
where $W\in\mathfrak{S}_{1}^{k-1}$, $W'\in\mathfrak{S}_{L-m-1}^{k}$,
$W''\in\mathfrak{S}_{m}^{k+\mathscr{N}_{0}}$ s.t. $W'\vdash W$}\tabularnewline
\hline 
{\footnotesize{}$\mathcal{Q}_{m,+}^{k}$} & {\footnotesize{}Reachable from $\mathcal{L}_{m,+}^{k}$ by allowed
jumps in $\mathfrak{c}^{[k-1,\,k]}\cup\mathfrak{c}^{k+\mathscr{N}_{0}}$,
excluding $\mathcal{N}_{m}^{k}\cup\mathcal{L}_{m,+}^{k}$}\tabularnewline
\hline 
\end{tabular}

\vspace{2mm}
\caption{\label{Tab2}Collections of configurations in $\widehat{\mathcal{N}}_{m}^{k}$
for $m\in[2,\,L-2]$.}
\end{table}

First, suppose that a configuration $\xi$ is obtained from $\mathcal{S}_{m}^{k}$
by vertical particle jumps in $\mathfrak{c}^{k}\cup\mathfrak{c}^{k+\mathscr{N}_{0}}$.
Then, it is easy to see that 
\begin{equation}
\mathbb{H}(\xi)\ge\mathbb{H}_{0}+2\quad\text{and equality holds if and only if}\quad\xi\in\mathcal{S}_{m}^{k}.\label{eq:vertical jumps claim}
\end{equation}
As done before, we start from $\mathcal{N}_{m}^{k}$ and follow an
allowed path that avoids $\mathcal{W}$. If we escape $\mathcal{N}_{m}^{k}$
from vertical movements only and obtain $\eta$, then by definition
$\eta\in\mathcal{D}_{m,\mathfrak{0}}^{k}\cup\mathcal{D}_{m}^{k}$.
If $\eta\in\mathcal{D}_{m,\mathfrak{0}}^{k}$ such that $\mathbb{H}(\eta)=\mathbb{H}_{0}+4$,
then any horizontal movements are not allowed since they raise the
energy. On the other hand, if $\eta\in\mathcal{D}_{m}^{k}$ such that
$\mathbb{H}(\eta)=\mathbb{H}_{0}+3$, then the fact that $\eta\notin\mathcal{N}_{m}^{k}$
implies that any vertical jumps from $\eta$ must raise the energy.\footnote{If not, there exists either an isolated empty site or an isolated
occupied site in $\mathfrak{c}^{k}\cup\mathfrak{c}^{k+\mathscr{N}_{0}}$.
This isolated site can move vertically until it hits the same spin
to arrive at $\mathcal{S}_{m}^{k}$, which implies that $\eta\in\mathcal{N}_{m}^{k}$
contradicting the assumption.} This implies that all spins $0$ and $1$ in $\mathfrak{c}^{k}\cup\mathfrak{c}^{k+\mathscr{N}_{0}}$
have at least one neighbor in the same column with the same spin.
In this situation, any horizontal movements from $\eta$ raise the
energy by at least $2$, thereby attain energy at least $\mathbb{H}_{0}+5$,
thus they are not permitted. In summary, after entering $\mathcal{D}_{m,\mathfrak{0}}^{k}\cup\mathcal{D}_{m}^{k}$,
there are no other new configurations to obtain and the only option
is to return to $\mathcal{N}_{m}^{k}$.

Next, suppose that we escape $\mathcal{N}_{m}^{k}$ from a horizontal
movement. There are exactly four cases: $\mathfrak{c}^{k+\mathscr{N}_{0}-1}\to\mathfrak{c}^{k+\mathscr{N}_{0}}$,
$\mathfrak{c}^{k+\mathscr{N}_{0}}\to\mathfrak{c}^{k+\mathscr{N}_{0}+1}$,
$\mathfrak{c}^{k+1}\to\mathfrak{c}^{k}$ and $\mathfrak{c}^{k}\to\mathfrak{c}^{k-1}$.\smallskip{}

\begin{itemize}
\item If a $\mathfrak{c}^{k+\mathscr{N}_{0}-1}\to\mathfrak{c}^{k+\mathscr{N}_{0}}$-jump
occurs, then it is straightforward to see that we enter $\mathcal{P}_{m,+}^{k}$.
Then, except returning to $\mathcal{N}_{m}^{k}$, the empty site in
$\mathfrak{c}^{k+\mathscr{N}_{0}-1}$ may move vertically to obtain
a configuration in $\mathcal{R}_{m,+}^{k}$ or it may move left and
enter the sea of occupied sites and we visit $\mathcal{P}_{m+1}^{k}$
which must be avoided. From $\mathcal{R}_{m,+}^{k}$, new configurations
are obtained by vertical jumps in $\mathfrak{c}^{k}$, again producing
configurations in $\mathcal{P}_{m,+}^{k}$. From here, the only option
is to return to $\mathcal{R}_{m,+}^{k}$.\smallskip{}
\item The other three cases yield similar results by entering $\mathcal{Q}_{m,-}^{k}$,
$\mathcal{P}_{m,-}^{k}$, or $\mathcal{Q}_{m,+}^{k}$. The only differences
are that the escape sets are now $\mathcal{R}_{m,-}^{k}\cup\mathcal{Q}_{m}^{k}$,
$\mathcal{L}_{m,-}^{k}\cup\mathcal{P}_{m}^{k}$, or $\mathcal{L}_{m,+}^{k}\cup\mathcal{Q}_{m+1}^{k}$,
respectively.\smallskip{}
\end{itemize}
Therefore, we exhausted all the possible configurations in $\widehat{\mathcal{N}}_{m}^{k}\setminus\mathcal{N}_{m}^{k}$,
thus the following decomposition is valid:
\begin{equation}
\widehat{\mathcal{N}}_{m}^{k}\setminus\mathcal{N}_{m}^{k}=\mathcal{D}_{m,\mathfrak{0}}^{k}\cup\mathcal{D}_{m}^{k}\cup\mathcal{P}_{m,+}^{k}\cup\mathcal{R}_{m,+}^{k}\cup\mathcal{Q}_{m,-}^{k}\cup\mathcal{R}_{m,-}^{k}\cup\mathcal{P}_{m,-}^{k}\cup\mathcal{L}_{m,-}^{k}\cup\mathcal{Q}_{m,+}^{k}\cup\mathcal{L}_{m,+}^{k}.\label{eq:Nkm-hat dec}
\end{equation}
Refer to Figure \ref{Fig7.10} for the overall structure of $\widehat{\mathcal{N}}_{m}^{k}$.
In summary, we obtain the following lemma.
\begin{lem}
\label{lem:Nkm-hat stab plat}In $\widehat{\mathcal{N}}_{m}^{k}\setminus\mathcal{N}_{m}^{k}$,
the stable plateaux are the ones in $\mathcal{D}_{m}^{k}\cup\mathcal{R}_{m,+}^{k}\cup\mathcal{R}_{m,-}^{k}\cup\mathcal{L}_{m,-}^{k}\cup\mathcal{L}_{m,+}^{k}$.
Moreover, all the stable plateaux here have energy $\mathbb{H}_{0}+3$.
\end{lem}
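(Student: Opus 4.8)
The plan is to leverage the explicit decomposition \eqref{eq:Nkm-hat dec} of $\widehat{\mathcal{N}}_{m}^{k}\setminus\mathcal{N}_{m}^{k}$ that was just established in Section \ref{sec7.4}, and to go through each of the ten collections on the right-hand side one at a time, determining in each case whether it contains a stable plateau (Definition \ref{def:stab plat}) and, if so, what its common energy value is. The key structural input is already recorded: every configuration in $\widehat{\mathcal{N}}_{m}^{k}$ has energy in $\{\mathbb{H}_{0}+2,\,\mathbb{H}_{0}+3,\,\mathbb{H}_{0}+4\}$, the configurations of energy $\mathbb{H}_{0}+2$ are exactly those in $\mathcal{N}_{m}^{k}$ (by \eqref{eq:vertical jumps claim} together with the analysis of horizontal escapes), and the collections $\mathcal{D}_{m,\mathfrak{0}}^{k},\,\mathcal{P}_{m,\pm}^{k},\,\mathcal{Q}_{m,\pm}^{k}$ consist of configurations of energy $\mathbb{H}_{0}+4$ while $\mathcal{D}_{m}^{k},\,\mathcal{R}_{m,\pm}^{k},\,\mathcal{L}_{m,\pm}^{k}$ consist of configurations of energy $\mathbb{H}_{0}+3$.

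First I would dispose of the energy-$(\mathbb{H}_{0}+4)$ collections. A stable plateau $\mathcal{P}$ requires $\mathbb{H}(\zeta)>\mathbb{H}(\mathcal{P})$ for all $\zeta\in\partial\mathcal{P}$; but since $\mathbb{H}_{0}+4=\overline{\Phi}$ is the maximal energy level reached inside $\overline{\Omega}$, no set at this energy level can be a stable plateau unless it is isolated at the top, which is impossible here because each of $\mathcal{D}_{m,\mathfrak{0}}^{k},\,\mathcal{P}_{m,+}^{k},\,\mathcal{Q}_{m,-}^{k},\,\mathcal{P}_{m,-}^{k},\,\mathcal{Q}_{m,+}^{k}$ is, by its very definition in Table \ref{Tab2}, reachable from $\mathcal{N}_{m}^{k}$ or from one of $\mathcal{R}_{m,\pm}^{k},\,\mathcal{L}_{m,\pm}^{k}$ by an allowed jump, hence adjacent to a configuration of strictly lower energy. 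So no stable plateau lies in these five collections. Next, for $\mathcal{D}_{m}^{k}$: by the argument already given in Section \ref{sec7.4} (the footnote there), a configuration in $\mathcal{D}_{m}^{k}$ has every spin in $\mathfrak{c}^{k}\cup\mathfrak{c}^{k+\mathscr{N}_{0}}$ adjacent to a like spin in its own column, so no vertical jump lowers the energy; moreover, since $\mathcal{D}_m^k$ is separated from $\mathcal N_m^k$ and from any lower-energy configuration in the energy landscape, I would argue — using the fact that the only allowed moves out of an energy-$(\mathbb{H}_{0}+3)$ configuration either return into $\mathcal{N}_{m}^{k}$ (which cannot happen from $\mathcal D_m^k$ by definition) or go up to energy $\mathbb{H}_{0}+4$ — that $\partial$ of each connected component of $\mathcal{D}_m^k$ consists only of energy-$(\mathbb{H}_{0}+4)$ configurations, so each component is a stable plateau of energy $\mathbb{H}_{0}+3$. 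An entirely analogous but slightly more delicate argument handles $\mathcal{R}_{m,\pm}^{k}$ and $\mathcal{L}_{m,\pm}^{k}$: within each, configurations are all at energy $\mathbb{H}_{0}+3$, they are connected to each other in blocks, and — crucially — the description in Table \ref{Tab2} shows that escaping any such block leads only to $\mathcal{P}_{m,\pm}^{k}$ or $\mathcal{Q}_{m,\pm}^{k}$ (energy $\mathbb{H}_{0}+4$) other than returning within the block; hence the stable plateaux inside $\mathcal{R}_{m,\pm}^{k}\cup\mathcal{L}_{m,\pm}^{k}$ are precisely the connected components there, all of energy $\mathbb{H}_{0}+3$.

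The main obstacle I anticipate is the bookkeeping inside $\mathcal{R}_{m,\pm}^{k}$ and $\mathcal{L}_{m,\pm}^{k}$: one must check carefully that a configuration in, say, $\mathcal{R}_{m,+}^{k}$ cannot be connected within energy $\mathbb{H}_{0}+3$ to a configuration in $\mathcal{N}_{m}^{k}$ or in $\mathcal{D}_{m}^{k}$ or to $\mathcal{R}_{m,-}^{k}$, since if it were then those sets would merge and the plateau structure would be different. This is where the precise stick descriptions (e.g. $W\in\mathfrak{S}_{L-m}^{k}$, $W'\in\mathfrak{S}_{L-1}^{k+\mathscr{N}_{0}-1}$, $W''\in\mathfrak{S}_{m+1}^{k+\mathscr{N}_{0}}$ with $W'\vdash W''$) and the "excluding $\mathcal{N}_{m}^{k}\cup\mathcal{R}_{m,+}^{k}$" clauses in Table \ref{Tab2} must be used: a vertical jump in $\mathfrak{c}^{k}$ that would shorten $W$ below length $L-m$ creates an isolated spin whose subsequent legal moves stay within $\mathcal{P}_{m,+}^{k}$, and a jump touching $\mathfrak{c}^{k+\mathscr{N}_{0}}$ immediately raises the energy because of the $W'\vdash W''$ constraint. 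Once this geometric separation is confirmed — which I would present schematically rather than case-by-case, citing Figure \ref{Fig7.9} and Figure \ref{Fig7.10} — the lemma follows, and in particular every stable plateau found sits at energy exactly $\mathbb{H}_{0}+3$ because all energy-$(\mathbb{H}_{0}+2)$ configurations in $\widehat{\mathcal{N}}_{m}^{k}$ already belong to $\mathcal{N}_{m}^{k}$ and hence are excluded from $\widehat{\mathcal{N}}_{m}^{k}\setminus\mathcal{N}_{m}^{k}$.
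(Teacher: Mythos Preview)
Your proposal is correct and follows essentially the same route as the paper. In fact the paper does not give a separate proof of this lemma at all: it is stated as a summary (``In summary, we obtain the following lemma'') of the case analysis carried out in Section~\ref{sec7.4}, which is precisely what you recapitulate---using the decomposition~\eqref{eq:Nkm-hat dec}, discarding the energy-$(\mathbb{H}_{0}+4)$ collections, and then checking that each configuration in $\mathcal{D}_{m}^{k}$, $\mathcal{R}_{m,\pm}^{k}$, $\mathcal{L}_{m,\pm}^{k}$ has all neighbors at strictly higher energy.

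One small caution: your assertion that $\mathcal{P}_{m,\pm}^{k}$ and $\mathcal{Q}_{m,\pm}^{k}$ consist \emph{entirely} of energy-$(\mathbb{H}_{0}+4)$ configurations is not literally read off from their definitions in Table~\ref{Tab2} (which say ``reachable by allowed jumps, excluding $\ldots$''), so you should either verify this directly or, more safely, argue that every connected component inside these collections is adjacent to $\mathcal{N}_{m}^{k}$ or to $\mathcal{R}_{m,\pm}^{k}$/$\mathcal{L}_{m,\pm}^{k}$ and hence cannot be a stable plateau regardless of its energy. The paper's analysis supports the latter reading.
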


\subsection{\label{sec7.5}Edge part}

Finally, we investigate the collections $\widehat{\mathcal{N}}^{k}$
that appear in the right-hand side of \eqref{eq:E landscape dec}.
The structure of $\widehat{\mathcal{N}}^{k}$ is far more complicated
than the other collections studied before, due to the fact that if
we start from the ground state $\bm{\sigma^{k}}$ (energy $\mathbb{H}_{0}$),
we have much more room for the energy to fluctuate compared to when
we start from $\mathcal{S}_{m}^{k}$ for $m\in[2,\,L-2]$ (energy
$\mathbb{H}_{0}+2$).

\subsubsection{\label{sec7.5.1}Small valleys}

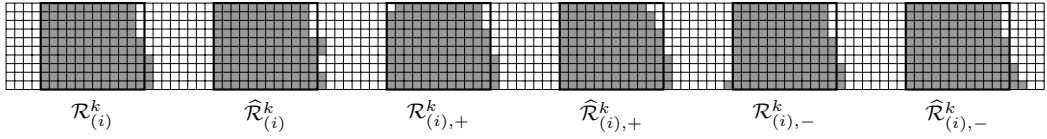
\begin{figure}
\begin{tikzpicture}[scale=0.115]
\fill[white] (-4,0) rectangle (116,10);

\fill[black!40!white] (0,0) rectangle (11,10);
\fill[black!40!white] (11,0) rectangle (12,6);
\fill[black!40!white] (12,0) rectangle (13,4);
\draw (6,0) node[below]{\scriptsize $\mathcal{R}_{(i)}^k$};

\fill[black!40!white] (20,0) rectangle (31,10);
\fill[black!40!white] (31,0) rectangle (32,6);
\fill[black!40!white] (32,0) rectangle (33,2);
\fill[black!40!white] (32,4) rectangle (33,6);
\draw (26,0) node[below]{\scriptsize $\widehat{\mathcal{R}}_{(i)}^k$};

\fill[black!40!white] (40,0) rectangle (51,10);
\fill[black!40!white] (51,0) rectangle (52,7);
\fill[black!40!white] (52,0) rectangle (53,4);
\fill[white] (40,9) rectangle (41,10);
\draw (46,0) node[below]{\scriptsize $\mathcal{R}_{(i),+}^k$};

\fill[black!40!white] (60,0) rectangle (70,10);
\fill[black!40!white] (70,0) rectangle (71,9);
\fill[black!40!white] (71,0) rectangle (72,7);
\fill[black!40!white] (72,0) rectangle (73,4);
\draw (66,0) node[below]{\scriptsize $\widehat{\mathcal{R}}_{(i),+}^k$};

\fill[black!40!white] (80,0) rectangle (91,10);
\fill[black!40!white] (79,0) rectangle (80,1);
\fill[black!40!white] (91,0) rectangle (92,6);
\fill[black!40!white] (92,0) rectangle (93,3);
\draw (86,0) node[below]{\scriptsize $\mathcal{R}_{(i),-}^k$};

\fill[black!40!white] (100,0) rectangle (111,10);
\fill[black!40!white] (111,0) rectangle (112,6);
\fill[black!40!white] (112,0) rectangle (113,3);
\fill[black!40!white] (113,0) rectangle (114,1);
\draw (106,0) node[below]{\scriptsize $\widehat{\mathcal{R}}_{(i),-}^k$};

\draw[very thin] (-4,0) grid (116,10);
\foreach \i in {0,...,5} {
\draw[thick] (0+20*\i,0) rectangle (12+20*\i,10);
}
\end{tikzpicture}\caption{\label{Fig7.11}Valley configurations in $\widehat{\mathcal{N}}^{k}$
in the case of $i=4$.}
\end{figure}

First, we investigate small valleys that are contained in $\widehat{\mathcal{N}}^{k}$.
For $i\ge2$, we define collections as in Table \ref{Tab3} (cf. Figure
\ref{Fig7.11}).\footnote{We say that two subsets $A,\,B$ of $V$ are \emph{separated} if the
graph distance between $A$ and $B$ are at least two.}

\begin{table}
\begin{tabular}{|c|l|}
\hline 
{\footnotesize{}$\mathcal{R}_{(i)}^{k}$} & {\footnotesize{}$\mathfrak{c}^{[k,\,k+\mathscr{N}_{0}-2]}\cup W\cup W'$
where $W\in\mathfrak{S}_{L-i}^{k+\mathscr{N}_{0}-1}$ and $W'\in\mathfrak{S}_{i}^{k+\mathscr{N}_{0}}$
s.t. $W\vdash W'$}\tabularnewline
\hline 
\multirow{3}{*}{{\footnotesize{}$\widehat{\mathcal{R}}_{(i)}^{k}$}} & {\footnotesize{}$\mathfrak{c}^{[k,\,k+\mathscr{N}_{0}-2]}\cup W\cup W'\cup W''$
with $|W|,\,|W'|,\,|W''|\ge2$ where $W,\,W'$ are separated sticks
in $\mathfrak{c}^{k+\mathscr{N}_{0}-1}$}\tabularnewline
 & {\footnotesize{}and $W''$ is a stick in $\mathfrak{c}^{k+\mathscr{N}_{0}}$
s.t. $|W|+|W'|=L-i$, $|W''|=i$ and $W\vdash W''$, or $W$ is a
stick in $\mathfrak{c}^{k+\mathscr{N}_{0}-1}$}\tabularnewline
 & {\footnotesize{}and $W',\,W''$ are separated sticks in $\mathfrak{c}^{k+\mathscr{N}_{0}}$
s.t. $|W|=L-i$, $|W'|+|W''|=i$ and $W\vdash W',\,W''$}\tabularnewline
\hline 
{\footnotesize{}$\mathcal{R}_{(i),+}^{k}$} & {\footnotesize{}$\mathfrak{c}^{[k,\,k+\mathscr{N}_{0}-2]}\cup W\cup W'\setminus\{w\}$
where $W\in\mathfrak{S}_{L-i+1}^{k+\mathscr{N}_{0}-1}$, $W'\in\mathfrak{S}_{i}^{k+\mathscr{N}_{0}}$,
$w\in\mathfrak{c}^{k}$ s.t. $W\vdash W'$}\tabularnewline
\hline 
{\footnotesize{}$\widehat{\mathcal{R}}_{(i),+}^{k}$} & {\footnotesize{}$\mathfrak{c}^{[k,\,k+\mathscr{N}_{0}-3]}\cup W\cup W'\cup W''$
where $W\in\mathfrak{S}_{L-1}^{k+\mathscr{N}_{0}-2}$, $W'\in\mathfrak{S}_{L-i+1}^{k+\mathscr{N}_{0}-1}$,
$W''\in\mathfrak{S}_{i}^{k+\mathscr{N}_{0}}$ s.t. $W\vdash W'\vdash W''$}\tabularnewline
\hline 
{\footnotesize{}$\mathcal{R}_{(i),-}^{k}$} & {\footnotesize{}$\mathfrak{c}^{[k,\,k+\mathscr{N}_{0}-2]}\cup W\cup W'\cup\{w\}$
where $W\in\mathfrak{S}_{L-i}^{k+\mathscr{N}_{0}-1}$, $W'\in\mathfrak{S}_{i-1}^{k+\mathscr{N}_{0}}$,
$w\in\mathfrak{c}^{k-1}$ s.t. $W\vdash W'$}\tabularnewline
\hline 
{\footnotesize{}$\widehat{\mathcal{R}}_{(i),-}^{k}$} & {\footnotesize{}$\mathfrak{c}^{[k,\,k+\mathscr{N}_{0}-2]}\cup W\cup W'\cup W''$
where $W\in\mathfrak{S}_{L-i}^{k+\mathscr{N}_{0}-1}$, $W'\in\mathfrak{S}_{i-1}^{k+\mathscr{N}_{0}}$,
$W''\in\mathfrak{S}_{1}^{k+\mathscr{N}_{0}+1}$ s.t. $W\vdash W'\vdash W''$}\tabularnewline
\hline 
\end{tabular}

\vspace{2mm}
\caption{\label{Tab3}Configurations in the small valleys in $\widehat{\mathcal{N}}^{k}$.
Denote by $\mathcal{L}_{(i)}^{k}$, $\widehat{\mathcal{L}}_{(i)}^{k}$,
$\mathcal{L}_{(i),-}^{k}$, $\widehat{\mathcal{L}}_{(i),-}^{k}$,
$\mathcal{L}_{(i),+}^{k}$ and $\widehat{\mathcal{L}}_{(i),+}^{k}$
the collections obtained from the six collections defined above, respectively,
by horizontally reflecting the configurations, where the notation
switches as $\mathcal{R}\leftrightarrow\mathcal{L}$ and $+\leftrightarrow-$.}
\end{table}

We remark that $\mathcal{R}_{(i)}^{k}$, $\mathcal{L}_{(i)}^{k}$
are defined for $i\le\frac{L}{2}$, $\widehat{\mathcal{R}}_{(i)}^{k},\,\widehat{\mathcal{L}}_{(i)}^{k}$
are defined for $i\le\frac{L-2}{2}$ and $\mathcal{R}_{(i),\pm}^{k},\,\widehat{\mathcal{R}}_{(i),\pm}^{k},\,\mathcal{L}_{(i),\mp}^{k},\,\widehat{\mathcal{L}}_{(i),\mp}^{k}$
are defined for $i\le\frac{L+1}{2}$. 

First, we focus on the six collections in the table. It is straightforward
that $\mathbb{H}(\eta)=\mathbb{H}_{0}+2$ for $\eta\in\mathcal{R}_{(i)}^{k}$
and $\mathbb{H}(\eta)=\mathbb{H}_{0}+3$ for $\eta\in\widehat{\mathcal{R}}_{(i)}^{k}\cup\mathcal{R}_{(i),+}^{k}\cup\widehat{\mathcal{R}}_{(i),+}^{k}\cup\mathcal{R}_{(i),-}^{k}\cup\widehat{\mathcal{R}}_{(i),-}^{k}$.
It is immediate from here that $\mathcal{N}(\widehat{\mathcal{R}}_{(i)}^{k})=\widehat{\mathcal{R}}_{(i)}^{k}$,
$\mathcal{N}(\mathcal{R}_{(i),+}^{k})=\mathcal{R}_{(i),+}^{k}$, $\mathcal{N}(\widehat{\mathcal{R}}_{(i),+}^{k})=\widehat{\mathcal{R}}_{(i),+}^{k}$,
$\mathcal{N}(\mathcal{R}_{(i),-}^{k})=\mathcal{R}_{(i),-}^{k}$ and
$\mathcal{N}(\widehat{\mathcal{R}}_{(i),-}^{k})=\widehat{\mathcal{R}}_{(i),-}^{k}$.

In turn, we investigate $\mathcal{N}(\mathcal{R}_{(i)}^{k})$. First,
assume that $i<\frac{L}{2}$. By the vertical mechanism explained
in Section \ref{sec7.2.1}, the stick $W'$ in $\mathfrak{c}^{k+\mathscr{N}_{0}}$
can slide vertically on the stick $W$ in $\mathfrak{c}^{k+\mathscr{N}_{0}-1}$
within energy $\mathbb{H}_{0}+3$. Moreover, switching the roles of
spins $0$ and $1$ in $\mathfrak{c}^{[k+\mathscr{N}_{0}-1,\,k+\mathscr{N}_{0}]}$,
the empty sites in $\mathfrak{c}^{k+\mathscr{N}_{0}-1}$ form a stick
and sits on the empty stick in $\mathfrak{c}^{k+\mathscr{N}_{0}}$,
such that it can also slide vertically. Thus, all the configurations
in $\mathcal{R}_{(i)}^{k}$ are connected within energy $\mathbb{H}_{0}+3$,
such that $\mathcal{N}(\mathcal{R}_{(i)}^{k})$ is a single connected
set.

On the other hand, if $i=\frac{L}{2}$ (in particular when $L$ is
even), then the two sticks in $\mathfrak{c}^{k+\mathscr{N}_{0}-1}$
and $\mathfrak{c}^{k+\mathscr{N}_{0}}$ have the same size; thus,
the sticks cannot move without visiting energy level $\mathbb{H}_{0}+4$.
This implies that $\mathcal{N}(\mathcal{R}_{(\frac{L}{2})}^{k})=\mathcal{R}_{(\frac{L}{2})}^{k}$,
where each element is isolated.

The same results hold for the reflected collections: $\mathcal{L}_{(i)}^{k}$,
$\widehat{\mathcal{L}}_{(i)}^{k}$, $\mathcal{L}_{(i),-}^{k}$, $\widehat{\mathcal{L}}_{(i),-}^{k}$,
$\mathcal{L}_{(i),+}^{k}$ and $\widehat{\mathcal{L}}_{(i),+}^{k}$.

\subsubsection{\label{sec7.5.2}Local geometry near $\mathcal{N}(\mathcal{R}_{(i)}^{k})$
and $\mathcal{N}(\mathcal{L}_{(i)}^{k})$ for $i\in[3,\,\frac{L}{2}]$}

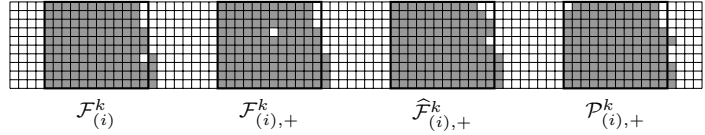
\begin{figure}
\begin{tikzpicture}[scale=0.115]
\fill[white] (-4,0) rectangle (76,10);

\fill[black!40!white] (0,0) rectangle (11,10);
\fill[black!40!white] (11,0) rectangle (12,3);
\fill[black!40!white] (11,4) rectangle (12,7);
\fill[black!40!white] (12,0) rectangle (13,4);
\draw (6,0) node[below]{\scriptsize $\mathcal{F}_{(i)}^k$};

\fill[black!40!white] (20,0) rectangle (31,10);
\fill[black!40!white] (31,0) rectangle (32,7);
\fill[black!40!white] (32,0) rectangle (33,4);
\fill[white] (26,6) rectangle (27,7);
\draw (26,0) node[below]{\scriptsize $\mathcal{F}_{(i),+}^k$};

\fill[black!40!white] (40,0) rectangle (50,10);
\fill[black!40!white] (50,0) rectangle (51,9);
\fill[black!40!white] (51,0) rectangle (52,5);
\fill[black!40!white] (51,6) rectangle (52,8);
\fill[black!40!white] (52,0) rectangle (53,4);
\draw (46,0) node[below]{\scriptsize $\widehat{\mathcal{F}}_{(i),+}^k$};

\fill[black!40!white] (60,0) rectangle (71,10);
\fill[black!40!white] (71,0) rectangle (72,7);
\fill[black!40!white] (72,0) rectangle (73,3);
\fill[black!40!white] (72,5) rectangle (73,6);
\fill[white] (60,9) rectangle (61,10);
\draw (66,0) node[below]{\scriptsize $\mathcal{P}_{(i),+}^k$};

\draw[very thin] (-4,0) grid (76,10);
\foreach \i in {0,...,3} {
\draw[thick] (0+20*\i,0) rectangle (12+20*\i,10);
}
\end{tikzpicture}\caption{\label{Fig7.12}Local configurations near $\mathcal{N}(\mathcal{R}_{(i)}^{k})$
for $i=4$.}
\end{figure}

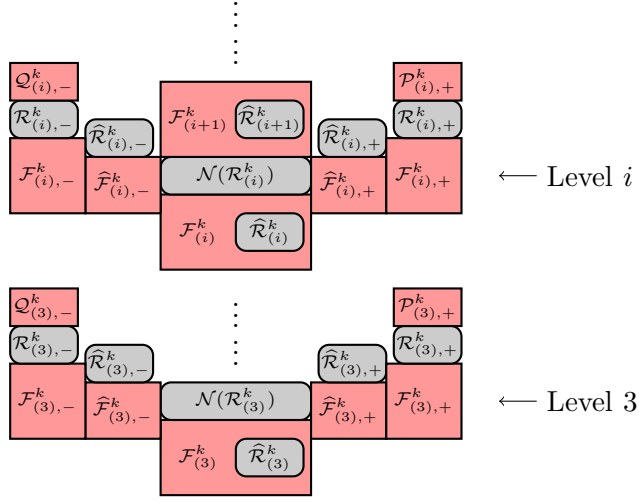
\begin{figure}
\begin{tikzpicture}
\fill[red!40!white] (1,3) rectangle (-1,4); \draw[thick] (1,3) rectangle (-1,4);
\fill[black!20!white,rounded corners] (1,4) rectangle (-1,4.5); \draw[thick,rounded corners] (1,4) rectangle (-1,4.5);
\fill[red!40!white] (1,3.75) rectangle (2,4.5); \draw[thick] (1,3.75) rectangle (2,4.5); \fill[black!20!white,rounded corners] (1.1,4.5) rectangle (2,5); \draw[thick,rounded corners] (1.1,4.5) rectangle (2,5);
\fill[red!40!white] (-1,3.75) rectangle (-2,4.5); \draw[thick] (-1,3.75) rectangle (-2,4.5); \fill[black!20!white,rounded corners] (-1.1,4.5) rectangle (-2,5); \draw[thick,rounded corners] (-1.1,4.5) rectangle (-2,5);
\fill[red!40!white] (2,3.75) rectangle (3,4.75); \draw[thick] (2,3.75) rectangle (3,4.75); \fill[red!40!white] (-2,3.75) rectangle (-3,4.75); \draw[thick] (-2,3.75) rectangle (-3,4.75);
\fill[black!20!white,rounded corners] (2.1,4.75) rectangle (3,5.25); \draw[thick,rounded corners] (2.1,4.75) rectangle (3,5.25); \fill[black!20!white,rounded corners] (-2.1,4.75) rectangle (-3,5.25); \draw[thick,rounded corners] (-2.1,4.75) rectangle (-3,5.25);
\fill[red!40!white] (2.1,5.25) rectangle (3,5.75); \draw[thick] (2.1,5.25) rectangle (3,5.75); \fill[red!40!white] (-2.1,5.25) rectangle (-3,5.75); \draw[thick] (-2.1,5.25) rectangle (-3,5.75);
\fill[black!20!white,rounded corners] (0,3.25) rectangle (0.9,3.75); \draw[thick,rounded corners] (0,3.25) rectangle (0.9,3.75);
\draw (2.5,4.25) node{\tiny $\mathcal{F}_{(3),+}^k$}; \draw (-2.5,4.25) node{\tiny $\mathcal{F}_{(3),-}^k$};
\draw (1.5,4.125) node{\tiny $\widehat{\mathcal{F}}_{(3),+}^k$}; \draw (-1.5,4.125) node{\tiny $\widehat{\mathcal{F}}_{(3),-}^k$};
\draw (-0.5,3.5) node{\tiny $\mathcal{F}_{(3)}^k$};
\draw (0.45,3.5) node{\tiny $\widehat{\mathcal{R}}_{(3)}^k$};
\draw (0,4.25) node{\tiny $\mathcal{N}(\mathcal{R}_{(3)}^k)$};
\draw (1.55,4.75) node{\tiny $\widehat{\mathcal{R}}_{(3),+}^k$};
\draw (-1.55,4.75) node{\tiny $\widehat{\mathcal{R}}_{(3),-}^k$};
\draw[->] (4,4.25)--(3.5,4.25); \draw (4,4.25) node[right]{Level $3$};
\draw (2.55,5) node{\tiny $\mathcal{R}_{(3),+}^k$};
\draw (-2.55,5) node{\tiny $\mathcal{R}_{(3),-}^k$};
\draw (2.55,5.5) node{\tiny $\mathcal{P}_{(3),+}^k$};
\draw (-2.55,5.5) node{\tiny $\mathcal{Q}_{(3),-}^k$};

\draw (0,5) node{$\vdots$};
\draw (0,5.5) node{$\vdots$};

\fill[red!40!white] (1,6) rectangle (-1,7); \draw[thick] (1,6) rectangle (-1,7);
\fill[black!20!white,rounded corners] (1,7) rectangle (-1,7.5); \draw[thick,rounded corners] (1,7) rectangle (-1,7.5);
\fill[red!40!white] (1,6.75) rectangle (2,7.5); \draw[thick] (1,6.75) rectangle (2,7.5); \fill[black!20!white,rounded corners] (1.1,7.5) rectangle (2,8); \draw[thick,rounded corners] (1.1,7.5) rectangle (2,8);
\fill[red!40!white] (-1,6.75) rectangle (-2,7.5); \draw[thick] (-1,6.75) rectangle (-2,7.5); \fill[black!20!white,rounded corners] (-1.1,7.5) rectangle (-2,8); \draw[thick,rounded corners] (-1.1,7.5) rectangle (-2,8);
\fill[red!40!white] (2,6.75) rectangle (3,7.75); \draw[thick] (2,6.75) rectangle (3,7.75); \fill[red!40!white] (-2,6.75) rectangle (-3,7.75); \draw[thick] (-2,6.75) rectangle (-3,7.75);
\fill[black!20!white,rounded corners] (2.1,7.75) rectangle (3,8.25); \draw[thick,rounded corners] (2.1,7.75) rectangle (3,8.25); \fill[black!20!white,rounded corners] (-2.1,7.75) rectangle (-3,8.25); \draw[thick,rounded corners] (-2.1,7.75) rectangle (-3,8.25);
\fill[red!40!white] (2.1,8.25) rectangle (3,8.75); \draw[thick] (2.1,8.25) rectangle (3,8.75); \fill[red!40!white] (-2.1,8.25) rectangle (-3,8.75); \draw[thick] (-2.1,8.25) rectangle (-3,8.75);
\fill[black!20!white,rounded corners] (0,6.25) rectangle (0.9,6.75); \draw[thick,rounded corners] (0,6.25) rectangle (0.9,6.75);
\draw (2.5,7.25) node{\tiny $\mathcal{F}_{(i),+}^k$}; \draw (-2.5,7.25) node{\tiny $\mathcal{F}_{(i),-}^k$};
\draw (1.5,7.125) node{\tiny $\widehat{\mathcal{F}}_{(i),+}^k$}; \draw (-1.5,7.125) node{\tiny $\widehat{\mathcal{F}}_{(i),-}^k$};
\draw (-0.5,6.5) node{\tiny $\mathcal{F}_{(i)}^k$};
\draw (0.45,6.5) node{\tiny $\widehat{\mathcal{R}}_{(i)}^k$};
\draw (0,7.25) node{\tiny $\mathcal{N}(\mathcal{R}_{(i)}^k)$};
\draw (1.55,7.75) node{\tiny $\widehat{\mathcal{R}}_{(i),+}^k$};
\draw (-1.55,7.75) node{\tiny $\widehat{\mathcal{R}}_{(i),-}^k$};
\draw[->] (4,7.25)--(3.5,7.25); \draw (4,7.25) node[right]{Level $i$};
\draw (2.55,8) node{\tiny $\mathcal{R}_{(i),+}^k$};
\draw (-2.55,8) node{\tiny $\mathcal{R}_{(i),-}^k$};
\draw (2.55,8.5) node{\tiny $\mathcal{P}_{(i),+}^k$};
\draw (-2.55,8.5) node{\tiny $\mathcal{Q}_{(i),-}^k$};

\fill[red!40!white] (1,7.5) rectangle (-1,8.5); \draw[thick] (1,7.5) rectangle (-1,8.5);
\fill[black!20!white,rounded corners] (0,7.75) rectangle (0.9,8.25); \draw[thick,rounded corners] (0,7.75) rectangle (0.9,8.25);
\draw (-0.5,8) node{\tiny $\mathcal{F}_{(i+1)}^k$};
\draw (0.45,8) node{\tiny $\widehat{\mathcal{R}}_{(i+1)}^k$};

\draw (0,9) node{$\vdots$};
\draw (0,9.5) node{$\vdots$};
\end{tikzpicture}\caption{\label{Fig7.13}Local structure near $\mathcal{N}(\mathcal{R}_{(i)}^{k})$
for $i\in[3,\,\frac{L}{2}]$.}
\end{figure}

Next, we look into the local geometry near $\mathcal{N}(\mathcal{R}_{(i)}^{k})$
and $\mathcal{N}(\mathcal{L}_{(i)}^{k})$ for $i\in[3,\,\frac{L}{2}]$.
Define the following collections in Table \ref{Tab4} for $i\ge2$.
See Figure \ref{Fig7.12} for a few examples.

\begin{table}
\begin{tabular}{|c|l|}
\hline 
{\footnotesize{}$\mathcal{F}_{(i)}^{k}$} & {\footnotesize{}$\mathfrak{c}^{[k,\,k+\mathscr{N}_{0}-2]}\cup W\cup W'$
where $W\subseteq\mathfrak{c}^{k+\mathscr{N}_{0}-1}$, $W'\subseteq\mathfrak{c}^{k+\mathscr{N}_{0}}$
s.t. $|W|=L-i$, $|W'|=i$, of energy $\mathbb{H}_{0}+4$}\tabularnewline
\hline 
{\footnotesize{}$\mathcal{F}_{(i),+}^{k}$} & {\footnotesize{}$\mathfrak{c}^{[k,\,k+\mathscr{N}_{0}-2]}\cup W\cup W'\setminus\{w\}$
where $W\in\mathfrak{S}_{L+1-i}^{k+\mathscr{N}_{0}-1}$, $W'\in\mathfrak{S}_{i}^{k+\mathscr{N}_{0}}$,
$w\in\mathfrak{c}^{[k+1,\,k+\mathscr{N}_{0}-3]}$ s.t. $W\vdash W'$}\tabularnewline
\hline 
{\footnotesize{}$\widehat{\mathcal{F}}_{(i),+}^{k}$} & {\footnotesize{}Reachable from $\widehat{\mathcal{R}}_{(i),+}^{k}$
by allowed jumps in $\mathfrak{c}^{[k+\mathscr{N}_{0}-2,\,k+\mathscr{N}_{0}]}$
s.t. $\mathfrak{c}^{k+\mathscr{N}_{0}-2}$ is not full}\tabularnewline
\hline 
{\footnotesize{}$\mathcal{P}_{(i),+}^{k}$} & {\footnotesize{}Reachable from $\mathcal{R}_{(i),+}^{k}$ by allowed
jumps in $\mathfrak{c}^{k}\cup\mathfrak{c}^{[k+\mathscr{N}_{0}-1,\,k+\mathscr{N}_{0}]}$,
excluding $\mathcal{R}_{(i),+}^{k}$}\tabularnewline
\hline 
{\footnotesize{}$\mathcal{F}_{(i),-}^{k}$} & {\footnotesize{}$\mathfrak{c}^{[k,\,k+\mathscr{N}_{0}-2]}\cup W\cup W'\cup\{w\}$
where $W\in\mathfrak{S}_{L-i}^{k+\mathscr{N}_{0}-1}$, $W'\in\mathfrak{S}_{i-1}^{k+\mathscr{N}_{0}}$,
$w\in\mathfrak{c}^{[k+\mathscr{N}_{0}+2,\,k-2]}$ s.t. $W\vdash W'$}\tabularnewline
\hline 
{\footnotesize{}$\widehat{\mathcal{F}}_{(i),-}^{k}$} & {\footnotesize{}Reachable from $\widehat{\mathcal{R}}_{(i),-}^{k}$
by allowed jumps in $\mathfrak{c}^{[k+\mathscr{N}_{0}-1,\,k+\mathscr{N}_{0}+1]}$
s.t. $\mathfrak{c}^{k+\mathscr{N}_{0}+1}$ is not empty}\tabularnewline
\hline 
{\footnotesize{}$\mathcal{Q}_{(i),-}^{k}$} & {\footnotesize{}Reachable from $\mathcal{R}_{(i),-}^{k}$ by allowed
jumps in $\mathfrak{c}^{k}\cup\mathfrak{c}^{[k+\mathscr{N}_{0}-1,\,k+\mathscr{N}_{0}]}$,
excluding $\mathcal{R}_{(i),-}^{k}$}\tabularnewline
\hline 
\end{tabular}

\vspace{2mm}
\caption{\label{Tab4}Other configurations in $\widehat{\mathcal{N}}^{k}$.
Denote by $\mathcal{G}_{(i)}^{k}$, $\mathcal{G}_{(i),-}^{k}$, $\widehat{\mathcal{G}}_{(i),-}^{k}$,
$\mathcal{P}_{(i),-}^{k}$, $\mathcal{G}_{(i),+}^{k}$, $\widehat{\mathcal{G}}_{(i),+}^{k}$
and $\mathcal{Q}_{(i),+}^{k}$ the collections obtained from the above
by horizontally reflecting the configurations, where the notation
switches as $\mathcal{F}\leftrightarrow\mathcal{G}$ and $+\leftrightarrow-$.}
\end{table}

We only consider $\mathcal{N}(\mathcal{R}_{(i)}^{k})$ as for $\mathcal{N}(\mathcal{L}_{(i)}^{k})$
the situation is totally symmetric. Escaping from $\mathcal{N}(\mathcal{R}_{(i)}^{k})$,
if the particles in $\mathfrak{c}^{[k+\mathscr{N}_{0}-1,\,k+\mathscr{N}_{0}]}$
are preserved, then we enter $\mathcal{F}_{(i)}^{k}\cup\mathcal{F}_{(i+1)}^{k}$.
From $\mathcal{F}_{(i)}^{k}$, provided that the particles in $\mathfrak{c}^{[k+\mathscr{N}_{0}-1,\,k+\mathscr{N}_{0}]}$
are still preserved, we may visit valley $\widehat{\mathcal{R}}_{(i)}^{k}$
(and then return to $\mathcal{F}_{(i)}^{k}$) or visit $\mathcal{N}(\mathcal{R}_{(i-1)}^{k})\cup\mathcal{N}(\mathcal{R}_{(i)}^{k})$.
If the particles in $\mathfrak{c}^{[k+\mathscr{N}_{0}-1,\,k+\mathscr{N}_{0}]}$
are not preserved from $\mathcal{N}(\mathcal{R}_{(i)}^{k})\cup\mathcal{F}_{(i)}^{k}$,
the only options are $\widehat{\mathcal{F}}_{(i),+}^{k}$ ($\mathfrak{c}^{k+\mathscr{N}_{0}-2}\to\mathfrak{c}^{k+\mathscr{N}_{0}-1}$)
or $\widehat{\mathcal{F}}_{(i),-}^{k}$ ($\mathfrak{c}^{k+\mathscr{N}_{0}}\to\mathfrak{c}^{k+\mathscr{N}_{0}+1}$).
From $\widehat{\mathcal{F}}_{(i),+}^{k}$, we either visit valley
$\widehat{\mathcal{R}}_{(i),+}^{k}$ or proceed as $\mathcal{F}_{(i),+}^{k}\to\mathcal{R}_{(i),+}^{k}\to\mathcal{P}_{(i),+}^{k}$,
which is a dead-end. Similarly, from $\widehat{\mathcal{F}}_{(i),-}^{k}$,
we either visit valley $\widehat{\mathcal{R}}_{(i),-}^{k}$ or proceed
as $\mathcal{F}_{(i),-}^{k}\to\mathcal{R}_{(i),-}^{k}\to\mathcal{Q}_{(i),-}^{k}$
and return.

A few minor differences occur at the edge part $i=\lfloor\frac{L}{2}\rfloor$
(the greatest integer less than or equal to $\frac{L}{2}$), depending
on the parity of $L$; details are left out to the readers. See Figure
\ref{Fig7.13} for a sketch of the local geometry.

\subsubsection{\label{sec7.5.3}Local geometry near $\mathcal{N}(\mathcal{R}_{(2)}^{k})$,
$\mathcal{N}(\mathcal{L}_{(2)}^{k})$ and $\mathcal{N}^{k}$}

\begin{figure}
\begin{tikzpicture}[scale=0.115]
\fill[white] (-4,0) rectangle (16,10);

\fill[black!40!white] (0,0) rectangle (12,10);
\fill[black!40!white] (14,7) rectangle (15,8);
\fill[white] (4,4) rectangle (5,5);
\draw (6,0) node[below]{\scriptsize $\mathcal{D}^k$};

\draw[very thin] (-4,0) grid (16,10);
\foreach \i in {0} {
\draw[thick] (0+20*\i,0) rectangle (12+20*\i,10);
}
\end{tikzpicture}\caption{\label{Fig7.14}Dead-end $\mathcal{D}^{k}$.}
\end{figure}
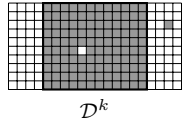

\begin{figure}
\begin{tikzpicture}
\fill[red!40!white] (3,1.5)--(-3,1.5)--(-3,3)--(-2,3)--(-2,2.75)--(-1,2.75)--(-1,2.5)--(1,2.5)--(1,2.75)--(2,2.75)--(2,3)--(3,3)--cycle;
\draw[thick] (3,1.5)--(-3,1.5)--(-3,3)--(-2,3)--(-2,2.75)--(-1,2.75)--(-1,2.5)--(1,2.5)--(1,2.75)--(2,2.75)--(2,3)--(3,3)--cycle;
\draw[thick] (2,1.5)--(2,2.75); \draw[thick] (-2,1.5)--(-2,2.75); \draw[thick] (1,1.5)--(1,2.5); \draw[thick] (-1,1.5)--(-1,2.5);
\fill[black!20!white,rounded corners] (2,3.25) rectangle (1.1,2.75); \draw[thick,rounded corners] (2,3.25) rectangle (1.1,2.75);
\fill[black!20!white,rounded corners] (-2,3.25) rectangle (-1.1,2.75); \draw[thick,rounded corners] (-2,3.25) rectangle (-1.1,2.75);
\fill[black!20!white,rounded corners] (1,2.5) rectangle (-1,3); \draw[thick,rounded corners] (1,2.5) rectangle (-1,3);
\fill[black!20!white,rounded corners] (0,1.75) rectangle (0.9,2.25); \draw[thick,rounded corners] (0,1.75) rectangle (0.9,2.25);
\draw (2.5,2.25) node{\tiny $\mathcal{F}_{(2),+}^k$}; \draw (-2.5,2.25) node{\tiny $\mathcal{F}_{(2),-}^k$};
\draw (1.5,2.125) node{\tiny $\widehat{\mathcal{F}}_{(2),+}^k$}; \draw (-1.5,2.125) node{\tiny $\widehat{\mathcal{F}}_{(2),-}^k$};
\draw (-0.5,2) node{\tiny $\mathcal{F}_{(2)}^k$};
\draw (0.45,2) node{\tiny $\widehat{\mathcal{R}}_{(2)}^k$};
\draw (0,2.75) node{\tiny $\mathcal{N}(\mathcal{R}_{(2)}^k)$};
\draw (1.55,3) node{\tiny $\widehat{\mathcal{R}}_{(2),+}^k$};
\draw (-1.55,3) node{\tiny $\widehat{\mathcal{R}}_{(2),-}^k$};

\fill[red!40!white] (1,3) rectangle (-1,4); \draw[thick] (1,3) rectangle (-1,4);
\fill[black!20!white,rounded corners] (0,3.25) rectangle (0.9,3.75); \draw[thick,rounded corners] (0,3.25) rectangle (0.9,3.75);
\draw (-0.5,3.5) node{\tiny $\mathcal{F}_{(3)}^k$};
\draw (0.45,3.5) node{\tiny $\widehat{\mathcal{R}}_{(3)}^k$};

\draw (0,4.5) node{$\vdots$};

\fill[red!40!white] (3,-1.5)--(-3,-1.5)--(-3,-3)--(-2,-3)--(-2,-2.75)--(-1,-2.75)--(-1,-2.5)--(1,-2.5)--(1,-2.75)--(2,-2.75)--(2,-3)--(3,-3)--cycle;
\draw[thick] (3,-1.5)--(-3,-1.5)--(-3,-3)--(-2,-3)--(-2,-2.75)--(-1,-2.75)--(-1,-2.5)--(1,-2.5)--(1,-2.75)--(2,-2.75)--(2,-3)--(3,-3)--cycle;
\draw[thick] (2,-1.5)--(2,-2.75); \draw[thick] (-2,-1.5)--(-2,-2.75); \draw[thick] (1,-1.5)--(1,-2.5); \draw[thick] (-1,-1.5)--(-1,-2.5);
\fill[black!20!white,rounded corners] (2,-3.25) rectangle (1.1,-2.75); \draw[thick,rounded corners] (2,-3.25) rectangle (1.1,-2.75);
\fill[black!20!white,rounded corners] (-2,-3.25) rectangle (-1.1,-2.75); \draw[thick,rounded corners] (-2,-3.25) rectangle (-1.1,-2.75);
\fill[black!20!white,rounded corners] (1,-2.5) rectangle (-1,-3); \draw[thick,rounded corners] (1,-2.5) rectangle (-1,-3);
\fill[black!20!white,rounded corners] (0,-1.75) rectangle (0.9,-2.25); \draw[thick,rounded corners] (0,-1.75) rectangle (0.9,-2.25);
\draw (2.5,-2.25) node{\tiny $\mathcal{G}_{(2),+}^k$}; \draw (-2.5,-2.25) node{\tiny $\mathcal{G}_{(2),-}^k$};
\draw (1.5,-2.125) node{\tiny $\widehat{\mathcal{G}}_{(2),+}^k$}; \draw (-1.5,-2.125) node{\tiny $\widehat{\mathcal{G}}_{(2),-}^k$};
\draw (-0.5,-2) node{\tiny $\mathcal{G}_{(2)}^k$};
\draw (0.45,-2) node{\tiny $\widehat{\mathcal{L}}_{(2)}^k$};
\draw (0,-2.75) node{\tiny $\mathcal{N}(\mathcal{L}_{(2)}^k)$};
\draw (1.55,-3) node{\tiny $\widehat{\mathcal{L}}_{(2),+}^k$};
\draw (-1.55,-3) node{\tiny $\widehat{\mathcal{L}}_{(2),-}^k$};

\fill[red!40!white] (1,-3) rectangle (-1,-4); \draw[thick] (1,-3) rectangle (-1,-4);
\fill[black!20!white,rounded corners] (0,-3.25) rectangle (0.9,-3.75); \draw[thick,rounded corners] (0,-3.25) rectangle (0.9,-3.75);
\draw (-0.5,-3.5) node{\tiny $\mathcal{G}_{(3)}^k$};
\draw (0.45,-3.5) node{\tiny $\widehat{\mathcal{L}}_{(3)}^k$};

\draw (0,-4.5) node{$\vdots$};

\foreach \i in {-2.5,2.5} { \foreach \j in {-1.25,1.25} {
\fill[black!20!white,rounded corners] (\i-0.5,\j-0.25) rectangle (\i+0.5,\j+0.25); \draw[thick,rounded corners] (\i-0.5,\j-0.25) rectangle (\i+0.5,\j+0.25); }}
\draw(2.5,1.25) node{\tiny $\mathcal{R}_{(2),+}^k$};
\draw(-2.5,-1.25) node{\tiny $\mathcal{L}_{(2),-}^k$};
\draw(2.5,-1.25) node{\tiny $\mathcal{L}_{(2),+}^k$};
\draw(-2.5,1.25) node{\tiny $\mathcal{R}_{(2),-}^k$};

\foreach \i in {-2,2} { \foreach \j in {-0.75,0.75} {
\fill[red!40!white] (\i-1,\j-0.25) rectangle (\i+1,\j+0.25); \draw[thick] (\i-1,\j-0.25) rectangle (\i+1,\j+0.25); }}
\draw (2,0.75) node{\tiny $\mathcal{P}_{(2),+}^k$};
\draw (-2,-0.75) node{\tiny $\mathcal{P}_{(2),-}^k$};
\draw (-2,0.75) node{\tiny $\mathcal{Q}_{(2),-}^k$};
\draw (2,-0.75) node{\tiny $\mathcal{Q}_{(2),+}^k$};

\foreach \i in {-4,4} { \foreach \j in {-1,1} {
\fill[white] (\i-1,\j+0.5) rectangle (\i+1,\j-0.5); \draw[thick] (\i-1,\j+0.5)--(\i+1,\j+0.5); \draw[thick] (\i-1,\j-0.5)--(\i+1,\j-0.5); }}
\draw[thick] (3,1.5)--(3,0.5); \draw[thick] (3,-1.5)--(3,-0.5); \draw[thick] (-3,1.5)--(-3,0.5); \draw[thick] (-3,-1.5)--(-3,-0.5);
\draw (4,1) node{$\small \mathcal{P}_2^k$};
\draw (4,-1) node{$\small \mathcal{Q}_2^k$};
\draw (-4,1) node{$\small \mathcal{Q}_{L-1}^{k-1}$};
\draw (-4,-1) node{$\small \mathcal{P}_{L-1}^{k-1}$};

\fill[black!20!white,rounded corners] (1,1.5) rectangle (-1,-1.5); \draw[thick,rounded corners] (1,1.5) rectangle (-1,-1.5);
\draw (0,0) node{$\small \mathcal{N}^k$};

\fill[red!40!white] (0.3,-0.6) rectangle (-0.3,-1.2); \draw[thick] (0.3,-0.6) rectangle (-0.3,-1.2);
\draw (0,-0.9) node{$\small \mathcal{D}^k$};
\end{tikzpicture}\caption{\label{Fig7.15}Local geometry near $\mathcal{N}(\mathcal{R}_{(2)}^{k})$,
$\mathcal{N}(\mathcal{L}_{(2)}^{k})$ and $\mathcal{N}^{k}$.}
\end{figure}
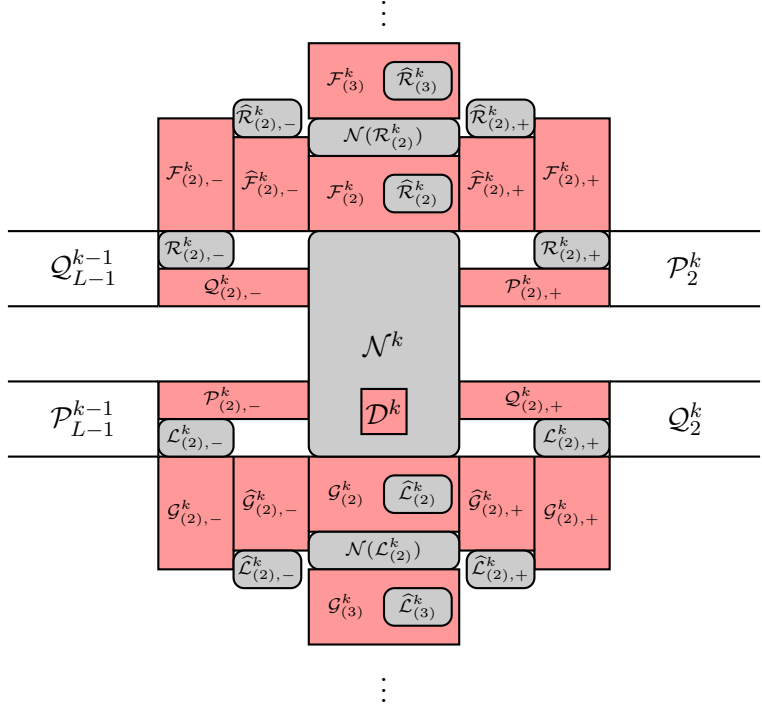

Finally, we analyze the local geometry near $\mathcal{N}(\mathcal{R}_{(2)}^{k})$,
$\mathcal{N}(\mathcal{L}_{(2)}^{k})$ and $\mathcal{N}^{k}$. One
more collection must be identified in Table \ref{Tab5} (cf. Figure
\ref{Fig7.14}).

\begin{table}
\begin{tabular}{|c|l|}
\hline 
{\footnotesize{}$\mathcal{D}^{k}$} & {\footnotesize{}$\mathfrak{c}^{[k,\,k+\mathscr{N}_{0}-1]}\cup\{w\}\setminus\{w'\}$
where $w\in\mathfrak{c}^{[k+\mathscr{N}_{0}+1,\,k-2]}$ and $w'\in\mathfrak{c}^{[k+1,\,k+\mathscr{N}_{0}-2]}$}\tabularnewline
\hline 
\end{tabular}

\vspace{2mm}
\caption{\label{Tab5}Dead-end configurations in $\widehat{\mathcal{N}}^{k}$.}
\end{table}

Starting from $\mathcal{N}(\mathcal{R}_{(2)}^{k})$, the situation
is quite similar to the bulk part identified in Section \ref{sec7.5.2};
the differences are that from $\mathcal{F}_{(2)}^{k}$ we enter $\mathcal{N}^{k}$
rather than $\mathcal{N}(\mathcal{R}_{(1)}^{k})$ (which does not
exist), that from $\mathcal{R}_{(2),+}^{k}\cup\mathcal{P}_{(2),+}^{k}$
(resp. $\mathcal{R}_{(2),-}^{k}\cup\mathcal{Q}_{(2),-}^{k}$) we may
also escape $\widehat{\mathcal{N}}^{k}$ and enter $\mathcal{P}_{2}^{k}$
(resp. $\mathcal{Q}_{L-1}^{k-1}$), and that from $\mathcal{P}_{(2),+}^{k}$
(resp. $\mathcal{Q}_{(2),-}^{k}$) we may also visit $\mathcal{N}^{k}$.
We leave the verification of these simple but tedious facts to the
readers. From $\mathcal{N}(\mathcal{L}_{(2)}^{k})$ the analysis remains
the same.

From $\mathcal{N}^{k}$, other than the above-identified collections
$\mathcal{F}_{(2)}^{k}\cup\mathcal{P}_{(2),+}^{k}\cup\mathcal{Q}_{(2),-}^{k}\cup\mathcal{G}_{(2)}^{k}\cup\mathcal{P}_{(2),-}^{k}\cup\mathcal{Q}_{(2),+}^{k}$,
we may also enter the collection $\mathcal{D}^{k}$ of dead-ends,
from which the only option is to return to $\mathcal{N}^{k}$.

Refer to Figure \ref{Fig7.15} for an illustration. In particular,
gathering the results in Sections \ref{sec7.5.1}, \ref{sec7.5.2}
and \ref{sec7.5.3} (cf. Figures \ref{Fig7.13} and \ref{Fig7.15}),
we have completed the analysis of $\widehat{\mathcal{N}}^{k}$. The
following lemma summarizes the analysis.
\begin{lem}
\label{lem:Nk-hat stab plat}In $\widehat{\mathcal{N}}^{k}\setminus\mathcal{N}^{k}$,
the stable plateaux are the ones in $\mathcal{R}_{(i)}^{k}\cup\widehat{\mathcal{R}}_{(i)}^{k}\cup\mathcal{R}_{(i),\pm}^{k}\cup\widehat{\mathcal{R}}_{(i),\pm}^{k}\cup\mathcal{L}_{(i)}^{k}\cup\widehat{\mathcal{L}}_{(i)}^{k}\cup\mathcal{L}_{(i),\mp}^{k}\cup\widehat{\mathcal{L}}_{(i),\mp}^{k}$.
Moreover, $\mathbb{H}(\eta)=\mathbb{H}_{0}+2$ for $\eta\in\mathcal{R}_{(i)}^{k}\cup\mathcal{L}_{(i)}^{k}$
and $\mathbb{H}(\eta)=\mathbb{H}_{0}+3$ otherwise.
\end{lem}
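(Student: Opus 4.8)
The assertion is the concluding summary of the case analysis of Sections~\ref{sec7.5.1}, \ref{sec7.5.2} and \ref{sec7.5.3}, so the plan is to read the stable plateaux off that analysis using Definition~\ref{def:stab plat}. First I would assemble, once and for all, the full decomposition of $\widehat{\mathcal{N}}^{k}\setminus\mathcal{N}^{k}$ recorded in Section~\ref{sec7.5}: the ``valley'' collections $\mathcal{R}_{(i)}^{k}$, $\widehat{\mathcal{R}}_{(i)}^{k}$, $\mathcal{R}_{(i),\pm}^{k}$, $\widehat{\mathcal{R}}_{(i),\pm}^{k}$ and their reflections $\mathcal{L}_{(i)}^{k}$, $\widehat{\mathcal{L}}_{(i)}^{k}$, $\mathcal{L}_{(i),\mp}^{k}$, $\widehat{\mathcal{L}}_{(i),\mp}^{k}$ (Table~\ref{Tab3}); the ``wall'' collections $\mathcal{F}_{(i)}^{k}$, $\mathcal{F}_{(i),\pm}^{k}$, $\widehat{\mathcal{F}}_{(i),\pm}^{k}$, $\mathcal{P}_{(i),+}^{k}$, $\mathcal{Q}_{(i),-}^{k}$ and their reflections (Table~\ref{Tab4}); and the dead-end collection $\mathcal{D}^{k}$ (Table~\ref{Tab5}); the global connectivity being recorded schematically in Figures~\ref{Fig7.13} and \ref{Fig7.15}. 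I would also recall the energy values already computed there: $\mathbb{H}\equiv\mathbb{H}_{0}+2$ on $\mathcal{R}_{(i)}^{k}\cup\mathcal{L}_{(i)}^{k}$, $\mathbb{H}\equiv\mathbb{H}_{0}+3$ on $\widehat{\mathcal{R}}_{(i)}^{k}\cup\mathcal{R}_{(i),\pm}^{k}\cup\widehat{\mathcal{R}}_{(i),\pm}^{k}$ together with their reflections, and $\mathbb{H}\equiv\mathbb{H}_{0}+4$ on each wall and dead-end collection.

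For the energy-$(\mathbb{H}_{0}+3)$ valleys I would invoke the identities $\mathcal{N}(\widehat{\mathcal{R}}_{(i)}^{k})=\widehat{\mathcal{R}}_{(i)}^{k}$, $\mathcal{N}(\mathcal{R}_{(i),\pm}^{k})=\mathcal{R}_{(i),\pm}^{k}$, $\mathcal{N}(\widehat{\mathcal{R}}_{(i),\pm}^{k})=\widehat{\mathcal{R}}_{(i),\pm}^{k}$ (and reflections) from Section~\ref{sec7.5.1}. Write such a set $\mathcal{C}$ as the disjoint union of its connected components; each component $\mathcal{A}$ has constant energy $\mathbb{H}_{0}+3$. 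If some $\zeta\in\partial\mathcal{A}$ had $\mathbb{H}(\zeta)\le\mathbb{H}_{0}+3$, pick $\eta\in\mathcal{A}$ with $\eta\sim\zeta$; then $\Phi(\eta,\zeta)\le\mathbb{H}_{0}+3<\mathbb{H}_{0}+4$, so $\zeta\in\mathcal{N}(\eta)\subseteq\mathcal{N}(\mathcal{C})=\mathcal{C}$, and since $\zeta$ is adjacent to $\eta\in\mathcal{A}$ it lies in $\mathcal{A}$ — a contradiction. Hence $\mathbb{H}(\zeta)>\mathbb{H}(\mathcal{A})$ for all $\zeta\in\partial\mathcal{A}$ and $\mathcal{A}$ is a stable plateau of energy $\mathbb{H}_{0}+3$. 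For the energy-$(\mathbb{H}_{0}+2)$ valleys $\mathcal{R}_{(i)}^{k}$, Section~\ref{sec7.5.1} shows $\mathcal{N}(\mathcal{R}_{(i)}^{k})$ is connected with all energies $\le\mathbb{H}_{0}+3$, hence a cycle whose bottom is $\mathcal{R}_{(i)}^{k}$; by Lemma~\ref{lem:cyc bot P} it follows that $\mathcal{R}_{(i)}^{k}$ is a union of stable plateaux, and since distinct configurations of $\mathcal{R}_{(i)}^{k}$ are connected only through energy $\mathbb{H}_{0}+3$ (a stick of length $i\ge2$ must slide) they are pairwise non-adjacent, so each is a singleton stable plateau of energy $\mathbb{H}_{0}+2$; for $i=L/2$ the same conclusion holds trivially, as then $\mathcal{N}(\mathcal{R}_{(i)}^{k})=\mathcal{R}_{(i)}^{k}$ with isolated elements. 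The reflected statements follow by the symmetry $\mathcal{R}\leftrightarrow\mathcal{L}$.

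It remains to exclude stable plateaux in the wall and dead-end collections. For this I would use the elementary remark that no $\eta\in\widehat{\mathcal{N}}^{k}$ with $\mathbb{H}(\eta)=\mathbb{H}_{0}+4$ lies in a stable plateau: by definition $\widehat{\mathcal{N}}^{k}=\widehat{\mathcal{N}}(\mathcal{N}^{k};\mathcal{W})$, so $\eta$ is joined to $\mathcal{N}^{k}$, and hence to $\bm{\sigma^{k}}$ of energy $\mathbb{H}_{0}$, by an allowed path; if every allowed neighbour of $\eta$ had energy $\mathbb{H}_{0}+4$ then that path could not leave the corresponding energy-$(\mathbb{H}_{0}+4)$ level set without crossing energy $\ge\mathbb{H}_{0}+5$, contradicting that its height is $\le\mathbb{H}_{0}+4$. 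Thus $\eta$ has a neighbour of energy $\le\mathbb{H}_{0}+3$, so neither $\{\eta\}$ nor any equal-energy set containing it is a stable plateau. This disposes of $\mathcal{F}_{(i)}^{k}$, $\mathcal{F}_{(i),\pm}^{k}$, $\widehat{\mathcal{F}}_{(i),\pm}^{k}$, $\mathcal{P}_{(i),+}^{k}$, $\mathcal{Q}_{(i),-}^{k}$, $\mathcal{D}^{k}$ and all their reflections; $\mathcal{N}^{k}$ itself lies outside $\widehat{\mathcal{N}}^{k}\setminus\mathcal{N}^{k}$, its stable plateaux being recorded in Lemma~\ref{lem:Nk stab plat}. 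Combining the two previous paragraphs gives precisely the asserted list together with the stated energies.

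The part I expect to require genuine care is not any individual plateau verification — each is short — but confirming that the enumeration behind Sections~\ref{sec7.5.1}–\ref{sec7.5.3} is genuinely exhaustive, i.e.\ that following allowed jumps out of $\mathcal{N}^{k}$ while avoiding $\mathcal{W}$ never escapes the union of the collections listed above. In particular the parity-dependent edge level $i=\lfloor L/2\rfloor$ (where $\mathcal{R}_{(i)}^{k}$ collapses to isolated configurations and the links to level $i+1$ disappear) and the base level $i=2$ (glued directly both to $\mathcal{N}^{k}$ via $\mathcal{F}_{(2)}^{k}$ and to the passages $\mathcal{P}_{2}^{k}$, $\mathcal{Q}_{L-1}^{k-1}$) have to be inspected separately, so that no stray stable plateau is missed and none of the wall collections is mistakenly counted as one.
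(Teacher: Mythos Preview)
Your approach is the right one and matches the paper's: the lemma is simply the summary of the exhaustive case analysis carried out in Sections~\ref{sec7.5.1}--\ref{sec7.5.3}, and you correctly read off the stable plateaux from the identities $\mathcal{N}(\widehat{\mathcal{R}}_{(i)}^{k})=\widehat{\mathcal{R}}_{(i)}^{k}$, etc., together with the energy values computed there. Your treatment of the valley collections is fine.

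There is, however, a genuine slip in your exclusion argument for the energy-$(\mathbb{H}_{0}+4)$ configurations. You claim that every such $\eta\in\widehat{\mathcal{N}}^{k}$ has a neighbour of energy $\le\mathbb{H}_{0}+3$, deducing this from the existence of an allowed path to $\bm{\sigma^{k}}$. That deduction is invalid: the allowed path may take several steps at level $\mathbb{H}_{0}+4$ before dropping, so the drop need not occur at $\eta$ itself. Concretely, take $\eta\in\mathcal{D}^{k}$ with the isolated particle $w$ and the isolated hole $w'$ both at distance $\ge 2$ from the boundary of their respective seas; then every single Kawasaki move from $\eta$ either keeps the energy at $\mathbb{H}_{0}+4$ (sliding $w$ or $w'$ inside their seas) or raises it above $\mathbb{H}_{0}+4$ (detaching a boundary particle). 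So $\eta$ has no neighbour of energy $\le\mathbb{H}_{0}+3$, contradicting your conclusion.

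The fix is to argue at the level of the putative stable plateau rather than at $\eta$: if $\eta$ belonged to a stable plateau $\mathcal{P}\subseteq\widehat{\mathcal{N}}^{k}$ with $\mathbb{H}(\mathcal{P})=\mathbb{H}_{0}+4$, then by Definition~\ref{def:stab plat} every $\zeta\in\partial\mathcal{P}$ has $\mathbb{H}(\zeta)\ge\mathbb{H}_{0}+5$, so no allowed path starting at $\eta$ can leave $\mathcal{P}$; this contradicts $\eta\in\widehat{\mathcal{N}}^{k}=\widehat{\mathcal{N}}(\mathcal{N}^{k};\mathcal{W})$, since $\bm{\sigma^{k}}\notin\mathcal{P}$. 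With this correction your argument goes through.
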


\section{\label{sec8}Proof of Theorems \ref{thm:P1 char}, \ref{thm:hier dec},
\ref{thm:Kawasaki} and \ref{thm:pos rates}}

In this final section, we conclude the proof of the main theorems
in Section \ref{sec3}.

First, gathering Lemmas \ref{lem:Nkm stab plat}, \ref{lem:Nk stab plat},
\ref{lem:Pkm Qkm stab plat}, \ref{lem:Nkm-hat stab plat} and \ref{lem:Nk-hat stab plat},
we readily check the full characterization given in Theorem \ref{thm:P1 char}.

To prove Theorem \ref{thm:hier dec}, we calculate the initial depth
(cf. \eqref{eq:Gammai1 def}) of each stable plateau. First, it is
clear for each stable plateau with energy $\mathbb{H}_{0}+3$ that
its initial depth is $1$ since $\overline{\Phi}=\mathbb{H}_{0}+4$.\footnote{One may also directly check this by inspecting the details in Sections
\ref{sec7.4} and \ref{sec7.5}.} This readily implies that
\[
\Gamma^{\star,1}=\min_{i}\Gamma_{i}^{1}=1.
\]
Now, we analyze each stable plateau.\smallskip{}

\begin{itemize}
\item According to Figure \ref{Fig7.6}, starting from each $\bm{\sigma^{k}}\in\mathcal{S}$,
the first exit is to raise energy by $3$ to enter $\mathcal{F}^{k}\cup\mathcal{G}^{k}$,
and then we may follow a downhill path to $\mathcal{R}^{k}$ or $\mathcal{L}^{k}$
which constitute of stable plateaux. This readily verifies that the
initial depth for each $\{\bm{\sigma^{k}}\}$ is exactly $3$. In
particular, by Theorem \ref{thm:P-star1 class}-(1), each $\{\bm{\sigma^{k}}\}$
is absorbing with respect to $\{\mathfrak{X}^{\star,1}(t)\}_{t\ge0}$.\smallskip{}
\item Again by Figure \ref{Fig7.6}, starting from $\mathcal{S}_{1}^{k}$,
we may raise the energy by $1$ to enter $\mathcal{P}_{1}^{k}$, move
to $\mathcal{F}^{k}$ which does not change the energy, and then finally
visit $\bm{\sigma^{k}}$ where $\mathbb{H}(\bm{\sigma^{k}})<\mathbb{H}(\mathcal{S}_{1}^{k})$.
Thus by Theorem \ref{thm:P-star1 class}-(2), $\mathcal{S}_{1}^{k}$
is a transient element of $\{\mathfrak{X}^{\star,1}(t)\}_{t\ge0}$.
Similar deductions readily imply that $\mathcal{S}_{L-1}^{k-1}$,
$\mathcal{R}^{k}$ and $\mathcal{L}^{k}$ are also transient elements
of $\{\mathfrak{X}^{\star,1}(t)\}_{t\ge0}$.\smallskip{}
\item Consider Figure \ref{Fig7.4}. Starting from each $\bm{\sigma_{m;\ell,\ell'}^{k}}\in\mathcal{S}_{m}^{k}$
for $m\in[2,\,L-2]$, we may raise energy by $1$ and enter the horizontal
or vertical blue line in Figure \ref{Fig7.4}, roam around therein
without changing the energy, and then finally end up at another $\bm{\sigma_{m;\ell'',\ell'''}^{k}}\in\mathcal{S}_{m}^{k}$.
Moreover, subject to this energy barrier, the elements of $\mathcal{S}_{m}^{k}$
are the only options to visit. This implies that each $\{\boldsymbol{\sigma_{m;\ell,\ell'}^{k}}\}$
is a recurrent element of $\{\mathfrak{X}^{\star,1}(t)\}_{t\ge0}$
and that
\[
\{\{\boldsymbol{\sigma_{m;\ell,\ell'}^{k}}\}:\,\ell,\,\ell'\in\mathbb{T}_{L}\}
\]
constitutes an irreducible component for each $k\in\mathbb{T}_{K}$
and $m\in[2,\,L-2]$.\smallskip{}
\item Referring to Figure \ref{Fig7.13}, the same classification applies
to collections $\mathcal{R}_{(i)}^{k}$ and $\mathcal{L}_{(i)}^{k}$
for $i\in[2,\,\frac{L}{2})$ as well, such that each $\{\eta\}$ for
$\eta\in\mathcal{R}_{(i)}^{k}\cup\mathcal{L}_{(i)}^{k}$ is recurrent
and
\[
\{\{\eta\}:\,\eta\in\mathcal{R}_{(i)}^{k}\}\quad\text{and}\quad\{\{\eta\}:\,\eta\in\mathcal{L}_{(i)}^{k}\}
\]
are irreducible components for $i\in[2,\,\frac{L}{2})$. On the other
hand, if $i=\frac{L}{2}$ then from the analysis in Section \ref{sec7.5.1},
we deduce that each $\{\eta\}$ for $\eta\in\mathcal{R}_{(\frac{L}{2})}^{k}\cup\mathcal{L}_{(\frac{L}{2})}^{k}$
has initial depth $2$, thus is an absorbing element with respect
to $\{\mathfrak{X}^{\star,1}(t)\}_{t\ge0}$.\smallskip{}
\item Finally, according to Figures \ref{Fig7.10}, \ref{Fig7.13} and \ref{Fig7.15},
we may easily check that all the energy-$(\mathbb{H}_{0}+3)$ stable
plateaux are transient elements with respect to $\{\mathfrak{X}^{\star,1}(t)\}_{t\ge0}$,
as starting therein we may always reach $\mathcal{N}^{k}$, in particular
$\bm{\sigma^{k}}$, via a path with depth $1$.\smallskip{}
\end{itemize}
The analysis above readily proves Theorem \ref{thm:hier dec}-(1).

Next, we analyze $\mathscr{P}^{\star,2}$. By part (1), we readily
deduce that
\begin{align*}
\mathscr{P}^{\star,2}= & \bigcup_{k\in\mathbb{T}_{K}}\{\{\bm{\sigma^{k}}\}\}\cup\bigcup_{k\in\mathbb{T}_{K}}\bigcup_{m=2}^{L-2}\{\mathcal{S}_{m}^{k}\}\cup\bigcup_{k\in\mathbb{T}_{K}}\bigcup_{i\in[2,\,\frac{L}{2})}\{\mathcal{R}_{(i)}^{k}\}\cup\bigcup_{k\in\mathbb{T}_{K}}\bigcup_{i\in[2,\,\frac{L}{2})}\{\mathcal{L}_{(i)}^{k}\}\\
 & \cup\bigcup_{k\in\mathbb{T}_{K}}\bigcup_{\eta\in\mathcal{R}_{(\frac{L}{2})}^{k}}\{\{\eta\}\}\cup\bigcup_{k\in\mathbb{T}_{K}}\bigcup_{\eta\in\mathcal{L}_{(\frac{L}{2})}^{k}}\{\{\eta\}\}\quad\text{(if}\quad L\quad\text{is even)}.
\end{align*}
Starting from $\mathcal{S}_{m}^{k}$, we may clearly follow the path
$\omega$ constructed in the proof of Lemma \ref{lem:E barrier UB}
(see also Figure \ref{Fig6.1}) to reach $\mathcal{S}$ with depth
$2$. Thus, it holds that $\Gamma^{\star,2}=2$. Moreover, from $\mathcal{S}$,
the characterization of $\mathcal{N}^{k}$ in Section \ref{sec7.2.2}
indicates that the initial depth $3$ is not enough to reach any other
collection in $\mathscr{P}^{\star,2}$. Thus, the second depth of
each $\{\bm{\sigma^{k}}\}$ is now $4$.

Finally, starting from a collection in $\mathscr{P}^{\star,2}$ other
than $\{\bm{\sigma^{k}}\}$ for some $k\in\mathbb{T}_{K}$, we may
follow a path of depth $2$ to reach $\mathcal{S}$ eventually, which
can be seen by either inspecting Figures \ref{Fig7.10}, \ref{Fig7.13}
and \ref{Fig7.15} or noting that $\overline{\Phi}=\max_{k,\,k'\in\mathbb{T}_{K}}\Phi(\bm{\sigma^{k}},\,\bm{\sigma^{k'}})=\mathbb{H}_{0}+4$.
According to Theorem \ref{thm:P-starh class}-(2), these observations
readily prove Theorem \ref{thm:hier dec}-(2).

Now, by part (2) it holds that
\[
\mathscr{P}^{\star,3}=\{\{\bm{\sigma^{k}}\}:\,k\in\mathbb{T}_{K}\}.
\]
Then, Theorem \ref{thm:E barrier} readily implies that the third
depth of each $\{\bm{\sigma^{k}}\}$ is exactly $4$, thus $\Gamma^{\star,3}=4$.
In addition, thanks to the symmetry and Theorem \ref{thm:P-starh class}-(3),
each $\{\bm{\sigma^{k}}\}$ is a recurrent element with respect to
$\{\mathfrak{X}^{\star,3}(t)\}_{t\ge0}$, thus
\[
\mathscr{P}_{{\rm rec}}^{\star,3}=\mathscr{P}^{\star,3}=\{\{\bm{\sigma^{k}}\}:\,k\in\mathbb{T}_{K}\}.
\]
Thus, this is the terminal level $\mathfrak{m}=3$ and the proof of
Theorem \ref{thm:hier dec} is completed.

Theorem \ref{thm:Kawasaki} is now straightforward from the general
theory developed in Section \ref{sec2} and proved in Sections \ref{sec4}
and \ref{sec5}.

Finally, we prove Theorem \ref{thm:pos rates}. Since $\Gamma^{\star,3}=4$,
by \eqref{eq:Vih def} it holds that for each $\{\bm{\sigma^{k}}\}=\mathcal{P}_{i}^{3}\in\mathscr{P}^{\star,3}$,
\[
\mathcal{V}_{i}^{3}=\{\eta\in\Omega:\,\Phi(\bm{\sigma^{k}},\,\eta)-\mathbb{H}_{0}<4\}=\mathcal{N}^{k}.
\]
Starting from $\mathcal{N}^{k}$, we may visit $\mathcal{P}_{(2),+}^{k}$,
$\mathcal{P}_{2}^{k}$, $\mathcal{P}_{2,-}^{k}$ and then $\mathcal{N}_{2}^{k}$
(cf. Figures \ref{Fig7.10} and \ref{Fig7.15}). Then, we may move
$\mathcal{N}_{m}^{k}\to\mathcal{P}_{m,+}^{k}\to\mathcal{P}_{m+1}^{k}\to\mathcal{P}_{m+1,-}^{k}\to\mathcal{N}_{m+1}^{k}$
for each $m\in[2,\,L-3]$ such that at the end we arrive at $\mathcal{N}_{L-2}^{k}$
(cf. Figure \ref{Fig7.10}). Next, we may move $\mathcal{N}_{L-2}^{k}\to\mathcal{P}_{L-2,+}^{k}\to\mathcal{P}_{L-1}^{k}$
(cf. Figure \ref{Fig7.10}). From $\mathcal{P}_{L-1}^{k}$, we may
directly move $\mathcal{P}_{L-1}^{k}\to\mathcal{P}_{(2),-}^{k+1}\to\mathcal{N}^{k+1}$
to arrive at $\mathcal{N}^{k+1}$. It is straightforward that $\mathfrak{R}^{3}(\cdot,\,\cdot)>0$
along this path from $\mathcal{N}^{k}$ to $\mathcal{N}^{k+1}$, thus
we obtain $\mathfrak{R}^{\star,3}(\bm{\sigma^{k}},\,\bm{\sigma^{k+1}})>0$.

On the other hand, at the last step from $\mathcal{P}_{L-1}^{k}$,
we may instead take a detour as (cf. Figure \ref{Fig7.15})
\[
\mathcal{P}_{L-1}^{k}\to\mathcal{L}_{(2),-}^{k+1}\to\mathcal{G}_{(2),-}^{k+1}\to\widehat{\mathcal{G}}_{(2),-}^{k+1}\to\mathcal{G}_{(2)}^{k+1}\to\widehat{\mathcal{G}}_{(2),+}^{k+1}\to\mathcal{G}_{(2),+}^{k+1}\to\mathcal{L}_{(2),+}^{k+1}\to\mathcal{Q}_{2}^{k+1}.
\]
Using this path, we can avoid visiting $\mathcal{N}^{k+1}$ and follow
a similar path forward to arrive at $\mathcal{P}_{L-1}^{k+1}$ near
$\mathcal{N}^{k+2}$. Thus, concatenating these detour paths, we obtain
that there exists a path from $\mathcal{N}^{k}$ to $\mathcal{N}^{k'}$
for any $k\ne k'$, along which $\mathfrak{R}^{3}(\cdot,\,\cdot)>0$
and avoids all other $\mathcal{N}^{k''}$ for $k''\ne k,\,k'$. Therefore,
we obtain $\mathfrak{R}^{\star,3}(\bm{\sigma^{k}},\,\bm{\sigma^{k'}})>0$
which concludes the proof of Theorem \ref{thm:pos rates}.

\section{\label{sec9}Exit Distribution from Cycles}

In Section \ref{sec9}, we prove Theorem \ref{thm:exit H1}. Since
we are interested only on the behavior of the dynamics starting from
$\mathcal{C}$ until it hits its boundary $\partial\mathcal{C}$,
without loss of generality, we may assume in this section that 
\begin{equation}
\Omega=\mathcal{C}\cup\partial\mathcal{C}\quad\text{and}\quad\xi\nsim\xi'\quad\text{for all}\quad\xi,\,\xi'\in\partial\mathcal{C}.\label{eq:sec9 assump}
\end{equation}
For each $f:\,\Omega\to\mathbb{R}$, denote by $\mathscr{D}_{\beta}(f)$
its \emph{Dirichlet form} defined as
\[
\mathscr{D}_{\beta}(f):=\langle f,\,-L_{\beta}f\rangle_{\mu_{\beta}}=\frac{1}{2}\sum_{\eta,\,\xi\in\Omega}\mu_{\beta}(\eta)r_{\beta}(\eta,\,\xi)(f(\xi)-f(\eta))^{2},
\]
where $\langle\cdot,\,\cdot\rangle_{\mu_{\beta}}$ denotes the inner
product with respect to $\mu_{\beta}$. For two disjoint nonempty
subsets $\mathcal{A},\,\mathcal{B}\subseteq\Omega$, denote by $\mathfrak{h}_{\mathcal{A},\mathcal{B}}:\,\Omega\to\mathbb{R}$
the \emph{equilibrium potential} between $\mathcal{A}$ and $\mathcal{B}$
given as $\mathfrak{h}_{\mathcal{A},\mathcal{B}}(\eta):=\mathbb{P}_{\eta}[\mathcal{T}_{\mathcal{A}}<\mathcal{T}_{\mathcal{B}}]$.
Here, $\mathfrak{h}_{\mathcal{A},\mathcal{B}}$ solves the following
Dirichlet problem:
\[
\mathfrak{h}_{\mathcal{A},\mathcal{B}}={\bf 1}_{\mathcal{A}}\quad\text{in}\quad\mathcal{A}\cup\mathcal{B}\quad\text{and}\quad L_{\beta}\mathfrak{h}_{\mathcal{A},\mathcal{B}}=0\quad\text{in}\quad(\mathcal{A}\cup\mathcal{B})^{c}.
\]
Then, the \emph{capacity} ${\rm cap}_{\beta}(\mathcal{A},\,\mathcal{B})$
between $\mathcal{A}$ and $\mathcal{B}$ is defined as ${\rm cap}_{\beta}(\mathcal{A},\,\mathcal{B}):=\mathscr{D}_{\beta}(\mathfrak{h}_{\mathcal{A},\mathcal{B}})$.
For simplicity, we write
\begin{equation}
\mathfrak{a}(\xi):=\sum_{\eta\in\mathcal{C}}{\bf 1}\{\eta\sim\xi\}\quad\text{for each}\quad\xi\in\partial^{\star}\mathcal{C}.\label{eq:axi def}
\end{equation}

\begin{lem}
\label{lem:9.1}For every $\eta_{0}\in\mathcal{C}$, it holds that
\[
{\rm cap}_{\beta}(\eta_{0},\,\partial\mathcal{C})=\frac{1+o(1)}{Z_{\beta}}\cdot\Big(\sum_{\xi\in\partial^{\star}\mathcal{C}}\mathfrak{a}(\xi)\Big)\cdot e^{-\beta\mathbb{H}(\partial^{\star}\mathcal{C})}.
\]
\end{lem}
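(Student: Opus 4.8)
The plan is to compute the capacity $\mathrm{cap}_\beta(\eta_0,\partial\mathcal{C})$ up to a $(1+o(1))$ factor by sandwiching it between a variational upper bound (Dirichlet principle) and a variational lower bound (Thomson/flow principle), and showing both match the claimed expression. The heuristic is that at inverse temperature $\beta$, the bottleneck for going from $\eta_0$ to $\partial\mathcal{C}$ is crossing the top energy level, and the relevant saddle configurations are precisely those $\eta\in\mathcal{C}$ adjacent to $\partial^\star\mathcal{C}$ together with the edges joining them to $\partial^\star\mathcal{C}$. Each such edge $\{\eta,\xi\}$ with $\eta\sim\xi\in\partial^\star\mathcal{C}$ carries, by \eqref{eq:det bal}, a conductance $\mu_\beta(\eta)r_\beta(\eta,\xi)=Z_\beta^{-1}e^{-\beta\max\{\mathbb{H}(\eta),\mathbb{H}(\xi)\}}=Z_\beta^{-1}e^{-\beta\mathbb{H}(\partial^\star\mathcal{C})}$, since $\mathbb{H}(\eta)\le\min_{\partial\mathcal{C}}\mathbb{H}=\mathbb{H}(\partial^\star\mathcal{C})=\mathbb{H}(\xi)$ by the cycle condition \eqref{eq:cyc def}. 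Summing over all such edges gives exactly $Z_\beta^{-1}\big(\sum_{\xi\in\partial^\star\mathcal{C}}\mathfrak{a}(\xi)\big)e^{-\beta\mathbb{H}(\partial^\star\mathcal{C})}$, which is the target.

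First I would establish the upper bound. Let $\Gamma^{\mathcal{C}}$ be the depth of the cycle and set $\theta:=\mathbb{H}(\partial^\star\mathcal{C})$. Consider the test function $f$ equal to $1$ on $\mathcal{C}_{<\theta}:=\{\eta\in\mathcal{C}:\mathbb{H}(\eta)<\theta\}$ (which contains $\eta_0$ since $\mathbb{H}(\eta_0)\le\min_{\mathcal{C}}\mathbb{H}<\theta$ as $\mathcal{C}$ is nontrivial), equal to $0$ on $\partial\mathcal{C}$, and interpolating on the remaining configurations of $\mathcal{C}$ at energy level exactly $\theta$ — say $f\equiv 1/2$ there, or more carefully chosen so that no ``expensive'' edge is doubly counted. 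Actually the cleanest choice: let $f=\mathbf{1}\{\mathbb{H}<\theta\}\cdot(\text{something})$; since every configuration in $\mathcal{C}$ has energy $<\theta$ except possibly those at level exactly $\theta$, and the only edges with conductance of order $e^{-\beta\theta}$ are those incident to $\partial^\star\mathcal{C}$ (all other edges have strictly smaller conductance, hence contribute $o(e^{-\beta\theta})$), the Dirichlet form $\mathscr{D}_\beta(f)$ is dominated by the sum over edges $\{\eta,\xi\}$, $\eta\in\mathcal{C}$, $\xi\in\partial^\star\mathcal{C}$, of $\mu_\beta(\eta)r_\beta(\eta,\xi)(f(\eta)-0)^2$. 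Taking $f$ to be $1$ on all of $\mathcal{C}\setminus(\text{level-}\theta\text{ set})$ and handling the level-$\theta$ configurations by a short path argument (each connects down to $\eta_0$ within $\mathcal{C}$ through cheap edges) gives $\mathscr{D}_\beta(f)\le\frac{1+o(1)}{Z_\beta}\big(\sum_{\xi\in\partial^\star\mathcal{C}}\mathfrak{a}(\xi)\big)e^{-\beta\theta}$, hence $\mathrm{cap}_\beta(\eta_0,\partial\mathcal{C})\le\mathscr{D}_\beta(f)$ is bounded above as desired.

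For the lower bound I would use the flow (Thomson) principle: $\mathrm{cap}_\beta(\eta_0,\partial\mathcal{C})=\big(\inf_\phi \sum_{\{\eta,\xi\}}\frac{\phi(\eta,\xi)^2}{\mu_\beta(\eta)r_\beta(\eta,\xi)}\big)^{-1}$ over unit flows $\phi$ from $\eta_0$ to $\partial\mathcal{C}$, and exhibit a specific good flow. Alternatively, and perhaps more robustly, invoke the $H^1$-approximation method of \cite{KS IsingPotts-2D,LanSeo RW-pot-field} as the paper advertises: one shows the equilibrium potential $\mathfrak{h}_{\eta_0,\partial\mathcal{C}}$ is, after rescaling, close in the $H^1$-sense to a limiting harmonic function on the ``reduced graph'' whose vertices are $\mathcal{C}_{<\theta}$ collapsed to a point, the level-$\theta$ configurations, and $\partial^\star\mathcal{C}$, and whose edge weights are the normalized conductances. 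Since all configurations in $\mathcal{C}$ below level $\theta$ communicate with $\eta_0$ through edges of conductance $\gg e^{-\beta\theta}$, the potential is asymptotically constant ($\approx 1$) on $\mathcal{C}_{<\theta}$ and $0$ on $\partial\mathcal{C}$; plugging this back, $\mathrm{cap}_\beta(\eta_0,\partial\mathcal{C})=\mathscr{D}_\beta(\mathfrak{h}_{\eta_0,\partial\mathcal{C}})\ge\frac{1-o(1)}{Z_\beta}\big(\sum_{\xi\in\partial^\star\mathcal{C}}\mathfrak{a}(\xi)\big)e^{-\beta\theta}$.

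The main obstacle is the careful bookkeeping of the level-$\theta$ configurations inside $\mathcal{C}$: these are ``maximal-energy'' configurations of the cycle that are not in $\partial\mathcal{C}$, and one must verify that (i) they do not create additional bottleneck edges of conductance comparable to $e^{-\beta\theta}$ beyond those counted in $\mathfrak{a}$, and (ii) in both the upper- and lower-bound test objects they can be assigned values making their contribution negligible — this uses that any level-$\theta$ configuration in $\mathcal{C}$ is connected to $\mathcal{C}_{<\theta}$ by an edge, since otherwise it would be a local minimum surrounded by strictly higher energy, contradicting that $\mathcal{C}$ is connected with all of $\partial\mathcal{C}$ at level $\theta$. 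Granting Lemma \ref{lem:9.1}, Theorem \ref{thm:exit H1} then follows from the standard identity $\mathbb{P}_{\eta_0}[\eta_\beta(\mathcal{T}_{\partial\mathcal{C}})=\xi_0]=\frac{\mathrm{cap}_\beta(\eta_0,\{\xi_0\})\,\mathbb{P}_{\xi_0}[\mathcal{T}_{\eta_0}<\mathcal{T}_{\partial\mathcal{C}\setminus\{\xi_0\}}]}{\cdots}$ — more precisely by writing the hitting distribution via last-exit / equilibrium-potential decompositions and applying Lemma \ref{lem:9.1} both to $\mathrm{cap}_\beta(\eta_0,\partial\mathcal{C})$ and to a modified chain, yielding the ratio $\mathfrak{a}(\xi_0)/\sum_{\xi\in\partial^\star\mathcal{C}}\mathfrak{a}(\xi)$.
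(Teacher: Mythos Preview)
Your overall strategy---Dirichlet principle for the upper bound, Thomson/flow or $H^1$-approximation for the lower bound---is sound and would work. However, you are manufacturing a difficulty that is not there: by the definition of a (nontrivial) cycle, $\max_{\eta\in\mathcal{C}}\mathbb{H}(\eta)<\min_{\xi\in\partial\mathcal{C}}\mathbb{H}(\xi)=\theta$, so there are \emph{no} configurations of $\mathcal{C}$ at level $\theta$. Hence $\mathcal{C}_{<\theta}=\mathcal{C}$, your ``main obstacle'' evaporates, and the upper-bound test function is simply $F_0=\mathbf{1}_\mathcal{C}$; a direct computation then gives $\mathscr{D}_\beta(F_0)=(1+o(1))Z_\beta^{-1}\big(\sum_{\xi\in\partial^\star\mathcal{C}}\mathfrak{a}(\xi)\big)e^{-\beta\theta}$ with no bookkeeping.

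The paper proceeds exactly this way for the upper bound, and for the matching lower bound it does not build a flow but instead runs the $H^1$-approximation in one shot: using a renewal estimate it shows $\sup_{\eta\in\mathcal{C}}|\mathfrak{h}_{\eta_0,\partial\mathcal{C}}(\eta)-1|=o(1)$, and then a short bilinear expansion yields $\mathscr{D}_\beta(F_0-\mathfrak{h}_{\eta_0,\partial\mathcal{C}})=o(\mathscr{D}_\beta(F_0))$, which forces $\mathrm{cap}_\beta(\eta_0,\partial\mathcal{C})=\mathscr{D}_\beta(\mathfrak{h}_{\eta_0,\partial\mathcal{C}})=(1+o(1))\mathscr{D}_\beta(F_0)$. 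Your two-sided variational sandwich is a perfectly correct alternative, just slightly longer; the paper's single $H^1$ step is more economical and reuses machinery needed anyway for Lemma~\ref{lem:9.2}. (Your closing remark about deriving Theorem~\ref{thm:exit H1} from a capacity identity is in the right spirit but not quite the route taken: the paper instead proves a second capacity estimate between $\xi_0$ and $\partial\mathcal{C}\setminus\{\xi_0\}$ and combines the two via the Dirichlet principle applied to $G_0-\mathfrak{h}_{\xi_0,\partial\mathcal{C}\setminus\{\xi_0\}}$.)
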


\begin{proof}
Define a test function $F_{0}:\,\Omega\to\mathbb{R}$ as $F_{0}:={\bf 1}_{\mathcal{C}}$.
Then, we calculate
\[
\mathscr{D}_{\beta}(F_{0})=\frac{1}{2}\sum_{\eta,\,\xi\in\Omega}\mu_{\beta}(\eta)r_{\beta}(\eta,\,\xi)(F_{0}(\xi)-F_{0}(\eta))^{2}=\sum_{\eta\in\mathcal{C}}\sum_{\xi\in\partial\mathcal{C}}\mu_{\beta}(\eta)r_{\beta}(\eta,\,\xi).
\]
By \eqref{eq:Gibbs def} and \eqref{eq:det bal}, we may rewrite this
as
\[
\mathscr{D}_{\beta}(F_{0})=\sum_{\eta\in\mathcal{C}}\sum_{\xi\in\partial\mathcal{C}:\,\eta\sim\xi}\mu_{\beta}(\xi)=\sum_{\eta\in\mathcal{C}}\sum_{\xi\in\partial\mathcal{C}:\,\eta\sim\xi}\frac{e^{-\beta\mathbb{H}(\xi)}}{Z_{\beta}}.
\]
Since $\mathcal{F}(\partial\mathcal{C})=\partial^{\star}\mathcal{C}$,
we conclude that (cf. \eqref{eq:axi def})
\begin{equation}
\mathscr{D}_{\beta}(F_{0})=\frac{(1+o(1))e^{-\beta\mathbb{H}(\partial^{\star}\mathcal{C})}}{Z_{\beta}}\cdot\sum_{\xi\in\partial^{\star}\mathcal{C}}\mathfrak{a}(\xi).\label{eq:9.1 pf1}
\end{equation}
Moreover, a well-known renewal estimate (e.g., see \cite[Lemma 7.8]{KS IsingPotts-2D})
implies that
\begin{equation}
\sup_{\eta\in\mathcal{C}}|\mathfrak{h}_{\eta_{0},\partial\mathcal{C}}(\eta)-1|=o(1).\label{eq:eq pot flat}
\end{equation}
Thus, we may calculate
\begin{align*}
\mathscr{D}_{\beta}(F_{0}-\mathfrak{h}_{\eta_{0},\partial\mathcal{C}}) & =\langle F_{0}-\mathfrak{h}_{\eta_{0},\partial\mathcal{C}},\,-L_{\beta}(F_{0}-\mathfrak{h}_{\eta_{0},\partial\mathcal{C}})\rangle_{\mu_{\beta}}\\
 & =\mathscr{D}_{\beta}(F_{0})+\mathscr{D}_{\beta}(\mathfrak{h}_{\eta_{0},\partial\mathcal{C}})-\langle F_{0},\,-L_{\beta}\mathfrak{h}_{\eta_{0},\partial\mathcal{C}}\rangle_{\mu_{\beta}}-\langle\mathfrak{h}_{\eta_{0},\partial\mathcal{C}},\,-L_{\beta}F_{0}\rangle_{\mu_{\beta}}.
\end{align*}
Since $F_{0}=\mathfrak{h}_{\eta_{0},\partial\mathcal{C}}={\bf 1}_{\{\eta_{0}\}}$
in $\{\eta_{0}\}\cup\partial\mathcal{C}$, we have
\[
\langle F_{0},\,-L_{\beta}\mathfrak{h}_{\eta_{0},\partial\mathcal{C}}\rangle_{\mu_{\beta}}=\langle\mathfrak{h}_{\eta_{0},\partial\mathcal{C}},\,-L_{\beta}\mathfrak{h}_{\eta_{0},\partial\mathcal{C}}\rangle_{\mu_{\beta}}=\mathscr{D}_{\beta}(\mathfrak{h}_{\eta_{0},\partial\mathcal{C}}),
\]
thus
\begin{equation}
\mathscr{D}_{\beta}(F_{0}-\mathfrak{h}_{\eta_{0},\partial\mathcal{C}})=\mathscr{D}_{\beta}(F_{0})-\langle\mathfrak{h}_{\eta_{0},\partial\mathcal{C}},\,-L_{\beta}F_{0}\rangle_{\mu_{\beta}}.\label{eq:9.1 pf2}
\end{equation}
Moreover, we calculate the inner product as
\[
\langle\mathfrak{h}_{\eta_{0},\partial\mathcal{C}},\,-L_{\beta}F_{0}\rangle_{\mu_{\beta}}=\sum_{\eta\in\mathcal{C}}\mathfrak{h}_{\eta_{0},\partial\mathcal{C}}(\eta)\sum_{\xi\in\partial\mathcal{C}}\mu_{\beta}(\eta)r_{\beta}(\eta,\,\xi).
\]
By \eqref{eq:eq pot flat}, \eqref{eq:Gibbs def} and \eqref{eq:det bal},
this becomes
\begin{equation}
(1+o(1))\cdot\frac{e^{-\beta\mathbb{H}(\partial^{\star}\mathcal{C})}}{Z_{\beta}}\cdot\sum_{\xi\in\partial^{\star}\mathcal{C}}\mathfrak{a}(\xi).\label{eq:9.1 pf3}
\end{equation}
By \eqref{eq:9.1 pf1}, \eqref{eq:9.1 pf2} and \eqref{eq:9.1 pf3},
we obtain that
\[
\mathscr{D}_{\beta}(F_{0}-\mathfrak{h}_{\eta_{0},\partial\mathcal{C}})=o(1)\cdot\frac{e^{-\beta\mathbb{H}(\partial^{\star}\mathcal{C})}}{Z_{\beta}}\cdot\sum_{\xi\in\partial^{\star}\mathcal{C}}\mathfrak{a}(\xi)=o(\mathscr{D}_{\beta}(F_{0})).
\]
Therefore, by the $H^{1}$ computation in \cite[Proof of Theorem 4.2]{KS IsingPotts-2D},
we conclude that
\[
{\rm cap}_{\beta}(\eta_{0},\,\partial\mathcal{C})=(1+o(1))\cdot\mathscr{D}_{\beta}(F_{0})=(1+o(1))\cdot\frac{e^{-\beta\mathbb{H}(\partial^{\star}\mathcal{C})}}{Z_{\beta}}\cdot\sum_{\xi\in\partial^{\star}\mathcal{C}}\mathfrak{a}(\xi).
\]
This concludes the proof.
\end{proof}
\begin{lem}
\label{lem:9.2}For $\xi_{0}\in\partial^{\star}\mathcal{C}$, it holds
that
\[
{\rm cap}_{\beta}(\xi_{0},\,\partial\mathcal{C}\setminus\{\xi_{0}\})=\frac{1+o(1)}{Z_{\beta}}\cdot\Big(\frac{\mathfrak{a}(\xi_{0})\cdot\sum_{\xi\in\partial^{\star}\mathcal{C}\setminus\{\xi_{0}\}}\mathfrak{a}(\xi)}{\sum_{\xi\in\partial^{\star}\mathcal{C}}\mathfrak{a}(\xi)}\Big)\cdot e^{-\beta\mathbb{H}(\partial^{\star}\mathcal{C})}.
\]
\end{lem}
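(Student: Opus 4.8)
The plan is to mimic the structure of the proof of Lemma \ref{lem:9.1}, but now with a test function tailored to the pair $(\xi_0,\,\partial\mathcal{C}\setminus\{\xi_0\})$. The natural candidate is the function $F_1:\,\Omega\to\mathbb{R}$ defined on $\mathcal{C}$ by the harmonic-type interpolation
\[
F_1(\eta):=\frac{\mathfrak{a}(\xi_0)}{\sum_{\xi\in\partial^{\star}\mathcal{C}}\mathfrak{a}(\xi)}\quad\text{for}\quad\eta\in\mathcal{C},
\]
together with $F_1(\xi_0):=1$ and $F_1(\xi):=0$ for $\xi\in\partial\mathcal{C}\setminus\{\xi_0\}$. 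In other words, $F_1$ is constant on the bulk $\mathcal{C}$ and drops to its boundary values only across the last edges. By the Dirichlet principle, ${\rm cap}_{\beta}(\xi_0,\,\partial\mathcal{C}\setminus\{\xi_0\})\le\mathscr{D}_{\beta}(F_1)$, and a direct computation using \eqref{eq:det bal} shows that the dominant contribution to $\mathscr{D}_{\beta}(F_1)$ comes from the edges between $\mathcal{C}$ and $\partial^{\star}\mathcal{C}$: edges touching $\xi_0$ contribute weight proportional to $\mathfrak{a}(\xi_0)\,(1-c)^2$ and edges touching the other minimizers contribute $\big(\sum_{\xi\in\partial^{\star}\mathcal{C}\setminus\{\xi_0\}}\mathfrak{a}(\xi)\big)c^2$, where $c=\mathfrak{a}(\xi_0)/\sum_{\xi\in\partial^{\star}\mathcal{C}}\mathfrak{a}(\xi)$. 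A one-line algebraic simplification gives exactly the claimed constant $\mathfrak{a}(\xi_0)\big(\sum_{\xi\neq\xi_0}\mathfrak{a}(\xi)\big)/\sum_{\xi}\mathfrak{a}(\xi)$ times $e^{-\beta\mathbb{H}(\partial^{\star}\mathcal{C})}/Z_\beta$; all other edges (to non-minimizing boundary points, or internal to $\mathcal{C}$) are negligible by the cycle property \eqref{eq:cyc def}. This yields the upper bound.

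For the matching lower bound, I would again run the $H^1$-approximation argument as in the proof of Lemma \ref{lem:9.1}: it suffices to show that $F_1$ is asymptotically the equilibrium potential, i.e. $\mathscr{D}_{\beta}(F_1-\mathfrak{h}_{\xi_0,\partial\mathcal{C}\setminus\{\xi_0\}})=o(\mathscr{D}_{\beta}(F_1))$. The key input is the analogue of \eqref{eq:eq pot flat}: the equilibrium potential $\mathfrak{h}_{\xi_0,\partial\mathcal{C}\setminus\{\xi_0\}}$ is asymptotically flat on the bulk $\mathcal{C}$, with constant value $c+o(1)$. This flatness follows from a renewal/coupling estimate together with Theorem \ref{thm:exit H1} itself (or, to avoid circularity, from the asymptotic exit distribution which in turn only needs the capacity estimate of Lemma \ref{lem:9.1} and the strong Markov property): starting from any $\eta\in\mathcal{C}$, the process equilibrates inside $\mathcal{C}$ before exiting, so $\mathfrak{h}_{\xi_0,\partial\mathcal{C}\setminus\{\xi_0\}}(\eta)$ converges to $\mathbb{P}[\text{exit at }\xi_0]=c$. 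Given this, expanding $\mathscr{D}_{\beta}(F_1-\mathfrak{h})$ exactly as in \eqref{eq:9.1 pf2} via the identity $\mathscr{D}_{\beta}(F_1-\mathfrak{h})=\mathscr{D}_{\beta}(F_1)-\langle\mathfrak{h},\,-L_\beta F_1\rangle_{\mu_\beta}$ (using $F_1=\mathfrak{h}$ on $\{\xi_0\}\cup(\partial\mathcal{C}\setminus\{\xi_0\})$ and the harmonicity of $\mathfrak{h}$ off this set), and computing the inner product $\langle\mathfrak{h},\,-L_\beta F_1\rangle_{\mu_\beta}$ over the boundary edges where $\mathfrak{h}\approx c$, one finds it equals $(1+o(1))\mathscr{D}_{\beta}(F_1)$, hence the difference is $o(\mathscr{D}_{\beta}(F_1))$. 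Then ${\rm cap}_{\beta}(\xi_0,\,\partial\mathcal{C}\setminus\{\xi_0\})=(1+o(1))\mathscr{D}_{\beta}(F_1)$, which is the assertion.

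I expect the main obstacle to be establishing the flatness of the equilibrium potential $\mathfrak{h}_{\xi_0,\partial\mathcal{C}\setminus\{\xi_0\}}$ on $\mathcal{C}$ \emph{without} invoking Theorem \ref{thm:exit H1} in a circular way. The cleanest route is probably to phrase it as: for any $\eta\in\mathcal{C}$, the difference $|\mathfrak{h}_{\xi_0,\partial\mathcal{C}\setminus\{\xi_0\}}(\eta)-\mathfrak{h}_{\xi_0,\partial\mathcal{C}\setminus\{\xi_0\}}(\eta_0)|$ is $o(1)$, which in turn reduces to a mixing estimate for the reflected chain on $\mathcal{C}$ (the chain mixes on a time scale $e^{\beta\widetilde\Gamma(\mathcal{C})}$ with $\widetilde\Gamma(\mathcal{C})<\Gamma^{\mathcal{C}}$, so it equilibrates before escaping), combined with the strong Markov property — exactly the kind of renewal argument cited as \cite[Lemma 7.8]{KS IsingPotts-2D} in the proof of Lemma \ref{lem:9.1}. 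Once this flatness is in hand, the remaining computations are routine bookkeeping with \eqref{eq:Gibbs def} and \eqref{eq:det bal}, entirely parallel to the proof of Lemma \ref{lem:9.1}. Finally, I would remark that Theorem \ref{thm:exit H1} then follows by combining Lemmas \ref{lem:9.1} and \ref{lem:9.2} with the standard identity $\mathbb{P}_{\eta_0}[\eta_\beta(\mathcal{T}_{\partial\mathcal{C}})=\xi_0]=\mathbb{P}_{\eta_0}[\mathcal{T}_{\xi_0}<\mathcal{T}_{\partial\mathcal{C}\setminus\{\xi_0\}}]+o(1)$ and the two-sided capacity bound $\mathbb{P}_{\eta_0}[\mathcal{T}_{\xi_0}<\mathcal{T}_{\partial\mathcal{C}\setminus\{\xi_0\}}]\asymp {\rm cap}_{\beta}(\xi_0,\partial\mathcal{C}\setminus\{\xi_0\})/{\rm cap}_{\beta}(\eta_0,\partial\mathcal{C})$-type estimates.
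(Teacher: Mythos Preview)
Your proposal is correct and follows essentially the same route as the paper: the test function you call $F_1$ is exactly the paper's $G_0$, the Dirichlet-form computation is identical, and the $H^1$ identity $\mathscr{D}_\beta(F_1-\mathfrak{h})=\mathscr{D}_\beta(F_1)-\langle\mathfrak{h},-L_\beta F_1\rangle_{\mu_\beta}$ is precisely what the paper uses.

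One clarification dispels your circularity worry entirely. You do \emph{not} need to know that the flat value of $\mathfrak{h}_{\xi_0,\partial\mathcal{C}\setminus\{\xi_0\}}$ on $\mathcal{C}$ equals $c$; you only need that $\mathfrak{h}$ is asymptotically constant there (differences $o(1)$), which is exactly the renewal/mixing estimate you identify at the end. The point is that the constant $c=\mathfrak{a}(\xi_0)/\sum_\xi\mathfrak{a}(\xi)$ is chosen so that $\sum_{\eta\in\mathcal{C}}\sum_{\xi\in\partial^\star\mathcal{C}}\mu_\beta(\xi)\mathbf{1}\{\eta\sim\xi\}(F_1(\eta)-F_1(\xi))=0$ at leading order: the $\xi=\xi_0$ term contributes $\mathfrak{a}(\xi_0)(c-1)$ and the $\xi\ne\xi_0$ terms contribute $c\sum_{\xi\ne\xi_0}\mathfrak{a}(\xi)$, and these cancel. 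Hence, after factoring out the unknown flat value $\mathfrak{h}(\eta_0)+o(1)$, the bulk contribution to $\langle\mathfrak{h},-L_\beta F_1\rangle_{\mu_\beta}$ vanishes regardless of what that value is, and only the boundary term at $\xi_0$ (where $\mathfrak{h}(\xi_0)=1$) survives, giving $(1+o(1))\mathscr{D}_\beta(F_1)$. This is the algebraic heart of the argument and is what makes Lemma~\ref{lem:9.2} logically prior to Theorem~\ref{thm:exit H1}, not the other way around.
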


\begin{proof}
Similarly, define a test function $G_{0}:\,\Omega\to\mathbb{R}$ as
$G_{0}(\xi_{0}):=1$, $G_{0}:=0$ in $\partial\mathcal{C}\setminus\{\xi_{0}\}$
and
\[
G_{0}(\eta):=\frac{\mathfrak{a}(\xi_{0})}{\sum_{\xi\in\partial^{\star}\mathcal{C}}\mathfrak{a}(\xi)}\quad\text{for all}\quad\eta\in\mathcal{C}.
\]
Then, according to \eqref{eq:sec9 assump}, we calculate $\mathscr{D}_{\beta}(G_{0})$
as
\begin{equation}
\begin{aligned} & \sum_{\eta\in\mathcal{C}}\sum_{\xi\in\partial\mathcal{C}}\mu_{\beta}(\eta)r_{\beta}(\eta,\,\xi)(G_{0}(\xi)-G_{0}(\eta))^{2}\\
 & \simeq\frac{e^{-\beta\mathbb{H}(\partial^{\star}\mathcal{C})}}{Z_{\beta}}\cdot\Big(\sum_{\eta\in\mathcal{C}}\sum_{\xi\in\partial^{\star}\mathcal{C}\setminus\{\xi_{0}\}:\,\xi\sim\eta}\Big(\frac{\mathfrak{a}(\xi_{0})}{\sum_{\xi\in\partial^{\star}\mathcal{C}}\mathfrak{a}(\xi)}\Big)^{2}+\sum_{\eta\in\mathcal{C}:\,\eta\sim\xi_{0}}\Big(1-\frac{\mathfrak{a}(\xi_{0})}{\sum_{\xi\in\partial^{\star}\mathcal{C}}\mathfrak{a}(\xi)}\Big)^{2}\Big)\\
 & =\frac{e^{-\beta\mathbb{H}(\partial^{\star}\mathcal{C})}\cdot\mathfrak{a}(\xi_{0})\cdot(\sum_{\xi\in\partial^{\star}\mathcal{C}\setminus\{\xi_{0}\}}\mathfrak{a}(\xi))}{Z_{\beta}\sum_{\xi\in\partial^{\star}\mathcal{C}}\mathfrak{a}(\xi)}.
\end{aligned}
\label{eq:9.2 pf1}
\end{equation}
In addition, another renewal estimate verifies that
\begin{equation}
\sup_{\eta,\,\eta'\in\mathcal{C}}|\mathfrak{h}_{\xi_{0},\partial\mathcal{C}\setminus\{\xi_{0}\}}(\eta)-\mathfrak{h}_{\xi_{0},\partial\mathcal{C}\setminus\{\xi_{0}\}}(\eta')|=o(1).\label{eq:eq pot flat 2}
\end{equation}
Thus, as in the proof of Lemma \ref{lem:9.1}, we calculate
\begin{equation}
\mathscr{D}_{\beta}(G_{0}-\mathfrak{h}_{\xi_{0},\partial\mathcal{C}\setminus\{\xi_{0}\}})=\mathscr{D}_{\beta}(G_{0})-\langle\mathfrak{h}_{\xi_{0},\partial\mathcal{C}\setminus\{\xi_{0}\}},\,-L_{\beta}G_{0}\rangle_{\mu_{\beta}},\label{eq:9.2 pf2}
\end{equation}
where $\langle\mathfrak{h}_{\xi_{0},\partial\mathcal{C}\setminus\{\xi_{0}\}},\,-L_{\beta}G_{0}\rangle_{\mu_{\beta}}$
now becomes
\begin{equation}
\begin{aligned} & \mu_{\beta}(\xi_{0})\sum_{\eta\in\mathcal{C}}r_{\beta}(\xi_{0},\,\eta)(G_{0}(\xi_{0})-G_{0}(\eta))\\
 & +\sum_{\eta\in\mathcal{C}}\mu_{\beta}(\eta)\mathfrak{h}_{\xi_{0},\partial\mathcal{C}\setminus\{\xi_{0}\}}(\eta)\sum_{\xi\in\partial\mathcal{C}}r_{\beta}(\eta,\,\xi)(G_{0}(\eta)-G_{0}(\xi)).
\end{aligned}
\label{eq:9.2 pf3}
\end{equation}
By \eqref{eq:det bal}, the first part of \eqref{eq:9.2 pf3} becomes
\begin{equation}
\sum_{\eta\in\mathcal{C}}\mu_{\beta}(\xi_{0}){\bf 1}\{\eta\sim\xi_{0}\}\cdot\Big(1-\frac{\mathfrak{a}(\xi_{0})}{\sum_{\xi\in\partial^{\star}\mathcal{C}}\mathfrak{a}(\xi)}\Big)=\frac{e^{-\beta\mathbb{H}(\partial^{\star}\mathcal{C})}}{Z_{\beta}}\cdot\frac{\mathfrak{a}(\xi_{0})\cdot(\sum_{\xi\in\partial^{\star}\mathcal{C}\setminus\{\xi_{0}\}}\mathfrak{a}(\xi))}{\sum_{\xi\in\partial^{\star}\mathcal{C}}\mathfrak{a}(\xi)}.\label{eq:9.2 pf4}
\end{equation}
By \eqref{eq:eq pot flat 2} and \eqref{eq:det bal}, the second part
of \eqref{eq:9.2 pf3} becomes
\[
(\mathfrak{h}_{\xi_{0},\partial\mathcal{C}\setminus\{\xi_{0}\}}(\eta_{0})+o(1))\cdot\sum_{\eta\in\mathcal{C}}\sum_{\xi\in\partial\mathcal{C}}\mu_{\beta}(\xi){\bf 1}\{\eta\sim\xi\}(G_{0}(\eta)-G_{0}(\xi)),
\]
where $\eta_{0}$ is arbitrarily chosen in $\mathcal{C}$. By the
definition of $G_{0}$, the double summation can be rewritten as
\begin{equation}
o(\mu_{\beta}(\partial^{\star}\mathcal{C}))+\mu_{\beta}(\xi_{0})\cdot\Big(\frac{\mathfrak{a}(\xi_{0})\sum_{\xi\in\partial^{\star}\mathcal{C}\setminus\{\xi_{0}\}}\mathfrak{a}(\xi)}{\sum_{\xi\in\partial^{\star}\mathcal{C}}\mathfrak{a}(\xi)}-\frac{\mathfrak{a}(\xi_{0})\sum_{\xi\in\partial^{\star}\mathcal{C}\setminus\{\xi_{0}\}}\mathfrak{a}(\xi)}{\sum_{\xi\in\partial^{\star}\mathcal{C}}\mathfrak{a}(\xi)}\Big)=o(\mu_{\beta}(\partial^{\star}\mathcal{C})).\label{eq:9.2 pf5}
\end{equation}
Therefore, collecting \eqref{eq:9.2 pf1}, \eqref{eq:9.2 pf2}, \eqref{eq:9.2 pf3},
\eqref{eq:9.2 pf4} and \eqref{eq:9.2 pf5}, we conclude that
\begin{equation}
\mathscr{D}_{\beta}(G_{0}-\mathfrak{h}_{\xi_{0},\partial\mathcal{C}\setminus\{\xi_{0}\}})=o(\mathscr{D}_{\beta}(G_{0})),\label{eq:9.2 pf6}
\end{equation}
thus
\[
{\rm cap}_{\beta}(\xi_{0},\,\partial\mathcal{C}\setminus\{\xi_{0}\})=\frac{(1+o(1))e^{-\beta\mathbb{H}(\partial^{\star}\mathcal{C})}\cdot\mathfrak{a}(\xi_{0})\cdot(\sum_{\xi\in\partial^{\star}\mathcal{C}\setminus\{\xi_{0}\}}\mathfrak{a}(\xi))}{Z_{\beta}\sum_{\xi\in\partial^{\star}\mathcal{C}}\mathfrak{a}(\xi)},
\]
as wanted.
\end{proof}
\begin{proof}[Proof of Theorem \ref{thm:exit H1}]
 Recall the test function $G_{0}$ from the proof of Lemma \ref{lem:9.2}
and consider a new function $\delta_{0}:=G_{0}-\mathfrak{h}_{\xi_{0},\partial\mathcal{C}\setminus\{\xi_{0}\}}$.
Then, $\delta_{0}\equiv0$ in $\partial\mathcal{C}$, thus by the
well-known \emph{Dirichlet principle} (cf. \cite[(16.2.2)]{BdH}),
it holds that
\[
\delta_{0}(\eta_{0})^{2}\cdot{\rm cap}_{\beta}(\eta_{0},\,\partial\mathcal{C})\le\mathscr{D}_{\beta}(\delta_{0}).
\]
By \eqref{eq:9.2 pf6}, the right-hand side equals
\[
\mathscr{D}_{\beta}(G_{0}-\mathfrak{h}_{\xi_{0},\partial\mathcal{C}\setminus\{\xi_{0}\}})=o(\mu_{\beta}(\partial^{\star}\mathcal{C})).
\]
On the other hand, by Lemma \ref{lem:9.1}, the left-hand side equals
\[
\Big(\frac{\mathfrak{a}(\xi_{0})}{\sum_{\xi\in\partial^{\star}\mathcal{C}}\mathfrak{a}(\xi)}-\mathfrak{h}_{\xi_{0},\partial\mathcal{C}\setminus\{\xi_{0}\}}(\eta_{0})\Big)^{2}\cdot\frac{1+o(1)}{|\partial^{\star}\mathcal{C}|}\cdot\Big(\sum_{\xi\in\partial^{\star}\mathcal{C}}\mathfrak{a}(\xi)\Big)\cdot\mu_{\beta}(\partial^{\star}\mathcal{C}).
\]
Therefore, we conclude that
\[
\frac{\mathfrak{a}(\xi_{0})}{\sum_{\xi\in\partial^{\star}\mathcal{C}}\mathfrak{a}(\xi)}-\mathfrak{h}_{\xi_{0},\partial\mathcal{C}\setminus\{\xi_{0}\}}(\eta_{0})=o(1),
\]
which is exactly Theorem \ref{thm:exit H1}.
\end{proof}

\appendix

\section{\label{appenA}Proof of Lemma \ref{lem:nbd dec}}

In this appendix, we prove Lemma \ref{lem:nbd dec}. We refer the
readers to \cite[Appendix A]{KS IsingPotts-2D} for a proof of part
(1) and focus on proving part (2). For disjoint $\mathcal{A}$ and
$\mathcal{B}$, it is clear that $\widehat{\mathcal{N}}(\mathcal{A};\mathcal{B})$
and $\mathcal{B}$ are disjoint. Applying part (1) for these two sets,
we obtain that
\[
\widehat{\mathcal{N}}\big(\widehat{\mathcal{N}}(\mathcal{A};\mathcal{B})\cup\mathcal{B}\big)=\widehat{\mathcal{N}}(\widehat{\mathcal{N}}(\mathcal{A};\mathcal{B});\mathcal{B})\cup\widehat{\mathcal{N}}(\mathcal{B};\widehat{\mathcal{N}}(\mathcal{A};\mathcal{B})).
\]
Thus, the proof of part (2) is completed if the following two displayed
identities are verified:
\[
\widehat{\mathcal{N}}(\widehat{\mathcal{N}}(\mathcal{A};\mathcal{B})\cup\mathcal{B})=\widehat{\mathcal{N}}(\mathcal{A}\cup\mathcal{B})\quad\text{and}\quad\widehat{\mathcal{N}}(\widehat{\mathcal{N}}(\mathcal{A};\mathcal{B});\mathcal{B})=\widehat{\mathcal{N}}(\mathcal{A};\mathcal{B}).
\]
We divide this verification into the following four parts.\smallskip{}

\begin{itemize}
\item $\widehat{\mathcal{N}}(\widehat{\mathcal{N}}(\mathcal{A};\mathcal{B})\cup\mathcal{B})\supseteq\widehat{\mathcal{N}}(\mathcal{A}\cup\mathcal{B})$:
Take $\eta\in\widehat{\mathcal{N}}(\mathcal{A}\cup\mathcal{B})$.
Then, there exists an allowed path $\omega:\,\xi\to\eta$ with $\xi\in\mathcal{A}\cup\mathcal{B}$.
If $\xi\in\mathcal{B}$ then clearly $\eta\in\widehat{\mathcal{N}}(\mathcal{B})$.
If $\xi\in\mathcal{A}$ then since $\omega$ is an allowed path, $\mathbb{H}(\xi)\le\mathbb{H}_{0}+4$,
thus obviously $\xi\in\widehat{\mathcal{N}}(\mathcal{A};\mathcal{B})$.
This implies that $\eta\in\widehat{\mathcal{N}}(\widehat{\mathcal{N}}(\mathcal{A};\mathcal{B}))$.\smallskip{}
\item $\widehat{\mathcal{N}}(\widehat{\mathcal{N}}(\mathcal{A};\mathcal{B})\cup\mathcal{B})\subseteq\widehat{\mathcal{N}}(\mathcal{A}\cup\mathcal{B})$:
Take $\eta\in\widehat{\mathcal{N}}(\widehat{\mathcal{N}}(\mathcal{A};\mathcal{B})\cup\mathcal{B})$
so that there exists an allowed path $\omega:\,\xi\to\eta$ with $\xi\in\widehat{\mathcal{N}}(\mathcal{A};\mathcal{B})\cup\mathcal{B}$.
If $\xi\in\mathcal{B}$ then clearly $\eta\in\widehat{\mathcal{N}}(\mathcal{B})$.
If $\xi\in\widehat{\mathcal{N}}(\mathcal{A};\mathcal{B})$ then there
exists another allowed path $\omega':\,\zeta\to\xi$ with $\zeta\in\mathcal{A}$
and $\omega'\cap\mathcal{B}=\emptyset$. Concatenating $\omega'$
and $\omega$, we obtain an allowed path from $\zeta$ to $\eta$,
which implies that $\eta\in\widehat{\mathcal{N}}(\mathcal{A})$.\smallskip{}
\item $\widehat{\mathcal{N}}(\widehat{\mathcal{N}}(\mathcal{A};\mathcal{B});\mathcal{B})\supseteq\widehat{\mathcal{N}}(\mathcal{A};\mathcal{B})$:
Take $\eta\in\widehat{\mathcal{N}}(\mathcal{A};\mathcal{B})$ so that
there exists an allowed path $\omega:\,\xi\to\eta$ with $\xi\in\mathcal{A}$
and $\omega\cap\mathcal{B}=\emptyset$. Since $\omega$ is an allowed
path, $\mathbb{H}(\xi)\le\mathbb{H}_{0}+4$, thus obviously $\xi\in\widehat{\mathcal{N}}(\mathcal{A};\mathcal{B})$.
This implies that $\eta\in\widehat{\mathcal{N}}(\widehat{\mathcal{N}}(\mathcal{A};\mathcal{B});\mathcal{B})$.\smallskip{}
\item $\widehat{\mathcal{N}}(\widehat{\mathcal{N}}(\mathcal{A};\mathcal{B});\mathcal{B})\subseteq\widehat{\mathcal{N}}(\mathcal{A};\mathcal{B})$:
Take $\eta\in\widehat{\mathcal{N}}(\widehat{\mathcal{N}}(\mathcal{A};\mathcal{B});\mathcal{B})$
so that there exists an allowed path $\omega:\,\xi\to\eta$ with $\xi\in\widehat{\mathcal{N}}(\mathcal{A};\mathcal{B})$
and $\omega\cap\mathcal{B}=\emptyset$. Then, there exists another
allowed path $\omega':\,\zeta\to\xi$ with $\zeta\in\mathcal{A}$
and $\omega'\cap\mathcal{B}=\emptyset$. Concatenating $\omega'$
and $\omega$, we obtain an allowed path from $\zeta$ to $\eta$
avoiding $\mathcal{B}$. This deduces that $\eta\in\widehat{\mathcal{N}}(\mathcal{A};\mathcal{B})$.\smallskip{}
\end{itemize}
Combining these four observations, we conclude the proof of part (2)
of Lemma \ref{lem:nbd dec}.
\begin{acknowledgement*}
SK was supported by KIAS Individual Grant (HP095101) at the Korea
Institute for Advanced Study. SK thanks Jungkyoung Lee for his valuable
comments and advices that helped to clarify the presentation of this
article.
\end{acknowledgement*}

\end{document}